\newif\ifclean
\newcommand{\nc}{\newcommand}
\nc{\md}{\operatorname{-}}
\renewcommand{\AA}{{\mathbb{A}}}
\nc{\CC}{{\mathbb{C}}}
\nc{\DD}{{\mathbb{D}}}
\nc{\LL}{{\mathbb{L}}}
\nc{\FF}{{\mathbb{F}}}
\nc{\RR}{{\mathbb{R}}}
\renewcommand{\P}{{\mathbb{P}}}
\nc{\OO}{{\mathbb{O}}}
\nc{\QQ}{{\mathbb{Q}}}
\nc{\ZZ}{{\mathbb{Z}}}
\nc{\Z}{{\mathbb{Z}}}
\nc{\cA}{{\mathcal{A}}}
\nc{\cB}{{\mathcal{B}}}
\nc{\cBstd}{{\cB^{\rm std}}}
\nc{\cC}{{\mathcal{C}}}
\nc{\cD}{{\mathcal{D}}}
\nc{\cE}{{\mathcal{E}}}
\nc{\cF}{{\mathcal{F}}}
\nc{\cG}{{\mathcal{G}}}
\nc{\cH}{{\mathcal{H}}}
\nc{\cI}{{\mathcal{I}}}
\nc{\cJ}{{\mathcal{J}}}
\nc{\cK}{{\mathcal{K}}}
\nc{\cKstd}{{\cK^{\rm std}}}
\nc{\cL}{{\mathcal{L}}}
\nc{\cM}{{\mathcal{M}}}
\nc{\cN}{{\mathcal{N}}}
\nc{\cO}{{\mathcal{O}}}
\nc{\cP}{{\mathcal{P}}}
\nc{\cQ}{{\mathcal{Q}}}
\nc{\cR}{{\mathcal{R}}}
\nc{\cS}{{\mathcal{S}}}
\nc{\cT}{{\mathcal{T}}}
\nc{\cU}{{\mathcal{U}}}
\nc{\cV}{{\mathcal{V}}}
\nc{\cW}{{\mathcal{W}}}
\nc{\cX}{{\mathcal{X}}}
\nc{\cY}{{\mathcal{Y}}}
\nc{\cZ}{{\mathcal{Z}}}
\nc{\rc}{{\mathrm{c}}}
\nc{\rd}{{\mathrm{d}}}
\nc{\rf}{{\mathrm{f}}}
\nc{\rh}{{\mathrm{h}}}
\nc{\rrm}{{\mathrm{m}}}
\nc{\rs}{{\mathrm{s}}}
\nc{\rch}{{\mathrm{ch}}}
\nc{\rtd}{{\mathrm{td}}}
\nc{\rA}{{\mathrm{A}}}
\nc{\rB}{{\mathrm{B}}}
\nc{\rC}{{\mathrm{C}}}
\nc{\rD}{{\mathrm{D}}}
\nc{\rE}{{\mathrm{E}}}
\nc{\rF}{{\mathrm{F}}}
\nc{\rG}{{\mathrm{G}}}
\nc{\rH}{{\mathrm{H}}}
\nc{\rI}{{\mathrm{I}}}
\nc{\rJ}{{\mathrm{J}}}
\nc{\rK}{{\mathrm{K}}}
\nc{\rL}{{\mathrm{L}}}
\nc{\rM}{{\mathrm{M}}}
\nc{\rN}{{\mathrm{N}}}
\nc{\rO}{{\mathrm{O}}}
\nc{\rP}{{\mathrm{P}}}
\nc{\rQ}{{\mathrm{Q}}}
\nc{\rR}{{\mathrm{R}}}
\nc{\rS}{{\mathrm{S}}}
\nc{\rT}{{\mathrm{T}}}
\nc{\rU}{{\mathrm{U}}}
\nc{\rV}{{\mathrm{V}}}
\nc{\rW}{{\mathrm{W}}}
\nc{\rX}{{\mathrm{X}}}
\nc{\rY}{{\mathrm{Y}}}
\nc{\rZ}{{\mathrm{Z}}}
\nc{\bA}{{\mathbf{A}}}
\nc{\bB}{{\mathbf{B}}}
\nc{\bC}{{\mathbf{C}}}
\nc{\bD}{{\mathbf{D}}}
\nc{\bE}{{\mathbf{E}}}
\nc{\bF}{{\mathbf{F}}}
\nc{\bG}{{\mathbf{G}}}
\nc{\bH}{{\mathbf{H}}}
\nc{\bI}{{\mathbf{I}}}
\nc{\bJ}{{\mathbf{J}}}
\nc{\bK}{{\mathbf{K}}}
\nc{\bL}{{\mathbf{L}}}
\nc{\bM}{{\mathbf{M}}}
\nc{\bN}{{\mathbf{N}}}
\nc{\bO}{{\mathbf{O}}}
\nc{\bP}{{\mathbf{P}}}
\nc{\bQ}{{\mathbf{Q}}}
\nc{\bR}{{\mathbf{R}}}
\nc{\bS}{{\mathbf{S}}}
\nc{\bT}{{\mathbf{T}}}
\nc{\bU}{{\mathbf{U}}}
\nc{\bV}{{\mathbf{V}}}
\nc{\bW}{{\mathbf{W}}}
\nc{\bX}{{\mathbf{X}}}
\nc{\bY}{{\mathbf{Y}}}
\nc{\bZ}{{\mathbf{Z}}}
\nc{\ba}{{\mathbf{a}}}
\nc{\bb}{{\mathbf{b}}}
\nc{\bc}{{\mathbf{c}}}
\nc{\bd}{{\mathbf{d}}}
\nc{\be}{{\mathbf{e}}}
\nc{\bg}{{\mathbf{g}}}
\nc{\bh}{{\mathbf{h}}}
\nc{\bi}{{\mathbf{i}}}
\nc{\bj}{{\mathbf{j}}}
\nc{\bk}{{\mathbf{k}}}
\nc{\bl}{{\mathbf{l}}}
\nc{\bm}{{\mathbf{m}}}
\nc{\bn}{{\mathbf{n}}}
\nc{\bo}{{\mathbf{o}}}
\nc{\bp}{{\mathbf{p}}}
\nc{\bq}{{\mathbf{q}}}
\nc{\br}{{\mathbf{r}}}
\nc{\bs}{{\mathbf{s}}}
\nc{\bt}{{\mathbf{t}}}
\nc{\bu}{{\mathbf{u}}}
\nc{\bv}{{\mathbf{v}}}
\nc{\bw}{{\mathbf{w}}}
\nc{\bx}{{\mathbf{x}}}
\nc{\by}{{\mathbf{y}}}
\nc{\bz}{{\mathbf{z}}}
\nc{\fA}{{\mathfrak{A}}}
\nc{\fB}{{\mathfrak{B}}}
\nc{\fC}{{\mathfrak{C}}}
\nc{\fD}{{\mathfrak{D}}}
\nc{\fE}{{\mathfrak{E}}}
\nc{\fF}{{\mathfrak{F}}}
\nc{\fG}{{\mathfrak{G}}}
\nc{\fH}{{\mathfrak{H}}}
\nc{\fI}{{\mathfrak{I}}}
\nc{\fJ}{{\mathfrak{J}}}
\nc{\fK}{{\mathfrak{K}}}
\nc{\fL}{{\mathfrak{L}}}
\nc{\fM}{{\mathfrak{M}}}
\nc{\fN}{{\mathfrak{N}}}
\nc{\fO}{{\mathfrak{O}}}
\nc{\fP}{{\mathfrak{P}}}
\nc{\fQ}{{\mathfrak{Q}}}
\nc{\fR}{{\mathfrak{R}}}
\nc{\fS}{{\mathfrak{S}}}
\nc{\fT}{{\mathfrak{T}}}
\nc{\fU}{{\mathfrak{U}}}
\nc{\fV}{{\mathfrak{V}}}
\nc{\fW}{{\mathfrak{W}}}
\nc{\fX}{{\mathfrak{X}}}
\nc{\fY}{{\mathfrak{Y}}}
\nc{\fZ}{{\mathfrak{Z}}}
\nc{\fa}{{\mathfrak{a}}}
\nc{\fb}{{\mathfrak{b}}}
\nc{\fc}{{\mathfrak{c}}}
\nc{\fd}{{\mathfrak{d}}}
\nc{\fe}{{\mathfrak{e}}}
\nc{\ff}{{\mathfrak{f}}}
\nc{\fg}{{\mathfrak{g}}}
\nc{\fh}{{\mathfrak{h}}}
\nc{\fj}{{\mathfrak{j}}}
\nc{\fk}{{\mathfrak{k}}}
\nc{\fl}{{\mathfrak{l}}}
\nc{\fm}{{\mathfrak{m}}}
\nc{\fn}{{\mathfrak{n}}}
\nc{\fo}{{\mathfrak{o}}}
\nc{\fp}{{\mathfrak{p}}}
\nc{\fq}{{\mathfrak{q}}}
\nc{\fr}{{\mathfrak{r}}}
\nc{\fs}{{\mathfrak{s}}}
\nc{\ft}{{\mathfrak{t}}}
\nc{\fu}{{\mathfrak{u}}}
\nc{\fv}{{\mathfrak{v}}}
\nc{\fw}{{\mathfrak{w}}}
\nc{\fx}{{\mathfrak{x}}}
\nc{\fy}{{\mathfrak{y}}}
\nc{\fz}{{\mathfrak{z}}}
\nc{\sA}{{\mathsf{A}}}
\nc{\sB}{{\mathsf{B}}}
\nc{\sC}{{\mathsf{C}}}
\nc{\sD}{{\mathsf{D}}}
\nc{\sE}{{\mathsf{E}}}
\nc{\sF}{{\mathsf{F}}}
\nc{\sG}{{\mathsf{G}}}
\nc{\sH}{{\mathsf{H}}}
\nc{\sI}{{\mathsf{I}}}
\nc{\sJ}{{\mathsf{J}}}
\nc{\sK}{{\mathsf{K}}}
\nc{\sL}{{\mathsf{L}}}
\nc{\sM}{{\mathsf{M}}}
\nc{\sN}{{\mathsf{N}}}
\nc{\sO}{{\mathsf{O}}}
\nc{\sP}{{\mathsf{P}}}
\nc{\sQ}{{\mathsf{Q}}}
\nc{\sR}{{\mathsf{R}}}
\nc{\sS}{{\mathsf{S}}}
\nc{\sT}{{\mathsf{T}}}
\nc{\sU}{{\mathsf{U}}}
\nc{\sV}{{\mathsf{V}}}
\nc{\sW}{{\mathsf{W}}}
\nc{\sX}{{\mathsf{X}}}
\nc{\sY}{{\mathsf{Y}}}
\nc{\sZ}{{\mathsf{Z}}}
\nc{\sa}{{\mathsf{a}}}
\nc{\sd}{{\mathsf{d}}}
\nc{\se}{{\mathsf{e}}}
\nc{\sg}{{\mathsf{g}}}
\nc{\sh}{{\mathsf{h}}}
\nc{\si}{{\mathsf{i}}}
\nc{\sj}{{\mathsf{j}}}
\nc{\sk}{{\mathsf{k}}}
\nc{\sm}{{\mathsf{m}}}
\nc{\sn}{{\mathsf{n}}}
\nc{\so}{{\mathsf{o}}}
\nc{\sq}{{\mathsf{q}}}
\nc{\sr}{{\mathsf{r}}}
\nc{\st}{{\mathsf{t}}}
\nc{\su}{{\mathsf{u}}}
\nc{\sv}{{\mathsf{v}}}
\nc{\sw}{{\mathsf{w}}}
\nc{\sx}{{\mathsf{x}}}
\nc{\sy}{{\mathsf{y}}}
\nc{\sz}{{\mathsf{z}}}
\nc{\oA}{{\overline{A}}}
\nc{\oB}{{\overline{B}}}
\nc{\oC}{{\overline{C}}}
\nc{\oD}{{\overline{D}}}
\nc{\oE}{{\overline{E}}}
\nc{\oF}{{\overline{F}}}
\nc{\oG}{{\overline{G}}}
\nc{\oH}{{\overline{H}}}
\nc{\oI}{{\overline{I}}}
\nc{\oJ}{{\overline{J}}}
\nc{\oK}{{\overline{K}}}
\nc{\oL}{{\overline{L}}}
\nc{\oM}{{\overline{M}}}
\nc{\oN}{{\overline{N}}}
\nc{\oO}{{\overline{O}}}
\nc{\oP}{{\overline{P}}}
\nc{\oQ}{{\overline{Q}}}
\nc{\oR}{{\overline{R}}}
\nc{\oS}{{\overline{S}}}
\nc{\oT}{{\overline{T}}}
\nc{\oU}{{\overline{U}}}
\nc{\oV}{{\overline{V}}}
\nc{\oW}{{\overline{W}}}
\nc{\oX}{{\overline{X}}}
\nc{\oY}{{\overline{Y}}}
\nc{\oZ}{{\overline{Z}}}
\nc{\oa}{{\overline{a}}}
\nc{\ob}{{\overline{b}}}
\nc{\oc}{{\overline{c}}}
\nc{\od}{{\overline{d}}}
\nc{\of}{{\overline{f}}}
\nc{\og}{{\overline{g}}}
\nc{\oh}{{\overline{h}}}
\nc{\oi}{{\overline{i}}}
\nc{\oj}{{\overline{j}}}
\nc{\ok}{{\overline{k}}}
\nc{\ol}{{\overline{l}}}
\nc{\om}{{\overline{m}}}
\nc{\on}{{\overline{n}}}
\nc{\oo}{{\overline{o}}}
\nc{\op}{{\overline{p}}}
\nc{\oq}{{\overline{q}}}
\nc{\os}{{\overline{s}}}
\nc{\ot}{{\overline{t}}}
\nc{\ou}{{\overline{u}}}
\nc{\ov}{{\overline{v}}}
\nc{\ow}{{\overline{w}}}
\nc{\ox}{{\overline{x}}}
\nc{\oy}{{\overline{y}}}
\nc{\oz}{{\overline{z}}}
\nc{\tA}{{\tilde{A}}}
\nc{\tB}{{\tilde{B}}}
\nc{\tC}{{\tilde{C}}}
\nc{\tD}{{\tilde{D}}}
\nc{\tE}{{\tilde{E}}}
\nc{\tF}{{\tilde{F}}}
\nc{\tG}{{\tilde{G}}}
\nc{\tH}{{\tilde{H}}}
\nc{\tI}{{\tilde{I}}}
\nc{\tJ}{{\tilde{J}}}
\nc{\tK}{{\tilde{K}}}
\nc{\tL}{{\tilde{L}}}
\nc{\tM}{{\tilde{M}}}
\nc{\tN}{{\tilde{N}}}
\nc{\tO}{{\tilde{O}}}
\nc{\tP}{{\tilde{P}}}
\nc{\tQ}{{\tilde{Q}}}
\nc{\tR}{{\tilde{R}}}
\nc{\tS}{{\tilde{S}}}
\nc{\tT}{{\tilde{T}}}
\nc{\tU}{{\tilde{U}}}
\nc{\tV}{{\tilde{V}}}
\nc{\tW}{{\tilde{W}}}
\nc{\tX}{{\tilde{X}}}
\nc{\tY}{{\tilde{Y}}}
\nc{\tZ}{{\tilde{Z}}}
\nc{\tfD}{{\tilde{\fD}}}
\nc{\tcA}{{\tilde{\cA}}}
\nc{\tcB}{{\tilde{\cB}}}
\nc{\tcC}{{\tilde{\cC}}}
\nc{\tcD}{{\tilde{\cD}}}
\nc{\tcE}{{\tilde{\cE}}}
\nc{\tcF}{{\tilde{\cF}}}
\nc{\tcM}{{\tilde{\cM}}}
\nc{\tcP}{{\tilde{\cP}}}
\nc{\tcT}{{\tilde{\cT}}}
\nc{\ta}{{\tilde{a}}}
\nc{\tb}{{\tilde{b}}}
\nc{\tc}{{\tilde{c}}}
\nc{\td}{{\tilde{d}}}
\nc{\te}{{\tilde{e}}}
\nc{\tf}{{\tilde{f}}}
\nc{\tg}{{\tilde{g}}}
\nc{\ti}{{\tilde{\imath}}}
\nc{\tj}{{\tilde{j}}}
\nc{\tk}{{\tilde{k}}}
\nc{\tl}{{\tilde{l}}}
\nc{\tm}{{\tilde{m}}}
\nc{\tn}{{\tilde{n}}}
\nc{\tp}{{\tilde{p}}}
\nc{\tq}{{\tilde{q}}}
\nc{\tr}{{\tilde{r}}}
\nc{\ts}{{\tilde{s}}}
\nc{\tu}{{\tilde{u}}}
\nc{\tv}{{\tilde{v}}}
\nc{\tw}{{\tilde{w}}}
\nc{\tx}{{\tilde{x}}}
\nc{\ty}{{\tilde{y}}}
\nc{\tz}{{\tilde{z}}}
\nc{\hA}{{\hat{A}}}
\nc{\hB}{{\hat{B}}}
\nc{\hC}{{\hat{C}}}
\nc{\hD}{{\hat{D}}}
\nc{\hE}{{\hat{E}}}
\nc{\hF}{{\hat{F}}}
\nc{\hG}{{\hat{G}}}
\nc{\hH}{{\hat{H}}}
\nc{\hI}{{\hat{I}}}
\nc{\hJ}{{\hat{J}}}
\nc{\hK}{{\hat{K}}}
\nc{\hL}{{\hat{L}}}
\nc{\hM}{{\hat{M}}}
\nc{\hN}{{\hat{N}}}
\nc{\hO}{{\hat{O}}}
\nc{\hP}{{\hat{P}}}
\nc{\hQ}{{\hat{Q}}}
\nc{\hR}{{\hat{R}}}
\nc{\hS}{{\hat{S}}}
\nc{\hT}{{\hat{T}}}
\nc{\hU}{{\hat{U}}}
\nc{\hV}{{\hat{V}}}
\nc{\hW}{{\hat{W}}}
\nc{\hX}{{\widehat{X}}}
\nc{\hY}{{\hat{Y}}}
\nc{\hZ}{{\hat{Z}}}
\nc{\ha}{{\hat{a}}}
\nc{\hb}{{\hat{b}}}
\nc{\hc}{{\hat{c}}}
\nc{\hd}{{\hat{d}}}
\nc{\he}{{\hat{e}}}
\nc{\hg}{{\hat{g}}}
\nc{\hh}{{\hat{h}}}
\nc{\hi}{{\hat{i}}}
\nc{\hj}{{\hat{j}}}
\nc{\hk}{{\hat{k}}}
\nc{\hl}{{\hat{l}}}
\nc{\hm}{{\hat{m}}}
\nc{\hn}{{\hat{n}}}
\nc{\ho}{{\hat{o}}}
\nc{\hp}{{\hat{p}}}
\nc{\hq}{{\hat{q}}}
\nc{\hr}{{\hat{r}}}
\nc{\hs}{{\hat{s}}}
\nc{\hu}{{\hat{u}}}
\nc{\hv}{{\hat{v}}}
\nc{\hw}{{\hat{w}}}
\nc{\hx}{{\hat{x}}}
\nc{\hy}{{\hat{y}}}
\nc{\hz}{{\hat{z}}}
\nc{\hcC}{{\widehat{\cC}}}
\nc{\hcT}{{\widehat{\cT}}}
\nc{\eps}{\upepsilon}
\nc{\lan}{\big\langle}
\nc{\ran}{\big\rangle}
\nc{\kk}{{\Bbbk}}
\nc{\io}{\upiota}
\nc{\Kr}{\mathsf{Kr}}
\nc{\cKr}{\mathcal{K}\!\mathit{r}}
\nc{\Dm}{\bD^{-}}
\nc{\Db}{\bD^{\mathrm{b}}}
\nc{\Dbc}{\bD^{\mathrm{b}}_{\mathrm{c}}}
\nc{\Dp}{\bD^{\mathrm{perf}}}
\nc{\Dperf}{\bD^{\mathrm{perf}}}
\nc{\Dqc}{\bD_{\mathrm{qc}}}
\nc{\Du}{\bD}
\nc{\Dsing}{\bD^{\mathrm{sg}}}
\nc{\Dg}{\bD^{\mathrm{sg}}}
\def\ol{\overline}
\newcommand{\opp}{\mathrm{op}}
\nc{\Rn}{\rR_{\mathrm{node}}}
\nc{\Cn}{\cC_{\mathrm{node}}}
\nc{\Dfd}[1]{\bD_{\mathrm{fd}}(#1)}
\def\bw#1#2{\textstyle{\bigwedge\hskip-0.9mm^{#1}}\hskip0.2mm{#2}}
\nc{\xrightiso}[1]{ \xrightarrow[{\ \raisebox{0.5ex}[0ex][0ex]{$\sim$}\ }]{#1} }
\nc{\thick}{\mathbf{thick}}
\DeclareMathOperator{\ev}{\mathbf{ev}}
\DeclareMathOperator{\Hom}{\mathrm{Hom}}
\DeclareMathOperator{\Ext}{\mathrm{Ext}}
\DeclareMathOperator{\End}{\mathrm{End}}
\DeclareMathOperator{\Aut}{\mathrm{Aut}}
\DeclareMathOperator{\Spec}{\mathrm{Spec}}
\DeclareMathOperator{\Coh}{\mathrm{Coh}}
\DeclareMathOperator{\Bl}{\mathrm{Bl}}
\DeclareMathOperator{\Pic}{\mathrm{Pic}}
\DeclareMathOperator{\Br}{\mathrm{Br}}
\DeclareMathOperator{\Am}{\mathrm{Am}}
\DeclareMathOperator{\SB}{\mathrm{SB}}
\DeclareMathOperator{\DAm}{\mathrm{DAm}}
\DeclareMathOperator{\CH}{\mathrm{CH}}
\DeclareMathOperator{\ExDiv}{\mathrm{ExDiv}}
\DeclareMathOperator{\Coker}{\mathrm{Coker}}
\DeclareMathOperator{\Ima}{\mathrm{Im}}
\DeclareMathOperator{\ind}{\mathrm{ind}}
\DeclareMathOperator{\GL}{\mathrm{GL}}
\DeclareMathOperator{\PGL}{\mathrm{PGL}}
\DeclareMathOperator{\id}{\mathrm{id}}
\DeclareMathOperator{\rank}{\mathrm{rk}}
\DeclareMathOperator{\codim}{\mathrm{codim}}
\DeclareMathOperator{\modd}{\mathrm{mod{--}}}
\DeclareMathOperator{\mmod}{\mathrm{{--}mod}}
\DeclareMathOperator{\Gal}{\mathrm{Gal}}
\DeclareMathOperator{\Rep}{\mathrm{Rep}}
\DeclareMathOperator{\cVar}{\mathcal{V}ar}
\DeclareMathOperator{\MMP}{\mathrm{MMP}}
\def\wt{\widetilde}
\def\xra{\xrightarrow}
\theoremstyle{plain}
\newtheorem{theorem}{Theorem}[section]
\newtheorem{conjecture}[theorem]{Conjecture}
\newtheorem{lemma}[theorem]{Lemma}
\newtheorem{proposition}[theorem]{Proposition}
\newtheorem{corollary}[theorem]{Corollary}
\newtheorem*{theorem*}{Theorem}
\theoremstyle{definition}
\newtheorem{definition}[theorem]{Definition}
\newtheorem{example}[theorem]{Example}
\theoremstyle{remark}
\newtheorem{remark}[theorem]{Remark}
\numberwithin{equation}{section} 
\newcommand{\dto}
{\dashrightarrow}
\newcommand{\dercat}[1]{\Db(#1)}
\newcommand{\ftn}[1]{}
\newcommand{\ftn}[1]{\footnote{#1}}
\title{Atomic decompositions for derived categories of $G$-surfaces}
\title{Atomic decompositions for derived categories of $G$-surfaces}
\date{December 2025}
\author[A. Elagin]{Alexey Elagin}
\address{School of Mathematical and Physical Sciences, University of Sheffield, S3 7RH, Sheffield, UK}
\curraddr{}
\email{alexey.elagin@gmail.com}
\author[J. Schneider]{Julia Schneider}
\address{Université Bourgogne Europe, CNRS, IMB UMR 5584, 21000 Dijon, France}
\curraddr{}
\email{julia.schneider@ube.fr}
\author[E. Shinder]{Evgeny Shinder}
\address{School of Mathematical and Physical Sciences, University of Sheffield, S3 7RH, Sheffield, UK}
\curraddr{}
\email{eugene.shinder@gmail.com}
\begin{document}

\begin{abstract}
We construct canonical semi-orthogonal decompositions for derived categories of smooth projective surfaces. These decompositions are compatible with the operations in the minimal model program, such as blow-ups and conic bundles. Therefore our construction confirms a conjecture of Kontsevich in dimension two.
We work in the $G$-equivariant setting and over an arbitrary perfect field, and canonical decompositions are consistent with group change  and algebraic field extensions.
Our method is based on the $G$-minimal model program for surfaces and on the Sarkisov link factorisation of birational maps between Mori fibre spaces. 
We characterise rationality of surfaces, and in certain cases,  birationality between surfaces in terms of the pieces of these decompositions, which we call atoms.
\end{abstract}

\maketitle

\tableofcontents

\section{Introduction}

One of the most important questions 
about derived categories of coherent sheaves in algebraic geometry is: given a birational map
$\phi\colon X \dashrightarrow Y$ between smooth projective varieties,  how
 $\dercat{X}$ and $\dercat{Y}$ are related?
This question has been considered in the pioneering work on derived categories by Bondal and Orlov for flips and flops 
\cite[p. 40]{BO-sod}, \cite{BO-ICM}. A
conjecture of Kawamata \cite{Kawamata-KD} predicts that if $\phi$ is a K-equivalence, then $\Db(X)$ and $\Db(Y)$ are equivalent. This is known to be the case for $3$-folds by a result of Bridgeland
\cite{Bridgeland-flops}.

A key observation is that if
 $f\colon Y \to X$ is a \emph{derived contraction}, that is, a morphism with $\rR f_* \mathcal{O}_Y = \mathcal{O}_X$ (e.g. a smooth blow-up), then the derived pullback functor $f^*\colon \dercat{X} \to \dercat{Y}$ 
 is fully faithful and
 we have a semi-orthogonal decomposition
\[
\dercat{Y} = \langle \mathcal{K}, f^* \dercat{X} \rangle, \text{ where } \mathcal{K} = \ker(f_*) := \{F \in \Db(Y) \colon f_*(F) = 0\}.
\]
The kernel category $\mathcal{K}$ decomposes further in special cases such as smooth blow-ups and projective bundles \cite{Orlov-blowup}, and some Fano fibrations \cite{Kuznetsov-quadrics}, \cite{Kuznetsov-dP6}, \cite{Xie-dP5}. The most prominent case is Orlov's blow-up formula, see \cite{Orlov-blowup} or Proposition~\ref{blowupformula}: if $\tX$ is the blow-up of $X$ at a smooth centre $Z$ of codimension $r$ then 
$$\Db(\tX)=\langle \underbrace{\Db(Z),\ldots,\Db(Z)}_{\text{$(r-1)$ times}},\Db(X)\rangle.$$
Thus derived categories of varieties which are birational, or more generally related by operations of the Minimal Model Program (MMP), are expected to have  common pieces in their semi-orthogonal decompositions.
On the other hand, it is known that semi-orthogonal decompositions into minimal pieces are in general not unique, already in dimension two.

\begin{conjecture}[Kontsevich]\label{conj:Kontsevich}
There should exist canonical semi-orthogonal decompositions, well-defined up to mutations, which are
compatible with geometric operations.   
\end{conjecture}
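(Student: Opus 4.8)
The plan is to \emph{prove} the two-dimensional case of Conjecture~\ref{conj:Kontsevich} by constructing the canonical decomposition explicitly and inductively along the equivariant minimal model program, so I first reformulate the vague statement as a precise one: to every smooth projective $G$-surface $X$ over a perfect field $k$ I want to attach a semi-orthogonal decomposition of $\dercat{X}$, well defined up to mutation, whose indecomposable pieces---the \emph{atoms}---form a multiset that is a birational-type invariant and transforms predictably under group change $G' \le G$ and field extension $k \subseteq k'$. The inductive engine is Orlov's blow-up formula (Proposition~\ref{blowupformula}): a $G$-equivariant blow-up $f\colon \tX \to X$ at a $G$-orbit $z$ yields $\dercat{\tX} = \langle \dercat{\kappa(z)}, f^*\dercat{X}\rangle$, where $\dercat{\kappa(z)}$ is the derived category of the (equivariant) residue algebra of the centre. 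Hence the atom multiset gains exactly one new atom $\dercat{\kappa(z)}$ per blow-up, and once the base cases are fixed, the atoms of an arbitrary $X$ are read off from the atoms of its $G$-minimal model together with the residue-field atoms accumulated along the MMP.

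Second, I would settle the \emph{base cases}. By the equivariant classification, a $G$-minimal surface is a Mori fibre space of one of two kinds: a $G$-del Pezzo surface of Picard rank one, or a $G$-conic bundle $\pi\colon X \to C$ over a smooth curve. For conic bundles I would use a relative Clifford-algebra decomposition $\dercat{X} = \langle \dercat{C}, \dercat{C, \Cli_0}\rangle$, whose atoms come from $\dercat{C}$ and from the even Clifford sheaf $\Cli_0$ supported on the discriminant. For del Pezzo surfaces of Picard rank one I would use their (possibly noncommutative) exceptional structure; here the genuinely nontrivial atoms are arithmetic in nature---Severi--Brauer pieces $\dercat{A}$ for a central simple algebra $A$, quadric and Clifford pieces, and so on---and these constitute the list of \emph{primitive} atoms.

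The main obstacle, and the technical heart of the argument, is \emph{canonicity}: the atom multiset must be independent of the chosen MMP, since minimal models of a $G$-surface need not be unique. To control this I would invoke the Sarkisov program, which factors any birational map $\phi\colon X \dto Y$ between Mori fibre spaces into elementary Sarkisov links. It then suffices to prove an invariance lemma---\emph{each elementary link preserves the atom multiset up to the allowed equivalence}---which I would verify link by link: resolve each link by a common smooth $G$-surface dominating both sides, apply the blow-up formula on each leg, and match the resulting atoms against the base-case decompositions. This case analysis, together with the homological bookkeeping needed to identify the Clifford and del Pezzo pieces appearing on the two sides of a link, is where essentially all the work lies.

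Finally, I would establish the advertised functorialities and applications. Extension of scalars $k \subseteq k'$ sends an atom $\dercat{L}$ to $\dercat{L \otimes_k k'}$, which decomposes further into atoms over $k'$, and $G$-orbits behave compatibly with restriction and induction; tracking these operations through the base cases gives consistency with group change and field extension. With canonicity in hand, I would characterise rationality by proving that $X$ is $G$-rational precisely when all of its atoms are of trivial type (that is, $\dercat{k}$-pieces), with no Severi--Brauer, quadric, or nontrivial conic-bundle atom surviving, and deduce the stated birationality criteria by comparing atom multisets across Sarkisov links.
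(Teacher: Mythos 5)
Your overall architecture is the same as the paper's: fix the theory in dimension at most one, choose decompositions for $G$-Mori fibre spaces, prove a link-by-link Sarkisov invariance lemma, propagate to all surfaces via the blow-up formula along the MMP, and descend along field extensions. The genuine gap is in your base cases: the decompositions you propose for Mori fibre spaces fail the invariance lemma that you yourself identify as the heart of the proof. Consider the first link of type I, $\P^2 \leftarrow \bF_1 \to \P^1$ (the case $(9,8)$ of Proposition~\ref{prop:SarkisovLinks}). Since mutations permute the pieces of an SOD up to equivalence (Lemma~\ref{lemma_atomswelldefined}), link invariance forces the multiset of atoms of $\bF_1$ computed through the blow-up $\bF_1\to\P^2$, namely $\{\Db(\kk)\}\cup\mathrm{At}(\P^2)$, to agree with the multiset computed through the conic bundle $\pi\colon\bF_1\to\P^1$, namely $\{\Db(\P^1,\Cli_0)\}\cup\{\Db(\kk),\Db(\kk)\}$, where your Clifford piece satisfies $\Db(\P^1,\Cli_0)\cong\ker\pi_*\cong\Db(\P^1)$ because $\bF_1=\P(\cO\oplus\cO(-1))$ has no singular fibres and trivial Brauer class. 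If you keep the Kuznetsov component $\cO^\perp=\langle\cO(-2),\cO(-1)\rangle\subset\Db(\P^2)$ as a single atom, the multisets differ, because $\cO^\perp$ (the derived category of the $3$-arrow Kronecker quiver) is not equivalent to $\Db(\P^1)$: the symmetrized Euler form on $\rK_0$ has determinant $-5$ for the former and $0$ for the latter. If instead you split $\Db(\P^2)$ fully into the Beilinson collection, the left-hand multiset is four copies of $\Db(\kk)$ and still disagrees, since the Clifford piece was kept whole. The paper's resolution is to abandon the Kuznetsov/Clifford pieces in high degree and use the $G$-invariant Karpov--Nogin three-block decompositions of Lemma~\ref{lem:3blockdecomp} for all Mori fibre spaces of degree $\ge 5$ (so that on $\bF_n/\P^1$ the kernel splits as $\langle\cO(-s-h)\rangle,\langle\cO(-s)\rangle$, see Definition~\ref{def_standard}), while keeping $\ker\pi_*$ as a single atom only in degree $\le 4$; consistency of this dichotomy is itself not automatic and rests on the fact that Sarkisov links never connect a degree-$\ge 5$ Mori fibre space to a degree-$\le 4$ one (Proposition~\ref{prop_list}). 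Your proposal contains neither the three-block decompositions nor this dichotomy, so the case-by-case verification you plan would fail at the very first case.

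Two further problems. Equivariance: atoms must be $G$-invariant subcategories, and for a $G$-minimal del Pezzo surface of degree $5$ or $6$ no full exceptional collection is invariant object-by-object; it is exactly the block structure (each block permuted into itself) that makes the chosen decompositions equivariant, and in the geometric case the arithmetic data you invoke (central simple algebras, Clifford algebras) must be replaced by transitive $G$-sets together with twisting classes in $\rH^2(G,(\kk^*)^n)$ as in Proposition~\ref{prop_Z}; your sketch only addresses the arithmetic incarnation. Finally, your closing rationality criterion is false as stated: triviality of all atoms is equivalent to rationality only in the arithmetic case (Theorem~\ref{thm:rational-atoms}), whereas equivariantly a del Pezzo surface of degree $5$ with a transitive action of $S_5$, $A_5$ or $F$ has only non-twisted permutation-type atoms yet is not projectively $G$-linearisable (Example~\ref{ex:dP5-rigid}).
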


This conjecture has been made more precise in the Noncommutative Minimal Model Program by
Halpern-Leistner \cite{DHL-NMMP} relating semi-orthogonal decompositions to paths in the stability manifold.

Existence of such canonical decompositions 
would imply that the pieces of the decomposition for $\Db(X)$ can be an obstruction to rationality of $X$, because
only pieces coming from varieties of dimension at most $\dim(X) - 2$ will appear if $X$ is rational.
A particular case is the celebrated  cubic fourfold conjecture by Kuznetsov \cite{Kuznetsov-cubics} that says that  
a smooth complex cubic fourfold $X \subset \P^5$ is rational if and only if the main component $\bA_X \subset \Db(X)$ is equivalent to the derived category of a K3 surface.
In the recent work \cite{KKPY-atoms}, Katzarkov, Kontsevich, Pantev, and Yu  proved irrationality of a very general complex cubic fourfold $X$ using their groundbreaking theory of {Hodge atoms} which is based on decomposing quantum cohomology into canonical pieces. As one of the steps in the proof, they show that the Hodge atom analog of the component $\bA_X$ of a cubic fourfold $X$ is indeed not coming from surfaces.

In this paper we construct canonical decompositions for varieties up to dimension $2$, over a perfect field and in the equivariant setting.
Using these decompositions we obtain some new results on rationality and, more generally, birationality of surfaces. 

\subsection{Atomic decompositions for {$G$}-surfaces}

We will now formalize what it means for semi-orthogonal decompositions to be compatible with geometric operations.
Motivated by the theory of Hodge atoms \cite{KKPY-atoms, CKK-atoms},
we make the following definition. For a fixed group $G$, let $\cV$ be a category consisting of some class of $G$-varieties (Definition~\ref{def_Gvariety}) as objects and some class of $G$-equivariant derived contractions as morphisms.

\begin{definition}[see Definition \ref{def_GSAT}] A $G$-atomic theory for $\cV$
assigns for
all $X \in \mathrm{Obj}(\cV)$
a mutation-equivalence class of $G$-invariant
semi-orthogonal decompositions, called \emph{atomic decompositions}, with the following property.
For any morphism $f\colon Y \to X$  in $\cV$, there is
an atomic decomposition 
$\dercat{X} = \langle \bA_1, \dots, \bA_n \rangle$ and an atomic decomposition for $\dercat{Y}$ that is obtained from 
\[
\dercat{Y} = \langle 
\ker(f_*),
f^*\bA_1, \dots, f^*\bA_n \rangle
\]
by possibly splitting  $\ker (f_*)$ into several components.
\end{definition}

From now on until the end of this subsection, we work over an algebraically closed field $\kk$ in the equivariant setting: that is, we fix a group $G$ and consider $G$-varieties and $G$-equivariant morphisms. In particular, we allow the \emph{geometric case} of a finite group $G$ acting $\kk$-linearly, and the \emph{arithmetic case} where $G$ is a Galois group. 

\medskip

Our first main result is the construction of 
a $G$-atomic theory
for the category 
$G-\cVar_{\kk, \le 2}^{\MMP}$
consisting of $G$-varieties of $\dim\le 2$  and rational derived contractions. By a rational derived contraction in $\dim\le 2$ we mean a derived contraction with geometrically rational generic fibre, or equivalently, a composition of $G$-MMP (Minimal Model Program) steps, that is, $G$-blow-ups of surfaces and $G$-Mori fibre space structure maps.

\begin{theorem}
[see Theorem \ref{thm:atoms-surfaces}]
\label{thm:main-intro}
For every group $G$ there is an explicit
 atomic theory for 
$G-\cVar_{\kk, \le 2}^{\MMP}$.
Furthermore, the constructed theories are compatible with passing to subgroups $H \subset G$.
\end{theorem}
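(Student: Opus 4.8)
The plan is to construct the atomic decompositions inductively along the $G$-MMP: anchor them on the minimal objects of the category, propagate them through the contraction morphisms via Orlov's formula, and then show that the mutation class attached to each object is independent of the chosen MMP run. First I would fix atomic decompositions on the objects admitting no nontrivial contraction, namely the $G$-Mori fibre spaces of $\dim \le 2$. In dimension $0$ an object is $\Spec$ of an étale $\kk$-algebra permuted by $G$, and its derived category is taken as a single atom (refined into $G$-orbits of closed points). In dimension $1$ the objects are smooth projective $G$-curves, where one fixes the standard class, using the exceptional data of the anticanonical $\P^1$ in the rational case and a single atom otherwise. In dimension $2$ the minimal objects are $G$-del Pezzo surfaces of $G$-Picard rank $1$ and $G$-conic bundles. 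For each of these I would record an atomic decomposition \emph{compatible with its own Mori fibre space structure morphism} $p\colon S \to B$, that is, a refinement of the fibration decomposition $\Db(S) = \langle \ker(p_*),\, p^*\Db(B)\rangle$ using the projective bundle formula in the $\P^1$-bundle case and the sheaf-of-even-Clifford-algebras decomposition in the genuine conic bundle case.

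\textbf{Propagation through blow-ups.} For a blow-up morphism $f\colon Y \to X$ contracting a $G$-orbit of $(-1)$-curves over a reduced $G$-invariant centre $Z$, Orlov's formula (Proposition~\ref{blowupformula}) gives $\Db(Y) = \langle \Db(Z),\, f^*\Db(X)\rangle$, so that $\ker(f_*) = \Db(Z)$ is itself an atom. Given an atomic decomposition $\Db(X) = \langle \bA_1, \dots, \bA_n\rangle$, I would declare $\langle \Db(Z),\, f^*\bA_1, \dots, f^*\bA_n\rangle$ to be an atomic decomposition of $\Db(Y)$; this is precisely the compatibility demanded by the definition of a $G$-atomic theory. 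Iterating along any $G$-MMP run $S \to S_1 \to \dots \to S_{\min}$ then produces an atomic decomposition of an arbitrary $G$-surface $S$ out of that of a minimal model.

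\textbf{Well-definedness (the crux).} The essential point is that the mutation class attached to $S$ must not depend on the chosen MMP run, which a priori may terminate at different minimal models. Two $G$-Mori fibre space structures on birational surfaces are connected by a chain of elementary $G$-Sarkisov links, and each link is an explicit diagram of blow-ups, blow-downs, and possibly a change of fibration. I would therefore reduce the global consistency statement to a finite verification: for each type of surface Sarkisov link, resolve the link to a common smooth $G$-surface, compute the two inductively built decompositions there via Orlov's formula, and check by an explicit mutation calculation that they coincide. This is where the Sarkisov factorisation does the real work, and I expect it to be the \emph{main obstacle}, since one must track precisely how the centre-atoms $\Db(Z)$ and the minimal-model atoms permute and transform across each link, and verify that the del Pezzo and conic bundle decompositions chosen in the base step are mutually consistent under exactly those links that relate them.

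\textbf{Subgroup compatibility.} Finally, for $H \subset G$ restriction gives a functor $G$-$\cVar_{\kk,\le 2}^{\MMP} \to H$-$\cVar_{\kk,\le 2}^{\MMP}$, since a $G$-variety is an $H$-variety and a $G$-equivariant derived contraction is $H$-equivariant. A $G$-minimal model need not be $H$-minimal, so I would run the $H$-MMP starting from it and, invoking the propagation and well-definedness already established, check that the $G$-atomic decomposition viewed $H$-equivariantly is mutation-equivalent to the $H$-atomic decomposition. Concretely, the point to verify is that the atoms attached to $G$-minimal models refine compatibly as the group shrinks, which again follows from Orlov's formula applied to the additional blow-downs that become available over the smaller group.
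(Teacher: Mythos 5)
Your overall architecture is the same as the paper's: fix decompositions on the two-dimensional $G$-Mori fibre spaces, propagate through blow-ups via Orlov's formula, reduce well-definedness to a case-by-case check over $G$-Sarkisov links, and handle subgroups at the end. However, there is a genuine gap exactly at the point you leave vague: the \emph{choice} of decompositions at the base of the induction, and the choices you do prescribe would make your own Sarkisov verification fail. For a $G$-minimal del Pezzo surface over $\Spec\kk$ you give no prescription at all (neither the projective bundle formula nor the Clifford formula applies there), and for ``genuine'' conic bundles you prescribe the even-Clifford-algebra decomposition, which presents $\ker(\pi_*)$ as a \emph{single} component $\Db(B,\Cliff_0)$. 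This is incompatible with the links of type I. Concretely, for the link $(9,5)$ the roof $Z$ is simultaneously the blow-up of $\P^2$ at a $G$-orbit of length $4$ and a conic bundle of degree $5$ over $\P^1$ (three singular fibres, so ``genuine'' in your terminology). The decomposition of $\Db(Z)$ induced from $\P^2$ has four components (the orbit block plus three pieces), whereas your Clifford prescription yields three components, one of which, $\Db(\P^1,\Cliff_0)$, is not generated by an orthogonal exceptional block. Since mutation-equivalent SOD-s have the same number of components, and the components agree up to permutation and $G$-equivalence (Lemma~\ref{lemma_atomswelldefined}), these two decompositions cannot be mutation-equivalent, so axiom \ref{ax:A2} fails; the same problem occurs for the links $(9,8)$ and $(8,6)$ if the kernel is not split.

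The actual content of the paper's proof is precisely the calibration you are missing: for \emph{birationally rich} Mori fibre spaces (del Pezzo surfaces of degree $9,8,6,5$ and conic bundles of degree $8,6,5$) one takes the $\Aut$-invariant Karpov--Nogin three-block decompositions of Lemma~\ref{lem:3blockdecomp}, resp.\ their four-block conic-bundle variants of Definition~\ref{def_standard}, which split $\ker(\pi_*)$ into exactly two blocks of exceptional objects, while for all remaining Mori fibre spaces (degree $\le 4$, or base of positive genus) one must \emph{not} split $\ker(\pi_*)$ at all. Only with these calibrated choices does the case-by-case check of Proposition~\ref{prop_sarkisov} (all of Appendix~\ref{ap_proof_prop}; note that the symmetric Bertini/Geiser links require the fractional Calabi--Yau property of Lemma~\ref{lemma_fCY}, not just line-bundle mutations) go through. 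Two further omissions: your induction is anchored only at Mori fibre spaces, but $G-\cVar_{\kk,\le 2}^{\MMP}$ also contains surfaces of Kodaira dimension $\ge 0$, which must be assigned a single atom and handled separately (Proposition~\ref{prop_compatible_gives_atomictheory}); and your well-definedness step needs the Sarkisov chain connecting two contractions $f\colon Y\to X$, $f'\colon Y\to X'$ to be \emph{dominated by} $Y$ (the merry-go-round diagram of \cite{LamyCremona}), not merely to exist, since otherwise the mutation equivalences established on the roofs of the individual links cannot be transported to decompositions of $\Db(Y)$ itself.
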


By compatibility for a subgroup $H \subset G$ we mean that $G$-atomic decompositions can be refined to $H$-atomic decompositions.
Theorem \ref{thm:main-intro} proves Conjecture \ref{conj:Kontsevich} in dimension 2.
In the case when $G$ is a Galois group
it
strengthens earlier results by Auel--Bernardara \cite{AuelBernardara} and Bernardara--Durighetto
\cite{BernardaraDurighetto}
who introduced the Kuznetsov component for geometrically rational surfaces in the arithmetic case and proved that it is a birational invariant.

\begin{example}\label{ex:P2-intro}
In the simplest case $\kk = \overline{\kk}$ and $G = \{e\}$, for  rational surfaces our result says the following. There are two types of minimal rational surfaces: $\P^2$ and Hirzebruch surfaces $\mathbb{F}_n$, $n \ge 0$, $n \ne 1$, with atomic decompositions
\[
\dercat{\P^2} = \langle \langle\cO(-2)\rangle, \langle\cO(-1)\rangle, \langle\cO \rangle\rangle, \quad
\dercat{\mathbb{F}_n} = \langle
\langle\cO(-s-h)\rangle, \langle\cO(-s)\rangle, \langle\cO(-h)\rangle, \langle\cO \rangle\rangle.
\]
Here $h$ is the fibre class and $s$ is the negative section class. Now if $X$ is any rational surface with contractions $X \to X_1, X\to X_2$ to minimal rational  surfaces then the two exceptional collections on $X$ induced from $X_1$ and $X_2$ by the blow-up formula both define atomic decompositions, in particular, they are mutation equivalent.
\end{example}

The proof of Theorem \ref{thm:main-intro} relies on the Minimal Model Program  for $G$-surfaces, using Sarkisov links to check compatibility as in the arithmetic case for components of derived categories in \cite{AuelBernardara}, \cite{BernardaraDurighetto} 
and the Grothendieck ring of varieties in \cite{LinShinderZimmermann}.
There are several steps in the proof.
The almost trivial step is to construct the (unique) atomic theory for curves.
Then we construct a decomposition of $\Db(X)$ for every two-dimensional $G$-Mori fibre space $X \to B$, based on the Karpov--Nogin $3$-block exceptional collections for del Pezzo surfaces \cite{KarpovNogin}. We check that these decompositions are \emph{compatible with Sarkisov links between Mori fibre spaces}: two atomic decompositions for two Mori fibre spaces forming the sides of a Sarkisov link induce mutation-equivalent decompositions on the roof. This requires case-by-case checking all possible types of links and constitutes the longest part of the proof.

This allows for a unique construction of an atomic theory for $G$-surfaces matching the decompositions for $G$-Mori fibre spaces: for arbitrary $G$-surface $Y$ we choose a sequence of blow-ups of $G$-orbits producing~$Y$ from a Mori fibre space $X/B$ or a surface $X$ with $K_X$ nef, and augment atomic decomposition for $X$ be adding an atom $\ker f_{i*}$ for any blow-up $f_i$ in the sequence as in the blow-up formula.

One important aspect of our atomic theory is that we do not decompose K-nef surfaces such as Enriques surfaces, i.e. by construction they are assigned a semi-orthogonal decomposition with a single piece. This is motivated by both the MMP, where such surfaces are the end steps, as well as by the quantum cohomology viewpoint, where such surfaces are assigned a single Hodge atom \cite{KKPY-atoms}. 

We refer to pieces of atomic semi-orthogonal decompositions
of $\dercat{X}$ as $G$-\emph{atoms}; these pieces, as  triangulated categories with a $G$-action,  are independent of the choice of an atomic decomposition because  mutations simply permute the pieces of a decomposition.

Of particular importance are \emph{$G$-atoms of permutation type}: these are $G$-atoms generated by an orthogonal exceptional collection $E_1, \ldots, E_n$ which is transitively permuted by $G$.

\begin{proposition}[see Proposition \ref{prop_Z}]
\label{prop:perm-atom-intro}
A $G$-atom of permutation type, up to $G$-equivariant equivalence, is determined by a transitive representation $\rho\colon G \to \rS_n$ and a cohomology class $[\alpha] \in \rH^2(G, (\kk^*)^n)$
where the $G$-action on $(\kk^*)^n$ is defined by the permutation $\rho$ and the action on scalars is fixed.
\end{proposition}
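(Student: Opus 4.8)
The plan is to reduce the classification to a standard cohomological computation by first pinning down the underlying $\kk$-linear category and then tracking the coherence data of the $G$-action. First I would observe that, since $E_1,\dots,E_n$ is an orthogonal exceptional collection, the atom $\bA=\langle E_1,\dots,E_n\rangle$ splits as a product $\bA\simeq\prod_{i=1}^n\langle E_i\rangle$ with each $\langle E_i\rangle\simeq\Dperf(\kk)$ via $E_i\mapsto\kk$. In particular the group of automorphisms of the identity functor is $\Aut(\id_\bA)=(\kk^*)^n$, one scalar per factor, and any $\kk$-linear autoequivalence sending the set $\{E_i\}$ to itself in degree $0$ is determined, up to such a scalar, by the induced permutation in $\rS_n$; there is no shift ambiguity precisely because the collection is permuted as a set of objects placed in a fixed degree. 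A $G$-action on $\bA$ consists of (semilinear) autoequivalences $\Phi_g$ together with coherence isomorphisms $\epsilon_{g,h}\colon\Phi_g\Phi_h\Rightarrow\Phi_{gh}$ satisfying the associativity constraint, and by assumption each $\Phi_g$ carries $E_i$ to $E_{\rho(g)(i)}$, defining the transitive homomorphism $\rho\colon G\to\rS_n$.

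Next I would extract the cocycle. Choose for every $g$ a representative $\Phi_g$ that on objects literally sends $E_i$ to $E_{\rho(g)(i)}$ and that is $g$-semilinear on the scalar endomorphisms $\Hom(E_i,E_i)=\kk$ (trivially so in the geometric case, via the Galois action in the arithmetic case). With this normalization $\Phi_g\Phi_h$ and $\Phi_{gh}$ agree on objects, so each $\epsilon_{g,h}$ is an automorphism of $\Phi_{gh}$, i.e.\ a tuple of scalars $\alpha_{g,h}\in(\kk^*)^n$ recording its value on each $E_i$. The associativity coherence relating the two isomorphisms $\Phi_g\Phi_h\Phi_k\Rightarrow\Phi_{ghk}$ translates verbatim into the $2$-cocycle identity for $\alpha$, where the $G$-module structure on $(\kk^*)^n$ is exactly the one in the statement: the permutation of factors by $\rho$ comes from $\Phi_g$ moving the components, and the twist on each $\kk^*$ comes from the semilinearity of $\Phi_g$. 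Thus $\alpha\in\rZ^2(G,(\kk^*)^n)$.

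I would then check that $[\alpha]$ is well defined and is a complete invariant. Changing the chosen representatives $\Phi_g$ amounts to rescaling the identifications $\Phi_g(E_i)\cong E_{\rho(g)(i)}$ by scalars $\mu_g\in(\kk^*)^n$, which modifies $\alpha$ by the coboundary $d\mu$; hence $[\alpha]\in\rH^2(G,(\kk^*)^n)$ depends only on the $G$-atom. Conversely, given $\rho$ there is a canonical \emph{split} action with $\epsilon_{g,h}=\id$ (permute the factors and apply the Galois twist), which is strictly associative; so the set of actions with underlying data $\rho$ is nonempty and forms a torsor under $\rH^2(G,(\kk^*)^n)$. In particular two $G$-atoms with the same $\rho$ and the same class $[\alpha]$ differ by a natural family of scalar isomorphisms on the generators that assembles into a $G$-equivariant equivalence, while distinct classes yield inequivalent atoms. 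This shows the pair $(\rho,[\alpha])$ determines the $G$-atom up to $G$-equivariant equivalence.

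The main obstacle I anticipate is foundational rather than computational: making rigorous the passage from the triangulated category $\bA$ with its $G$-action to the finite $\kk$-linear skeleton on $\{E_1,\dots,E_n\}$, and verifying that this passage loses no information. Concretely one must confirm that $\Aut(\id_\bA)=(\kk^*)^n$ with no extra central automorphisms, that every autoequivalence permuting the collection is, up to a scalar and the coherence data, governed only by its permutation (ruling out hidden shifts or nonscalar actions on the one-dimensional $\Hom$-spaces), and that a compatible system of scalar isomorphisms on the generators extends uniquely to a $G$-equivariant equivalence of the generated triangulated categories. Once these points are secured, the identification of the associativity constraint with the $2$-cocycle condition—and of reparametrizations with coboundaries—is routine, and the transitivity of $\rho$ is what lets one rewrite $\rH^2(G,(\kk^*)^n)$ via Shapiro's lemma as $\rH^2(\Stab,\kk^*)$ should a Brauer-type description be desired.
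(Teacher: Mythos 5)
Your proposal is correct and follows essentially the same route as the paper's proof of Proposition \ref{prop_Z}: the paper packages the block into the single object $E=E_1\oplus\cdots\oplus E_n$ with $\End(E)=\kk^n$ and classifies $\sigma$-linear $G$-actions on the one-object category $\mathsf{pt}_{\kk^n}$ (Lemmas \ref{lemma_actiononapoint2} and \ref{lemma_actiononapoint3}), which is exactly your extraction of a $2$-cocycle in $(\kk^*)^n_{\rho,\sigma}$ from the coherence isomorphisms, with changes of representatives giving coboundaries. The extension step you flag as the main obstacle is resolved in the paper just as you envisage: after adjusting by a shift so the equivalence preserves the generators, it is extended over finite direct sums of shifts and then over the idempotent completion to the whole triangulated category.
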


If $f\colon Y \to X$
is a blow up of a $G$-orbit $Z$, then the $G$-atoms of $\Db(Y)$
are those of $\Db(X)$ together with a new $G$-atom of permutation type corresponding to $Z$ with a trivial twisting class $[\alpha]$.
Therefore the set of   $G$-atoms of permutation of type with nontrivial twisting class is a $G$-birational invariant of $X$. This birational invariant is reminiscent to the Amitsur invariant \cite{Liedtke-SB, BCDP-finite}
and gives similar obstructions to $G$-birationality.

\begin{remark}
Conjecturally, the atomic decompositions we construct
correspond to Hodge atoms constructed by Katzarkov--Kontsevich--Pantev--Yu \cite{KKPY-atoms} and further used as an obstruction to rationality in \cite{CKK-atoms}.
We thank Jenya Tevelev for explaining to us the following observation in this direction.
The number of atoms for $G$-minimal del Pezzo surfaces -- two atoms for del Pezzo surfaces of degree $4$ and below, and three atoms for del Pezzo surfaces of degree $5$ and above -- matches the number of distinct eigenvalues of the quantum multiplication by the canonical class at the $G$-invariant point \cite[\S3]{BayerManin}, which by definition corresponds to the Hodge atoms in \cite{KKPY-atoms}.
\end{remark}

\begin{remark}
The exact connection between the atomic decompositions we construct and the Noncommutative Minimal Model Program by Halpern-Leistner \cite{DHL-NMMP}, further developed in \cite{HLJR} and
partially realized for surfaces \cite{HLJR, Karube} deserves a separate investigation. It is quite likely that the two theories coincide.
\end{remark}

Our first application of Theorem \ref{thm:main-intro}
is to the motivic invariant \cite{LinShinder, LinShinderZimmermann} of birational maps between surfaces, which was extended to the equivariant case by \cite{KreschTschinkel-invariant}.
In terms of atomic decompositions, the invariant of a birational map between $G$-surfaces $X$ and $Y$ can be understood as the formal difference 
between the multisets of equivalence classes of $G$-atoms in $X$ and $Y$.
Thus the motivic invariant for equivariant self-maps of a $G$-surface is always trivial, see Theorem \ref{thm:motivic}.
This extends the result for surfaces in the arithmetic case \cite{LinShinderZimmermann},
and contrasts with the case of $G$-threefolds where such invariants are in general nonzero \cite[\S3.3]{LinShinder},
\cite[\S7]{KreschTschinkel-invariant}, \cite[Corollary 4.6]{LinShinder-fibrations}

\subsection{Applications in the arithmetic case}

We now explain applications of our atomic theory to birational geometry of $G$-surfaces.
We assume that we are in the arithmetic case, that is, $\kk$ is a perfect field and
$G = \Gal(\ol{\kk}/\kk)$ acts on the scalar extension of varieties from $\kk$ to $\ol{\kk}$. 
Using Galois descent for semi-orthogonal decompositions we deduce from Theorem 
\ref{thm:main-intro}:

\begin{theorem}
[see Theorem \ref{thm:atoms-surfaces2}]
\label{thm:main-intro-perfect}
For any perfect field $\kk$
there is an explicit
 atomic theory for 
$\cVar_{\kk, \le 2}^{\MMP}$.
Furthermore the constructed theories are compatible with passing to algebraic field extensions $\kk \subset \LL$.
\end{theorem}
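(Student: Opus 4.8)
The plan is to deduce this statement from the equivariant theory of Theorem~\ref{thm:main-intro} by Galois descent, taking $G = \Gal(\overline\kk/\kk)$ to be the absolute Galois group of the perfect field $\kk$. Since $\kk$ is perfect, $\overline\kk$ is the separable closure and Galois descent applies cleanly. For $X \in \cVar_{\kk,\le 2}^{\MMP}$ I would pass to the base change $\overline X = X \times_\kk \overline\kk$, which carries a continuous semilinear $G$-action; Galois descent for varieties identifies $\cVar_{\kk,\le 2}^{\MMP}$ with the category of such $G$-varieties over $\overline\kk$ together with $G$-equivariant rational derived contractions, i.e.\ with the relevant part of $G-\cVar_{\overline\kk,\le 2}^{\MMP}$. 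For any fixed $X$ the action factors through a finite quotient $\Gal(\kk'/\kk)$ for some finite Galois extension $\kk'/\kk$ over which all the geometric data are defined, so one may work with finite groups throughout and invoke Theorem~\ref{thm:main-intro}.

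First I would record the descent statement for semi-orthogonal decompositions: under the identification of $\Db(X)$ with the category of $G$-equivariant objects in $\Db(\overline X)$, a $G$-invariant semi-orthogonal decomposition $\Db(\overline X) = \langle \overline\bA_1, \dots, \overline\bA_n\rangle$ into $G$-stable pieces descends to $\Db(X) = \langle \bA_1, \dots, \bA_n\rangle$, where $\bA_i$ is the descent of $\overline\bA_i$, because both semiorthogonality and generation are preserved by passing to $G$-equivariant objects. The atomic decompositions produced by Theorem~\ref{thm:main-intro} are $G$-invariant in exactly this sense --- the $G$-action may permute the exceptional objects inside an atom of permutation type, but each atom is a $G$-stable subcategory --- so each descends. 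I would then \emph{define} the atomic theory over $\kk$ by assigning to $X$ the descent of the $G$-atomic decomposition of $\overline X$; this is well defined on mutation-equivalence classes since $G$-equivariant mutations descend to mutations over $\kk$.

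To verify the axioms of an atomic theory, given a morphism $f\colon Y \to X$ in $\cVar_{\kk,\le 2}^{\MMP}$ I would base change to the $G$-equivariant derived contraction $\overline f\colon \overline Y \to \overline X$ and apply the compatibility built into Theorem~\ref{thm:main-intro}. Since pushforward commutes with the flat base change $\overline\kk/\kk$ and with the $G$-action, $\ker(\overline f_*)$ descends to $\ker(f_*)$, and the permitted splitting of the kernel on $\overline X$ descends to the corresponding splitting on $X$, which yields the required compatibility of the descended decompositions. For compatibility with an algebraic extension $\kk \subset \LL$, I would fix an embedding $\LL \hookrightarrow \overline\kk$ and set $H = \Gal(\overline\kk/\LL) \subseteq G$, a closed subgroup. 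Base change $X \mapsto X_\LL$ corresponds precisely to restricting the $G$-action on $\overline X$ to $H$, and descent over $\LL$ is taken with respect to $H$. The subgroup-compatibility in Theorem~\ref{thm:main-intro} --- that a $G$-atomic decomposition refines to an $H$-atomic one --- then descends to the assertion that the $\LL$-atomic decomposition of $X_\LL$ refines the base change of the $\kk$-atomic decomposition of $X$.

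The main obstacle I anticipate is the careful bookkeeping of Galois descent for semi-orthogonal decompositions: checking that the descent operation is compatible simultaneously with mutation-equivalence, with the kernel-splitting operation in the definition of an atomic theory, and with restriction to subgroups. The profinite nature of $G$ adds a layer of care, as one must verify that for each morphism all data descend to a common finite Galois subextension so that Theorem~\ref{thm:main-intro} may be applied to a finite group. Once descent is set up so that these three structures are respected, the statement follows formally from the equivariant case.
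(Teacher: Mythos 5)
Your overall strategy---descending the $G$-equivariant atomic theory over $\bar\kk$ for $G=\Gal(\bar\kk/\kk)$ and treating algebraic extensions $\kk\subset\LL$ via the subgroups $\Gal(\bar\kk/\LL)\subset G$---is exactly the paper's. However, the descent mechanism you rely on has a genuine gap. You base everything on ``the identification of $\Db(X)$ with the category of $G$-equivariant objects in $\Db(\overline X)$''. For the \emph{infinite} extension $\bar\kk/\kk$ this identification is false as stated: already for $X=\Spec\kk$, a $G$-equivariant object is a $\bar\kk$-vector space with a semilinear action of the profinite group $G$, and without a continuity hypothesis (every vector having open stabiliser) such an object need not descend to $\kk$; classical Galois descent for vector spaces requires continuity. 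The paper proves the equivariant-objects equivalence only for \emph{finite} Galois extensions (Proposition~\ref{prop_descent}). For the absolute Galois group it works instead with a bijection at the level of subcategories: $\bA\mapsto p^*\bA$, with inverse $\bB\mapsto p_*\bB:=\{A\in\Db(X)\colon p^*A\in\bB\}$, shown in Lemma~\ref{lemma_subcatdescent} to be a bijection between thick subcategories of $\Db(X)$ and $G$-invariant thick subcategories of $\Db(X_{\bar\kk})$, hence between SOD-s and $G$-invariant SOD-s (Proposition~\ref{prop_SODdescent}). The ``defined over a finite subextension'' argument is applied there object by object and morphism by morphism, not to the group action or to the category as a whole, and this is what makes the profinite case work; the atomic theory over $\kk$ is then transported through this bijection (Proposition~\ref{prop_ATdescent}), with axiom (A1) holding because the bijection respects mutations and (A2) checked by applying $p^*$ and $p_*$ to the relevant decompositions.

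Your proposed remedy for profiniteness is also incorrect on two counts. First, the semilinear action of $G$ on the scheme $X_{\bar\kk}$ is faithful (each $g\ne e$ moves scalars in $\rH^0(X_{\bar\kk},\cO)=\bar\kk$), so it never factors through a finite quotient $\Gal(\kk'/\kk)$; only the induced actions on discrete invariants such as $N_1$ or $\Pic$ do. Second, even after passing to $X_{\kk'}$ with its finite Galois action you cannot ``invoke Theorem~\ref{thm:main-intro}'', because that theorem requires an algebraically closed base field, and $\kk'$ is not algebraically closed. The reduction to finite groups is in any case unnecessary: Theorem~\ref{thm:atoms-surfaces} holds for arbitrary groups satisfying the finite-orbit condition on $N_1$, and the paper applies it directly to the profinite $G$ (more generally to $H\times G$) acting semilinearly on $X_{\bar\kk}$---this is precisely the ``arithmetic action'' the equivariant theory is designed to accept. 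With your descent step replaced by the subcategory-level bijection, the rest of your argument (descending $\ker f_*$, mutation classes, and subgroup compatibility) goes through as you describe.
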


Theorem \ref{thm:atoms-surfaces2} is a more general version, allowing a $\kk$-linear action of a group 
on our varieties,
and the theories will be compatible with taking subgroups just like in Theorem \ref{thm:main-intro}.
To prove Theorem we rely on a general results from the theory of descent of semi-orthogonal decompositions \cite{Elagin_2011}, \cite{Elagin_2012}, 
and in particular 
the Galois descent as developed in \cite{AuelBernardara}, \cite{BDM}. 
We use a bijection between $G$-invariant thick subcategories in $\Db(X_{\overline{\kk}})$ and thick subcategories in $\Db(X)$ which preserves semi-orthogonal 
decompositions (Lemma \ref{lemma_subcatdescent}, Proposition \ref{prop_SODdescent}).
This allows to descend
a $G$-atomic theory over $\ol{\kk}$ to an atomic theory over $\kk$.

\medskip

We aim for a birational classification of geometrically rational surfaces
in terms of their atoms.
Our first result in this direction is:

\begin{proposition}[see Theorem \ref{thm:rational-atoms}] \label{prop:rational-intro}
A surface $X$ 
is rational if and only if
all
atoms of  $\dercat{X}$  are of the form 
$\dercat{L}$
for finite field extensions $L/\kk$.
\end{proposition}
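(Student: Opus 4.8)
The plan is to prove both implications by passing to the minimal model of $X$ and exploiting the feature, built into the atomic theory of Theorem~\ref{thm:main-intro-perfect}, that blow-ups contribute only \emph{split} atoms of the form $\dercat{L}$. Concretely, the atomic decomposition of any surface is obtained from a minimal model $X_0$ -- a Mori fibre space or a surface with $K_{X_0}$ nef -- by adjoining, for each blow-up $f_i$ in a chosen sequence $X_0 \leftarrow \dots \leftarrow X$, the atom $\ker(f_{i*})$; by Orlov's formula (Proposition~\ref{blowupformula}) this atom is $\dercat{L_i}$, where $L_i$ is the residue field of the blown-up closed point, a permutation-type atom with trivial twisting class as in Proposition~\ref{prop:perm-atom-intro}. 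Since the atoms are independent of the chosen decomposition, the multiset of atoms of $\dercat{X}$ is that of $\dercat{X_0}$ together with split atoms $\dercat{L_i}$; in particular $X$ and $X_0$ have the same non-split atoms.

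For the forward implication, suppose $X$ is rational, hence $\kk$-birational to $\P^2$. As birational maps between smooth projective surfaces over a perfect field are resolved by blowing up closed points, there is a smooth projective surface $W$ with birational morphisms $W \to X$ and $W \to \P^2$, each a composition of blow-ups of closed points. Computing the atoms of $W$ starting from $\P^2$, whose atomic decomposition consists of three copies of $\dercat{\kk}$ (Example~\ref{ex:P2-intro}), the reduction above shows every atom of $W$ is of the form $\dercat{L}$. Computing instead from $X$ exhibits the atoms of $X$ as a sub-multiset of those of $W$, so they are all of the form $\dercat{L}$ as well.

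For the converse I argue the contrapositive: if $X$ is not rational, I produce a non-split atom, which by the reduction step I may seek on the minimal model $X_0$. If $\kappa(X) \ge 0$, then $X_0$ is $K$-nef and its single atom is $\dercat{X_0}$, the derived category of a surface, which is not of the form $\dercat{L}$. If $\kappa(X) = -\infty$ but $X$ is not geometrically rational, then $X_0 \to C$ is a conic bundle over a curve $C$ of positive genus, and the atomic decomposition of $X_0$ contains the atom $\dercat{C}$, again not of the form $\dercat{L}$.

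The essential case -- and the main obstacle -- is when $X$ is geometrically rational but not $\kk$-rational, so that $X_0$ is a minimal Mori fibre space (a del Pezzo surface of Picard rank one, or a conic bundle over $\P^1$ or a conic) that is not rational. Here I would inspect the explicit atomic decompositions produced from the Karpov--Nogin $3$-block collections, and from the even Clifford algebra $\Cliff_0$ in the conic-bundle case, and check that non-rationality always forces a non-split atom: a twisted category $\dercat{A}$ with $A$ a nontrivial central simple algebra, such as the Amitsur class of a nontrivial Severi--Brauer surface or the quaternion class of an anisotropic quadric. Carrying this out across the finitely many types of minimal geometrically rational surfaces, and matching the resulting non-split atoms with the classical rationality criteria of Iskovskikh and Manin, is where the weight of the argument lies; it identifies the non-split atoms precisely with the known obstructions to rationality and thereby closes the contrapositive.
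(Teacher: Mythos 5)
Your forward implication is correct and is essentially the paper's own argument: rationality implies trivial atoms because blow-ups of closed points only contribute atoms of the form $\dercat{L}$, and the atoms of $\P^2$ are trivial (this is Lemma~\ref{lem:bir-non-closed-atoms} combined with Example~\ref{ex:bir-rich-non-closed}). The reductions in your contrapositive for $\kappa(X)\ge 0$ and for conic bundles over a curve of positive genus are also fine.

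The gap is exactly the case you yourself flag as ``where the weight of the argument lies'': a geometrically rational, non-rational surface whose minimal model is a Mori fibre space. You prove nothing here; you only outline a plan (inspect the decompositions case by case and match non-split atoms against Iskovskikh--Manin rationality criteria), so the converse is not established. Moreover the plan partly misidentifies what the atoms are: for minimal del Pezzo surfaces and Mori conic bundles of degree $\le 4$, the atomic decomposition is $\langle\ker\pi_*,\ldots\rangle$ with $\ker\pi_*$ a single \emph{big} atom containing a copy of $\Db(\P^1)$ or $\Db(C)$ (Definition~\ref{def_standard}, Proposition~\ref{prop:bir-rich}); inside the atomic theory there is no Karpov--Nogin or Clifford-algebra refinement of it, and in this subcase no rationality criterion is needed at all, since the big atom is present regardless. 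The genuinely hard subcase is the birationally rich one (minimal models of degree $\ge 5$), where all atoms are small, of the form $\Db(L,\ba)$, and one must show that triviality of all the classes $\ba$ forces rationality. The paper closes this uniformly rather than case by case: trivial atoms give $\ind(X)=1$ via the derived Amitsur group and the index formula $\ind(X)\cdot|\Am(X)|=\prod_i\ind(\ba_i)$ (Proposition~\ref{prop:DAm-index-formula}), and then birationally rich together with $\ind(X)=1$ implies rational via the existence of a toric model (Lemma~\ref{lem:bir-rich-toric}) and a torsor argument using Voskresenskii's theorem (Lemma~\ref{lem:bir-rich-rationality}). Your Iskovskikh--Manin matching could plausibly be carried out, but it would require the biregular classification of the minimal models (as the paper needs for the harder Theorem~\ref{thm:almost-same-atoms-bir-rich}), and it is precisely the work your proposal leaves undone.
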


Proposition \ref{prop:rational-intro} refines the main results of \cite{AuelBernardara, BernardaraDurighetto} who proved that rational surfaces have semi-orthogonal decompositions into pieces of the form $\Db(L)$, and constructed a nonvanishing Kuznetsov component for nonrational geometrically rational surfaces.

In our approach the 'only if' implication of Proposition \ref{prop:rational-intro} holds by construction of the atomic theory because a blow up of a closed point with residue field $L$ adds an atom of the form $\Db(L)$. For the 'if' implication we have to prove that $X$ has a toric model and index $\ind(X) = 1$ (this uses the formula in Proposition \ref{prop:ind-intro} below) which then implies that $X$ is rational.
In the geometric case the 
'only if'
implication still holds, however the 'if' implication fails, see Example \ref{ex:dP5-rigid}. We conjecture that in the geometric case, 
atoms of a $G$-surface 
control stable linearisability, 
see Conjecture \ref{conj:triv-stab-rat}.

For the next application we need the following definition. We say that $X$ is \emph{birationally rich} if it is a geometrically rational surface which is
birational either to a del Pezzo surface of degree at least $5$, or to a conic bundle of degree at least $5$. Informally speaking, these are surfaces with the most number of birational maps, and 'birationally rich' serves as a moral antonym to 'birationally rigid', even though the two are not mutually exclusive. Every rational surface is by definition birationally rich, and for algebraically closed fields the two notions coincide.

\begin{proposition}[Proposition \ref{prop:bir-rich-non-closed}] 
A surface $X$ is birationally rich if and only if all its atoms are of the form $\Db(L, \ba)$ for some finite field extensions $L/\kk$ and Brauer classes $\ba \in \Br(L)$. 
\end{proposition}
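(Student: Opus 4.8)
The plan is to reduce the statement to a computation of atoms on minimal models, using two structural facts. First, by the construction of the atomic theory (Theorem~\ref{thm:atoms-surfaces2}) and its compatibility with blow-ups (Proposition~\ref{blowupformula}), the multiset of atoms of $\Db(X)$ differs from that of any minimal model in its birational class only by atoms of the form $\Db(L)$ with trivial Brauer class; in particular the property ``all atoms are of the form $\Db(L,\ba)$'' is a birational invariant. Second, I will use the identification, via Shapiro's lemma applied to Proposition~\ref{prop_Z}, of the atoms $\Db(L,\ba)$ with exactly the $G$-atoms of permutation type: for a transitive $\rho\colon G \to \rS_n$ with stabiliser $H$ and $L = \ol\kk^H$, one has $\rH^2(G,(\ol\kk^*)^n) \cong \rH^2(H,\ol\kk^*) = \Br(L)$, so a permutation-type atom is the same thing as $\Db(L,\ba)$ for some $\ba \in \Br(L)$. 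Thus the proposition is equivalent to the assertion that $X$ is birationally rich if and only if all of its atoms are of permutation type.

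Before treating the minimal models I would settle geometric rationality. If every atom is $\Db(L,\ba)$, then base-changing to $\ol\kk$ (using the descent of Proposition~\ref{prop_SODdescent}) splits each atom into a direct sum of copies of $\Db(\ol\kk)$; hence $\Db(X_{\ol\kk})$ admits a full exceptional collection and $X_{\ol\kk}$ is rational by the rationality criterion over an algebraically closed field (Theorem~\ref{thm:rational-atoms}). Therefore a surface all of whose atoms are $\Db(L,\ba)$ is automatically geometrically rational, and its minimal model is a $G$-Mori fibre space. For the ``only if'' direction, if $X$ is birationally rich its birational class contains a del Pezzo surface of degree $\ge 5$ or a conic bundle of degree $\ge 5$, and by birational invariance it suffices to compute the atoms of such a model. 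For del Pezzo surfaces of degree $\ge 5$ the canonical decomposition is induced by a Karpov--Nogin $3$-block collection \cite{KarpovNogin}; each block is a completely orthogonal, $G$-stable exceptional collection, so each atom is of permutation type. For a conic bundle $\pi\colon X \to B$ of degree $\ge 5$ the base $B$ is a conic and there are at most $3$ degenerate fibres over $\ol\kk$, so the Kuznetsov decomposition $\Db(X) = \langle \pi^*\Db(B), \Db(B,\Cli_0)\rangle$ \cite{Kuznetsov-quadrics} has $\pi^*\Db(B)$ contributing the point-atoms $\Db(\kk)$ and $\Db(\kk,\ba_B)$, while the even Clifford piece contributes only residue algebras at the degenerate fibres; all atoms are again of permutation type.

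For the ``if'' direction I argue by contraposition: assuming $X$ geometrically rational but not birationally rich, every minimal model in its class is a del Pezzo surface of degree $\le 4$ or a conic bundle of degree $\le 4$, and I must produce a non-permutation atom. The unifying mechanism is cohomological. A permutation-type atom has $\rK_0$ a permutation $G$-lattice, hence $\rH^1(G',\rK_0)=0$ for every closed subgroup $G'$; so it suffices to exhibit an atom whose $\rK_0$, as a Galois lattice, carries nonzero $\rH^1$, reflecting a non-rationality obstruction that cannot be absorbed into a Brauer twist. For a conic bundle of degree $\le 4$ this has a concrete geometric shadow: with at least $4$ degenerate fibres the discriminant is large enough that the even Clifford piece $\Db(B,\Cli_0)$ acquires a summand of positive dimension (a twisted form of the derived category of a positive-genus curve attached to the discriminant), which is not generated by any $G$-permuted orthogonal exceptional collection. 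The ``large'' atom of a minimal del Pezzo surface of degree $d\le 4$ carries the primitive Picard lattice $\langle K\rangle^{\perp}\cong E_{9-d}$ (of type $D_5, E_6, E_7, E_8$) with its minimal Galois action, and here non-rationality forces $\rH^1(G',\,\cdot\,)\neq 0$ for some $G'$.

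I expect the main obstacle to be the birationally rigid del Pezzo case of degree $\le 4$, where no conic bundle model is available and one must argue directly with the canonical two-atom decomposition. This is precisely the point at which the threshold $d=5$ enters: for $d\ge 5$ the root lattice $E_{9-d}$ (of type $A_4$ or smaller) is small enough that a Galois-minimal action either renders the surface rational or pushes the whole obstruction into $\rH^2$, realised as a nontrivial Brauer twist $\ba$ of an atom that remains of permutation type; whereas for $d\le 4$ the obstruction lives genuinely in $\rH^1$ of the lattice. Making this dichotomy precise -- verifying that for $d\le 4$ the primitive lattice is never a permutation $G$-lattice under a minimal action, and matching this with the explicit shape of the canonical two-atom decomposition so that the ``large'' atom is seen not to be of the form $\Db(L,\ba)$ -- is the technical heart of the argument, and combines the Weyl-group analysis of the Galois action on $E_{9-d}$ with the descent results of Proposition~\ref{prop_SODdescent}.
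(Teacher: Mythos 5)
Your overall framing follows the paper's own route: identify the small atoms $\Db(L,\ba)$ with permutation-type $G$-atoms of $X_{\ol\kk}$ via descent (this is Proposition~\ref{prop_descentforblocks} together with Proposition~\ref{prop_alphaD}), reduce by birational invariance (Lemma~\ref{lem:blowup-perm}) to $G$-minimal models, and analyse the Mori fibre spaces. Your ``only if'' direction is essentially correct for del Pezzo surfaces of degree $\ge 5$, but for conic bundles you compute the pieces of Kuznetsov's Clifford decomposition $\langle \pi^*\Db(B), \Db(B,\Cli_0)\rangle$ and treat them as atoms. Atoms are by definition the pieces of the canonical decomposition of Definition~\ref{def_standard}; the pieces of some other SOD are atoms only if that SOD is mutation-equivalent to the canonical one, which you do not address (statements of exactly this kind occupy Appendix~\ref{ap_proof_prop} of the paper). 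The paper sidesteps the issue: by Corollary~\ref{cor:bir-rich-models} every birationally rich surface contracts to a minimal model of type $\cD_9$, $\cD_8$, $\cD_6$, $\cD_5$ or $\cC_8$, whose standard decompositions are explicit block decompositions, so the conic bundle classes $\cC_6$, $\cC_5$ never need to be analysed as minimal models at all.

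The genuine gap is in the ``if'' direction. Your proposed mechanism --- exhibit an atom whose $\rK_0$, as a Galois lattice, has $\rH^1(G',\rK_0)\ne 0$ --- cannot work, because the implication ``not birationally rich $\Rightarrow$ some atom has non-permutation $\rK_0$'' is false. The Ch\^atelet surfaces in the paper's final example (after Conjecture~\ref{conj:stab-rat}) are a counterexample: they are relatively minimal conic bundles with $K_X^2=4$, hence not birationally rich, yet they are stably rational, so $\Pic(X_{\ol\kk})$, and therefore $\rK_0(X_{\ol\kk})\cong\Z\oplus\Pic(X_{\ol\kk})\oplus\Z$, is a stably permutation Galois module; consequently $\rH^1(G',-)$ vanishes on the $\rK_0$ of every atom for every closed subgroup $G'$. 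The same example refutes your claim that for $d\le 4$ the primitive lattice under a minimal action is ``never a permutation $G$-lattice''; no such dichotomy at $d=5$ exists on the level of lattices, which is precisely why Proposition~\ref{ref:prop-atoms-obstructions} in the paper is stated as a one-way implication. What the paper uses instead is a categorical, not cohomological, obstruction: for a non-birationally-rich Mori fibre space $\pi\colon X\to B$, the single kernel atom $\ker\pi_*$ contains a thick copy of $\Db(C)$ for some curve $C$ --- $\Db(B)$ via Orlov's projective bundle theorem when $B$ is a curve, and $\Db(\P^1)$ via a non-equivariant conic fibration when $X$ is a del Pezzo surface of degree $\le 4$. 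Since every thick subcategory of a block-generated category is again block-generated, and $\Db(C)$ is not block-generated (its Serre functor is nontrivial), this atom cannot be of permutation type. That argument detects non-smallness even when every lattice-theoretic invariant vanishes, and it is the step your proposal is missing.
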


Here $\Db(L, \ba)$ is the  derived category of $\ba$-twisted $L$-vector spaces; these atoms are obtained by Galois descent of $G$-atoms of permutation type, and the class $\ba$ corresponds to the twisting class $[\alpha]$ from Proposition \ref{prop:perm-atom-intro}.
We  call
atoms of the form
$\Db(L, \ba)$  \emph{small atoms},
and atoms of the form
$\Db(L)$ (that is small atoms with a trivial Brauer class)  \emph{trivial atoms}. In these terms Proposition \ref{prop:rational-intro} says that rationality of a surface is equivalent to triviality of its atoms. 

\begin{example}[\cite{Bernardara}, \cite{AuelBernardara}]
Let $A$ be a central simple algebra of degree $3$ (that is dimension $3^2 = 9$) over $\kk$. Let $\ba \in \Br(\kk)$ be its $3$-torsion Brauer class.
 The Severi--Brauer surface $X = \SB(A)$ which is the corresponding twisted form of $\P^2$ is birationally rich and has an atomic semi-orthogonal decomposition
\[
\Db(X) = \langle \Db(\kk, 2\ba), \Db(\kk, \ba), \Db(\kk) \rangle
\]
obtained by Galois descent from the atomic decomposition for $\Db(\P^2)$ in Example \ref{ex:P2-intro}.
All atoms of $\Db(X)$ are small, but not all of them are trivial unless 
$A$ is a split matrix algebra $\rM_3(\kk)$.

Let us consider the opposite Severi--Brauer surface $X^\mathrm{op}$
which corresponds to the opposite algebra $A^\mathrm{op}$
and has Brauer class $-\ba = 2\ba \in \Br(\kk)$.
Then the standard Cremona involution is easily seen to descend to a birational isomorphism between $X$ and $X^\mathrm{op}$, and both categories $\Db(X)$ and $\Db(X^\mathrm{op})$ consist of the same atoms.
\end{example}

\begin{theorem}[see Theorem \ref{thm:almost-same-atoms-bir-rich}]
\label{thm:bir-rich-bir-intro}
Let $X$ and $X'$ be birationally rich surfaces. Then $X$ and $X'$ are birational if and only if they have the same nontrivial atoms.
\end{theorem}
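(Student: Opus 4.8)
The plan is to deduce both implications from the atomic theory of Theorem~\ref{thm:main-intro-perfect} together with the explicit description, provided by Proposition~\ref{prop:bir-rich-non-closed}, of the atoms of a birationally rich surface as small atoms $\Db(L,\ba)$.

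\emph{Birational $\Rightarrow$ same nontrivial atoms.} First I would resolve the birational map $\phi\colon X \dto X'$ by a common roof, i.e. a smooth projective surface $W$ together with birational morphisms $p\colon W\to X$ and $q\colon W\to X'$; in dimension two such $W$ always exists by elimination of indeterminacy, and over a perfect field $p$ and $q$ may be taken to be compositions of blow-ups of closed points, hence morphisms in $\cVar_{\kk,\le 2}^{\MMP}$. Applying the compatibility property of the atomic theory to $p$ and to $q$, the multiset of atoms of $\Db(W)$ — which is independent of the chosen atomic decomposition since mutations only permute the pieces — equals both $\{\text{kernel atoms of } p\}\sqcup\{p^*\bA_i\}$ and $\{\text{kernel atoms of } q\}\sqcup\{q^*\bA'_j\}$, where $\langle\bA_i\rangle$ and $\langle\bA'_j\rangle$ are atomic decompositions of $X$ and $X'$. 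Each $p^*\bA_i\simeq\bA_i$ and $q^*\bA'_j\simeq\bA'_j$, while the kernel atoms of a blow-up of a closed point with residue field $L$ are the trivial atoms $\Db(L)$. Deleting the trivial atoms from the three multisets therefore yields the equality of the nontrivial atoms of $X$, of $W$ and of $X'$.

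\emph{Same nontrivial atoms $\Rightarrow$ birational.} By the direction just proved the collection of nontrivial atoms is a birational invariant, so I may replace $X$ and $X'$ by models in their birational classes that are del Pezzo surfaces of degree $\ge 5$ or conic bundles of degree $\ge 5$. For each such surface I would compute the atoms explicitly, starting from the Karpov--Nogin $3$-block exceptional collections over $\ol\kk$ and descending along $G=\Gal(\ol\kk/\kk)$; by Proposition~\ref{prop:bir-rich-non-closed} every atom is a small atom $\Db(L,\ba)$, so the nontrivial ones record a finite collection of nonzero Brauer classes. The final step is to recognise this collection as the complete Brauer-theoretic (Amitsur-type) invariant that classifies large Mori fibre spaces up to birational equivalence, and to invoke that classification to conclude that $X$ and $X'$ are birational.

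\emph{Main obstacle.} The only-if direction is essentially formal once the common roof is available. The real work is the if-direction: one must carry out the case-by-case computation of atoms for \emph{all} types of large del Pezzo surfaces and conic bundles, and then prove that the resulting multiset of Brauer classes is a \emph{complete} birational invariant, matching it against the birational classification of large Mori fibre spaces. A delicate point is the built-in symmetrisation of the Brauer data — for instance a Severi--Brauer surface and its opposite share the atoms $\{\Db(\kk,\ba),\Db(\kk,-\ba)\}$ and are indeed birational via the Cremona involution — so the atoms recover the Brauer invariant only up to the identifications that already hold birationally.
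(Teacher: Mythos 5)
Your first direction is correct and is essentially the paper's argument: resolve the map by a common roof, note that the kernel atoms of a point blow-up are trivial atoms $\Db(L)$, and cancel them. The gap is entirely in the second direction, where you defer the core of the proof to "the complete Brauer-theoretic (Amitsur-type) invariant that classifies large Mori fibre spaces up to birational equivalence, and to invoke that classification." No such off-the-shelf classification exists to invoke: the statement that the multiset of nonzero Brauer classes attached to the atoms is a complete birational invariant of birationally rich surfaces is essentially the theorem itself, and the paper has to prove it. What the literature (Blunk, CTKM, Trepalin, Shramov--Vologodsky) provides are \emph{isomorphism} or birationality statements for pairs of minimal surfaces of the same type whose \emph{full} invariant data agree, and feeding those results requires substantial intermediate work: first one shows (using the explicit atom tables and the index formula) that the nontrivial atoms pin down the minimal model type among $\cD_9$, $\cD_8$, $\cD_6$, $C\times C'$ — note this list is finer than your "del Pezzo of degree $\ge 5$ or conic bundle of degree $\ge 5$", since $\cD_5$ drops out (it always has a rational point, hence is rational) and $\cC_8$ reduces to a product of conics.

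The step that would actually fail in your plan is the $\cD_6$ case with index $2$ or $3$. There the hypothesis "same nontrivial atoms" does \emph{not} imply "same atoms": $X$ can have atoms $(\kk,0)$, $(L_2,0)$, $(L_3,\ba)$ while $X'$ has $(\kk,0)$, $(L_2',0)$, $(L_3,\ba)$ with $L_2\not\cong L_2'$, i.e. the \emph{trivial} atoms differ. So you cannot hand the Brauer data to any isomorphism classification (Blunk's theorem needs all three blocks to match). The paper bridges this by geometry: using the splitting relation between the two atoms of a minimal sextic del Pezzo (the field $L_2$ splits the class $\ba$ on $L_3$, and vice versa) it deduces that $X'_{L_2}$ is rational, hence $X'$ carries a closed point of degree $2$ with residue field $L_2$; it then verifies a general-position condition (the point avoids $(-1)$-curves and cannot lie on a $\ol{\kk}$-conic, else that conic would be Galois-invariant, contradicting Picard rank one) and blows it up to produce a $(6,4,6)$ Sarkisov link of type II that replaces the trivial atom $(L_2',0)$ by $(L_2,0)$; only after this modification do $X$ and the new model have the same atoms and does Blunk's theorem apply. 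This explicit construction — existence of the degree-$2$ (resp. degree-$3$) point, the general-position argument, and the link — is the real content of the "if" direction and is absent from your proposal; the symmetrisation issue you flag for Severi--Brauer surfaces is comparatively benign, since there the Cremona involution is classical and the trivial atoms automatically agree.
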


This result unifies some known statements about birationality of del Pezzo surfaces, considered in \cite{CTKM}, 
\cite{AuelBernardara},
\cite{Liedtke-SB}, \cite{ShramovVologodsky}, \cite{Trepalin}, \cite{KY-dP6}. 
As opposed to the proof of Proposition \ref{prop:rational-intro} which is uniform, our proof of Theorem \ref{thm:bir-rich-bir-intro} 
relies on some case-by-case analysis, and we need to use the biregular classification of rank one toric surfaces from \cite{CTKM}, \cite{Blunk}, \cite{Trepalin}.

The result analogous to Theorem \ref{thm:bir-rich-bir-intro} cannot hold without any assumptions on the surfaces, because K-nef surfaces can be derived equivalent without being birational.
We conjecture that the result still holds for all geometrically rational surfaces. 

We explain one more connection between atomic decompositions and arithmetic of surfaces.
Recall that the Amitsur group $\mathrm{Am}(X)$
 \cite{CTKM}, \cite{Liedtke-SB} and the index $\ind(X)$ are basic birational invariants of $X$. They are related to atomic decompositions as follows: 

\begin{proposition}[see Proposition \ref{prop:DAm-index-formula}]
\label{prop:ind-intro}
Let $X$ be a birationally rich surface with atomic semi-orthogonal decomposition
\[
\Db(X) = \langle \Db(L_1, \ba_1), \ldots, \Db(L_m, \ba_m) \rangle.
\]
Then $|\mathrm{Am}(X)|\cdot \ind(X) = \prod_{i=1}^m \ind(\ba_i)$, where $\ind(\ba_i)$ is the index of the corresponding Brauer class.
\end{proposition}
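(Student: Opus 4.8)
The plan is to compute the index of one lattice inside another in two different ways. Write $f\colon X_{\ol\kk}\to X$ for the base change, set $\Lambda = K_0(\Db(X_{\ol\kk}))$, and let $\Lambda_0 = \mathrm{im}\bigl(f^*\colon K_0(\Db(X)) \to \Lambda\bigr)$. I will show that the index $[\Lambda^G : \Lambda_0]$ equals $\prod_i \ind(\ba_i)$ on the one hand and $|\Am(X)|\cdot\ind(X)$ on the other, where $G = \Gal(\ol\kk/\kk)$.

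First I would pin down $\Lambda_0$ and $\Lambda^G$ explicitly from the atomic decomposition. We have $K_0(\Db(X)) = \bigoplus_i \mathbb{Z}[E_i]$, where $E_i$ generates the small atom $\Db(L_i,\ba_i)\simeq \Db(D_i)$ for $D_i$ the division algebra of class $\ba_i$ over $L_i$, so $[E_i]=[D_i]$. Since $\kk$ is perfect, $L_i\otimes_\kk\ol\kk\cong \ol\kk^{[L_i:\kk]}$, and $D_i\otimes_{L_i}\ol\kk\cong \rM_{d_i}(\ol\kk)$ with $d_i=\ind(\ba_i)$; as a module over itself $\rM_{d_i}(\ol\kk)$ is the simple module with multiplicity $d_i$. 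Hence $f^*[E_i]=d_i\,v_i$, where $v_i=\sum_{j\in S_i}[\cE_j]$ runs over the $G$-orbit $S_i$ of geometric exceptional objects into which the atom splits, with $|S_i|=[L_i:\kk]$. Because the base-changed decomposition is $G$-invariant and $G$ permutes the classes $[\cE_j]$ within the orbits $S_i$, we get $\Lambda^G=\bigoplus_i\mathbb{Z}\,v_i$ and $\Lambda_0=\bigoplus_i \mathbb{Z}\,d_i v_i$, so that $[\Lambda^G:\Lambda_0]=\prod_i d_i=\prod_i\ind(\ba_i)$.

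The second computation uses the topological (coniveau) filtration $F^\bullet$ on $\Lambda$, which is $G$-equivariant and preserved by $f^*$, so the inclusion $\Lambda_0\subseteq\Lambda^G$ is filtered and its index is the product of the indices on the graded pieces $\mathrm{gr}^0,\mathrm{gr}^1,\mathrm{gr}^2$. Using $\mathrm{gr}^0=\mathbb{Z}$ (rank), $\mathrm{gr}^1=\Pic$, $\mathrm{gr}^2=\CH_0=\mathbb{Z}$, together with $H^1(G,\mathbb{Z})=0$ to identify $\mathrm{gr}^p(\Lambda^G)$ with $(\mathrm{gr}^p\Lambda)^G$, I would match the three factors. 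The rank piece contributes $1$, since $[\cO_X]$ has rank one; the Picard piece contributes $[(\Pic X_{\ol\kk})^G:\Pic X]=|\Am(X)|$ by the defining exact sequence of the Amitsur group; and the zero-cycle piece contributes $[\mathbb{Z}:\ind(X)\mathbb{Z}]=\ind(X)$, since the image of $\CH_0(X)\to\CH_0(X_{\ol\kk})=\mathbb{Z}$ is exactly $\ind(X)\mathbb{Z}$ by definition of the index. Multiplying gives $[\Lambda^G:\Lambda_0]=|\Am(X)|\cdot\ind(X)$, and comparing with the first computation proves the formula.

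The main obstacle is the bookkeeping in the second step: one must check that $f^*$ is strictly compatible with the coniveau filtration on these geometrically rational surfaces, so that the index is genuinely multiplicative across the graded pieces, and that forming $G$-invariants commutes with passing to $\mathrm{gr}^\bullet$ (this is where $H^1(G,\mathbb{Z})=0$ and the permutation nature of the $G$-lattice $\Pic X_{\ol\kk}$ enter). A secondary point needing care is that the base-change multiplicity of the atom generator is exactly $\ind(\ba_i)$, and not the degree of $D_i$; this relies on modelling $\Db(L_i,\ba_i)$ on the division algebra and on $\ol\kk$ splitting it into a matrix algebra, which yields the simple module with multiplicity precisely $\ind(\ba_i)$.
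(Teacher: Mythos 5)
Your proof is correct and is essentially the paper's own argument: your index $[\Lambda^G:\Lambda_0]$ is exactly the order of the paper's derived Amitsur group $\DAm(X)=\Coker(\rK_0(X)\to\rK_0(X_{\ol\kk})^G)$, your first computation (multiplicity $\ind(\ba_i)$ of the geometric exceptional objects, via Proposition \ref{prop_alphaD}) matches the paper's additivity of $\DAm$ over the atomic decomposition together with Lemma \ref{lem:DAm-A}, and your second computation (coniveau filtration with graded pieces $\Z$, $\Pic$, $\CH_0$, using $\rH^1(G,\Z)=0$ and injectivity of $\Pic(X)\to\Pic(X_{\ol\kk})$) is the same content as the paper's Riemann--Roch-without-denominators diagram and snake lemma in Proposition \ref{prop:DAm-sequence}. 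The strictness and $G$-invariance issues you flag are precisely what the snake-lemma packaging resolves, so there is no gap.
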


For del Pezzo surfaces this result can be deduced from the classification given in  \cite{AuelBernardara}. We provide a uniform proof using what we call the \emph{derived Amitsur group} which is an invariant of derived categories of surfaces, additive for semi-orthogonal decompositions.

We have another application for a class of surfaces which are not birationally rich.

\begin{theorem}[see Theorem \ref{thm:dP4-bir} for a stronger  statement]
\label{thm:Deg4}
If $X$ and $X'$ are minimal del Pezzo surfaces of degree $4$ 
which are birational, then they are isomorphic.
\end{theorem}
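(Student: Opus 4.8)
The plan is to deduce the theorem from the atomic theory of Theorem~\ref{thm:main-intro-perfect} by showing that the ``big'' atom of a minimal del Pezzo surface of degree $4$ is simultaneously a birational invariant and a \emph{complete} invariant of such surfaces. First I would record the shape of a canonical atomic decomposition of a minimal degree-$4$ surface $X$: by the count recalled in the Remark after Theorem~\ref{thm:main-intro}, it has exactly two atoms, at least one of which is non-small. A minimal degree-$4$ del Pezzo surface is birationally rigid, hence is not birational to any del Pezzo surface or conic bundle of degree $\ge 5$, so it is not birationally rich; by the characterisation of birational richness (Proposition~\ref{prop:bir-rich-non-closed}) at least one of its atoms is not of the form $\Db(L,\ba)$. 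Call this non-small piece $\mathbf{A}_X$ (one can also check directly that it is not small, e.g.\ via its Hochschild homology). The same applies to $X'$, giving a non-small atom $\mathbf{A}_{X'}$.

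Next I would establish that the non-small atoms form a birational invariant. Given a birational map $\phi\colon X\dashrightarrow X'$, the Sarkisov program factors $\phi$ into elementary links together with blow-ups and blow-downs of orbits. By the compatibility of the atomic theory with these MMP operations, which is precisely what is verified while proving Theorem~\ref{thm:main-intro}, each elementary step can only create or annihilate \emph{small} atoms --- the trivial atoms $\Db(L)$ attached to blown-up orbits, or the permutation-type atoms of Proposition~\ref{prop:perm-atom-intro} --- while preserving the multiset of non-small atoms up to equivalence. Consequently the non-small atoms of $X$ and $X'$ coincide; in particular $\mathbf{A}_X\simeq\mathbf{A}_{X'}$ as triangulated categories (with $G$-action, or $k$-linearly after descent). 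Equivalently, one phrases this through the motivic invariant of $\phi$, whose expression as a formal difference of atom multisets can involve small atoms only, forcing the big atoms to agree.

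The hard part will be the reconstruction: showing that for minimal degree-$4$ surfaces the equivalence class of $\mathbf{A}_X$ determines $X$ up to isomorphism. Here I would realise $X$ as a smooth intersection of two quadrics $Q_0\cap Q_\infty\subset\mathbb{P}^4$ and invoke Kuznetsov's description of derived categories of quadric fibrations together with the even Clifford algebra formalism of Auel--Bernardara \cite{Kuznetsov-quadrics},\cite{AuelBernardara}, identifying $\mathbf{A}_X$ with $\Db(\mathbb{P}^1,\mathcal{C}_0)$, where $\mathcal{C}_0$ is the even Clifford sheaf of the associated pencil of quadratic forms. This Clifford datum is supported over the degree-$5$ discriminant scheme of degenerate members of the pencil and, together with its Brauer/spinor class, is a complete invariant of the pencil and hence of $X$ up to $\mathrm{PGL}_5$; thus $\mathbf{A}_X\simeq\mathbf{A}_{X'}$ yields $X\cong X'$. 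I expect the genuine obstacle to lie in making this identification fully arithmetic-equivariant and in proving the required biregular Torelli-type statement. Concretely this amounts to a case-by-case analysis across the admissible conjugacy classes of the Galois (or $G$-)image in $W(D_5)$, the minimality hypothesis cutting these down, and matching the finer invariants carried by $\mathbf{A}_X$ in each case --- which is exactly the analysis underpinning the stronger Theorem~\ref{thm:dP4-bir}.
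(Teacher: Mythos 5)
The first half of your argument is correct and is exactly the paper's: by Definition~\ref{def_standard} a ($G$-)minimal del Pezzo surface of degree $4$ has precisely two atoms, $\langle\cO_X\rangle$ and the big atom $\cO_X^{\perp}$; since degree-$4$ Mori fibre spaces are not birationally rich (Proposition~\ref{prop_list}), the latter is not of permutation/small type (Propositions~\ref{prop:bir-rich} and~\ref{prop:bir-rich-non-closed}); and birational invariance of the big atom is Proposition~\ref{prop_essential} (equivalently Lemma~\ref{lem:bir-non-closed-atoms}), since birational maps only add or remove trivial atoms. This is precisely how the paper reduces Theorem~\ref{thm:Deg4} to the implication ``same atoms $\Rightarrow$ isomorphic'' of Theorem~\ref{thm:dP4-bir}. (Two small quibbles: the atom count comes from the construction in Definition~\ref{def_standard}, not from the quantum-cohomology remark; and minimal degree-$4$ del Pezzo surfaces need not be birationally rigid --- they can admit type~I links to degree-$3$ conic bundles --- but all that is needed, and true, is that they are not birationally rich.)

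The gap is in your reconstruction step. The paper does not prove this step; it invokes the Torelli theorem of the first author \cite{AE_Torelli}, which says precisely that the equivalence class of $\cO_X^{\perp}$ (with its group action) determines a degree-$4$ del Pezzo surface up to isomorphism. Your identification $\bA_X\simeq\Db(\P^1,\cC_0)$ is correct (as in \cite{Kuznetsov-quadrics}, \cite{AuelBernardara}), and it is plausible that the sheaf of even Clifford algebras $\cC_0$, as a sheaf of algebras on $\P^1$ up to the $\PGL_2$-action, is a complete invariant of the pencil. But your ``thus'' conflates two very different statements: an abstract triangulated equivalence $\Db(\P^1,\cC_0)\simeq\Db(\P^1,\cC_0')$ is far weaker than an isomorphism (or even a Morita equivalence over $\P^1$) of the Clifford data. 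Nothing forces such an equivalence to be $\P^1$-linear, to be of Fourier--Mukai type respecting the orders, or to preserve fibres over points of $\P^1$; over $\ol{\kk}$ these categories are generated by exceptional collections of length $7$, and controlling all abstract equivalences between such categories is exactly the hard content of a Torelli-type theorem. The $W(D_5)$ case analysis you propose does not address this: the difficulty is not enumerating the possible Galois (or $G$-) actions, but showing that the category, with whatever intrinsic structure it carries (Serre functor, exceptional objects, the Euler pairing on $\rK_0$), recovers the pencil. So your proposal follows the paper's strategy, but replaces its key cited ingredient \cite{AE_Torelli} by a sketch that asserts, rather than proves, the crucial implication.
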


This result has been proven independently by Shramov and Trepalin \cite{ShramovTrepalin}
by an intricate direct analysis of conic bundles that appear in the link factorisation of a birational map.
Our approach is instead
first to show that $X$ and $X'$ have the same atoms, and then use a Torelli theorem for degree $4$ del Pezzo surfaces by the first author \cite{AE_Torelli}
to deduce that they are isomorphic.
We have an analog of Theorem~\ref{thm:Deg4} for any group action.

\medskip

Finally, let us explain the main difficulties for applying our strategy for constructing atomic semi-orthogonal decompositions in higher dimension. Firstly, varieties appearing in higher-dimensional MMP will be singular, and while it is conjectured that derived categories of singular varieties admit unique minimal resolutions \cite[Conjecture 5.1]{BO-ICM}, \cite[Conjecture 4.10]{Kuznetsov-crepant}, this is currently far out of reach. Furthermore, while it is a natural idea to relate derived categories of Mori fibre spaces using Sarkisov links, this seems to be a lot more difficult in higher dimension due to the lack of a complete classification of Sarkisov links that is available for surfaces due to work of Iskovskikh~\cite{Isk96}.

\subsection*{Acknowledgements} We thank 
Arend Bayer,
Tom Bridgeland,
Alexander Duncan,
Daniel Halpern-Leistner,
Ludmil Katzar\-kov,
Alexander Kuznetsov,
Ed Segal,
Jenya Tevelev
for discussions and interest to our work.
We thank St\'ephane Lamy for 
including a remark explaining a version of the Sarkisov link decomposition, which is essential for our proof of Theorem \ref{thm_compatible_gives_atomictheory}, 
into the
Cremona Group book \cite{LamyCremona}.
The authors were supported by the UKRI Horizon Europe guarantee award `Motivic invariants and birational geometry of simple normal crossing degenerations' EP/Z000955/1.

\section{Preliminaries}

Let $\kk$ be a field. By a \emph{variety over $\kk$} we mean a smooth projective geometrically integral scheme over $\kk$. We denote the category of varieties over $\kk$ by $\cVar_{\kk}$, and $\cVar_{\kk, n}$ (resp. $\cVar_{\kk, \le n}$) denotes the full subcategory of varieties of dimension $n$ (resp. $\le n$).
By a \emph{surface} we mean a variety  of dimension $2$. 

\subsection{Group actions and $G$-surfaces}

Recall that a \emph{group action} of a group $G$ on a scheme $X$ is a group homomorphism $\gamma\colon G\to \Aut(X)$, where $\Aut(X)$ is the group of scheme automorphisms of $X$.

\begin{remark}
    If $X$ is a variety over a field $\kk$ (in particular, projective and geometrically integral by assumption), then
    $\rH^0(X,\cO_X)\cong \kk$ and so any group action $\gamma \colon G\to\Aut(X)$ induces a group action $\check{\gamma}\colon G\to \Aut(\kk)$, that is, for every $g\in G$ there is a commutative diagram
\[\begin{tikzcd}
    X\ar[r, "\sim", "\gamma_g"']\ar[d,"\pi"]& X \ar[d,"\pi"] \\
    \Spec\kk\ar[r,,"\sim","\check\gamma_g"']&\Spec\kk,
\end{tikzcd}\]
where $\pi\colon X\to\Spec\kk$ denotes the structure morphism.
\end{remark}

For applications, we will mostly have the following types of group actions on varieties in mind:
\begin{enumerate}
    \item By a \emph{geometric action} we mean an injective group homomorphism $\gamma\colon G\to \Aut_\kk(X)$, where $G$ is a finite group, $\kk$ is an algebraically closed field, and $\Aut_\kk(X)$ denotes the automorphisms of $X$ over $\kk$.
    \item By an \emph{arithmetic action} we mean the following: let $X$ be a variety over a perfect field $\kk$. Then the absolute Galois group $G=\Gal(\bar \kk/\kk)$ acts on $X_{\bar \kk}=X\otimes_\kk \bar \kk$ via trivial action on $X$ and with the usual action of the Galois group on $\bar \kk$. This gives an action $\gamma\colon \Gal(\bar \kk/\kk)\to \Aut_\kk(X_{\bar\kk})\subset \Aut(X_{\bar\kk})$.
    \item A combination of the two above actions: let $X$ be a variety over a perfect field $\kk$, let $\Gal(\bar\kk/\kk)$ act tautologically on $X_{\bar\kk}$, and let $G$ be a finite group injecting to $\Aut_\kk(X)\subset\Aut_\kk(X_{\bar\kk})$. Since these two actions on $X_{\bar\kk}$ commute, we get an action $\gamma\colon G\times \Gal(\bar \kk/\kk)\to \Aut_\kk(X_{\bar\kk})$.
\end{enumerate}

\begin{definition}
Let $X$ and $Y$ be two varieties  over $\kk$ with a $G$-action, and let $f\colon X\to Y$ be a morphism over $\kk$. We say $f$ is a \emph{$G$-morphism} or $f$ is \emph{$G$-equivariant} if for every $g\in G$ the following diagram commutes: 
\[\begin{tikzcd}    X\ar[r,"\gamma_{g}^X"]\ar[d,"f",swap] & X\ar[d,"f"]\\ 
    Y\ar[r,"\gamma_{g}^Y"]& Y.
\end{tikzcd}\]
\end{definition}

A $G$-action on a variety $X$ induces a $G$-action on $\Pic(X)$, the set of isomorphism classes of line bundles on $X$, and on $N_1(X)$, the $\mathbb{R}$-vector space of $1$-cycles on $X$ modulo numerical equivalence. We denote by $\Pic(X)^G\subset\Pic(X)$ the subgroup of (isomorphism classes of) $G$-invariant line bundles, and by $N_1(X)^G\subset N_1(X)$ the $G$-invariant subspace. The \emph{$G$-Picard rank} of $X$, denoted by $\rho^G(X)$, is the dimension of the $\mathbb{R}$-vector space $N_1(X)^G$.

For the notion of a $G$-surface we will make stronger assumptions on the group action so that the $G$-equivariant Sarkisov Program works (see e.g. \cite[Section III, 16.1]{LamyCremona}):
\begin{definition}
\label{def_Gvariety}
Let $X$ be a  variety over $\kk$, and let $G$ be a group acting on $X$. We say that $X$, together with the group action, is a \emph{$G$-variety} if the induced action on $N_1(X)$ has finite orbits. In particular, a \emph{$G$-surface} is a $G$-variety of dimension $2$. 
\end{definition}
We remark that we will use the Definition~\ref{def_Gvariety} only for varieties of dimension $\le 2$, and the finiteness condition is empty for varieties of dimension $\le 1$. Also we remark that the three types of actions mentioned above satisfy Definition~\ref{def_Gvariety}.

\subsection{Birational geometry of $G$-surfaces}
In this subsection, we assume $\kk$ to be algebraically closed.

Let $Y$ be a $G$-surface, $Z\subset Y$ be a finite $G$-invariant set and 
$f\colon X\to Y$ be the blow-up of $Z$. Then $G$ acts naturally on $X$ and the map $f$ is $G$-equivariant. The induced $G$-action on $N_1(X)$ has finite orbits,  hence $X$ is a $G$-surface as in Definition~\ref{def_Gvariety}.

The other way, let $X$ be a $G$-surface and $f\colon X\to Y$ a birational morphism onto a surface $Y$ contracting disjoint $(-1)$-curves. If the set of contracted $(-1)$-curves is $G$-invariant then $f$ induces a $G$-surface structure on $Y$ such that $f\colon X\to Y$ is a $G$-morphism. Indeed, the $G$-action on $Y$ is given by $g\in G\mapsto f\circ \gamma_g\circ f^{-1}\in\Aut(Y)$, which is indeed in $\Aut(Y)$ since the contracted curves form $G$-orbits.
Moreover, the induced $G$-action on $N_1(Y)$ has finite orbits, hence $Y$ is a $G$-surface.

\bigskip

A $G$-surface $X$ is \emph{$G$-minimal} if every birational $G$-morphism $X\to Y$ to any smooth $G$-surface $Y$ is an isomorphism. 

\begin{definition}
    Let $X$ be a $G$-surface and $B$ a smooth $G$-variety of $\dim(B)\le  1$. Let $\pi\colon X\to B$ be a contraction (that is, a surjective morphism with $\pi_*\cO_X=\cO_B$) that is $G$-equivariant.
    \begin{enumerate}
        \item For $r\ge 1$, the morphism $\pi$ is a \emph{$G$-rank $r$ fibration} if $-K_X$ is $\pi$-ample and $\rho^G(X/B)=r$        . 
        \item A \emph{$G$-Mori fibre space} $X/B$ is a $G$-rank $1$ fibration.
        \item If $B$ is a curve and $-K_X$ is $\pi$-ample, then $\pi$ is a \emph{$G$-conic bundle}.
        \item A $G$-conic bundle that is a $G$-Mori fibre space is also called a \emph{$G$-Mori conic bundle.}
    \end{enumerate}
\end{definition}

\begin{remark}
    If $X/\Spec\kk$ is a $G$-rank $r$ fibration, then $X$ is a del Pezzo surface with $\rho^G(X)=r$.
\end{remark}

Recall that we assume a $G$-surface $X$ to have finite orbits for the induced action on $N_1(X)$. 
\begin{theorem}[$G$-equivariant MMP for surfaces, see~\cite{DolgachevIskovskikh}]
    Let $Y$ be a $G$-surface. Then there is a $G$-surface $X$ and a birational $G$-morphism $Y\to X$ such that either $K_X$ is nef, or $X$ admits the structure of a $G$-Mori fibre space $X/B$. For a $G$-Mori fibre space $X/B$, if $B=\Spec \kk$ is a point then $X$ is a $G$-minimal del Pezzo surface, and if $B$ is a curve then $X/B$ is a $G$-Mori conic bundle.
\end{theorem}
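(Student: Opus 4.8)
The statement is the classical $G$-equivariant minimal model program for surfaces, going back to Manin and Iskovskikh and developed by Dolgachev--Iskovskikh \cite{DolgachevIskovskikh}; the plan is to sketch how it is carried out. The key observation is that the canonical class $K_X$ is intrinsic, hence $G$-invariant, so the $G$-action on $N_1(X)$ preserves the Mori cone $\overline{\mathrm{NE}}(X)$ together with the half-space where $K_X$ is negative. The plan is to work inside the invariant subspace $N_1(X)^G$, of dimension $\rho^G(X)$, and its subcone $\overline{\mathrm{NE}}(X)^G = \overline{\mathrm{NE}}(X) \cap N_1(X)^G$. The finite-orbit hypothesis in Definition~\ref{def_Gvariety} guarantees that each $K_X$-negative extremal ray of $\overline{\mathrm{NE}}(X)$ has finite $G$-orbit, so the cone theorem for surfaces applies $G$-equivariantly: the $K_X$-negative part of $\overline{\mathrm{NE}}(X)^G$ is rational polyhedral, and every $G$-invariant extremal ray in it admits a $G$-equivariant contraction.

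The main loop runs as follows, starting from $X_0 = Y$. If $K_{X_i}$ is nef I stop, producing the first alternative. Otherwise there is a $G$-invariant, $K_{X_i}$-negative extremal ray $R \subset \overline{\mathrm{NE}}(X_i)^G$, and I contract it to obtain a $G$-equivariant morphism $\pi\colon X_i \to Z$ with connected fibres onto a projective $Z$, relatively anti-ample for $K_{X_i}$. I then split into three cases by $\dim Z$. If $\dim Z = 2$, the map $\pi$ is birational and contracts a $G$-orbit of pairwise disjoint $(-1)$-curves to smooth points; then $Z$ is a smooth $G$-surface with $\rho^G(Z) = \rho^G(X_i) - 1$, and I set $X_{i+1} = Z$ and repeat. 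If $\dim Z = 1$, then $B := Z$ is a smooth curve, $-K_{X_i}$ is $\pi$-ample, and $\rho^G(X_i/B) = 1$, so $\pi$ is a $G$-Mori conic bundle and I stop. If $\dim Z = 0$, then $Z = \Spec \kk$, necessarily $\rho^G(X_i) = 1$, and $-K_{X_i}$ is ample, so $X_i$ is a $G$-minimal del Pezzo surface and I stop. In the last two cases the second alternative holds. Termination is immediate: each birational step strictly decreases the positive integer $\rho^G$, so after finitely many steps the loop halts, and the composite of the contractions is the desired birational $G$-morphism $Y \to X$.

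The step requiring the most care is the birational contraction ($\dim Z = 2$): I must verify that the contracted curves form a $G$-orbit of pairwise disjoint $(-1)$-curves, so that $Z$ is smooth and the process stays within the smooth category. The plan is to argue that a birational-type $G$-invariant extremal ray of $\overline{\mathrm{NE}}(X_i)^G$ corresponds to a $G$-invariant extremal face of $\overline{\mathrm{NE}}(X_i)$ spanned by a $G$-orbit of ordinary extremal rays, each generated by a $(-1)$-curve; the classes of these curves span a negative-definite sublattice, which forces them to be pairwise disjoint, since an intersecting pair of conjugate $(-1)$-curves would violate either extremality of the ray in $N_1(X_i)^G$ or smoothness of the contracted surface. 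Castelnuovo's criterion then identifies $\pi$ with the simultaneous blow-down of this orbit, exactly as in the discussion preceding the theorem.

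The remaining identifications are formal consequences of relative ampleness of $-K_{X_i}$ along $\pi$. In the fibre case, a general fibre $f$ is a smooth rational curve with $-K_{X_i}\cdot f = 2$, whence $\pi$ is a conic bundle, and having contracted a single invariant ray gives $\rho^G(X_i/B) = 1$, so it is a $G$-Mori conic bundle in the sense of the preceding definition. In the point case, relative ampleness over $\Spec\kk$ is ampleness of $-K_{X_i}$, giving a del Pezzo surface, and $\rho^G(X_i) = 1$ makes it $G$-minimal, matching the stated remark that a $G$-rank $r$ fibration over a point is a del Pezzo surface with $\rho^G = r$.
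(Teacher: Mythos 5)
The paper does not actually prove this statement: it is imported wholesale from \cite{DolgachevIskovskikh}, so there is no internal proof to compare against. Your sketch follows exactly the route taken in that reference (and in the standard treatments of equivariant surface MMP): average the cone theorem over the $G$-action using the finite-orbit hypothesis of Definition~\ref{def_Gvariety}, contract a $G$-invariant $K$-negative extremal ray of $\overline{\mathrm{NE}}(X)^G$, split into cases by the dimension of the image, and terminate by descending induction on $\rho^G$. The skeleton is sound, and your treatment of the birational case (negative definiteness of the contracted classes plus adjunction forces a $G$-orbit of pairwise disjoint $(-1)$-curves, then Castelnuovo) is the correct way to keep the process in the smooth category.

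Two points deserve repair, though neither breaks the argument. First, your claim that the $K_X$-negative part of $\overline{\mathrm{NE}}(X)^G$ is \emph{rational polyhedral} is false in general: already for trivial $G$ and $X$ a blow-up of $\P^2$ in nine or more very general points there are infinitely many $K$-negative extremal rays. What the cone theorem gives, and what you actually use, is local polyhedrality in the open $K$-negative region together with the existence of at least one invariant $K$-negative extremal ray whenever $K_X$ is not nef (take a curve $C$ with $K_X\cdot C<0$ and pass to the finite orbit sum, which is again $K$-negative since $K_X$ is $G$-invariant). Second, after each birational contraction you must verify that the target is again a $G$-surface in the sense of Definition~\ref{def_Gvariety}, i.e.\ that the induced action on $N_1(Z)$ still has finite orbits; this is easy (pullback embeds $N_1(Z)$ equivariantly into $N_1(X)$), and the paper records it in the discussion preceding the theorem, but your induction silently relies on it.
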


\begin{definition}
\label{def_isoMFS}
    Let $X_1/B_1$ and $X_2/B_2$ be two $G$-Mori fibre spaces. An isomorphism $\varphi\colon X_1\to X_2$ is an \emph{isomorphism of $G$-Mori fibre spaces from $X_1/B_1$ to $X_2/B_2$} if there is a $G$-equivariant diagram \[\begin{tikzcd}    X_1\ar[r,"\varphi"]\ar[d,"\pi_1",swap] & X_2\ar[d,"\pi_2"]\\ 
    B_1\ar[r,"\cong"] &B_2.
\end{tikzcd}\]
\end{definition}

\begin{definition}[Sarkisov links]\label{def:SarkisovLinks}
Let $X_1/B_1$ and $X_2/B_2$ be two $G$-Mori fibre spaces. A $G$-equivariant birational map $\chi\colon X_1\dashrightarrow X_2$ is a \emph{$G$-Sarkisov link from $X_1/B_1$ to $X_2/B_2$} if there is a $G$-equivariant commutative diagram 
    \[\begin{tikzcd}        &Z\ar[dl,swap,"\sigma_1"]\ar[dr,"\sigma_2"]&\\        X_1\ar[d,swap,"\pi_1"]\ar[dashed,rr,"\chi"]&& X_2\ar[d,"\pi_2"]\\ 
        B_1\ar[dr,"\beta_1",swap]&& B_2\ar[dl,"\beta_2"]\\
        &B,&
    \end{tikzcd}\] where $Z/B$ is a $G$-rank $2$ fibration, and $\chi$ is not an isomorphism of $G$-Mori fibre spaces. 
\end{definition}

\begin{remark}\label{rem:dPZ-link}
 For any $G$-rank $2$ fibration $Z/B$ there exists a Sarkisov link with two sides determined by two extremal contractions of $Z$ \cite[Lemma 16.18]{LamyCremona}. This applies in particular to a $G$-rank $2$ del Pezzo surface over $B = \Spec(\kk)$.  
\end{remark}

The following is the classification of Sarkisov links in terms of $K_{X_1}^2,K_Z^2,K_{X_2}^2$, obtained in \cite[Theorem~2.6]{Isk96} in the arithmetic case, and the same computations work for more general group actions \cite[\S7]{DolgachevIskovskikh}. Since $\rho^G(Z/B)=2$ and $\rho^G(X_i/B_i)=1$, exactly one of the maps $\sigma_1,\beta_1$ is an isomorphism, and similarly for $\sigma_2,\beta_2$. Therefore links divide into four types depending on which of the four combination realises. 
\begin{proposition}[Numerical classification of Sarkisov links]\label{prop:SarkisovLinks}
    Let $X_1/B_1$ and $X_2/B_2$ be two $G$-Mori fibre spaces and $\chi\colon X_1\dashrightarrow X_2$ be a $G$-Sarkisov link over a base $B$, with associated $G$-rank $2$ fibration $Z/B$ as in Definition~\ref{def:SarkisovLinks}.
    Then one of the following holds.

    If $\chi$ is a \textbf{link of type I}, that is, $\beta_1$ and $\sigma_2$ are isomorphisms, then $B_1=B=\Spec\kk$, $B_2=\P^1$, and
    \[(K_{X_1}^2,K_{X_2}^2)\in\{(9,8),(9,5),(8,6),(4,3)\}.\]

    If $\chi$ is a \textbf{link of type II}, that is, $\beta_1$ and $\beta_2$ are isomorphisms, then either 
    \begin{enumerate}
        \item $B$ is a curve and $K_{X_1}^2=K_{X_2}^2$, or
        \item $B=\Spec\kk$ and $(K_{X_1}^2,K_Z^2,K_{X_2}^2)$ is either symmetric (that is, $K_{X_1}^2=K_{X_2}^2$) and in one of the cases
        \begin{enumerate}
            \item $(d,1,d)$ for $d\in\{9,8,6,5,4,3,2\}$ (Bertini type)
            \item $(d,2,d)$ for $d\in\{9,8,6,5,4,3\}$ (Geiser type)
            \item $(9,6,9)$, $(9,3,9)$,  $(8,4,8)$, $(6,4,6)$, $(6,3,6)$,
        \end{enumerate}
        or asymmetric and in one of the cases (up to swapping $X_1$ and $X_2
        $)
        \begin{enumerate}[resume]
            \item $(9,7,8)$, $(9,4,5)$, $(8,5,6)$,  $(8,3,5)$.
        \end{enumerate}
    \end{enumerate} 
    
    If $\chi$ is a \textbf{link of type III}, that is, $\sigma_1$ and $\beta_2$ are isomorphisms, then $\chi^{-1}$ is a link of type I.

    If $\chi$ is a \textbf{link of type IV}, that is, $\sigma_1$ and $\sigma_2$ are isomorphisms, then $B=\Spec\kk$, $B_1=B_2=\P^1$ and $K_{X_1}^2=K_{X_2}^2\in\{1,2,4,8\}$.
\end{proposition}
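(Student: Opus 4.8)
The plan is to reconstruct Iskovskikh's computation in the equivariant setting, organising it around the two extremal contractions of the $G$-rank $2$ fibration $Z/B$. Since $\rho^G(Z/B)=2$, the relative $G$-invariant cone of curves $\overline{\mathrm{NE}}(Z/B)^G$ is two-dimensional, hence spanned by exactly two extremal rays $R_1,R_2$; as $-K_Z$ is $\pi$-ample relative to $B$, both rays are $K_Z$-negative, and the $G$-equivariant contraction theorem for surfaces produces a contraction for each $R_i$. On a smooth $G$-surface such an extremal contraction has exactly one of two shapes: a \emph{divisorial} contraction blowing down a $G$-orbit of $k_i$ disjoint $(-1)$-curves to a $G$-Mori fibre space over $B$ (the case $\beta_i$ iso), or a \emph{fibre-type} contraction exhibiting $Z$ itself as a Mori fibre space over a lower-dimensional base (the case $\sigma_i$ iso). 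Pairing the two possibilities gives precisely the four types I--IV. A bookkeeping of relative Picard ranks, using $\rho^G(Z/B)=2$ and $\rho^G(X_i/B_i)=1$, then forces the base: in types I, III, IV at least one ray is of fibre type and drops the dimension of the base, so $B=\Spec\kk$ (and the fibred side has base $\P^1$), while in type II both contractions are divisorial over $B$, so $B$ may be either a point or a curve.

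Next I would record the numerical effect of each contraction. A divisorial contraction $\sigma_i\colon Z\to X_i$ of a $G$-orbit of $k_i$ disjoint $(-1)$-curves $E_j$ satisfies $\sigma_i^*K_{X_i}=K_Z-\sum_j E_j$, whence $K_{X_i}^2=K_Z^2+k_i$; a fibre-type contraction has $X_i=Z$ and $K_{X_i}^2=K_Z^2$. This reduces the whole statement to determining the admissible triples $(K_Z^2,k_1,k_2)$, and already gives the coarse shape of each case: in type II over a curve an elementary transformation blows up one $G$-orbit and blows down another of the same size, so $K_{X_1}^2=K_{X_2}^2$; in type IV both sides equal $K_Z^2$; in types I/III exactly one side is shifted by $k_1$; and in type II over a point $Z$ dominates both $X_1$ and $X_2$, so its degree $K_Z^2$ is the smallest of the three.

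The core of the proof is then the enumeration of the finitely many admissible $(K_Z^2,k_1,k_2)$. For $B=\Spec\kk$ the surface $Z$ is a del Pezzo surface of $G$-Picard rank $2$, so $1\le K_Z^2\le 9$ and $K_{X_i}^2=K_Z^2+k_i\le 9$; the possible orbit sizes $k_i$ are constrained by the configuration of $(-1)$-curves on a del Pezzo of the given degree, together with the requirement that the contracted curves form a single transitively permuted $G$-orbit spanning an extremal ray. Running through the degrees and the two rays of each rank-$2$ del Pezzo, one reads off the Bertini family $(d,1,d)$, the Geiser family $(d,2,d)$, the remaining symmetric triples and the asymmetric quadruple, while the self-correspondences with two distinct conic-bundle structures produce the type IV list $K_Z^2\in\{1,2,4,8\}$. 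For $B$ a curve one argues instead with the relative conic-bundle structure of $Z/B$, where the two rays correspond to the two components of the reducible fibres, yielding $K_{X_1}^2=K_{X_2}^2$.

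I expect the main obstacle to be the completeness and realisability of this enumeration rather than any single computation: one must verify that every numerically admissible orbit of disjoint $(-1)$-curves actually occurs for some group action, and that no spurious triples slip through, which requires controlling the $G$-orbit structure on the $(-1)$-curves of each rank-$2$ del Pezzo surface through the intersection theory of the Picard lattice. This is exactly the content of the case analysis of \cite[Theorem~2.6]{Isk96} and \cite[\S7]{DolgachevIskovskikh}; the only new point beyond the arithmetic case is that an arbitrary $G$-action is allowed, but since the argument uses only the combinatorics of $G$-orbits in the Picard lattice and the numerical contraction formula $K_{X_i}^2=K_Z^2+k_i$, the same computation goes through verbatim.
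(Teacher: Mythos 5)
Your proposal is correct and follows essentially the same route as the paper: the paper offers no independent argument beyond the observation that the rank count $\rho^G(Z/B)=2$, $\rho^G(X_i/B_i)=1$ forces exactly one of $\sigma_i,\beta_i$ to be an isomorphism (yielding the four types), and then cites \cite[Theorem~2.6]{Isk96} and \cite[\S7]{DolgachevIskovskikh} for the numerical enumeration. Your reconstruction of the extremal-ray dichotomy and the contraction formula $K_{X_i}^2=K_Z^2+k_i$ fleshes out that skeleton, but you ultimately defer the completeness of the case analysis to the very same references, so the two arguments coincide in substance.
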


For a surface $X$, we will call $K_X^2$ the \emph{degree} of $X$.

Note that if $X_1/B_1$ satisfies $K_{X_1}^2\le  4$, then either $K_{X_2}^2=K_{X_1}^2$, or $\chi$ (or $\chi^{-1})$ is a link of type I with $(K_{X_1}^2,K_{X_2}^2)=(4,3)$.
On the other hand, if $K_{X_1}^2\ge 5$ there are many Sarkisov links where $K_{X_1}^2$ and $K_{X_2}^2$ differ.
In this sense, we introduce the following notion:

\begin{definition}\label{def:BirationallyRich}
    We say that a $G$-surface is \emph{$G$-birationally rich} if it is $G$-birational to a
    $G$-Mori fibre space of degree $\ge 5$.
\end{definition}

\begin{proposition}
\label{prop_list}
    Let $X/B$ be a $G$-Mori fibre space.
    \begin{enumerate}
        \item\label{it:SarkisovLinks--BirRich} $X$ is $G$-birationally rich exactly if one of the following holds:
            \begin{enumerate}
            \item\label{it:SarkisovLinks--BirRich--dP} $B=\Spec\kk$ and $X$ is a $G$-minimal del Pezzo surface of degree $K_X^2\in\{9,8,6,5\}$, or
            \item\label{it:SarkisovLinks--BirRich--cb} $B=\P^1$ and $X/B$ is a $G$-Mori conic bundle of degree $K_X^2\in\{8,6,5\}$.
        \end{enumerate}
        \item\label{it:SarkisovLinks--NotBirRich} $X$ is not $G$-birationally rich exactly if one of the following holds:
        \begin{enumerate}
            \item\label{it:SarkisovLinks--NotBirRich--dP} $B=\Spec\kk$ and $X$ is a $G$-minimal del Pezzo surface of degree $K_X^2\in\{4,3,2,1\}$, or
            \item\label{it:SarkisovLinks--NotBirRich--P1} $B=\P^1$ and $X/B$ is a $G$-Mori conic bundle of degree $K_X^2\in\{4,3,2,1\}$, or 
            \item\label{it:SarkisovLinks--NotBirRich--P1neg} $B=\P^1$ and $X/B$ is a $G$-Mori conic bundle of degree $K_X^2\le  0$, or
            \item\label{it:SarkisovLinks--NotBirRich--curve} $B$ is a curve of genus $g(B)\ge 1$ and $X/B$ is a $G$-Mori conic bundle of degree $K_X^2\le 8(1-g(B))\le  0$.
        \end{enumerate}    
    \end{enumerate} 
    Moreover, if $X'/B'$ is a $G$-Mori fibre space such that $X'$ is $G$-birational to $X$, then $X/B$ and $X'/B'$ are both in one of the following, mutually excluding cases: 
    $\{\eqref{it:SarkisovLinks--BirRich--dP}, \eqref{it:SarkisovLinks--BirRich--cb}\}$, $\{\eqref{it:SarkisovLinks--NotBirRich--dP}, \eqref{it:SarkisovLinks--NotBirRich--P1}\}$, $\{\eqref{it:SarkisovLinks--NotBirRich--P1neg}\}$, $\{\eqref{it:SarkisovLinks--NotBirRich--curve}\}$.
\end{proposition}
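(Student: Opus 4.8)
The plan is to first list the $G$-Mori fibre spaces (proving that cases (a)--(f) are exhaustive and mutually exclusive), then to show that every $G$-Sarkisov link keeps both of its sides in the same one of the four classes of the \emph{Moreover} clause, and finally to read off $G$-birational richness from this. First I would establish the classification. By the $G$-equivariant MMP for surfaces, $B$ is either $\Spec\kk$ --- in which case $X$ is a $G$-minimal del Pezzo surface --- or a smooth curve, in which case $X/B$ is a $G$-Mori conic bundle. In the del Pezzo case one has $1\le K_X^2\le 9$, and degree $7$ is excluded because a del Pezzo surface of degree $7$ carries a unique $(-1)$-curve meeting two others, which is therefore $G$-invariant and can be $G$-equivariantly contracted, contradicting $G$-minimality; this yields exactly \eqref{it:SarkisovLinks--BirRich--dP} and \eqref{it:SarkisovLinks--NotBirRich--dP}. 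In the conic bundle case the identity $K_X^2=8(1-g(B))-k$, with $k\ge 0$ the number of degenerate fibres, gives $K_X^2\le 8$ for $B=\P^1$ and $K_X^2\le 8(1-g(B))\le 0$ for $g(B)\ge 1$; this produces \eqref{it:SarkisovLinks--NotBirRich--P1}, \eqref{it:SarkisovLinks--NotBirRich--P1neg}, \eqref{it:SarkisovLinks--NotBirRich--curve} and the range in \eqref{it:SarkisovLinks--BirRich--cb}, once one checks that degree $7$ again does not occur: the two components of the single degenerate fibre would have to be swapped by $G$ to force $\rho^G(X/B)=1$, which is incompatible with the distinguished $G$-invariant $(-1)$-curve on the underlying degree-$7$ surface. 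The degree bounds make the four classes mutually exclusive.

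For the \emph{exactly if} statements, the easy direction is that the surfaces in \eqref{it:SarkisovLinks--BirRich--dP} and \eqref{it:SarkisovLinks--BirRich--cb} are themselves $G$-Mori fibre spaces of degree $\ge 5$, hence $G$-birationally rich by Definition~\ref{def:BirationallyRich}. Everything else --- the converse direction together with the non-richness of \eqref{it:SarkisovLinks--NotBirRich--dP}--\eqref{it:SarkisovLinks--NotBirRich--curve} --- follows from the \emph{Moreover} clause, which is the heart of the argument, so I would prove that next.

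For the \emph{Moreover} clause I would invoke the $G$-equivariant Sarkisov program: any $G$-birational map between $G$-Mori fibre spaces factors as a composition of $G$-Sarkisov links. It therefore suffices to show that a single link $\chi\colon X_1/B_1\dashrightarrow X_2/B_2$ keeps both sides in the same class, and then conclude by composition. Here I would go type by type through Proposition~\ref{prop:SarkisovLinks}. Links of type I and their inverses of type III realise $(K_{X_1}^2,K_{X_2}^2)\in\{(9,8),(9,5),(8,6),(4,3)\}$ with $X_1$ a del Pezzo surface and $X_2$ a conic bundle over $\P^1$: the first three stay within the rich class $\{\eqref{it:SarkisovLinks--BirRich--dP},\eqref{it:SarkisovLinks--BirRich--cb}\}$, while $(4,3)$ connects a degree-$4$ del Pezzo surface to a degree-$3$ conic bundle, both in $\{\eqref{it:SarkisovLinks--NotBirRich--dP},\eqref{it:SarkisovLinks--NotBirRich--P1}\}$. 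For type II the symmetric cases preserve the degree (and, when $B$ is a curve, the base $B_1\cong B_2\cong B$ and hence its genus), whereas every asymmetric case $(9,7,8),(9,4,5),(8,5,6),(8,3,5)$ has both degrees in $\{9,8,6,5\}$ and so stays in the rich class. Type IV has $B_1=B_2=\P^1$ and $K_{X_1}^2=K_{X_2}^2\in\{1,2,4,8\}$, so both sides are conic bundles over $\P^1$ of the same degree, again remaining in a single class.

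Finally I would use the observation recorded after Proposition~\ref{prop:SarkisovLinks} to close the low-degree classes: once $K_{X_1}^2\le 4$ a link either preserves the degree or is the $(4,3)$ type I link. Consequently a conic bundle of degree $\le 0$ admits only degree-preserving links, which by the classification must be of type II over a curve base, and these fix both the degree and the genus of $B$; hence classes \eqref{it:SarkisovLinks--NotBirRich--P1neg} and \eqref{it:SarkisovLinks--NotBirRich--curve} are closed and meet neither each other nor the degree-$1$-to-$4$ class. Assembling these facts, any $G$-Mori fibre space that is $G$-birational to one of degree $\ge 5$ must lie in the rich class, which gives the non-obvious direction of part~(1) and the non-richness in part~(2). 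The main obstacle I anticipate is the bookkeeping in the \emph{Moreover} clause --- matching each entry of the link table to a class while tracking the base (point versus curve) and its genus --- together with the two degree-$7$ exclusions, which, although elementary, rest on the specific $(-1)$-curve geometry of degree-$7$ del Pezzo surfaces.
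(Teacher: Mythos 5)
Your overall route is the same as the paper's. The paper's proof consists of (i) completeness of the list \eqref{it:SarkisovLinks--BirRich--dP}--\eqref{it:SarkisovLinks--NotBirRich--curve}, for which it simply cites \cite[\S 8]{DolgachevIskovskikh} together with Beauville's formula $K_X^2=8(1-g)$; (ii) the observation that surfaces of types \eqref{it:SarkisovLinks--BirRich--dP}, \eqref{it:SarkisovLinks--BirRich--cb} are $G$-birationally rich by Definition~\ref{def:BirationallyRich}; (iii) the Sarkisov factorisation of any $G$-birational map between $G$-Mori fibre spaces into $G$-Sarkisov links; and (iv) a check against Proposition~\ref{prop:SarkisovLinks} that every link stays within one class, the only crossings being between \eqref{it:SarkisovLinks--BirRich--dP} and \eqref{it:SarkisovLinks--BirRich--cb}, and between \eqref{it:SarkisovLinks--NotBirRich--dP} and \eqref{it:SarkisovLinks--NotBirRich--P1}. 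Your steps (ii)--(iv), including the closing of the classes \eqref{it:SarkisovLinks--NotBirRich--P1neg} and \eqref{it:SarkisovLinks--NotBirRich--curve} via degree and base genus, match this and are correct. The one substantive difference is that you prove the classification in step (i) rather than citing it, and that is where there is a gap.

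The gap is in your exclusion of degree-$7$ conic bundles. A $G$-Mori conic bundle $X/\P^1$ with $K_X^2=7$ is the blow-up of some Hirzebruch surface $\bF_n$ at one point, and this surface need \emph{not} be a del Pezzo surface of degree $7$: if the blown-up point lies on the negative section, or if $n\ge 2$, then $X$ carries only two $(-1)$-curves, namely the components $E$ and $h-E$ of the unique singular fibre, so the ``distinguished $G$-invariant $(-1)$-curve'' your argument appeals to does not exist, and your contradiction only rules out the case where $X$ is the del Pezzo surface of degree $7$. The claim is still true and can be repaired in either of two ways. Lattice argument: writing $\Pic(X)=\Z s^*\oplus\Z h\oplus\Z E$ with $s^*$ the pullback of the section of $\bF_n$, any $g\in G$ fixes $h$ and $K_X=-2s^*-(n+2)h+E$, so sending $E\mapsto h-E$ would force $2g(s^*)=2s^*-2E+h$, making $h$ divisible by $2$ in $\Pic(X)$, which is false since $h\cdot s^*=1$. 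Geometric argument: contracting $E$ yields $\bF_n$ while contracting $h-E$ yields $\bF_{n\pm1}$, and since these are non-isomorphic, no automorphism of $X$ can exchange the two components. Either way both components are $G$-invariant classes, hence $\rho^G(X)=3$ and $\rho^G(X/\P^1)=2\ne 1$, so no such $G$-Mori fibre space exists. With this patch (or by simply citing \cite[\S 8]{DolgachevIskovskikh}, as the paper does), your proof is complete.
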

\begin{proof}
One can see that the list has all $G$-Mori fibre spaces: for the classification of these  see~\cite[\S 8]{DolgachevIskovskikh}, and for the formula  $K_X^2 = 8(1-g)$ for a conic bundle without singular fibres over a curve of genus $g$~\cite[Prop. III.21]{Beauville}.   By definition, $G$-Mori fibre spaces of types \eqref{it:SarkisovLinks--BirRich--dP},\eqref{it:SarkisovLinks--BirRich--cb} are birationally rich.
By the Sarkisov program, any $G$-birational isomorphism between $G$-Mori fibre spaces decomposes into a composition of Sarkisov links (see~\cite{Isk96}, \cite{DolgachevIskovskikh}, or \cite[Th. 16.28(1)]{LamyCremona}).  
Checking the list of Sarkisov links from Proposition~\ref{prop:SarkisovLinks}, one sees that every $G$-Sarkisov link starting at a Mori fibre space of some type goes to a Mori fibre space of the same type, except for links between types \eqref{it:SarkisovLinks--BirRich--dP},\eqref{it:SarkisovLinks--BirRich--cb} and between types \eqref{it:SarkisovLinks--NotBirRich--dP},\eqref{it:SarkisovLinks--NotBirRich--P1}. In particular Mori fibre spaces of type (2) are not $G$-birationally rich.
\end{proof}

\begin{remark}\label{rem:non-minimal-C}
Not all $G$-Mori conic bundles are $G$-minimal surfaces. More precisely, a $G$-Mori conic bundle  $X/B$ is not $G$-minimal exactly if there is a $G$-Sarkisov link of type III from $X/B$. By examination of the links in Proposition~\ref{prop:SarkisovLinks} we have:
    \begin{enumerate}
        \item If $K_X^2\notin\{8,6,5,3\}$, then $X$ is $G$-minimal.   
        \item If $K_X^2=8$, then $X=\bF_n$ is a Hirzebruch surface with $n\ge0$, and it is minimal (and hence $G$-minimal) exactly if $n\neq1$. 
        \item If $K_X^2\in\{6,5,3\}$, then $X$ is not $G$-minimal.  In this case a link of type III always exists by Remark \ref{rem:dPZ-link} since $X/\Spec\kk$ is a rank $2$ fibration, see~\cite[Prop. 5.2]{DolgachevIskovskikh}. 
    \end{enumerate}
\end{remark}

We will denote the class of $G$-minimal del Pezzo surfaces of degree $d$ by $\cD_d$ and the class of rational $G$-Mori conic bundles of degree $d$ by $\cC_d$.

\begin{corollary}
\label{cor:bir-rich-models}
A $G$-surface $X$ is $G$-birationally rich  if and only if 
there is a birational $G$-equivariant morphism from $X$ to a $G$-minimal surface of one of the following types:
\[
\cD_9, \quad \cD_8, \quad \cD_6, \quad \cD_5, \quad \cC_8.
\]
\end{corollary}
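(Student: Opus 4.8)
The plan is to prove the two implications separately, relying on the $G$-equivariant MMP for surfaces and the numerical classification of Sarkisov links in Proposition~\ref{prop:SarkisovLinks}. For the \emph{if} direction I would first note that every surface in the list already carries the structure of a $G$-Mori fibre space of degree $\ge 5$. Indeed, a $G$-minimal del Pezzo surface has $-K$ ample and hence is not $K$-nef, so by the $G$-equivariant MMP it is itself a $G$-Mori fibre space, of degree $9,8,6$ or $5$ when it lies in $\cD_9,\cD_8,\cD_6$ or $\cD_5$; and a member of $\cC_8$ is by definition a $G$-Mori conic bundle over $\P^1$ of degree $8$. A birational $G$-morphism $X\to X_0$ is in particular a $G$-birational map, so $X$ is $G$-birational to a $G$-Mori fibre space of degree $\ge 5$, i.e. $G$-birationally rich in the sense of Definition~\ref{def:BirationallyRich}. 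This direction is essentially immediate.

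For the \emph{only if} direction I would apply the $G$-equivariant MMP to $X$ to obtain a birational $G$-morphism $X\to X'$ with $X'$ either $K$-nef or a $G$-Mori fibre space. Since $X$ is $G$-birationally rich it is $G$-birational to a del Pezzo surface or conic bundle of degree $\ge 5$, hence geometrically rational; as geometric rationality is a birational invariant and a $K$-nef surface is not geometrically rational, the $K$-nef case cannot occur and $X'$ is a $G$-Mori fibre space. Being $G$-birational to $X$, the surface $X'$ is itself $G$-birationally rich, so part (1) of Proposition~\ref{prop_list} applies: either $X'$ is a $G$-minimal del Pezzo surface of degree in $\{9,8,6,5\}$, or $X'/\P^1$ is a $G$-Mori conic bundle of degree in $\{8,6,5\}$.

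In the del Pezzo case $X'$ already lies in $\cD_9\cup\cD_8\cup\cD_6\cup\cD_5$ and $X\to X'$ is the required morphism. In the conic bundle case I would distinguish by $G$-minimality: if $X'$ has degree $8$ and is $G$-minimal then $X'\in\cC_8$; otherwise $X'$ is a non-$G$-minimal conic bundle, namely of degree $6$ or $5$, or the Hirzebruch surface $\bF_1$ of degree $8$, and by Remark~\ref{rem:non-minimal-C} it admits a $G$-Sarkisov link of type III. The contraction $\sigma_2$ of that link is a birational $G$-morphism $X'\to X''$ onto a $G$-minimal del Pezzo surface $X''$ (a $G$-Mori fibre space over $\Spec\kk$, so $\rho^G(X'')=1$); reading the admissible degrees off the type~I list in Proposition~\ref{prop:SarkisovLinks} (type III being its inverse), the relevant $(\text{conic bundle},\,\text{del Pezzo})$ degree pairs are $(8,9)$, $(5,9)$ and $(6,8)$, so $\deg X''\in\{9,8\}$ and $X''\in\cD_9\cup\cD_8$, whence $X\to X'\to X''$ is the desired morphism. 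The main obstacle is exactly this conic bundle bookkeeping: one must recognise that $G$-Mori conic bundles of degree $5$ and $6$ (and the non-minimal degree-$8$ bundle $\bF_1$) are never $G$-minimal and therefore have to be contracted one step further, and then use the Sarkisov degree table to confirm that the contraction lands precisely in $\cD_9$ or $\cD_8$ rather than in a class outside the list. The del Pezzo case and the \emph{if} direction are routine by comparison.
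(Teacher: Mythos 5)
Your proof is correct and follows essentially the same route as the paper: run the $G$-MMP, classify the resulting Mori fibre space via Proposition~\ref{prop_list}, and contract the non-$G$-minimal conic bundles ($\cC_6$, $\cC_5$) using the type III links of Remark~\ref{rem:non-minimal-C}. Your write-up is in fact slightly more careful than the paper's one-line proof, since you also treat explicitly the case where the MMP output is the non-minimal degree-$8$ conic bundle $\bF_1$, which must likewise be contracted (to $\P^2\in\cD_9$) and which the paper leaves implicit.
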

\begin{proof}
    This follows from Proposition \ref{prop_list} as the surfaces $\cC_6$ and $\cC_5$ are not minimal (see Remark \ref{rem:non-minimal-C}), hence can be contracted, to one of the surfaces from the list.
\end{proof}

\subsection{Derived contractions and MMP contractions} 

Here we introduce a class of morphisms that will play a central role in the following. 
The base field~$\kk$ can be any perfect field.

\begin{definition}
\label{def_dc}
We will call a morphism $f\colon X\to Y$   between varieties over $\kk$ a \emph{derived contraction} if  $\rR f_*\cO_X=\cO_Y$.
\end{definition}

\begin{definition}
\label{def_GMMP}
Let $X,Y$ be smooth projective $G$-varieties of dimension $\le 2$ over an algebraically closed field $\kk$.
We will say that a $G$-morphism $f\colon X\to Y$ is
a \emph{$G$-MMP contraction} if $f$ is one of the following:
    \begin{enumerate}[label=(\alph*)]
    \item a $G$-Mori fibre space of dimension $1$ or $2$,
    \item the blow-up of a finite $G$-orbit on a $G$-surface $Y$.
    \end{enumerate}
In other words, $f$ is a $G$-MMP contraction if $f$ is an extremal contraction of a $K_X$-negative $G$-extremal ray of the $G$-Mori cone of $X$, see~\cite{KollarMori1998}.
\end{definition} 

\begin{proposition}
\label{prop_elementary}
Let $f\colon X\to Y$ be a $G$-morphism between smooth projective $G$-varieties of dimension $\le 2$ over an algebraically closed field $\kk$. Then the following are equivalent:
\begin{enumerate}[label=(\alph*)]
    \item generic fibre $f$ is geometrically rational,
    \item $f$ is a composition of G-MMP contractions.
\end{enumerate}
\end{proposition}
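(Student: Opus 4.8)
The plan is to prove both implications by splitting into cases according to the pair $(\dim X,\dim Y)$, which lies in a short finite list since $\dim X,\dim Y\le 2$. First I would observe that either condition forces $f$ to be dominant: if $f$ is not dominant, its scheme-theoretic generic fibre over $\eta_Y$ is empty, so (a) fails; and every $G$-MMP contraction is surjective with surjectivity preserved under composition, so (b) fails too. Hence I may assume $f$ surjective, treating an isomorphism as the empty composition of $G$-MMP contractions.

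For the implication (b)$\Rightarrow$(a), the key preliminary observation is a structural constraint on factorisations: a blow-up of a finite orbit has a surface as both source and target, whereas a $G$-Mori fibre space strictly drops dimension, and no $G$-MMP contraction has a curve or a point as source other than the one-dimensional fibre space $\P^1\to\Spec\kk$. Therefore in any factorisation $X=X_0\to\cdots\to X_n=Y$ the blow-ups occur first, followed by at most one two-dimensional Mori fibre space, and then at most one contraction $\P^1\to\Spec\kk$. This yields a short list of shapes, and in each I verify geometric rationality of the generic fibre directly: if $f$ is a composition of blow-ups only, the generic fibre is a single reduced point; if the terminal contraction is a conic bundle $X'\to B$, the generic fibre of $f$ coincides with the smooth conic $X'_{\eta_B}$, which is geometrically $\P^1$; and if $Y=\Spec\kk$, the generic fibre is $X$ itself, which is rational because it is a blow-up of a del Pezzo surface or of a conic bundle over $\P^1$. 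The latter uses Tsen's theorem: over the algebraically closed field $\kk$ the field $\kk(\P^1)$ is $C_1$, so the generic conic has a rational point and the conic bundle is rational. This enumeration replaces an abstract transitivity lemma for ``geometrically rational generic fibre'', which is delicate because geometric rationality of fibre and base does not in general force rationality of the total space.

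For the substantial implication (a)$\Rightarrow$(b), I would run the (relative) $G$-equivariant minimal model program. When $\dim X=\dim Y$ the geometrically rational generic fibre is zero-dimensional, hence a single reduced point, so $f$ is birational; a birational $G$-morphism of smooth projective surfaces is $G$-equivariantly a composition of blow-ups of finite orbits, and for curves it is an isomorphism. When $Y=\Spec\kk$ and $\dim X=1$ the generic fibre is $X\cong\P^1$, so $f$ is a one-dimensional Mori fibre space. When $Y=\Spec\kk$ and $\dim X=2$, geometric rationality of $X$ forces the $G$-MMP to terminate at a $G$-Mori fibre space $X'\to B$ rather than a $K$-nef model, since a $K$-nef model would be birational to $X$ yet have nonnegative Kodaira dimension; then $X\to X'$ is a composition of blow-ups, and $X'\to B$ is either a del Pezzo contraction to $\Spec\kk$ or a conic bundle over $B=\P^1$ followed by $\P^1\to\Spec\kk$. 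Finally, when $\dim X=2$ and $\dim Y=1$, the generic fibre is a geometrically rational curve, and running the relative $G$-MMP over $Y$ contracts vertical orbits of $(-1)$-curves until we reach a $G$-Mori conic bundle $X'\to Y$, so $f$ factors as blow-ups followed by this conic bundle.

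The hard part will be the case $\dim X=2,\ \dim Y=1$ together with the analogous terminal step when $Y=\Spec\kk$: one must run the minimal model program \emph{relatively} over the base curve in the $G$-equivariant setting and argue that, because the generic fibre has genus zero, the relatively minimal model is a genuine $G$-Mori conic bundle—relative $G$-Picard rank one with $-K$ relatively ample—rather than a relatively $K$-nef surface, and moreover that the contracted locus is a $G$-invariant union of disjoint $(-1)$-curves, so that $X\to X'$ genuinely decomposes into blow-ups of finite $G$-orbits in the sense of Definition~\ref{def_GMMP}. By contrast, verifying (b)$\Rightarrow$(a) is routine once Tsen's theorem is invoked; the only care needed there is the dimension bookkeeping guaranteeing that no fibrations other than conics, del Pezzo surfaces, and points occur.
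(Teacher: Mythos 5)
Your proposal is correct and follows essentially the same route as the paper, whose proof is only a three-line sketch: ``(a) $\Longrightarrow$ (b): run relative $G$-MMP for $X$ over $Y$ and show that eventually one gets $Y$ over $Y$; (b) $\Longrightarrow$ (a): straightforward check.'' Your case analysis by $(\dim X,\dim Y)$, the use of Tsen's theorem for the conic-bundle cases, and the relative $G$-MMP argument with the genus-zero generic fibre ruling out a relatively $K$-nef model are exactly the details that sketch leaves to the reader.
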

If this holds, then $f$ is a derived contraction.
\begin{proof}
We sketch the proof.

(a) $\Longrightarrow$   (b): run relative G-MMP for $X$ over $Y$ and show that eventually one gets $Y$ over $Y$.

(b) $\Longrightarrow$   (a): straightforward check.

The final statement follows since the class of derived contractions is closed under compositions, and every $G$-MMP contraction is a derived contraction.
\end{proof}

\begin{definition}
\label{def_rdc} 
We say that a morphism $f\colon X \to Y$ of surfaces over a perfect field $\kk$ is a \emph{rational derived contraction} if $f_{\ol{\kk}} \colon X_{\ol{\kk}} \to Y_{\ol{\kk}}$ satisfies the conditions of Proposition \ref{prop_elementary}
with respect to the Galois group $G = \Gal(\ol{\kk}/\kk)$.
\end{definition}

\begin{remark}
Not every derived contraction is a rational derived contraction. For example, if $X$ is an Enriques surface  then the projection $X \to \Spec(\kk)$ is a derived contraction but not a rational derived contraction. 
\end{remark}

\subsection{Divisor classes on surfaces}
\label{section_divisorsonsurfaces}
Let $X$ be a smooth projective rational surface over an algebraically closed field $\kk$. It is known that $X$ is an iterated blow-up of $\P^2$ or of a Hirzebruch surface $\bF_d$, $d\ge 0$. The Picard group $\Pic X$ of $X$ is a finitely generated free abelian group and is equipped with a non-degenerate intersection form. 

If $X$ is an iterated blow-up of $\P^2$ at $n$ points, then $\Pic X$ has the basis $H,E_1,\ldots,E_n$, where $H$ is the preimage of the line on $\P^2$ and $E_i$ is the preimage of the exceptional divisor of the $i$-th blow-up. One has
$$H^2=1, E_i^2=-1, H\cdot E_i=E_i\cdot E_j=0\quad\text{for $i\ne j$.}$$
Also one has
$$K_X=-3H+E_1+\ldots+E_n.$$

If $X$ is an iterated blow-up of a Hirzebruch surface $\bF_d$ at $n$ points, then $\Pic X$ has the basis $h,s,E_1,\ldots,E_n$, where $h$ and $s$ are the preimages of the fibre and the negative section of $\bF_d$ respectively, and $E_i$ is the preimage of the exceptional divisor of the $i$-th blow-up. One has
$$h^2=0, h\cdot s=1, s^2=-d, E_i^2=-1, h\cdot E_i=s\cdot E_i=E_i\cdot E_j=0\quad\text{for $i\ne j$.}$$
Also one has
$$K_X=-2s-(2+d)h+E_1+\ldots+E_n.$$
We will use the following terminology
\begin{definition}
Let us say that $C\subset X$ is an \emph{$r$-curve} if $C$ is a smooth irreducible rational curve and $C^2=r$.
Let us say that $D\in\Pic X$ is an \emph{$r$-class} if $D^2+D\cdot K_X=-2$ and $D^2=r$. 
\end{definition}
If $C\subset X$ is a smooth irreducible rational curve then by the adjunction formula one has $C^2+C\cdot K_X=-2$, hence the divisor class of an $r$-curve is an $r$-class.

\medskip
Recall that a \emph{del Pezzo surface} is a surface $X$ whose anticanonical divisor $-K_X$ is ample, and that its \emph{degree} is the self-intersection $d=K_X^2$. One has that $1\le d\le 9$, and $X$ is either isomorphic to the blow-up of $\P^2$ at $r=9-d$ points in general position, or $X\cong\P^1\times\P^1$. 

\begin{remark}\label{rem:general-pos}
Recall that $r\leq 8$ points on $\P^2$ are in general position if no three lie on a line, no six on a conic, and (if $r=8$) not all eight lie on a  singular cubic such that one of them is the singular point. Equivalently, the blow-up of $\P^2$ at these points is a del Pezzo surface.

More generally, for a del Pezzo surface of degree $d$, we say that $r$ points are in general position if their blow-up gives again a del Pezzo surface. For $r\leq d-2$, this is the case exactly if for every $k\geq 1$, no $k$ of the points lie on a $(k-2)$-curve.
\end{remark}

\begin{lemma}[See~{\cite[Prop.  2.9, Cor. 2.15]{EXZ}, \cite[Lemma 4.7]{LinShinderZimmermann}}]
\label{lemma_classes}
Assume $X$ is a del Pezzo surface of degree $d$ and $D$ is an $r$-class on $X$. Then 
\begin{enumerate}
    \item if $r=-2$ then the pair $(\cO, \cO(D))$ is completely orthogonal: $h^i(\cO(D))=h^i(\cO(-D))=0$ for all $i$, 
    \item if $d\ge 3$ then any $0$-class is realised by a smooth irreducible curve and defines a morphism $X\to \P^1$ with general fibre $\P^1$,
    \item if $d\ge 4$ then any $1$-class is realised by a smooth irreducible curve and defines a birational morphism $X\to \P^2$.
\end{enumerate}
\end{lemma}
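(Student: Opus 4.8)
The plan is to treat (i) by a direct cohomological computation that uses only the positivity of $-K_X$, and to treat (ii), (iii) by first establishing that $D$ is nef and then that the linear system $|D|$ is base-point-free, after which the geometry is read off from $D^2$.

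For (i), recall that a del Pezzo surface is rational, so $\chi(\cO_X)=1$ and $h^i(\cO_X)=0$ for $i>0$. Since $D$ is a $(-2)$-class we have $D^2=-2$ and $D\cdot K_X=0$, so Riemann--Roch gives $\chi(\cO(D))=1+\tfrac12(D^2-D\cdot K_X)=0$, and the same holds for $-D$. By Serre duality $h^2(\cO(\pm D))=h^0(\cO(K_X\mp D))$; since $(K_X\mp D)\cdot(-K_X)=-d<0$ and $-K_X$ is ample, these classes are not effective, so $h^2(\cO(\pm D))=0$. As $(\pm D)\cdot(-K_X)=0$ while $-K_X$ is ample, neither $D$ nor $-D$ is effective, hence $h^0(\cO(\pm D))=0$ as well. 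Combined with $\chi=0$ this forces $h^1(\cO(\pm D))=0$, which is exactly complete orthogonality of the pair $(\cO,\cO(D))$.

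For (ii) and (iii) I would first record the numerics: Riemann--Roch gives $\chi(\cO(D))=2$ for a $0$-class and $\chi(\cO(D))=3$ for a $1$-class, and as above $(K_X-D)\cdot(-K_X)=-d+D\cdot K_X<0$ yields $h^2(\cO(D))=0$, so $h^0(\cO(D))\ge 2$, resp.\ $\ge 3$; in particular $D$ is effective. The next step is to show $D$ is nef. Since the Mori cone of a del Pezzo surface is generated by $(-1)$-curves (these are the only irreducible curves of negative self-intersection, as there are no $(-2)$-curves), it suffices to exclude a $(-1)$-curve $C$ with $D\cdot C<0$. Such a $C$ is a fixed component, so one writes the effective divisor as $mC+D''$ with $D''$ effective and $C\not\subset\mathrm{Supp}\,D''$; the identity $D\cdot(-K_X)=2$ (resp. $3$) bounds the multiplicity $m$ and the anticanonical degrees of the components of $D''$, leaving only finitely many numerical types, and a short intersection computation using $C^2=-1$ and the value of $D^2$ contradicts $D\cdot C<0$ in each of them (the only surviving case would require two $(-1)$-curves meeting in at least two points, which does not occur in the relevant degree range).

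With $D$ nef, the crux is base-point-freeness of $|D|$ together with the vanishing $h^1(\cO(D))=0$; this is where the degree hypotheses do the essential work, and I would invoke the theory of linear systems on del Pezzo surfaces as developed in the cited \cite{EXZ} and \cite{LinShinderZimmermann}, by which a nef class of the given anticanonical degree on a surface of degree $d\ge 3$ (resp. $d\ge 4$) is globally generated with $h^0(\cO(D))=2$ (resp. $3$). Granting this, the morphism $\varphi_{|D|}$ is determined by $D^2$: for a $0$-class, $D^2=0$ forces the image to be a curve, so $\varphi_{|D|}\colon X\to\P^1$ is a fibration, while for a $1$-class, $D^2=1$ with $h^0=3$ forces $\varphi_{|D|}$ onto a surface of degree $1$, necessarily a linearly embedded $\P^2$, i.e.\ a birational morphism $X\to\P^2$. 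Finally, by Bertini the general member of the base-point-free system $|D|$ is smooth, and by adjunction its arithmetic genus is $p_a=1+\tfrac12(D^2+D\cdot K_X)=0$; a smooth curve of arithmetic genus $0$ is connected and rational, so the general member realises $D$ by a smooth irreducible rational curve, giving the stated fibration, resp.\ birational morphism. The main obstacle is precisely the base-point-freeness (and the accompanying vanishing $h^1(\cO(D))=0$), which is the content secured by the degree bounds $d\ge 3$ and $d\ge 4$.
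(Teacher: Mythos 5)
First, a point of calibration: the paper contains no proof of Lemma~\ref{lemma_classes} at all --- it is imported wholesale by citation from \cite{EXZ} and \cite{LinShinderZimmermann} --- so your proposal has to be judged on its own merits rather than against an internal argument. Your part (i) is correct and complete: for a $(-2)$-class one has $D\cdot K_X=0$, so Riemann--Roch gives $\chi(\cO(\pm D))=0$; ampleness of $-K_X$ excludes effectivity of $\pm D$ and (via Serre duality) of $K_X\mp D$, killing $h^0$ and $h^2$; and then $h^1=0$ since $\chi=0$. This is the standard argument and is exactly what the cited sources do.

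For (ii) and (iii) your route --- effectivity by Riemann--Roch, nefness by testing against $(-1)$-curves, then base-point-freeness and reading the geometry off $D^2$ --- is the right one, and your compressed nefness analysis does close up (the critical configuration is indeed two $(-1)$-curves meeting in two points, which is impossible for $d\ge3$ because $-K_X$ is then very ample and two lines meet in at most one point). However, two things keep this from being a proof. First, you defer the crux --- base-point-freeness of $|D|$ and $h^1(\cO(D))=0$ --- to exactly the references the paper already cites, so the proposal does not actually replace the citation. It can be made self-contained: $D-K_X$ is ample by Nakai--Moishezon (its square is $d+4+3r>0$ and it meets every irreducible curve positively, $D$ being nef and $-K_X$ ample), so Kawamata--Viehweg/Ramanujam vanishing gives $h^i(\cO(D))=0$ for $i>0$, and Reider's theorem (applicable since $(D-K_X)^2=d+4+3r\ge5$) yields base-point-freeness of $|D|=|K_X+(D-K_X)|$: a Reider divisor $E$ would have $(D-K_X)\cdot E\le 1$, which is incompatible with $D$ nef, $-K_X$ ample, and the parity of $E^2+E\cdot K_X$ forced by adjunction. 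Second, your final step uses the claim that ``a smooth curve of arithmetic genus $0$ is connected'', which is false in general (an elliptic curve disjoint from a rational curve has $p_a=0$). Connectedness of the general member must instead come from the numerics; e.g.\ for a $0$-class, if $D=D_1+D_2$ with $D_1\cdot D_2=0$, then nefness and $D^2=0$ force $D_1^2=D_2^2=0$ and $D_i\cdot(-K_X)=1$, contradicting adjunction parity. Both defects are repairable, but as written there is a gap at the base-point-freeness step and a faulty auxiliary claim at the end.
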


There are symmetries between certain classes of divisors as the following lemma explains.

\begin{lemma}
\label{lemma_dualclasses}
Let $X$ be a del Pezzo surface of degree~$d$. 
\begin{enumerate}
    \item Assume $D,D'\in \Pic X$ and $D+D'=-K_X$. If $D$ is an $r$-class then $D'$ is an $r'$-class, where $r+r'=d-4$.
    \item Assume $D,D'\in \Pic X$ and $D+D'=-2K_X$. If $D$ is a $(d-2)$-class then $D'$ is also a $(d-2)$-class.
\end{enumerate}
\end{lemma}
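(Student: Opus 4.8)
The plan is to treat both parts as direct computations with the non-degenerate intersection form on $\Pic X$, using only the two facts that define an $r$-class, namely $D^2 = r$ and $D^2 + D\cdot K_X = -2$ (so that $D\cdot K_X = -2 - r$), together with $K_X^2 = d$. For each part I would substitute $D' = -K_X - D$ (respectively $D' = -2K_X - D$), expand $D'^2$ and $D'\cdot K_X$ by bilinearity, and check that the two defining conditions for $D'$ hold with the claimed value of $r'$.

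For part (1), first I would compute the self-intersection $D'^2 = (-K_X - D)^2 = K_X^2 + 2\,K_X\cdot D + D^2$. Substituting $K_X^2 = d$, $D\cdot K_X = -2 - r$ and $D^2 = r$ gives $D'^2 = d + 2(-2-r) + r = d - 4 - r$. Setting $r' := D'^2$ this already yields the asserted relation $r + r' = d - 4$. It then remains to verify that $D'$ is genuinely an $r'$-class, i.e. that $D'^2 + D'\cdot K_X = -2$; since $D'\cdot K_X = -K_X^2 - D\cdot K_X = -d + 2 + r$, adding these gives $(d-4-r) + (-d+2+r) = -2$, as required.

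For part (2), the computation is identical in spirit but with $D' = -2K_X - D$ and the assumption $D^2 = d-2$ (hence $D\cdot K_X = -2 - (d-2) = -d$). I would expand $D'^2 = 4K_X^2 + 4\,K_X\cdot D + D^2 = 4d + 4(-d) + (d-2) = d-2$, and then $D'\cdot K_X = -2K_X^2 - D\cdot K_X = -2d + d = -d$, so that $D'^2 + D'\cdot K_X = (d-2) + (-d) = -2$. Thus $D'$ is again a $(d-2)$-class, which is the symmetric statement claimed.

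Since everything reduces to bilinear expansion and substitution of the numerical identities $K_X^2 = d$ and $D\cdot K_X = -2 - D^2$, there is no genuine obstacle here; the only point requiring care is to verify \emph{both} conditions in the definition of an $r$-class (not merely the self-intersection $D'^2$), since it is the adjunction-type identity $D'^2 + D'\cdot K_X = -2$ that makes $D'$ an honest $r'$-class rather than just a class of the correct square.
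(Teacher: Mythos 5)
Your proof is correct, and it is exactly the argument the paper intends: the paper's proof of this lemma simply says ``follows from the definitions'' (with references to \cite[Lemma 2.5]{EXZ} and \cite[Lemma 4.7]{LinShinderZimmermann}), and your bilinear expansion of $D'^2$ and $D'\cdot K_X$ under the substitutions $D'=-K_X-D$ and $D'=-2K_X-D$ is precisely that computation, carried out in full. In particular, you rightly verify both defining conditions of an $r'$-class, including the adjunction-type identity $D'^2+D'\cdot K_X=-2$, which is the only point where care is needed.
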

\begin{proof}
Follows from the definitions, see~\cite[Lemma  2.5]{EXZ}, \cite[Lemma 4.7]{LinShinderZimmermann}.    
\end{proof}

For $-1\le r\le 1$ the number of $r$-classes on a del Pezzo surface of degree $d$ (different from $\bF_1$) is finite and given in Table~\ref{table_1}. Explicitly, assume $d\ge 5$. Then 
\begin{align}
\notag
    \text{the $-1$-classes are} \quad &E_i\quad \text{and $H-E_{i_1}-E_{i_2}$,}\\
\label{eq_0classes}
    \text{the $0$-classes are}\quad  & H-E_i \quad\text{and $2H-E_{i_1}-E_{i_2}-E_{i_3}-E_{i_4}$ (for $d= 5$),}\\
\label{eq_1classes}
    \text{$1$-classes are} \quad  &H \quad \text{and $2H-E_{i_1}-E_{i_2}-E_{i_3}$ (for $d\le 6$).}
\end{align}
On $\bF_0$, the $0$-classes are $h_1,h_2$ (the fibres of two $\P^1$-fibrations) and there are no $1$-classes.

Throughout the paper we will denote $0$-classes by $h_i$ and $1$-classes by $H_i$.

\begin{table}
    \centering
    $$
    \begin{array}{|c||c|c|c|c|c|}
    \hline
         d& 9&8&7& 6&5 \\ \hline
         r=-1&  0&0&3&  6&10\\  
         r=0&  0 &2& 2&   3&5\\  
         r=1&  1&0&1&  2&5  \\
    \hline
    \end{array}
    $$
    \caption{Number of $r$-classes on del Pezzo surfaces (different from $\bF_1$) of degrees $9$ to $5$}
    \label{table_1}
\end{table}

\section{Semi-orthogonal decompositions and their descent}

\subsection{Preliminaries on triangulated  categories}

Here we recall necessary notions, with an accent on SOD-s, exceptional collections, and mutations. Our references in this section are~\cite{BK} and~\cite{Huyb}.

Let $\bT$ be a triangulated category. We denote $\Hom^i(T_1,T_2):=\Hom(T_1,T_2[i])$ for $T_1,T_2\in\bT$, $i\in\Z$. Recall that a full subcategory $\bT'\subset \bT$ is called \emph{triangulated} if $\bT'$ is closed under shifts and taking cones, and \emph{thick} if $\bT'$ is triangulated and idempotent-closed in $\bT$.

\subsubsection{Semi-orthogonal decompositions and their mutations}
\begin{definition}
    A \emph{semi-orthogonal decomposition} (SOD) of $\bT$ is given by an ordered collection $\bA_1,\ldots,\bA_n$ of full triangulated subcategories in $\bT$ such that 
    \begin{enumerate}[label=(\alph*)]
        \item $\Hom(A_j,A_i)=0$ for all $i<j$ and $A_i\in\bA_i, A_j\in\bA_j$,
        \item $\bT$ is generated by $\bA_1,\ldots,\bA_n$ as a triangulated subcategory: $\bT$ is the smallest triangulated subcategory in $\bT$ that contains all of $\bA_i$.
    \end{enumerate}
    Semi-orthogonal decompositions are denoted by
    $$\bT=\langle\bA_1,\ldots,\bA_n\rangle.$$
\end{definition}
For a subcategory (or a collection of objects) $\bA\subset \bT$ the \emph{orthogonal subcategories}
$$\bA^{\perp}=\{T\mid \Hom^i(A,T)=0\quad\text{for all $A\in\bA$ and $i$}\}, \quad
^{\perp}\bA=\{T\mid \Hom^i(T,A)=0\quad\text{for all $A\in\bA$ and $i$}\}$$
are defined, they are full and triangulated.

Any component $\bA_i$ of an SOD is determined by the other components: one has 
$$\bA_i={}^\perp\langle\bA_1,\ldots,\bA_{i-1}\rangle\cap \langle\bA_{i+1},\ldots,\bA_n\rangle^\perp.$$
Hence the following trivial observation that will be frequently used in the paper:
\begin{lemma}
\label{lemma_thelastone}
    If $$\bT=\langle\bA_1,\ldots,\bA_n\rangle=\langle\bA'_1,\ldots,\bA'_n\rangle$$
    are two SOD-s and $\bA_i=\bA'_i$ for all $i=1,\ldots,n$, $i\ne k$, then $\bA_k=\bA'_k$ as well.
\end{lemma}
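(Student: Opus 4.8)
The plan is to apply directly the explicit formula for a single component of an SOD in terms of the remaining components, which is recorded immediately before the lemma. For the fixed index $k$, this formula reads
$$\bA_k = {}^\perp\langle \bA_1, \ldots, \bA_{k-1}\rangle \cap \langle \bA_{k+1}, \ldots, \bA_n\rangle^\perp,$$
and the analogous identity holds for the second decomposition, with $\bA'_k$ on the left-hand side and the primed components on the right. So the entire task reduces to checking that the two right-hand sides agree.

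First I would observe that the subcategory $\langle \bA_1, \ldots, \bA_{k-1}\rangle$ is, by definition, the smallest triangulated subcategory of $\bT$ containing the union $\bA_1 \cup \cdots \cup \bA_{k-1}$; in particular it is a function of the unordered collection of these subcategories, and of nothing else. Since $\bA_i = \bA'_i$ for every $i \neq k$, and in particular for all $i < k$, this gives $\langle \bA_1, \ldots, \bA_{k-1}\rangle = \langle \bA'_1, \ldots, \bA'_{k-1}\rangle$. The same reasoning applied to the indices $i > k$ yields $\langle \bA_{k+1}, \ldots, \bA_n\rangle = \langle \bA'_{k+1}, \ldots, \bA'_n\rangle$.

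Then I would pass to orthogonals: forming the left orthogonal ${}^\perp(-)$, the right orthogonal $(-)^\perp$, and the intersection all depend only on the subcategories involved. Hence the right-hand side of the displayed formula for $\bA_k$ is literally the same subcategory as the right-hand side of the corresponding formula for $\bA'_k$, and therefore $\bA_k = \bA'_k$, as claimed.

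There is essentially no hard step here: the whole content is carried by the determination-of-a-component formula recalled above, which is a standard consequence of the defining axioms of an SOD. The only point deserving an explicit word is that the generated subcategories $\langle \cdots \rangle$ are functions of the (unordered) collection of their arguments, so that coincidence of the individual components away from index $k$ forces coincidence of the two flanking generated subcategories; once this is noted, the conclusion is immediate.
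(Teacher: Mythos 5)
Your proof is correct and follows exactly the paper's own route: the paper states the formula $\bA_i={}^\perp\langle\bA_1,\ldots,\bA_{i-1}\rangle\cap \langle\bA_{i+1},\ldots,\bA_n\rangle^\perp$ immediately before the lemma and derives the lemma from it with the word ``Hence,'' which is precisely the deduction you spell out. Your additional remark that the generated subcategories $\langle\cdots\rangle$ depend only on the collection of components (so equality away from index $k$ forces equality of the flanking subcategories and hence of their orthogonals) is the right, and only, point to check.
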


\begin{definition}
A full subcategory $\bA\subset \bT$ is called \emph{left (right) admissible} if the inclusion functor 
$\bA\to \bT$ has a left (right) adjoint functors. 
A full subcategory $\bA\subset \bT$ is called \emph{admissible} if it is both left and right admissible.
\end{definition}

For a left (right) admissible subcategory $\bA\subset \bT$ there is a semi-orthogonal decomposition
$\bT=\langle\bA, {}^{\perp}\bA\rangle$ (resp. $\bT=\langle\bA^{\perp}, \bA\rangle$).
For an admissible subcategory $\bA\subset \bT$ one has both:
$$\langle\bA^{\perp}, \bA\rangle=\bT=\langle\bA, {}^{\perp}\bA\rangle.$$
Moreover, the orthogonals $\bA^{\perp}$ and ${}^{\perp}\bA$ are equivalent via \emph{mutation functors}
$$R_{\bA}\colon \bA^{\perp}\to {}^{\perp}\bA, \quad L_{\bA}\colon  {}^{\perp}\bA\to \bA^{\perp},$$
where $R_{\bA}$ is the restriction to $\bA^{\perp}$ of the right adjoint $\bT\to {}^{\perp}\bA$ to the inclusion, and 
$L_{\bA}$ is the restriction to ${}^{\perp}\bA$ of the left adjoint $\bT\to \bA^{\perp}$ to the inclusion. These mutation functors are inverse to each other.

Assume a semi-orthogonal decomposition 
$$\bT=\langle\bA_1,\ldots,\bA_n\rangle$$
consists of admissible subcategories. For $1\le i\le n-1$, one defines the $i$-th right mutation as the SOD
$$\bT=\langle\bA_1,\ldots,\bA_{i-1},\bA_{i+1},R_{\bA_{i+1}}\bA_i, \bA_{i+2},\ldots,\bA_n \rangle.$$
Note that  $\bA_i$  and $R_{\bA_{i+1}}\bA_i$ are equivalent via mutation functors $R_{\bA_{i+1}}$,
$L_{\bA_{i+1}}$.
 
Similarly, for $i=2,\ldots,n$ one defines $i$-th left  mutation  as the SOD
$$\bT=\langle\bA_1,\ldots,\bA_{i-2},L_{\bA_{i-1}}\bA_i, \bA_{i-1},\bA_{i+1},\ldots,\bA_n \rangle.$$
As above, $\bA_i$  is equivalent to $L_{\bA_{i-1}}\bA_i$ via mutation functors. 

Assume that all subcategories involved are admissible, then right and left mutations define operations on the set of SOD-s of a triangulated category $\bT$ into $n$ components, we will denote them by $R_i$ (for $i=1,\ldots,n-1$) and $L_i$ (for $i=2,\ldots,n$). 
Operations $R_i$ and $L_{i+1}$ are inverse to each other for $i=1,\ldots,n-1$. 
Moreover, mutations define an action of the $n$-th braid group on the set of SOD-s of $\bT$ into $n$ components. We will say that two SOD-s are \emph{mutation-equivalent} if there is a sequence of mutations sending one into another, i.e., they belong to the same orbit.

\subsubsection{Exceptional objects}
Let $\kk$ be a field. Recall that a category  is \emph{$\kk$-linear} if its $\Hom$ spaces are $\kk$-vector spaces and the composition is $\kk$-bilinear. A $\kk$-linear category is \emph{$\Hom$-finite} if all $\Hom$-spaces are finite-dimensional. For a triangulated  $\kk$-linear category $\bT$ and $T_1,T_2\in\bT$ we denote 
$$\Hom^{\bullet}(T_1,T_2):=\oplus_{i\in\Z} \Hom^i(T_1,T_2),$$
this is a graded vector space. Category $\bT$ is called \emph{$\Ext$-finite} if the space $\Hom^{\bullet}(T_1,T_2)$
is finite-dimensional for all $T_1,T_2$ in $\bT$. 

Let $\bT$ be a triangulated $\kk$-linear category. 
\begin{definition}
\label{def_exceptionalobject}
An object $E\in\bT$ is called \emph{exceptional} if 
\begin{itemize}
    \item $\End(E)=\kk$, and
    \item $\Hom^i(E,E)=0$ for $i\ne 0$.
\end{itemize}
More generally, an object $E\in\bT$ is called \emph{weakly exceptional} if 
\begin{itemize}
    \item $\End(E)$ is a finite-dimensional division $\kk$-algebra, and
    \item $\Hom^i(E,E)=0$ for $i\ne 0$.
\end{itemize}
If $E$ is weakly exceptional and $\End(E)=D$ we say that $E$ is \emph{$D$-exceptional}. Note also that exceptional and weakly exceptional are the same if $\kk$ is  algebraically closed.
\end{definition}
\begin{definition}
\label{def_exceptionalcollection}
A collection of objects $(E_1,\ldots,E_n)$ is called \emph{exceptional (weakly exceptional)} if all $E_i$ are 
exceptional (weakly exceptional) and 
\begin{itemize}
    \item $\Hom^i(E_l,E_k)=0$ for all $i$ and $k<l$.
\end{itemize}
A weakly exceptional collection of objects $(E_1,\ldots,E_n)$ is called a \emph{block} if 
\begin{itemize}
    \item $\Hom^i(E_l,E_k)=0$ for all $i$ and $k\ne l$.
\end{itemize}
\end{definition}
If for some weakly  exceptional collection $(E_1,\ldots,E_n)$ a subcollection $(E_p,\ldots,E_q)$ is a block, it is standard to write it vertically:  
$$\left( E_1,\ldots,E_{p-1},
\begin{smallmatrix}
E_p\\ \ldots \\ E_q
\end{smallmatrix} ,E_{q+1},\ldots,E_n\right)$$

\begin{definition}
A weakly exceptional collection of objects $(E_1,\ldots,E_n)$ in $\bT$ is called \emph{full} if these objects generate $\bT$ as a triangulated category: $\bT$ is the smallest triangulated subcategory in $\bT$ that contains all of $E_i$.
\end{definition}

Let $\bT$ be an $\Ext$-finite $\kk$-linear triangulated category. For any $D$-exceptional object $E$ in $\bT$, the subcategory $\langle E\rangle$ generated by $E$ is admissible. The right adjoint functor $\bT\to \langle E\rangle$ is given by 
$$\bT\ni T\mapsto \Hom^{\bullet}(E,T)\otimes_D E=\oplus_i \Hom^i(E,T)\otimes_D E[-i]\in \langle E\rangle,$$ 
and the left adjoint functor is given by 
$$\bT\ni T\mapsto \Hom^{\bullet}(T,E)^*\otimes_D E=\oplus_i \Hom^i(T,E)^*\otimes_D E[i]\in \langle E\rangle.$$
Consequently, mutation functors $R_{E}=R_{\langle E\rangle}$ and $L_E=L_{\langle E\rangle}$ via $E$ fit into exact triangles 
\begin{gather}
\label{def_EmutationR}
R_E(T)\to T\xrightarrow{\mathrm{ev}} \Hom^{\bullet}(T,E)^*\otimes_D E \to R_E(T)[1],\\
\label{def_EmutationL} L_E(T)[-1]\to  \Hom^{\bullet}(E,T)\otimes_D E \xrightarrow{\mathrm{ev}} T\to  L_E(T),
\end{gather}
where the maps marked by $\mathrm{ev}$ are canonical evaluation maps.

It follows that any weakly exceptional and full collection of objects $E_1,\ldots,E_n$ in $\bT$ generates a semi-orthogonal decomposition
$$\bT=\langle \langle E_1\rangle, \ldots, \langle E_n\rangle
\rangle.$$

\subsubsection{Serre functor}

Let $\bT$ be a $\Hom$-finite $\kk$-linear category. A $\kk$-linear functor $S\colon \bT\to \bT$ is called a \emph{Serre functor} if
\begin{enumerate}[label=(\alph*)]
    \item $S$  is an equivalence, and
    \item there are isomorphisms of $\kk$-vector spaces for all $T_1,T_2\in\bT$
    \begin{equation}
        \label{eq_serre}
        \Hom(T_1,T_2)\xrightarrow{\sim} \Hom(T_2,S(T_1))^*,
    \end{equation}
    natural in $T_1,T_2$ (where $*$ denotes the dual $\kk$-vector space).
\end{enumerate}
A Serre functor, if exists, is unique up to an isomorphism. A Serre functor on a triangulated category is automatically exact. Equality~\eqref{eq_serre} is known as \emph{Serre duality}.

Assume $\bT$ has a Serre functor and
 $\bT=\langle\bA_1,\ldots,\bA_n\rangle$
is an SOD with admissible components. Then there are SOD-s
$$\bT=\langle S(\bA_n),\bA_1,\ldots,\bA_{n-1}\rangle, \qquad \bT=\langle \bA_2,\ldots,\bA_{n}, S^{-1}(\bA_1)\rangle.$$
In particular, by Lemma~\ref{lemma_thelastone}
\begin{equation}
    \label{eq_SerreMutat1}
    S(\bA_n)=L_{\bA_1}L_{\bA_2}\ldots L_{\bA_{n-1}}(\bA_n), \qquad
S^{-1}(\bA_1)=R_{\bA_n}R_{\bA_{n-1}}\ldots R_{\bA_{2}}(\bA_1).
\end{equation}

\subsection{Preliminaries on derived categories of coherent sheaves}\label{sec_derivedcat_coh}
Here we recall necessary background on derived categories of coherent sheaves on algebraic varieties and   provide some types of mutations for exceptional collections on surfaces that will be used in the proof of main results.

Recall that all varieties are supposed to be smooth and projective.  
We denote the bounded derived category of coherent sheaves on a variety $X$ over a field $\kk$ by $\Db(X)$. This is a $\kk$-linear $\Ext$-finite triangulated category. It has a Serre functor given by
$$S_X(-)=-\otimes \omega_X[\dim X].$$
It follows from~\cite{BK} that $\Db(X)$ and its left or right admissible subcategories are saturated, in particular, any left or right admissible subcategory is admissible. Therefore, any component of any SOD consists of admissible subcategories, and all mutations exist. 

For a morphism $f\colon X\to Y$ between algebraic varieties, we will denote by $f_*\colon \Db(X)\to \Db(Y)$ and $f^*\colon \Db(Y)\to \Db(X)$ the \textbf{derived} push-forward and pull-back functors respectively, unless  otherwise stated. Also, for two complexes of coherent sheaves, $\otimes$ will denote their \textbf{derived} tensor product.

\begin{example}[Beilinson~{\cite{Beilinson}}]
\label{example_Beilinson}
    The collection of sheaves $\cO(a-n),\ldots,\cO(a-1),\cO(a)$ on $\P^n$ is full and exceptional for any $a\in \Z$. Therefore, there are semi-orthogonal decompositions $\cA_a$:
    \begin{equation*}
              \Db(\P^n)=\langle\langle\cO(a-n)\rangle, \ldots, \langle\cO(a-1)\rangle,\langle\cO(a)\rangle\rangle.
    \end{equation*}
    It follows from Lemma~\ref{lemma_thelastone} or from~\eqref{eq_SerreMutat1} that  
    $$L_{\langle\cO(a-n)\rangle}\ldots L_{\langle\cO(a-1)\rangle}(\langle\cO(a)\rangle)\cong \langle\cO(a-n-1)\rangle,$$ hence $\cA_{a-1}$ is mutation-equivalent to $\cA_a$.
    Therefore, these SOD-s are mutation-equivalent for all $a\in\Z$.
\end{example}

\begin{lemma}[Bridgeland~{\cite[Prop. 2.3]{Bridgeland-flops}}]
\label{lemma_Bridgeland}
Let $f\colon X\to Y$ be a rational derived contraction between smooth varieties (see Definition~\ref{def_rdc}). Then the derived  inverse image functor
$$f^*\colon \Db(Y)\to \Db(X)$$
is fully faithful. 
\end{lemma}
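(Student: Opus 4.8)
The plan is to run the standard adjunction-and-projection-formula argument; the only geometric input is the contraction identity $f_*\cO_X=\cO_Y$, where, following the convention of Section~\ref{sec_derivedcat_coh}, $f_*$ denotes the \emph{derived} pushforward. First I observe that since $f$ is a rational derived contraction, Proposition~\ref{prop_elementary} guarantees it is a derived contraction, so $f_*\cO_X=\cO_Y$ in $\Db(Y)$ by Definition~\ref{def_dc}. Because $Y$ is smooth, every object of $\Db(Y)$ is perfect, so the derived pullback $f^*$ genuinely maps $\Db(Y)$ to $\Db(X)$; and since $f$ is proper, $f_*$ maps $\Db(X)$ to $\Db(Y)$ and is right adjoint to $f^*$.

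I would then invoke the general criterion that a functor possessing a right adjoint is fully faithful if and only if the unit of the adjunction is an isomorphism. Hence it suffices to prove that the unit $\eta\colon\id_{\Db(Y)}\to f_*f^*$ is a natural isomorphism. For $F\in\Db(Y)$ the projection formula supplies a natural isomorphism
$$f_*f^*F \;=\; f_*\bigl(f^*F\otimes\cO_X\bigr)\;\xrightarrow{\ \sim\ }\; F\otimes f_*\cO_X,$$
and feeding in $f_*\cO_X=\cO_Y$ gives $f_*f^*F\cong F\otimes\cO_Y\cong F$.

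The single delicate point, which is where I expect the real work to sit, is to confirm that this composite isomorphism is literally the unit $\eta_F$ rather than merely an abstract identification $f_*f^*F\cong F$. This is a naturality/monoidality check: the projection-formula isomorphism is natural in $F$ and compatible with the monoidal structure, and under it the canonical map $\cO_Y\to f_*\cO_X$ (the component $\eta_{\cO_Y}$ of the unit, using $f^*\cO_Y\cong\cO_X$) is an isomorphism by the contraction hypothesis. Chasing this naturality identifies $\eta_F$ with the displayed isomorphism for all $F$, so $\eta$ is invertible and $f^*$ is fully faithful. Alternatively, to sidestep the unit entirely, one argues directly on morphism spaces: for $A,B\in\Db(Y)$,
$$\Hom_{\Db(X)}(f^*A,f^*B)\cong\Hom_{\Db(Y)}(A,f_*f^*B)\cong\Hom_{\Db(Y)}(A,\,B\otimes f_*\cO_X)\cong\Hom_{\Db(Y)}(A,B),$$
using adjunction, the projection formula, and the contraction identity in turn, and then checks that the resulting composite is precisely the map $g\mapsto f^*g$ induced by the functor.
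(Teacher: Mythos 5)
Your proposal is correct and follows essentially the same route as the paper's proof: both identify the adjunction unit $\id_{\Db(Y)}\to f_*f^*$ with tensoring by $\cO_Y\to f_*\cO_X=\cO_Y$ via the projection formula, conclude it is an isomorphism, and deduce full faithfulness from the Hom-space computation $\Hom_X(f^*F_1,f^*F_2)\cong\Hom_Y(F_1,f_*f^*F_2)\cong\Hom_Y(F_1,F_2)$. Your extra care about checking that the projection-formula isomorphism really is the unit is a point the paper asserts without elaboration, so your write-up is if anything slightly more complete.
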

\begin{proof}
The adjunction unit $\id_{\Db(Y)}\to f_*f^*$ is given by tensoring with the map $\cO_Y\to f_*\cO_X=\cO_Y$ by the projection formula, hence is an isomorphism. It follows then  that $f^*$ is fully faithful:
$$\Hom_X(f^*F_1,f^*F_2)=\Hom_Y(F_1,f_*f^*F_2)=\Hom_Y(F_1,F_2)$$
for all $F_1,F_2\in\Db(Y)$.
\end{proof}

For a blow-up at a smooth centre one can say more:
\begin{proposition}[Orlov's blow-up formula,~{\cite{Orlov-blowup}}]\label{blowupformula}
    Let $Z\subset X$ be smooth varieties, let $r=\codim_XZ$. Consider the blow-up of $X$ at $Z$:
    $$\xymatrix{\tZ \ar[r]^j \ar[d]^{\pi} & \tX\ar[d]^p\\ Z\ar[r] & X.}$$
    Then one has  SOD-s
    \begin{multline*}
    \Db(\tX)=\langle j_*(\pi^*\Db(Z)\otimes \cO_{\tZ}(-r+1)), \ldots, 
    j_*(\pi^*\Db(Z)\otimes \cO_{\tZ}(-1)), p^*\Db(X)\rangle=\\ =
    \langle p^*\Db(X),j_*(\pi^*\Db(Z)\otimes \cO_{\tZ}), \ldots, 
    j_*(\pi^*\Db(Z)\otimes \cO_{\tZ}(r-2))\rangle,    
    \end{multline*}
    where the component are the images of fully faithful functors $\Db(Z)\to \Db(\tX)$ and  $\Db(X)\to \Db(\tX)$.
q\end{proposition}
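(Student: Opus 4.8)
The plan is to follow Orlov's original argument, checking the three defining properties of each claimed SOD—full faithfulness of the generating functors, semi-orthogonality in the prescribed order, and generation—after recording the geometry of the blow-up and one key adjunction identity from which all the Hom-computations follow.

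First I would set up the geometry, writing $i\colon Z\hookrightarrow X$ for the centre so that $p\circ j=i\circ\pi$. The exceptional divisor $j\colon\tZ\hookrightarrow\tX$ is the projective bundle $\pi\colon\tZ=\P(N_{Z/X})\to Z$, a $\P^{r-1}$-bundle, with the convention $\cO_{\tZ}(-1)=\cO_{\tX}(\tZ)|_{\tZ}$. The only cohomological input needed is the fibrewise vanishing $\rR\pi_*\cO_{\tZ}(m)=0$ for $-r+1\le m\le-1$, together with $\rR\pi_*\cO_{\tZ}=\cO_Z$. Next I would record the key identity: since $\tZ$ is a smooth divisor with conormal bundle $\cO_{\tZ}(1)$, one has $j^!(-)\cong Lj^*(-)\otimes\cO_{\tZ}(-1)[-1]$, and from the Koszul resolution $j_*\cO_{\tZ}=[\cO_{\tX}(-\tZ)\to\cO_{\tX}]$ one computes $Lj^*j_*\cF\cong\cF\oplus(\cF\otimes\cO_{\tZ}(1))[1]$; combining these yields
\[
j^!j_*\cF\cong\cF\oplus(\cF\otimes\cO_{\tZ}(-1))[-1].
\]

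Full faithfulness of $p^*$ is immediate from Lemma~\ref{lemma_Bridgeland}, since a blow-up at a smooth centre is a rational derived contraction, $\rR p_*\cO_{\tX}=\cO_X$. For the functors $\Phi_k:=j_*(\pi^*(-)\otimes\cO_{\tZ}(k))\colon\Db(Z)\to\Db(\tX)$, I would combine the key identity with the $(\pi^*,\rR\pi_*)$-adjunction and the projection formula to obtain, for all twists,
\[
\Hom_{\tX}^\bullet(\Phi_lG,\Phi_kF)\cong\Hom_Z^\bullet(G,F\otimes\rR\pi_*\cO_{\tZ}(k-l))\oplus\Hom_Z^\bullet(G,F\otimes\rR\pi_*\cO_{\tZ}(k-l-1))[-1].
\]
Taking $k=l$ and using $\rR\pi_*\cO_{\tZ}=\cO_Z$, $\rR\pi_*\cO_{\tZ}(-1)=0$ collapses this to $\Hom_Z^\bullet(F,G)$, so each $\Phi_k$ is fully faithful. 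The same adjunction chain, using $p\circ j=i\circ\pi$, also gives
\[
\Hom_{\tX}^\bullet(p^*M,\Phi_kF)\cong\Hom_Z^\bullet(Li^*M,F\otimes\rR\pi_*\cO_{\tZ}(k)),\qquad
\Hom_{\tX}^\bullet(\Phi_kF,p^*M)\cong\Hom_Z^\bullet(F,Li^*M\otimes\rR\pi_*\cO_{\tZ}(-1-k))[-1].
\]
For the first SOD, with twists $k\in\{-r+1,\dots,-1\}$ to the left of $p^*\Db(X)$, the arguments $k-l$, $k-l-1$ and $k$ all lie in $[-r+1,-1]$, so the displayed groups vanish and semi-orthogonality holds; the second SOD, with $p^*\Db(X)$ first and twists $k\in\{0,\dots,r-2\}$, is identical after shifting, now using the vanishing of $\rR\pi_*\cO_{\tZ}(-1-k)$.

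Generation is the step I expect to be the main obstacle, since the computations above only exhibit a semi-orthogonal sequence of admissible subcategories and one must show the listed pieces exhaust $\Db(\tX)$. The plan is the standard two-step argument. First, $\Db(\tX)$ is classically generated by $p^*\Db(X)$ together with $j_*\Db(\tZ)$: this follows from $\rR p_*\cO_{\tX}=\cO_X$ and the divisor sequences $0\to\cO_{\tX}((b-1)\tZ)\to\cO_{\tX}(b\tZ)\to j_*\cO_{\tZ}(-b)\to0$, which reduce the twists $p^*\cO_X(a)\otimes\cO_{\tX}(b\tZ)$, a classical generating set on the projective variety $\tX$, to objects of $p^*\Db(X)$ and $j_*\Db(\tZ)$. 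Second, the projective bundle decomposition $\Db(\tZ)=\langle\pi^*\Db(Z)\otimes\cO_{\tZ}(0),\dots,\pi^*\Db(Z)\otimes\cO_{\tZ}(r-1)\rangle$ (Beilinson's Example~\ref{example_Beilinson} applied fibrewise) expresses $j_*\Db(\tZ)$ through the $\Phi_k$; the exact triangles relating neighbouring $\Phi_k$'s and $p^*\Db(X)$, governed by the same vanishing ranges of $\rR\pi_*$, then slide any fibre-twist into the prescribed window, so the listed pieces already generate. This proves the first SOD, and the second follows either verbatim with the shifted window or by a mutation using the Serre functor.
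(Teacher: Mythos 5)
You are reconstructing a statement the paper itself does not prove: Proposition~\ref{blowupformula} is quoted from \cite{Orlov-blowup} with no argument given, so the comparison is with Orlov's original proof, which your proposal follows in outline. The parts you carry out are correct and standard: the identity $j^{!}(-)\cong Lj^*(-)\otimes\cO_{\tZ}(-1)[-1]$, the resulting $\Hom$-formulas, full faithfulness of the functors $\Phi_k$ and of $p^*$ (the latter exactly as in Lemma~\ref{lemma_Bridgeland}), and semi-orthogonality in both prescribed orders, with the twist bookkeeping checking out. Two harmless remarks: the direct-sum decomposition of $Lj^*j_*\cF$ is safer used as an exact triangle with outer terms $\cF$ and $\cF\otimes\cO_{\tZ}(1)[1]$ (every conclusion you draw needs only the triangle, since in each application at least one outer term dies under $\rR\pi_*$); and ``collapses this to $\Hom^\bullet_Z(F,G)$'' should read $\Hom^\bullet_Z(G,F)$. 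The first half of your generation argument, that $p^*\Db(X)$ together with $j_*\Db(\tZ)$ generates $\Db(\tX)$ via the twists $p^*\cO_X(a)\otimes\cO_{\tX}(b\tZ)$ and the divisor sequences, is also correct.

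The genuine gap is the final ``sliding'' step of generation. The projective-bundle decomposition gives generation of $j_*\Db(\tZ)$ by the $\Phi_k$ with $k$ in a window of length $r$, say $k\in[-r+1,0]$, whereas the claimed SOD keeps only the $r-1$ twists $k\in[-r+1,-1]$ and replaces $\Phi_0\Db(Z)$ by $p^*\Db(X)$. This trade is \emph{not} ``governed by the same vanishing ranges of $\rR\pi_*$'', and the triangles you allude to do not exist at that level of generality: the relative Koszul resolution on $\tZ=\P(N_{Z/X})$ expresses $\cO_{\tZ}$ through $\pi^*(-)\otimes\cO_{\tZ}(-1),\ldots,\pi^*(-)\otimes\cO_{\tZ}(-r)$, i.e.\ it writes $\Phi_0$ in terms of $\Phi_{-1},\ldots,\Phi_{-r}$ and so overshoots the window; while the divisor sequence $0\to\cO_{\tX}(-\tZ)\to\cO_{\tX}\to j_*\cO_{\tZ}\to 0$ expresses $\Phi_0(i^*M)$ through $p^*M$ only at the cost of the object $p^*M\otimes\cO_{\tX}(-\tZ)$, whose membership in the claimed subcategory is precisely what is in question, so the argument is circular. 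The missing ingredient, which is the key lemma of Orlov's proof, is the computation of the derived pullback $Lp^*(i_*F)$: for a sheaf $F$ on $Z$ its cohomology sheaves are $\Tor^{\cO_X}_a(\cO_{\tX},i_*F)\cong j_*(\pi^*F\otimes\Omega^a_{\pi}(a))$, and for $1\le a\le r-1$ the bundle $\Omega^a_\pi(a)$ lies in the subcategory of $\Db(\tZ)$ generated by $\pi^*\Db(Z)\otimes\cO_{\tZ}(k)$ with $k\in[-r+1,-1]$. Consequently the cone of the counit $p^*i_*F\to j_*\pi^*F$ lies in $\langle\Phi_{-r+1}\Db(Z),\ldots,\Phi_{-1}\Db(Z)\rangle$, and this is exactly what lets one trade $\Phi_0\Db(Z)$ for $p^*\Db(X)$ and conclude generation (an alternative is Huybrechts' route: show the right orthogonal of the candidate SOD vanishes using $\rR p_*N=0$, a support-and-d\'evissage argument on $\tZ$, and the orthogonality relations, but that too requires input beyond the $\rR\pi_*$ vanishings). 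Without one of these, the generation claim, which you yourself identify as the main obstacle, remains unproved.
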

As a special case one gets (using Lemma~\ref{lemma_thelastone})
\begin{proposition}
\label{prop_blowup2}
    Let $X$ be a surface over $\kk$, let $P_1,\ldots,P_n\in X$ be $\kk$-points. Let $\tX\xrightarrow{p} X$ be the blow-up at $P_1,\ldots,P_n$ with the exceptional divisors $E_1,\ldots,E_n$. Then the objects $\cO_{E_i}(j)$ are exceptional for $i=1,\ldots,n$ and $j\in\Z$, and 
    $$ \left\langle \left\langle\begin{smallmatrix}
\cO_{E_{1}}(-1)\\ \ldots \\ \cO_{E_{n}}(-1)
\end{smallmatrix} \right\rangle, p^*\Db(X)\right\rangle \quad\text{and}\quad
\left\langle p^*\Db(X),  \left\langle\begin{smallmatrix}
\cO_{E_{1}}\\ \ldots \\ \cO_{E_{n}}
\end{smallmatrix} \right\rangle\right\rangle$$
are mutation-equivalent SOD-s of $\Db(\tilde X)$.
\end{proposition}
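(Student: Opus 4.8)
The plan is to read everything off Orlov's blow-up formula (Proposition~\ref{blowupformula}) specialised to the case where the centre is a finite reduced set of $\kk$-points, so that the codimension is $r=2$ and only a single twist survives on each side of the formula. First I would set $Z=\{P_1,\ldots,P_n\}$, taking the points distinct; this is a smooth closed subscheme with $\Db(Z)\cong\bigoplus_{i=1}^n\Db(\Spec\kk)$, generated by the skyscrapers $\cO_{P_i}$, which are mutually completely orthogonal and hence form a block. The exceptional locus is $\tZ=\bigsqcup_i E_i$ with each $E_i\cong\P^1_\kk$, and $\pi\colon\tZ\to Z$ restricts to the structure map on each component.

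The key step is identifying the image objects. Since Orlov's relative bundle satisfies $\cO_{\tZ}(-1)=\cO_{\tX}(\tZ)|_{\tZ}$, which has degree $-1$ on each $E_i$, one computes $j_*(\pi^*\cO_{P_i}\otimes\cO_{\tZ}(k))=\cO_{E_i}(k)$ for every $k\in\Z$. For $r=2$ the two displayed forms of Proposition~\ref{blowupformula} carry only the twists $k=-1$ and $k=0$ respectively, so they read
\[
\Db(\tX)=\langle \cB_{-1}, p^*\Db(X)\rangle=\langle p^*\Db(X), \cB_0\rangle,
\]
where $\cB_k=\langle \cO_{E_1}(k),\ldots,\cO_{E_n}(k)\rangle$. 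The full faithfulness built into Orlov's formula gives $\RHom_{\tX}(\cO_{E_i}(k),\cO_{E_{i'}}(k))=\RHom_Z(\cO_{P_i},\cO_{P_{i'}})$, which is $\kk$ for $i=i'$ and $0$ otherwise; this simultaneously proves that each $\cO_{E_i}(j)$ is exceptional and that each $\cB_k$ is a block. (Alternatively one checks exceptionality directly: writing $\cO_{E_i}(j)=\iota_*\cL$ with $\cL=\cO_{\P^1}(j)$ and using $\iota^*\iota_*\cL\cong\cL\oplus(\cL\otimes\cO_{E_i}(1))[1]$, since $N_{E_i}=\cO_{E_i}(-1)$, adjunction yields $\RHom_{\tX}(\cO_{E_i}(j),\cO_{E_i}(j))=\kk$.) These are exactly the two SODs in the statement.

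For the mutation-equivalence I would avoid any explicit mutation calculation. Applying the right mutation $R_1$ to $\langle \cB_{-1}, p^*\Db(X)\rangle$ produces an SOD of the form $\langle p^*\Db(X), R_{p^*\Db(X)}\cB_{-1}\rangle$. Both this SOD and the second form $\langle p^*\Db(X), \cB_0\rangle$ have the same first component $p^*\Db(X)$, so Lemma~\ref{lemma_thelastone} forces $R_{p^*\Db(X)}\cB_{-1}=\cB_0$. Hence the second SOD is literally one right mutation away from the first, which is what we want.

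The only delicate point I expect is the convention bookkeeping in the object identification: pinning down that Orlov's $\cO_{\tZ}(-1)$ restricts to the degree $-1$ bundle on $E_i$, and that $j_*$ of the surviving twists matches the sheaves $\cO_{E_i}(-1)$ and $\cO_{E_i}$ named in the statement. Once that matching is correct, both SOD statements and the mutation-equivalence are formal consequences of Proposition~\ref{blowupformula} and Lemma~\ref{lemma_thelastone}, with no further computation needed.
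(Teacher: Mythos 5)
Your proposal is correct and follows essentially the same route as the paper: the paper derives Proposition~\ref{prop_blowup2} precisely as the $r=2$ special case of Orlov's blow-up formula (Proposition~\ref{blowupformula}), with the mutation-equivalence pinned down via Lemma~\ref{lemma_thelastone}, exactly as you do. Your write-up merely fills in the details (identification $j_*(\pi^*\cO_{P_i}\otimes\cO_{\tZ}(k))\cong\cO_{E_i}(k)$, exceptionality, and the single right mutation) that the paper leaves implicit.
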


\begin{remark}\label{rem:leftmutation}
Let $E\in\Coh(X)$ be an exceptional vector bundle, and let $T\in\Coh(X)$. Then $\Hom^\bullet(E,T)\cong \rH^\bullet(X,T\otimes E^\vee)$.
If $\Hom^\bullet(E,T)=\kk^{\oplus r}[-d]$ (i.e. concentrated in degree $d$), then one can sometimes determine the exact triangle \eqref{def_EmutationL} (and hence the left mutation $L_E(T)$) with an exact sequence of sheaves: 
\begin{enumerate}
    \item If $d=0$ and the evaluation morphism $\ev\colon E^{\oplus r}\to T$ is surjective then $L_E(T)=K[1]$, where $K$ fits into an exact sequence of sheaves $0\to K \to  E^{\oplus r}\xrightarrow{\ev} T \to 0$.
    \item If $d=1$  then  $L_E(T)$ is given by the universal extension: $L_E(T)=A$ is the sheaf fitting into the exact sequence of sheaves $0\to T \to A\to   E^{\oplus r}\to 0$ induced by $\ev\colon E^{\oplus r}\to T[1]$.
\end{enumerate}
\end{remark}

Three following lemmas are standard, we include them for the convenience of the readers.

\begin{lemma}
\label{lemma_3mutations}
Let $X$ be a surface with $\rH^{\bullet}(X,\cO_X)=\kk$,
let $E$ be a $(-1)$-curve on $X$ and $\cO(D)$ be a line bundle on $X$. Then 
\begin{enumerate}
    \item\label{it:3mut_exceptional} the pairs $(\cO(D),\cO_E(a))$ and $(\cO_E(a),\cO(D-E))$ are exceptional if and only if $a=D\cdot E$;
    \item\label{it:3mut_mutations} three pairs below are exceptional and mutations of each other (up to a shift) as shown, where  $a=D\cdot E$:
$$\xymatrix{& (\cO(D-E), \cO(D)) \ar@<1mm>[rd]^{L_2}  \ar@<1mm>[ld]^(0.3){R_1}& \\ (\cO(D), \cO_E(a)) 
\ar@<1mm>[ru]^{L_2}  \ar@<1mm>[rr]^{R_1}& & (\cO_E(a),\cO(D-E)). \ar@<1mm>[ll]^{L_2}  \ar@<1mm>[lu]^(0.7){R_1}}$$
\end{enumerate} 
\end{lemma}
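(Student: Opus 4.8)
The plan is to reduce every computation to cohomology on $E\cong\P^1$ via the adjunction for the closed embedding $i\colon E\hookrightarrow X$, and then to extract all three mutations from a single short exact sequence. Throughout I use that $E$ is a $(-1)$-curve, so $E\cong\P^1$, $i^*\cO(D)=\cO_{\P^1}(D\cdot E)$, and the normal bundle is $\cO_E(E)=\cO_{\P^1}(-1)$.

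\emph{Part (1).} First I would note that any line bundle is exceptional because $\Hom^\bullet(\mathcal{L},\mathcal{L})=\mathrm{H}^\bullet(X,\cO_X)=\kk$, so $\cO(D)$ and $\cO(D-E)$ are exceptional; and $\cO_E(a)$ is exceptional by Proposition~\ref{prop_blowup2} (or a direct adjunction computation). It then remains to verify the two semi-orthogonality conditions. Using Grothendieck duality for $i$, namely $i^!(-)=i^*(-)\otimes\cO_E(E)[-1]$, I would compute
$$\Hom^\bullet(\cO_E(a),\cO(D))=\RGamma(\P^1,\cO(D\cdot E-1-a))[-1],\qquad \Hom^\bullet(\cO(D-E),\cO_E(a))=\RGamma(\P^1,\cO(a-1-D\cdot E)),$$
where the second uses $\cO_E(a)\otimes\cO(E-D)=i_*\cO_{\P^1}(a-1-D\cdot E)$. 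Each vanishes in all degrees exactly when the line bundle on $\P^1$ has degree $-1$, i.e.\ when $a=D\cdot E$, which gives (1).

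\emph{Part (2).} Set $a=D\cdot E$ and write $P_1=(\cO(D-E),\cO(D))$, $P_2=(\cO(D),\cO_E(a))$, $P_3=(\cO_E(a),\cO(D-E))$. The key object is the fundamental short exact sequence obtained by twisting the ideal sequence $0\to\cO_X(-E)\to\cO_X\to\cO_E\to 0$ by $\cO(D)$:
$$0\to\cO(D-E)\to\cO(D)\to\cO_E(a)\to 0.$$
All three pairs are exceptional by (1), so it remains to match the mutation functors. Since $R_i$ and $L_{i+1}$ are mutually inverse, it suffices to check the three $R_1$-arrows of the diagram. The relevant $\Hom$-spaces, computed as in Part (1), are one-dimensional and concentrated in a single degree:
$$\Hom^\bullet(\cO(D-E),\cO(D))=\kk,\quad \Hom^\bullet(\cO(D),\cO_E(a))=\kk,\quad \Hom^\bullet(\cO_E(a),\cO(D-E))=\kk[-1].$$
Feeding these into the right-mutation triangle~\eqref{def_EmutationR} and identifying each evaluation map with the corresponding map of the fundamental sequence, I expect to read off $R_{\cO(D)}\cO(D-E)=\cO_E(a)[-1]$, then $R_{\cO_E(a)}\cO(D)=\cO(D-E)$, and finally $R_{\cO(D-E)}\cO_E(a)=\cO(D)$, which are precisely the arrows $P_1\xrightarrow{R_1}P_2\xrightarrow{R_1}P_3\xrightarrow{R_1}P_1$ up to shift; the $L_2$-arrows then follow by inversion.

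\emph{Main obstacle.} The only real care is the bookkeeping of evaluation maps and shifts. For the first two mutations the governing $\Hom$ lives in degree $0$, so the evaluation map is, up to scalar, the inclusion $\cO(D-E)\hookrightarrow\cO(D)$ or the restriction $\cO(D)\twoheadrightarrow\cO_E(a)$, and the cone is immediate (cf.\ Remark~\ref{rem:leftmutation}). The subtle case is $R_{\cO(D-E)}\cO_E(a)$, where the governing space sits in degree $1$: here the mutation is controlled by the extension class in $\Ext^1(\cO_E(a),\cO(D-E))=\kk$, and the point to nail down is that this class is nonzero and therefore represented by the unique non-split extension, which is exactly the fundamental sequence, returning $\cO(D)$. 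Once these identifications are pinned down, all six arrows commute up to shift.
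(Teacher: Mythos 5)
Your proposal is correct and takes essentially the same route as the paper: part (1) is done by pushing the Hom-computations down to $E\cong\P^1$, and part (2) by reading all three mutations off the twisted ideal sequence $0\to\cO(D-E)\to\cO(D)\to\cO_E(a)\to 0$, which is exactly the paper's proof (the paper merely cites this sequence together with Remark~\ref{rem:leftmutation} and leaves the triangle bookkeeping you carried out implicit). One small imprecision: exceptionality of the pair $(\cO(D-E),\cO(D))$ is not literally contained in part (1) — it requires the additional vanishing $\rH^{\bullet}(X,\cO(-E))=0$, which follows in one line from the ideal sequence and $\rH^{\bullet}(X,\cO_X)=\kk$, or alternatively comes for free from the fact that a mutation of an exceptional pair is again exceptional.
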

\begin{proof}
Note that all line bundles on $X$ are exceptional by the assumption $\rH^{\bullet}(X,\cO_X)=\kk$.

For \eqref{it:3mut_exceptional}, the pair $(\cO_E(a),\cO(D-E))$ is  exceptional if and only if 
$$0=\Hom^{\bullet}(\cO(D-E), \cO_E(a))=\rH^{\bullet}(\P^1,\cO_{\P^1}(a-E\cdot (D-E))),$$
which is equivalent to $a-E\cdot (D-E)=-1$,  $a=D\cdot E$. Similarly for the first pair.

For \eqref{it:3mut_mutations}, the mutations in question are given by the exact sequence (see e.g. Remark~\ref{rem:leftmutation})
\begin{equation}\label{eq_sesMinusOne}
    0\to \cO(D-E)\to \cO(D)\to \cO_E(a)\to 0.
\end{equation}
\end{proof}

\begin{lemma}
\label{lemma_OE}
Let $X$ be a surface with $\rH^{\bullet}(X,\cO_X)=\kk$, let $E_1,\ldots,E_n\subset X$ be a collection of disjoint $(-1)$-curves. Then the SOD-s 
\[\left\langle \left\langle\begin{smallmatrix}
\cO_{E_{1}}(-1)\\ \ldots \\ \cO_{E_{n}}(-1)
\end{smallmatrix} \right\rangle, \bA,\langle\cO\rangle\right\rangle
=\Db(X)= \left\langle \bA,\left\langle\begin{smallmatrix}
\cO(-E_{1})\\ \ldots \\ \cO(-E_{n})
\end{smallmatrix} \right\rangle,\langle\cO\rangle\right\rangle\]
are mutation-equivalent.
\end{lemma}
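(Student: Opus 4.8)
The plan is to produce the second semi-orthogonal decomposition from the first by a single mutation and then to pin down the mutated component using the uniqueness statement in Lemma~\ref{lemma_thelastone}. Write $\cB=\langle\cO_{E_1}(-1),\ldots,\cO_{E_n}(-1)\rangle$ and $\cB'=\langle\cO(-E_1),\ldots,\cO(-E_n)\rangle$, so that the two displayed decompositions are $\langle\cB,\bA,\langle\cO\rangle\rangle$ and $\langle\bA,\cB',\langle\cO\rangle\rangle$. Since $\Db(X)$ is saturated, each component of either decomposition is admissible and all mutations are defined.

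First I would right-mutate $\cB$ past $\bA$ in the first decomposition. As $\cB$ is the leftmost component we have $\cB\subset\bA^\perp$, so the mutation functor $R_\bA\colon\bA^\perp\to{}^\perp\bA$ applies and yields the mutation-equivalent decomposition
\[
\Db(X)=\langle\,\bA,\ R_\bA(\cB),\ \langle\cO\rangle\,\rangle.
\]
Because $R_\bA$ is an equivalence it preserves all $\Hom$-spaces; in particular, using that the $E_i$ are disjoint and hence $\Hom^\bullet(\cO_{E_i}(-1),\cO_{E_j}(-1))=0$ for $i\ne j$, the images $R_\bA(\cO_{E_i}(-1))$ remain completely orthogonal, so $R_\bA(\cB)$ is again generated by an orthogonal exceptional collection of length $n$, i.e. a block of the same shape.

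Now both $\langle\bA,R_\bA(\cB),\langle\cO\rangle\rangle$ and the second displayed decomposition $\langle\bA,\cB',\langle\cO\rangle\rangle$ are semi-orthogonal decompositions of $\Db(X)$ with the same first component $\bA$ and the same last component $\langle\cO\rangle$. By Lemma~\ref{lemma_thelastone} their middle components agree, so $R_\bA(\cB)=\cB'$. Hence the second decomposition is obtained from the first by one right mutation, and the two are mutation-equivalent.

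I expect the only real point to be careful about is the bookkeeping in the middle step: that the mutation through the (a priori large) subcategory $\bA$ is legitimate and transports the block $\cB$ to a block. Once this is in place the identification $R_\bA(\cB)=\cB'$ is forced by uniqueness and needs no explicit description of $R_\bA$. (Should one instead wish to compute $R_\bA(\cO_{E_i}(-1))$ by hand, the relevant input would be the structure sequences $0\to\cO\to\cO(E_i)\to\cO_{E_i}(-1)\to0$ and $0\to\cO(-E_i)\to\cO\to\cO_{E_i}\to0$ underlying Lemma~\ref{lemma_3mutations}; but this computation is less transparent and, crucially, depends on the unknown $\bA$, so routing through Lemma~\ref{lemma_thelastone} is what keeps the argument short.)
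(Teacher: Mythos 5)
Your single-mutation-plus-uniqueness argument has one genuine gap: the appeal to Lemma~\ref{lemma_thelastone} requires knowing \emph{beforehand} that $\langle\bA,\cB',\langle\cO\rangle\rangle$, with $\cB'=\langle\cO(-E_1),\ldots,\cO(-E_n)\rangle$, is a semi-orthogonal decomposition of $\Db(X)$, since that lemma only compares two collections both of which are already known to be SOD-s. But this is precisely the nontrivial content of Lemma~\ref{lemma_OE} as it is used in the paper: in the proofs of Lemma~\ref{lem:3blockdecomp}, Lemma~\ref{lem:StandardDecompbirrich} and Proposition~\ref{prop_sarkisov} it serves as a rewriting rule which, starting from the left-hand SOD, \emph{produces} the right-hand one. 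Nothing in the hypotheses gives you semi-orthogonality of the pair $(\cB',\bA)$, i.e. $\Hom^{\bullet}(\cO(-E_i),\bA)=0$, nor that $\bA$, the $\cO(-E_i)$ and $\cO$ together generate $\Db(X)$; since $\bA$ is an arbitrary unknown subcategory, these cannot be checked directly, and establishing them is exactly what a proof must do. So the sentence ``both \dots are semi-orthogonal decompositions of $\Db(X)$'' assumes the conclusion. (Only under the weaker reading in which both displayed decompositions are hypotheses would your argument be complete, and that reading would make the lemma unusable in the way the paper applies it.)

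The repair keeps your one-mutation picture but replaces uniqueness by a computation that never touches $\bA$, and it is essentially the paper's proof. Contract the disjoint $(-1)$-curves, $p\colon X\to Y$; in the given SOD one has $\langle\bA,\langle\cO\rangle\rangle={}^{\perp}\cB=p^*\Db(Y)$, so Proposition~\ref{prop_blowup2} together with Lemma~\ref{lemma_thelastone} identifies the right mutation of $\cB$ through \emph{all} of ${}^{\perp}\cB$ as the block $\langle\cO_{E_1},\ldots,\cO_{E_n}\rangle$, giving the SOD $\langle\bA,\langle\cO\rangle,\langle\cO_{E_1},\ldots,\cO_{E_n}\rangle\rangle$. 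Then Lemma~\ref{lemma_3mutations} (with $D=0$, applied to each pair $(\cO,\cO_{E_i})$; the $E_i$ are disjoint, so the block structure survives) left-mutates this block through $\langle\cO\rangle$ to $\cB'$, producing $\langle\bA,\cB',\langle\cO\rangle\rangle$ as an SOD mutation-equivalent to the original. A posteriori this also proves your identity $R_{\bA}(\cB)=\cB'$, because $R_{\bA}(\cB)=L_{\langle\cO\rangle}\bigl(R_{\langle\bA,\langle\cO\rangle\rangle}(\cB)\bigr)$; but the identification of the mutated category is obtained from Proposition~\ref{prop_blowup2} and Lemma~\ref{lemma_3mutations}, not from uniqueness.
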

\begin{proof}
Follows from Proposition~\ref{prop_blowup2} and Lemma~\ref{lemma_3mutations}.
\end{proof}

\begin{lemma}
\label{lemma_hmutation}
Let $X$ be a del Pezzo surface of degree $K_X^2\ge 3$,  let $h$ be a $0$-class on $X$, and $\cO(D)$ be a line bundle on $X$. Then the following pairs are exceptional and are mutations of each other (up to a shift) as shown:
$$\xymatrix{(\cO(D-h), \cO(D)) \ar@<1mm>[rr]^{R_1} &&  (\cO(D), \cO(D+h))\ar@<1mm>[ll]^{L_2}}.$$
\end{lemma}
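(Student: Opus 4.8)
The plan is to extract everything from one explicit short exact sequence of line bundles, after which both mutations can be read off directly. Since $X$ is a del Pezzo surface it is rational, so $\rH^{\bullet}(X,\cO_X)=\kk$ and hence every line bundle on $X$ is exceptional; in particular the three objects $\cO(D-h),\cO(D),\cO(D+h)$ are all exceptional. For the semi-orthogonality of the two pairs it suffices to compute $\rH^{\bullet}(X,\cO(-h))$, because
$$\Hom^{\bullet}(\cO(D),\cO(D-h))=\Hom^{\bullet}(\cO(D+h),\cO(D))=\rH^{\bullet}(X,\cO(-h)).$$
By Lemma~\ref{lemma_classes}(2) the hypothesis $K_X^2\ge 3$ guarantees that the $0$-class $h$ is represented by a smooth rational curve $C$ with $C^2=0$, and that the base-point-free pencil $|h|$ defines a morphism $\phi\colon X\to\P^1$ with $\cO(h)=\phi^*\cO_{\P^1}(1)$. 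Restricting along $0\to\cO_X(-C)\to\cO_X\to\cO_C\to 0$ and using $\rH^{\bullet}(X,\cO_X)=\kk$ together with $\cO_C\cong\cO_{\P^1}$, I would obtain $\rH^{\bullet}(X,\cO(-h))=0$; and from the twisted sequence $0\to\cO_X\to\cO_X(C)\to\cO_C(C)\to 0$ (note $\cO_C(C)\cong\cO_{\P^1}$ as $C^2=0$) that $\rH^{\bullet}(X,\cO(h))$ is two-dimensional, concentrated in degree $0$. This proves both pairs are exceptional.

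Next I would produce the key sequence. Pulling back the tautological sequence $0\to\cO_{\P^1}(-1)\to\cO_{\P^1}^{\oplus 2}\to\cO_{\P^1}(1)\to 0$ attached to the base-point-free pencil along $\phi$ and twisting by $\cO(D)$ gives
$$0\to\cO(D-h)\to\cO(D)^{\oplus 2}\xrightarrow{(s_0,s_1)}\cO(D+h)\to 0,$$
where $s_0,s_1$ is a basis of $\rH^0(X,\cO(h))=\Hom^{\bullet}(\cO(D),\cO(D+h))$. The surjection on the right is exactly the evaluation map $\Hom^{\bullet}(\cO(D),\cO(D+h))\otimes\cO(D)\to\cO(D+h)$, and it is concentrated in degree $0$ with $r=2$. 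Applying the first case of Remark~\ref{rem:leftmutation}, the left mutation is $L_{\cO(D)}(\cO(D+h))=K[1]$, where $K$ is the kernel of the surjection, i.e. $K=\cO(D-h)$. Hence $L_2$ carries the exceptional pair $(\cO(D),\cO(D+h))$ to $(\cO(D-h),\cO(D))$, the shift being irrelevant at the level of generated subcategories. As $R_1$ and $L_2$ are mutually inverse operations, this simultaneously shows that $R_1$ carries $(\cO(D-h),\cO(D))$ to $(\cO(D),\cO(D+h))$, as claimed.

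The argument is a routine computation, so there is no genuine obstacle; the only delicate input is Lemma~\ref{lemma_classes}(2), which is precisely where $K_X^2\ge 3$ is used. Its role is to ensure that $h$ is a base-point-free pencil defining the fibration $\phi$, so that both the vanishing $\rH^{\bullet}(X,\cO(-h))=0$ and the tautological short exact sequence are available; for lower degree a $0$-class need not be realised by such a pencil. If one preferred to avoid the inverse-operation shortcut, the mutation $R_1$ could instead be computed directly from the same sequence via triangle~\eqref{def_EmutationR}: the right mutation of $\cO(D-h)$ through $\cO(D)$ is the cokernel $\cO(D+h)$, up to shift, giving an independent verification.
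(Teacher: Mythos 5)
Your proof is correct, but it executes the key step differently from the paper. The paper's own argument also starts from Lemma~\ref{lemma_classes}(2) --- the conic bundle $f\colon X\to\P^1$ defined by $|h|$, with $\cO(h)\cong f^*\cO_{\P^1}(1)$ --- but then proceeds functorially: by Lemma~\ref{lemma_Bridgeland} the pullback $f^*$ is fully faithful, fully faithful functors commute with mutations, so the Beilinson mutation $L_{\langle\cO_{\P^1}\rangle}(\langle\cO_{\P^1}(1)\rangle)\cong\langle\cO_{\P^1}(-1)\rangle$ of Example~\ref{example_Beilinson} transports to $L_{\langle\cO\rangle}(\langle\cO(h)\rangle)\cong\langle\cO(-h)\rangle$ on $X$, and twisting by $D$ finishes. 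You instead inline this transport at the level of sheaves: you pull back the Euler sequence on $\P^1$, twist by $\cO(D)$, identify the resulting surjection $\cO(D)^{\oplus 2}\to\cO(D+h)$ with the evaluation map, and read off the mutation from Remark~\ref{rem:leftmutation}(1); your short exact sequence is exactly what realizes the Beilinson mutation upstairs. What each approach buys: yours is more self-contained, avoiding the assertion that fully faithful functors commute with mutations (which the paper uses without detailed justification), at the price of the explicit cohomology computations $\rH^\bullet(X,\cO(-h))=0$ and $\rH^\bullet(X,\cO(h))=\kk^2$ --- facts the paper gets for free from full faithfulness, since on $\P^1$ they are standard. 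Both proofs correctly locate the role of the hypothesis $K_X^2\ge 3$ in Lemma~\ref{lemma_classes}(2), and your closing remark that $R_1$ need not be computed separately because $R_1$ and $L_2$ are mutually inverse matches the paper's conventions on mutation operations.
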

\begin{proof}
By Lemma~\ref{lemma_classes}(2), linear system $|h|$ defines a conic bundle $f\colon X\to\P^1$ so that $\cO(h)\cong f^*\cO_{\P^1}(1)$. By Lemma~\ref{lemma_Bridgeland} pull-back functor $f^*$ is fully faithful and hence commute with mutations. We have the mutation of exceptional objects  $L_{\langle \cO_{\P^1}\rangle}(\langle \cO_{\P^1}(1)\rangle)\cong \langle \cO_{\P^1}(-1)\rangle$ by Example~\ref{example_Beilinson}, therefore $L_{\langle \cO\rangle}(\langle \cO(h)\rangle)\cong \langle \cO(-h)\rangle$. The results follows now by twisting by $D$.
\end{proof}

\subsection{Three-block semi-orthogonal decompositions for del Pezzo surfaces}

Here we describe semi-orthogonal decompositions for del Pezzo surfaces of degree $\ge 5$ that consist of three blocks of exceptional objects and are invariant under all automorphisms of the surfaces. They have been found  by Karpov and Nogin in~\cite{KarpovNogin}, here we recall the construction. 

\begin{lemma}[Three-block decompositions]
\label{lem:3blockdecomp}
Let $\kk$ be  an algebraically closed field and $X=X_d$ be a del Pezzo surface of degree $d$ over $\kk$. We write $\{h_i\}_{i\in I}$ for the $0$-classes and $\{H_j\}_{j\in J}$ for the $1$-classes on $X$. Then the following are $3$-block decompositions of $\Db(X)$, and each block is preserved by $\Aut(X)$:
    \begin{align*}
        \Db(\P^2)= & \left\langle 
        \langle\cO(-2H)\rangle, 
        \langle\cO(-H)\rangle, 
        \langle\cO\rangle
        \right\rangle, & I&=\emptyset, &J&=\{1\},\\
        \Db(\P^1\times\P^1)= & \left\langle 
        \langle\cO(-h_1-h_2)\rangle,
        \left\langle \begin{smallmatrix}
        \cO(-h_1)\\ \cO(-h_2)
        \end{smallmatrix} \right\rangle,
        \langle\cO\rangle
        \right\rangle, & I&=\{1,2\},&J&=\emptyset, \\
        \Db(X_6)= & \left\langle
         \left\langle \begin{smallmatrix}
        \cO(-H_1)\\ \cO(-H_2)
        \end{smallmatrix} \right\rangle,
        \left\langle \begin{smallmatrix}
        \cO(-h_1)\\ \cO(-h_2) \\ \cO(-h_3)
        \end{smallmatrix} \right\rangle,
        \langle\cO\rangle
        \right\rangle,& I&=\{1,2,3\},&J&=\{1,2\},\\
        \Db(X_5)= & \left\langle
         \langle \cE\rangle,
        \left\langle \begin{smallmatrix}
        \cO(-h_1)\\ \ldots \\ \cO(-h_5)
        \end{smallmatrix} \right\rangle,
        \langle\cO\rangle
        \right\rangle, & I&=\{1,\ldots,5\},&J&=\{1,\ldots,5\},
    \end{align*}
    where $\cE$ is a vector bundle of rank $2$ on $X_5$ fitting into an exact sequence 
    \begin{equation}
        \label{eq_vectorbundleE}
        0\to \cO(-H_i)\to\cE\to \cO(-h_i)\to 0
    \end{equation}    for every $i\in\{1,\ldots,5\}$ such that $h_i+H_i=-K_X$.
    Moreover, for $d=6$ or $5$ these SOD-s are obtained from the SOD provided by the blow-up formula applied to the blow-up $X\to \P^2$ at $r=9-d$ points:
    \begin{equation}
    \label{eq_BlrP2}
    \Db(X_d)= \left\langle
        \left\langle \begin{smallmatrix}
        \cO_{E_1}(-1)\\ \cdots\\
        \cO_{E_{r}}(-1)
        \end{smallmatrix} \right\rangle, 
        \langle\cO(-2H)\rangle, 
        \langle\cO(-H)\rangle, 
        \langle\cO\rangle
        \right\rangle,
\end{equation}
by mutations and merging two completely orthogonal components into one.
\end{lemma}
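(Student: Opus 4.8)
The plan is to handle the two minimal cases $\P^2$ and $\P^1\times\P^1$ directly, and to obtain the decompositions for $X_6$ and $X_5$ by mutating the semi-orthogonal decomposition \eqref{eq_BlrP2} coming from the blow-up formula. In every case the block structure reduces to one observation: within a block the difference of the two relevant divisor classes is a $(-2)$-class, so that Lemma~\ref{lemma_classes}(1) yields complete orthogonality. For $\P^2$ the stated collection is Beilinson's (Example~\ref{example_Beilinson}) and each block is a single line bundle. For $\P^1\times\P^1=\bF_0$ the collection is the standard full exceptional one, and the only point to check is that $\cO(-h_1)$ and $\cO(-h_2)$ form a block, which holds because $(h_1-h_2)^2=-2$ and $(h_1-h_2)\cdot K_X=0$, i.e.\ $h_1-h_2$ is a $(-2)$-class.

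For $X_6$ and $X_5$ I would begin from \eqref{eq_BlrP2}, apply Lemma~\ref{lemma_OE} to replace the torsion block $\{\cO_{E_i}(-1)\}$ by the line-bundle block $\{\cO(-E_i)\}$, and then move the line bundles to the target classes $\{-h_i\}_{i\in I}$ and $\{-H_j\}_{j\in J}$ of \eqref{eq_0classes}--\eqref{eq_1classes} by a sequence of elementary mutations: Lemma~\ref{lemma_3mutations} trades a line bundle across a $(-1)$-curve and Lemma~\ref{lemma_hmutation} shifts by a $0$-class, while Lemma~\ref{lemma_dualclasses} records the pairing $h_i+H_i=-K_X$ that links the two families. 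Once the line bundles of one class-type occupy adjacent positions, their pairwise differences are $(-2)$-classes (for instance $h_i-h_j=E_j-E_i$ and, on $X_6$, $H_1-H_2=-H+E_1+E_2+E_3$), so by Lemma~\ref{lemma_classes}(1) each family is completely orthogonal and merges into a single block. Since these collections are produced from \eqref{eq_BlrP2} by mutations, they are automatically full; as a numerical check, the number of objects equals $\rank K_0(X)=12-d$.

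For $X_5$ the first block is the rank-$2$ bundle $\cE$ rather than a family of line bundles, and it arises as a universal extension in the sense of Remark~\ref{rem:leftmutation}(2). A Riemann--Roch computation, using that $h_i-H_i$ has negative intersection with the ample class $-K_X$ (and likewise for its Serre dual), gives $\Hom^\bullet(\cO(-h_i),\cO(-H_i))=\kk[-1]$, so the left mutation $L_{\cO(-h_i)}\cO(-H_i)$ is the sheaf $\cE$ fitting into the triangle \eqref{eq_vectorbundleE}. The main obstacle is exactly this $X_5$ derivation: organising the mutation sequence among the ten $0$- and $1$-classes, checking that all intermediate pairs remain exceptional, and verifying that \eqref{eq_vectorbundleE} yields the \emph{same} bundle $\cE$ for every $i$ with $h_i+H_i=-K_X$. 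This $i$-independence is the distinctive feature of degree $5$ and constitutes the computational heart of \cite{KarpovNogin}.

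Finally, for $\Aut(X)$-invariance, any automorphism acts on $\Pic(X)$ preserving the intersection form and $-K_X$, hence permutes the $0$-classes $\{h_i\}$ and the $1$-classes $\{H_j\}$; combined with $g^*\cO\cong\cO$, this shows that $\Aut(X)$ preserves the middle block $\langle\cO(-h_i)\rangle_{i\in I}$ and the last block $\langle\cO\rangle$. The first block is then preserved as well, since by Lemma~\ref{lemma_thelastone} it is determined by the other two; in particular the subcategory $\langle\cE\rangle$ is $\Aut(X_5)$-invariant without having to argue about $\cE$ separately.
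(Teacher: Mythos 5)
Your overall strategy coincides with the paper's: Beilinson collections for $\P^2$ and $\P^1\times\P^1$, mutations of the blow-up decomposition \eqref{eq_BlrP2} for $d=6,5$, the computation $\Hom^{\bullet}(\cO(-h_i),\cO(-H_i))\cong\kk[-1]$ realising $\cE$ as a universal extension, and Lemma~\ref{lemma_thelastone} for $\Aut$-invariance of the remaining block. The route differs only in detail (you first convert the torsion block into $\{\cO(-E_i)\}$ via Lemma~\ref{lemma_OE}, whereas the paper keeps the sheaves $\cO_{E_i}(-1)$ and mutates the line bundles through them via Lemma~\ref{lemma_3mutations}), and your pointwise verifications — the $(-2)$-class orthogonality within blocks, the Riemann--Roch computation giving $\kk[-1]$ — are correct.

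However, there is a genuine gap, and you point at it yourself: you declare the $X_5$ case, namely "organising the mutation sequence among the ten $0$- and $1$-classes" and "verifying that \eqref{eq_vectorbundleE} yields the same bundle $\cE$ for every $i$", to be the main obstacle, and then defer it to the "computational heart" of \cite{KarpovNogin} instead of resolving it. Both halves of this obstacle dissolve with a short argument. First, the mutation sequence is much shorter than you anticipate: applying $L_2$ and then $R_2$ to \eqref{eq_BlrP2} (Lemma~\ref{lemma_3mutations}, applied blockwise) yields $\langle\cO(-2H+\textstyle\sum E_i),\cO(-H),\{\cO(-H+E_i)\},\cO\rangle$; for $d=6$ this is already the stated decomposition after merging the two completely orthogonal bundles $\cO(-H_1),\cO(-H_2)$ into one block, and for $d=5$ it reads $\langle\cO(-h_5),\cO(-H_5),\{\cO(-h_i)\}_{i\le 4},\cO\rangle$, so a single further $L_2$ produces $\cE_5=L_{\cO(-h_5)}(\cO(-H_5))$ — no orchestration of all ten classes is needed. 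Second, the $i$-independence of $\cE$ requires no computation at all: running the same mutations for the contraction $X_5\to\P^2$ given by $|H_i|$ produces, for each $i$, an exceptional sheaf $\cE_i$ fitting into \eqref{eq_vectorbundleE} and generating the first component of a $3$-block decomposition whose middle and last blocks, $\{\cO(-h_j)\}_{j=1}^{5}$ and $\langle\cO\rangle$, do not depend on $i$. By Lemma~\ref{lemma_thelastone} the first component is therefore the same subcategory for every $i$, and an exceptional object generating a fixed subcategory equivalent to $\Db(\Spec\kk)$ is unique up to shift, hence up to isomorphism since all the $\cE_i$ are sheaves; so $\cE_i\cong\cE$. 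Without this (or an equivalent) argument, the clause of the lemma asserting that one and the same bundle $\cE$ satisfies \eqref{eq_vectorbundleE} for every $i$ remains unproved in your proposal.
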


\begin{proof}
The decompositions for $X=\P^2$ and $X=\P^1\times\P^1$ come from the standard Beilinson exceptional collections, see Example~\ref{example_Beilinson}.
For $d\in\{6,5\}$, decompositions with three blocks were found in \cite{KarpovNogin}, and it was remarked in \cite{Elagin} that each block is preserved by $\Aut(X)$.  However, since we use slightly different $3$-block decompositions, we recall how to obtain them from~\eqref{eq_BlrP2} by mutations.

In both cases, we first use Lemma~\ref{lemma_3mutations} to perform mutations
\begin{multline*}
\text{\eqref{eq_BlrP2}}\overset{L_2}\leadsto
\left\langle
    \langle\cO(-2H+\sum E_i)\rangle, 
        \left\langle \begin{smallmatrix}
        \cO_{E_1}(-1)\\ \cdots\\
        \cO_{E_{r}}(-1)
        \end{smallmatrix} \right\rangle, 
        \langle\cO(-H)\rangle, 
        \langle\cO\rangle
        \right\rangle
        \overset{R_2}\leadsto\\
\overset{}\leadsto
\left\langle
    \langle\cO(-2H+\sum E_i)\rangle, 
        \langle\cO(-H)\rangle, 
        \left\langle \begin{smallmatrix}
        \cO(-H+E_1)\\ \cdots\\
        \cO(-H+E_r)
        \end{smallmatrix} \right\rangle, 
        \langle\cO\rangle
        \right\rangle.
\end{multline*}

For $X=X_6$ we have $r=3$, $H_1=2H-(E_1+E_2+E_3)$, $H_2=H$, $h_i=H-E_i$ for $i=1,2,3$ (see~\eqref{eq_0classes}) and~\eqref{eq_1classes}). Since $\cO(-H_1)$ and $\cO(-H_2)$ are orthogonal, we obtain exactly the desired $3$-block decomposition.

For $X=X_5$, we have $r=4$, $h_i=H-E_i$, $H_i=2H-(E_1+E_2+E_3+E_4)+E_i$ for $i=1,\ldots,4$, and $h_5=2H-(E_1+E_2+E_3+E_4)$, $H_5=H$  (see~\eqref{eq_0classes}) and~\eqref{eq_1classes}).  
Here, an additional mutation is needed:
\begin{align*}
        \left\langle
        \langle\cO(-h_5)\rangle,
        \langle\cO(-H_5)\rangle, 
        \left\langle \begin{smallmatrix}
        \cO{(-h_1)}\\ \cdots\\
        \cO{(-h_4)}
        \end{smallmatrix} \right\rangle,
        \langle\cO\rangle
        \right\rangle \overset{L_2}\leadsto 
        \left\langle
        \langle\cE_5\rangle, 
        \langle\cO(-h_5)\rangle,
        \left\langle \begin{smallmatrix}
        \cO{(-h_1)}\\ \cdots\\
        \cO{(-h_4)}
        \end{smallmatrix} \right\rangle,
        \langle\cO\rangle
        \right\rangle.
\end{align*}
We used that
$\Hom^{\bullet}(\cO(-h_5), \cO(-H_5))\cong\rH^\bullet(X,\cO(-H_5+h_5))\cong \kk[-1]$, therefore the exact triangle obtained from \eqref{eq_vectorbundleE} is a rotation of~\eqref{def_EmutationL}, giving $\cE_5=L_{\cO(-h_5)}(\cO(-H_5))$ for $i=5$.
Since the $\cO(-h_i)$ are orthogonal to each other, we obtain a $3$-block decomposition. Using the map $X\to\P^2$ given by the 1-class $H_i$, one gets a similar SOD with $\cE_i$ fitting into~\eqref{eq_vectorbundleE}. Since all these $\cE_i$ generate the same subcategory in $\Db(X)$ and are exceptional, they are isomorphic and may be denoted by $\cE$. 

Finally, it is clear that each block is preserved by $\Aut(X)$ because the set of $0$-classes as well as the set of $1$-classes are preserved, and if two of three blocks are preserved then the third one also is by Lemma~\ref{lemma_thelastone}. 
\end{proof}

We will denote the $3$-block decomposition of $\Db(X)$ defined in Lemma~\ref{lem:3blockdecomp} by $\cS(X)$. These decompositions are compatible to each other in the following sense.

\begin{lemma}
\label{lem:StandardDecompbirrich}
    Let $f\colon X\to Y$ be a birational morphism between two del Pezzo surfaces of degrees $K_X^2,K_Y^2\in\{5,6,8,9\}$, $X,Y\not\cong\bF_1$.
    Then the $4$-block decomposition \[\langle \ker f_*, f^*\cS(Y)\rangle\] of $\Db(X)$ is mutation-equivalent to a refinement of $\cS(X)$.
\end{lemma}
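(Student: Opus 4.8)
The plan is to reduce, by a transitivity argument, to a short list of elementary morphisms, dispose of the case $Y=\P^2$ directly from Lemma~\ref{lem:3blockdecomp}, and then settle the two remaining cases by explicit mutations. Throughout I write $\cP_g:=\langle\ker g_*,\,g^*\cS(B)\rangle$ for a rational derived contraction $g\colon A\to B$ of del Pezzo surfaces, so the assertion is that $\cP_g$ is mutation-equivalent to a refinement of $\cS(A)$.

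First I would record the structural ingredients. Since $f$ is a birational morphism of surfaces it contracts disjoint $(-1)$-curves $E_1,\dots,E_n$, and by Proposition~\ref{prop_blowup2} its kernel $\ker f_*=\langle\cO_{E_1}(-1),\dots,\cO_{E_n}(-1)\rangle$ is one completely orthogonal block, so $\cP_f$ is a genuine $4$-block SOD. As $f^*$ is fully faithful (Lemma~\ref{lemma_Bridgeland}) it is an equivalence onto the component $f^*\Db(Y)$ of $\cP_f$, hence carries $\cS(Y)$, its mutations, and its refinements to the corresponding data inside that component; in particular a refinement of $\cS(Y)$ pulls back to a refinement of $\cP_f$, and mutation-equivalences of SODs of $\Db(Y)$ pull back to mutation-equivalences inside $f^*\Db(Y)$. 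I would also use the standard fact that mutations lift through refinements: mutation-equivalent SODs admit mutation-equivalent refinements. Finally, for a factorisation $f=f_2\circ f_1$ with $f_1\colon X\to Y$, $f_2\colon Y\to W$, substituting the two blow-up SODs and applying Lemma~\ref{lemma_thelastone} gives $\ker f_*=\langle\ker f_{1*},f_1^*\ker f_{2*}\rangle$, so that
\[
\cP_f=\langle\ker f_{1*},\,f_1^*\cP_{f_2}\rangle .
\]
If the Lemma holds for $f_2$, then $f_1^*\cP_{f_2}$ is mutation-equivalent to a refinement of $f_1^*\cS(Y)$, whence $\cP_f$ is mutation-equivalent to a refinement of $\cP_{f_1}$; if it also holds for $f_1$, then lifting mutations through refinements shows $\cP_f$ is mutation-equivalent to a refinement of $\cS(X)$. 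Thus the statement is multiplicative in $f$.

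Over $\ol\kk$ the del Pezzo surfaces of degree in $\{5,6,8,9\}$ other than $\bF_1$ are exactly $\P^2,\bF_0,X_6,X_5$, and a birational morphism strictly lowers the degree unless it is an isomorphism (the latter case being trivial). Routing every contraction through $X_6$, the transitivity step reduces the Lemma to the morphisms $X_6\to\P^2$, $X_6\to\bF_0$ and $X_5\to X_6$ (the cases $X_5\to\P^2$ and $X_5\to\bF_0$ factoring as $X_5\to X_6\to\P^2$, resp. $X_5\to X_6\to\bF_0$, with only $X_6$ as intermediate surface). For $Y=\P^2$ one has $f^*\cS(\P^2)=\langle\cO(-2H),\cO(-H),\cO\rangle$, so $\cP_f$ is precisely the SOD~\eqref{eq_BlrP2}; since Lemma~\ref{lem:3blockdecomp} obtains $\cS(X)$ from \eqref{eq_BlrP2} by mutations and the merging of two completely orthogonal components, \eqref{eq_BlrP2} is mutation-equivalent to a refinement of $\cS(X)$, which is exactly the claim. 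It remains to treat $f\colon X_6\to\bF_0$ and $f\colon X_5\to X_6$ by hand. In both cases the zero-class line bundles occurring in $f^*\cS(Y)$ are already canonical zero-classes on $X$ and survive untouched; the new zero-classes, and for $X_5$ the rank-two bundle $\cE$, must be manufactured from $\ker f_*$ together with the pulled-back one-classes. Each new zero-class has the shape $\cO(D-E)$ for a contracted curve $E$ and a pulled-back class $\cO(D)$, so Lemma~\ref{lemma_3mutations} turns the adjacent pair $(\cO(D),\cO_E(-1))$ into it; the bundle $\cE$ then arises as the universal extension $L_{\cO(-h')}\bigl(\cO(-H')\bigr)$ governed by \eqref{eq_vectorbundleE}, exactly as in the proof of Lemma~\ref{lem:3blockdecomp} (via Remark~\ref{rem:leftmutation}). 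After these mutations the block sizes of $\cP_f$ --- $(1,2,3,1)$ for $X_5\to X_6$ and $(2,1,2,1)$ for $X_6\to\bF_0$ --- match a refinement of $\cS(X)$ in which the middle (zero-class) block is split (as $2+3$, resp. $1+2$), so $\cP_f$ is mutation-equivalent to a refinement of $\cS(X)$.

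The main obstacle is the explicit mutation bookkeeping in these last two cases, and especially the reconstruction of $\cE$ on $X_5$: one must check that the relevant $\Hom^\bullet$-spaces are concentrated in a single degree so that Lemma~\ref{lemma_3mutations} and Remark~\ref{rem:leftmutation} genuinely apply, identify the manufactured objects with the canonical zero-classes and with $\cE$ of Lemma~\ref{lem:3blockdecomp}, and verify the orthogonalities that let the pieces regroup into the three blocks of $\cS(X)$. I should also stress that a purely formal reduction to $\P^2$ does \emph{not} suffice: writing $\cP_f=\langle\ker f_*,f^*\cP'\rangle$ for the $\P^2$-blow-up SOD $\cP'$ of $Y$ only shows that suitable refinements of $\cP_f$ and of $\cS(X)$ are mutation-equivalent, which is strictly weaker than the asserted statement, and closing this gap is precisely what the hands-on computation accomplishes.
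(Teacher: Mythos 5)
Your treatment of the case $Y=\P^2$ and your two hands-on cases $(6,8)$, $(5,6)$ follow the paper's own route (Lemma~\ref{lem:3blockdecomp} plus explicit mutations, whose details you defer), but the transitivity step by which you dispose of $(5,8)$ contains a genuine gap --- and it is exactly the defect you yourself diagnose in your closing paragraph. The displayed identity $\cP_f=\langle\ker f_{1*},\,f_1^*\cP_{f_2}\rangle$ is false as an equality of semi-orthogonal decompositions: the left-hand side has $\ker f_*$ as a \emph{single} component (four components in total), while the right-hand side splits it as $\langle\ker f_{1*},f_1^*\ker f_{2*}\rangle$ (five components in total). Mutation-equivalence preserves the number of components, and your lifting lemma only goes in one direction (a refinement of the source is carried to a refinement of the target; nothing descends to coarsenings). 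So your induction proves only that a \emph{refinement} of $\cP_f$ is mutation-equivalent to a refinement of $\cS(X)$, which is precisely the ``strictly weaker'' statement you correctly reject when discussing the formal reduction to $\P^2$.

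The failure is not merely formal: the composite route cannot be coarsened. Factor $f\colon X_5\to\bF_0$ as $f_1\colon X_5\to X_6$ (contracting $E_3$) followed by $f_2\colon X_6\to\bF_0$ (contracting $E_1,E_2$). Tracing the mutations you invoke, the $(6,8)$ step carries the block $\{\cO_{E_1}(-1),\cO_{E_2}(-1)\}$ to the block $\{\cO(-H_1),\cO(-H_2)\}$ (the untracked first component of \eqref{eq:6-8--1}), and the lifted $(5,6)$ step then carries that block to $\{\cO(-h_4),\cO(-h_5)\}$, while $\cO_{E_3}(-1)$ is carried to $\langle\cE\rangle$. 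Thus the two pieces of $\ker f_*$ land in the non-adjacent positions $1$ and $4$ of the resulting five-component decomposition
\[
\left\langle \langle\cE\rangle,\ \left\langle\begin{smallmatrix}\cO(-h_1)\\ \cO(-h_2)\end{smallmatrix}\right\rangle,\ \langle\cO(-h_3)\rangle,\ \left\langle\begin{smallmatrix}\cO(-h_4)\\ \cO(-h_5)\end{smallmatrix}\right\rangle,\ \langle\cO\rangle\right\rangle,
\]
separated by components coming from $f^*\cS(\bF_0)$; no merging of adjacent components can recover a four-component refinement of $\cS(X_5)$, since $\cE$ and the $0$-classes lie in different blocks of $\cS(X_5)$. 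This is why the paper handles $(5,8)$ by a separate direct computation, whose mutations keep the three objects $\cO_{E_i}(-1)$ together throughout (they become the single block $\{\cO(-h_1),\cO(-h_2),\cO(-h_3)\}$ of \eqref{eq:5-6--2}). To repair your argument you must either carry out the $(5,8)$ mutations explicitly, as the paper does, or prove a genuinely stronger inductive statement controlling where the kernel block goes under the equivalences; transitivity as you state it does not suffice. As a minor further point, your description of the hands-on cases is slightly off: for a pulled-back class $D$ one has $D\cdot E=0$, so the exceptional pair furnished by Lemma~\ref{lemma_3mutations} involves $\cO_E(0)$ rather than $\cO_E(-1)$, and the new $0$-classes (e.g.\ $h_3=h_1+h_2-E_1-E_2$ in the $(6,8)$ case) are produced via the Serre twist and Lemma~\ref{lemma_hmutation}, not directly by Lemma~\ref{lemma_3mutations}.
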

\begin{proof}
    If $K_Y^2=9$, take the mutations from Lemma~\ref{lem:3blockdecomp}.
    The cases $(K_X^2,K_Y^2)\in\{(5,6),(5,8),(6,8)\}$ remain.

    {$\mathbf{(6,8)}$}: We have $K_X=-h_1-h_2-h_3$ and we compare \begin{equation}\label{eq:6-8--2}
        \langle \ker f_*,f^*\cS(Y)\rangle =
        \left\langle
        \left\langle \begin{smallmatrix}
        \cO_{E_1}(-1)\\ \cO_{E_2}(-1)
        \end{smallmatrix} \right\rangle,
        \langle \cO(-h_1-h_2) \rangle,
         \left\langle \begin{smallmatrix}
        \cO(-h_1)\\ \cO(-h_2)
        \end{smallmatrix} \right\rangle,
        \langle\cO\rangle
        \right\rangle
    \end{equation}
    with
    \begin{equation}\label{eq:6-8--1}
        \left\langle
        \left\langle \begin{smallmatrix}
        \cO(-H_1)\\ \cO(-H_2)
        \end{smallmatrix} \right\rangle,
         \left\langle \begin{smallmatrix}
        \cO(-h_1)\\ \cO(-h_2)
        \end{smallmatrix} \right\rangle,
        \langle \cO(-h_3) \rangle,
        \langle\cO\rangle
        \right\rangle,
    \end{equation}
    which is a refinement of $\cS(X)$. We do mutations as follows:
\begin{multline*}
\text{\eqref{eq:6-8--2}}\overset{R_1}\leadsto    
\left\langle 
\langle \cO(-h_1-h_2)\rangle,
\left\langle \begin{smallmatrix}
\cL_1\\ \cL_2
\end{smallmatrix} \right\rangle,
\left\langle \begin{smallmatrix}
\cO(-h_1)\\ \cO(-h_2)
\end{smallmatrix} \right\rangle,
\langle\cO\rangle, 
\right\rangle\overset{-K}\leadsto   
\left\langle 
\left\langle \begin{smallmatrix}
\cL_1\\ \cL_2
\end{smallmatrix} \right\rangle,
\left\langle \begin{smallmatrix}
\cO(-h_1)\\ \cO(-h_2)
\end{smallmatrix} \right\rangle,
\langle\cO\rangle, 
\langle \cO(h_3)\rangle\right\rangle\overset{L_4}\leadsto
\text{\eqref{eq:6-8--1}}
\end{multline*}
Here and later on we denote by $-K$ the mutation of the first component to the end, which is given by tensoring with $\cO(-K_X)$, see~\eqref{eq_SerreMutat1}. The last mutation is by Lemma~\ref{lemma_hmutation}. Note that we do not track one of the components but use Lemma~\ref{lemma_thelastone} to deduce that the result of our mutations coincides with~\eqref{eq:6-8--1}.

    $\mathbf{(5,6)}$: Classes $h_1,h_2,h_3,H_1,H_2$ on $X$ are the pull-backs from $Y$. We have $h_4=-H_1-K_X, h_5=-H_2-K_X$
    by Lemma~\ref{lemma_dualclasses}, and we compare
    \begin{equation}\label{eq:5-6--1}
        \langle \ker f_*,f^*\cS(X_6)\rangle = 
        \left\langle
        \langle \cO_{E}(-1) \rangle,
         \left\langle \begin{smallmatrix}
        \cO(-H_1)\\ \cO(-H_2)
        \end{smallmatrix} \right\rangle,
        \left\langle \begin{smallmatrix}
        \cO(-h_1)\\ \cO(-h_2) \\ \cO(-h_3)
        \end{smallmatrix} \right\rangle,
        \langle\cO\rangle
        \right\rangle
    \end{equation}
    with    \begin{equation}\label{eq:5-6--2}
        \left\langle
         \langle \cE\rangle,
        \left\langle \begin{smallmatrix}
        \cO(-h_1)\\ \cO(-h_2) \\ \cO(-h_3)
        \end{smallmatrix} \right\rangle,
        \left\langle \begin{smallmatrix}
        \cO(-h_4)\\  \cO(-h_5)
        \end{smallmatrix} \right\rangle,
        \langle\cO\rangle
        \right\rangle,
    \end{equation}
    which is a refinement of $\cS(X_5)$. They are mutation-equivalent via the following mutations:
    \begin{multline*}
\text{\eqref{eq:5-6--1}}{\overset{\text{Lemma}~\ref{lemma_OE}}\leadsto}
        \left\langle
        \left\langle \begin{smallmatrix}
        \cO(-H_1)\\ \cO(-H_2)
        \end{smallmatrix} \right\rangle,
        \left\langle \begin{smallmatrix}
        \cO(-h_1)\\ \cO(-h_2) \\ \cO(-h_3)
        \end{smallmatrix} \right\rangle,
        \langle \cO(-E) \rangle,
        \langle\cO\rangle
        \right\rangle
        \overset{-K}\leadsto
        \left\langle
        \left\langle \begin{smallmatrix}
        \cO(-h_1)\\ \cO(-h_2) \\ \cO(-h_3)
        \end{smallmatrix} \right\rangle,
        \langle \cO(-E) \rangle,
        \langle\cO\rangle,
         \left\langle \begin{smallmatrix}
        \cO(h_4)\\ \cO(h_5)
        \end{smallmatrix} \right\rangle
        \right\rangle
        \overset{L_4}\leadsto\\
        \leadsto
        \left\langle
        \left\langle \begin{smallmatrix}
        \cO(-h_1)\\ \cO(-h_2) \\ \cO(-h_3)
        \end{smallmatrix} \right\rangle,
        \langle \cO(-E) \rangle,
        \left\langle \begin{smallmatrix}
        \cO(-h_4)\\ \cO(-h_5)
        \end{smallmatrix} \right\rangle,
        \langle\cO\rangle
        \right\rangle
        \overset{L_2}\leadsto
        \eqref{eq:5-6--2}.
\end{multline*}

    $\mathbf{(5,8)}$: This time we denote by  $h_4,h_5$ the $0$-classes on $X$ pulled back from $Y$. For $i=1,2,3$ we note that $h_4+h_5$ is a $2$-class, $H_i=h_4+h_5-E_i$ is a $1$-class, and $h_i=-K_X-H_i$  by Lemma~\ref{lemma_dualclasses}.
    We compare \begin{equation}\label{eq:5-8--1}
        \langle \ker f_*,f^*\cS(Y)\rangle =\left\langle 
        \left\langle \begin{smallmatrix}
        \cO_{E_1}(-1)\\ \cO_{E_2}(-1)\\ \cO_{E_3}(-1)
        \end{smallmatrix} \right\rangle,
        \langle\cO(-h_4-h_5)\rangle,
        \left\langle \begin{smallmatrix}
        \cO(-h_4)\\ \cO(-h_5)
        \end{smallmatrix} \right\rangle,
        \langle\cO\rangle
        \right\rangle
    \end{equation}
    with \eqref{eq:5-6--2}, which is a refinement of $\cS(X_5)$.
    We do mutations
    \begin{multline*}
    \text{\eqref{eq:5-8--1}}\overset{R_1}\leadsto
    \left\langle 
        \langle\cO(-h_4-h_5)\rangle,
        \left\langle \begin{smallmatrix}
        \cO(-h_4-h_5+E_1)\\ \cO(-h_4-h_5+E_2)\\ \cO(-h_4-h_5+E_3)
        \end{smallmatrix} \right\rangle,
        \left\langle \begin{smallmatrix}
        \cO(-h_4)\\ \cO(-h_5)
        \end{smallmatrix} \right\rangle,
        \langle\cO\rangle
        \right\rangle=\\
           = \left\langle 
        \langle\cO(-h_4-h_5)\rangle,
        \left\langle \begin{smallmatrix}
        \cO(-H_1)\\ \cO(-H_2)\\ \cO(-H_3)
        \end{smallmatrix} \right\rangle,
        \left\langle \begin{smallmatrix}
        \cO(-h_4)\\ \cO(-h_5)
        \end{smallmatrix} \right\rangle,
        \langle\cO\rangle
        \right\rangle
        \overset{R_1}\leadsto
        \left\langle 
        \left\langle \begin{smallmatrix}
        \cO(-H_1)\\ \cO(-H_2)\\ \cO(-H_3)
        \end{smallmatrix} \right\rangle,
        \langle\widetilde \cE\rangle,
        \left\langle \begin{smallmatrix}
        \cO(-h_4)\\ \cO(-h_5)
        \end{smallmatrix} \right\rangle,
        \langle\cO\rangle
        \right\rangle
        \overset{-K}\leadsto\\ \leadsto
        \left\langle 
        \langle\widetilde \cE\rangle,
        \left\langle \begin{smallmatrix}
        \cO(-h_4)\\ \cO(-h_5)
        \end{smallmatrix} \right\rangle,
        \langle\cO\rangle,
        \left\langle \begin{smallmatrix}
        \cO(h_1)\\ \cO(h_2)\\ \cO(h_3)
        \end{smallmatrix} \right\rangle
        \right\rangle
        \overset{L_4}\leadsto
        \left\langle 
        \langle\widetilde \cE\rangle,
        \left\langle \begin{smallmatrix}
        \cO(-h_4)\\ \cO(-h_5)
        \end{smallmatrix} \right\rangle,
        \left\langle \begin{smallmatrix}
        \cO(-h_1)\\ \cO(-h_2)\\ \cO(-h_3)
        \end{smallmatrix} \right\rangle,
        \langle\cO\rangle
        \right\rangle
        \overset{R_2}\leadsto
        \eqref{eq:5-6--2}.
    \end{multline*}
\end{proof}

\begin{lemma}
\label{lemma_fCY}
Let $X$ be a del Pezzo surface of degree $1$ or $2$, let $\sigma\colon X\to X$ be the Bertini/Geiser involution. Let $\bA=\cO^{\perp}\subset \Db(X)$ and $S_{\bA}$ be the Serre functor on $\bA$. Then
\begin{align*}
S_{\bA}^3\cong \sigma^*[5] & \quad \text{for $d=1$,}\\
S_{\bA}^2\cong \sigma^*[3] & \quad \text{for $d=2$.}
\end{align*}
\end{lemma}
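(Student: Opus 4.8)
The plan is to reduce both identities to a single statement about a \emph{rotation functor} on $\bA$, and then to evaluate that functor using the (pluri\nobreakdash-)anticanonical double\nobreakdash-cover structure of $X$.

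First I would express $S_\bA$ through the ambient Serre functor $S_X=(-)\otimes\omega_X[2]$. Since $\cO$ is exceptional one has the SOD $\Db(X)=\langle\bA,\langle\cO\rangle\rangle$ with $\bA=\cO^\perp$ the first component, so by the mutation triangle \eqref{def_EmutationL} the left adjoint $i^*$ to the inclusion $i\colon\bA\hookrightarrow\Db(X)$ is the left mutation $L_{\langle\cO\rangle}$. The standard relation $S_\bA^{-1}=i^*\,S_X^{-1}\,i$ between the Serre functor of an admissible component and that of the ambient category then gives
\[
S_\bA^{-1}=L_{\langle\cO\rangle}\circ\big((-)\otimes\omega_X^{-1}\big)\big|_{\bA}\,[-2].
\]
Writing $G:=L_{\langle\cO\rangle}\circ\big((-)\otimes\omega_X^{-1}\big)\big|_{\bA}\colon\bA\to\bA$ (which lands in $\bA$ because $L_{\langle\cO\rangle}$ projects onto $\cO^\perp$), this reads $S_\bA=G^{-1}[2]$. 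Substituting into the two claimed identities and using $\sigma^2=\id$, both collapse to the single clean statement
\[
G^{\,n}\cong\sigma^*[1],\qquad n=2\ (d=2),\quad n=3\ (d=1).
\]

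Next I would compute $G^n$ geometrically. For $d=2$ the anticanonical map realises $X$ as a double cover $\pi\colon X\to\P^2$ with covering involution $\sigma$ and $\omega_X^{-1}=\pi^*\cO(1)$; for $d=1$ the bianticanonical map realises $X$ as a double cover $\pi\colon X\to Q$ of a quadric cone $Q=\P(1,1,2)$ with $\omega_X^{-2}=\pi^*\cO_Q(1)$, again with $\sigma$ the covering involution. The key input is the double\nobreakdash-cover triangle $\sigma^*F\otimes\pi^*\cL^{-1}\to\pi^*\pi_*F\to F$ relating $\pi^*\pi_*$, the identity and $\sigma^*$, where $\pi_*\cO_X=\cO\oplus\cL^{-1}$. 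Iterating the defining triangle of $G$ expresses $G^n(a)$ as $L_{\langle\cO\rangle}$ applied to $a\otimes\omega_X^{-n}$, together with correction terms built from the lower twists $\omega_X^{-k}$, $0\le k<n$. Now $\omega_X^{-n}$ is exactly the anti\nobreakdash-invariant line bundle of the cover (the one carrying the distinguished section $w$ with $\sigma^*w=-w$): applying the double\nobreakdash-cover triangle at this top twist produces a copy of $\sigma^*a$, which survives $L_{\langle\cO\rangle}$ since $\sigma^*$ preserves $\cO^\perp=\bA$, while the remaining contributions are pulled back from the base. The connecting map of the double\nobreakdash-cover triangle accounts for the shift $[1]$, so the computation produces a natural comparison morphism between $G^n(a)$ and $\sigma^*a[1]$.

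The main obstacle is to turn this into a \emph{functorial} isomorphism rather than an isomorphism of objects or of classes in $K_0$. Concretely, one must track natural transformations through the $n$\nobreakdash-fold iterate of $L_{\langle\cO\rangle}$ and prove that all the pulled\nobreakdash-back\nobreakdash-from\nobreakdash-the\nobreakdash-base contributions cancel after mutation through $\langle\cO\rangle$; this cancellation is forced by $a\in\cO^\perp$, equivalently $\rH^\bullet(X,a)=0$, i.e. $\pi_*a\in\cO_B^{\perp}$, but making it natural in $a$ is the delicate point. A secondary difficulty, specific to $d=1$, is that the base $Q$ is singular and $|-K_X|$ has a base point, so the double\nobreakdash-cover formalism must be run on $Q=\P(1,1,2)$ with its weighted grading; this is precisely what makes the period jump from $2$ to $3$. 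As a consistency check one can verify the numerical shadow on Hochschild homology, where $[1]$ acts trivially and $\sigma^*$ acts by the Geiser/Bertini reflection (which is $-1$ on $K_X^\perp$ and $+1$ on $K_X$); this pins down the shifts $\ell=3$ and $\ell=5$ unambiguously and confirms the reduction above.
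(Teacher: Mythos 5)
You have carried out the reduction correctly, and it is in fact the same mechanism that underlies the result the paper invokes: with $i\colon\bA\hookrightarrow\Db(X)$ one has $i^{*}=L_{\langle\cO\rangle}$, the identity $S_{\bA}^{-1}=i^{*}S_X^{-1}i$ gives $S_{\bA}=G^{-1}[2]$ for $G=L_{\langle\cO\rangle}\circ(-\otimes\omega_X^{-1})|_{\bA}$, and both claims collapse to $G^{n}\cong\sigma^{*}[1]$ with $n=2$ ($d=2$), $n=3$ ($d=1$); your identification of $\omega_X^{-n}$ with the anti-invariant bundle $\pi^{*}L$ of the double cover is also numerically correct ($L=\cO_{\P^2}(2)$ for $d=2$, $L=\cO_{\P(1,1,2)}(3)$ for $d=1$). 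The problem is that everything after this reduction is a plan rather than a proof, and the step you defer is the entire content of the lemma. The isomorphism of \emph{functors} $G^{n}\cong\sigma^{*}[1]$ is precisely the fractional Calabi--Yau property of the residual category of a double cover; iterating the push-pull triangle $\sigma^{*}F\otimes\pi^{*}L^{-1}\to\pi^{*}\pi_{*}F\to F$ through an $n$-fold composition of mutations produces, at best, object-wise comparison maps, and you yourself concede that naturality in $a$ and the cancellation of the base contributions is ``the delicate point'' without resolving it. This is not a finishing touch one can leave to the reader: in the literature this statement is established by a structurally different argument (a careful analysis of rotation functors for cyclic covers, in the spirit of graded matrix factorizations), not by chasing the cover triangle.

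The paper does not attempt such a computation either: it checks that $X$ is the double cover of $\P^2$ branched in a quartic ($d=2$), respectively of $\P(1,1,2)$ \emph{viewed as a smooth stack} and branched exactly in the degree-$6$ curve ($d=1$), and then quotes \cite[Cor.~5.3]{KuzPer}, whose conclusion is exactly your $G^{n}\cong\sigma^{*}[1]$; for $d=1$ this also uses that $(\cO,\cO(1),\cO(2),\cO(3))$ is a full exceptional collection on the stack $\P(1,1,2)$, by \cite[Remark 2.2.6]{Canonaco} --- this is the precise form of your ``secondary difficulty'', and it is not a technical annoyance: on the singular base the category $\cO^{\perp}$ is not under control, and it is the stacky picture that makes the period come out as $3$ rather than $2$. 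A further small point: your proposed consistency check on Hochschild homology cannot ``pin down the shifts unambiguously'' --- such numerical invariants see at most the parity of a shift (and the shift acts there by a sign, not trivially). So either invoke the Kuznetsov--Perry corollary directly, in which case your reduction becomes unnecessary because the corollary yields the lemma as stated, or supply the missing functorial argument; as written, the proposal has a genuine gap at its central step.
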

\begin{proof}
If $\deg X=1$ then $X$ is a double cover of $\P(1,1,2)$, ramified over a curve of degree $6$ (degree relative to $\cO_{\P(1,1,2)}(1)$), and $\sigma$ is the deck transformation. Viewing $\P(1,1,2)$ as a smooth stack, the map $X\to \P(1,1,2)$ is ramified exactly over the curve (and not over the stacky point). Hence the statement is a special case of~\cite[Cor. 5.3]{KuzPer}, where $M=\P(1,1,2)$, $k=1$, $d_1=3$, $\ind(M)=4$, $\ind(X)=1$,  $c=1$, $\cR_M=0$, and $\bT=\id$. Note that $(\cO, \cO(1), \cO(2), \cO(3))$ is a full and exceptional collection in $\Db(\P(1,1,2))$ by~\cite[Remark 2.2.6]{Canonaco}.

If $\deg X=2$ then $X$ is a double cover of $\P^2$ ramified over a quartic, and $\sigma$ is the deck transformation. The statement is a special case of~\cite[Cor. 5.3]{KuzPer}, where $M=\P^2=\P(1,1,1)$, $k=1$, $d_1=2$, $\ind(M)=3$, $\ind(X)=1$,  $c=1$, $\cR_M=0$, and $\bT=\id$.
\end{proof}

\subsection{Group actions on categories}
\label{section_group-actions}
Here we recall necessary definitions and facts about group actions on categories and equivariant categories. We refer to~\cite{Deligne}, \cite[4.1.3]{Drinfeld-et-al}, \cite{ShinderGroupActions} for details. 

\begin{definition}
\label{def_groupaction}
Let $G$ be a group and $\bC$ be a category. An \emph{action} of  $G$ on $\bC$ is given by 
\begin{itemize}
    \item a collection of autoequivalences $$\rho_g\colon \bC\to\bC, \quad g\in G,$$ and
    \item a collection of functorial isomorphisms 
    $$\epsilon_{g,h}\colon \rho_g\rho_h\to \rho_{gh}, \quad g,h\in G,$$
\end{itemize}
satisfying the cocycle condition given by the commutativity of the diagram of functorial isomorphisms: 
\begin{equation}
\label{eq_eeee}
\xymatrix{\rho_f\rho_g\rho_h \ar[rr]^{\epsilon_{f,g}\rho_h}\ar[d]^{\rho_f\epsilon_{g,h}}  && \rho_{fg}\rho_h\ar[d]^{\epsilon_{fg,h}}\\
\rho_f\rho_{gh}\ar[rr]^{\epsilon_{f,gh}} && \rho_{fgh}} 
\end{equation} 
for any $f,g,h\in\ G$.
An action is called \emph{strict} if $\rho_{gh}=\rho_g\rho_h$ and $\epsilon_{g,h}=\id$ for all $g,h$.
\end{definition}

\begin{remark}
    The definition of a strict action may seem to be more natural and appealing than that of a general action. 
    Moreover, there is a result (see~\cite[Th. 5.4]{ShinderGroupActions}) saying that any group action is equivalent to a strict group action. However, the framework of strict actions is too restrictive: many natural group actions on categories are not strict,
    and strictifiying them produces categories with unnecessary excessive objects. For example, non-strict $G$-actions on a category $\mathsf{pt}_{\kk}$ with one object and endomorphisms $\kk$ from Example~\ref{example_kpoint} are equivalent to strict actions on a category with $|G|$ isomorphic objects, but are not equivalent to a strict action on the one-object category $\mathsf{pt}_\kk$.
\end{remark}

We will call a category with a $G$-action a \emph{$G$-category}.

\begin{definition}
    Let $\bC, \bD$ be $G$-categories. A \emph{$G$-equivariant functor} from $\bC$ to $\bD$ is given by
\begin{itemize}
    \item a functor $$\phi\colon \bC\to\bD,$$ and 
    \item a collection of functorial isomorphisms
    $$\beta_g\colon \phi\rho_g^{\bC}\to \rho_g^{\bD}\phi,$$
\end{itemize}
satisfying for all $g,h\in G$ the compatibility conditions that are given by the commutativity of the diagram of functorial isomorphisms
\begin{equation}
\label{eq_ebbbe}
\xymatrix{\phi\rho_{g}^{\bC}\rho_h^{\bC} \ar[r]^{\beta_g\rho_h^{\bC}}\ar[d]^{\phi\epsilon_{g,h}^{\bC}}& \rho_{g}^{\bD}\phi\rho_h^{\bC} \ar[r]^{\rho_g^{\bD}\beta_{h}}& \rho_{g}^{\bD}\rho_h^{\bD}\phi\ar[d]^{ \epsilon_{g,h}^{\bD}\phi}\\
 \phi\rho_{gh}^{\bC}\ar[rr]^{\beta_{gh}}&& \rho_{gh}^{\bD}\phi.}
\end{equation}
\end{definition}

\begin{definition}
    Let $\bC, \bD$ be $G$-categories and $\phi,\psi\colon \bC\to \bD$ be $G$-equivariant functors. A \emph{$G$-morphism} (or a \emph{$G$-natural transformation}) from $\phi$ to $\psi$ is a  morphism of functors $f\colon \phi\to \psi$ such that
    $\beta_g^{\psi} \circ f\rho_g^{\bC}=\rho_g^{\bD}(f)\circ\beta_g^{\phi}$
    for all $g$.
\end{definition}

Two categories with a $G$-action are called \emph{$G$-equivariantly equivalent} if there is a $G$-equivariant equivalence between them.   In particular, one can speak of $G$-equivariant equivalence of $G$-actions on a fixed category. The following observation will be used below, sometimes implicitly.
\begin{remark}
For two equivalent categories $\bC$ and $\bD$, $G$-actions on $\bC$ correspond to $G$-actions on $\bD$, up to equivariant equivalence.    
\end{remark}

Let $\bC$ be a  category with an action of a group $G$. We say that a  full subcategory $\bC'\subset \bC$ is \emph{$G$-invariant} if $\rho_g(C)\in\bC'$ for any object $C\in \bC'$, and remark that an invariant subcategory naturally has induced $G$-action. We say that an SOD of a triangulated category $\bT$ is \emph{$G$-invariant} if all its components are $G$-invariant.

\begin{remark}\label{rem_mutationOfGinvariantSOD}
    Any mutation of a $G$-invariant SOD is also a $G$-invariant SOD. Indeed, if 
$$\bT=\langle\bA_1,\ldots,\bA_n\rangle=\langle\bA_1,\ldots,\bA_{i-2},\bA_{i}, R_{\bA_i}\bA_{i-1},\bA_{i+1},  \bA_n\rangle$$ and all $\bA_j$ are $G$-invariant then
$$R_{\bA_i}\bA_{i-1}={}^\perp\langle\bA_1,\ldots,\bA_{i-2},\bA_i\rangle\cap \langle\bA_{i+1},\ldots,\bA_n\rangle^\perp$$
also is $G$-invariant. 
\end{remark}
The following lemma will be crucial for our study of atoms in subsequent sections.
\begin{lemma}
\label{lemma_atomswelldefined}    
Let $\bT$ be a triangulated category with an action of a group $G$.
If $\bT=\langle\bA_1,\ldots,\bA_n\rangle$ is a $G$-invariant SOD into admissible components then the mutation functors 
$$\bA_i\to R_{\bA_{i+1}}(\bA_i), \quad \bA_{i+1}\to L_{\bA_{i}}(\bA_{i+1})$$
are $G$-equivariant equivalences for all $i=1,\ldots,n-1$. 
In particular, if two SOD-s
$$\bT=\langle\bA_1,\ldots,\bA_n\rangle=\langle\bB_1,\ldots,\bB_n\rangle$$
are mutation-equivalent then
the collections of $G$-categories $\bA_1,\ldots,\bA_n$ and $\bB_1,\ldots,\bB_n$ are the same up to a permutation and $G$-equivariant equivalences.
\end{lemma}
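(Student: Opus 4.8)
The plan is to reduce everything to a single general principle: the adjoint of a $G$-equivariant functor carries a canonical $G$-equivariant structure. First I would record that for a $G$-invariant admissible subcategory $\bA\subset\bT$, both orthogonals $\bA^\perp$ and ${}^\perp\bA$ are again $G$-invariant. This is immediate: since each $\rho_g$ is an autoequivalence and $\bA$ is $G$-invariant, for $T\in\bA^\perp$ and $A\in\bA$ one has $\Hom^\bullet(A,\rho_g T)\cong\Hom^\bullet(\rho_{g^{-1}}A,T)=0$ because $\rho_{g^{-1}}A\in\bA$; hence $\rho_g T\in\bA^\perp$, and symmetrically for ${}^\perp\bA$. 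Each of these subcategories therefore inherits a $G$-action by restricting $\rho_g$ and $\epsilon_{g,h}$, and the inclusion functors $\bA^\perp\hookrightarrow\bT$, ${}^\perp\bA\hookrightarrow\bT$ and $\bA\hookrightarrow\bT$ are tautologically $G$-equivariant, with structure isomorphisms $\beta_g=\id$, so that \eqref{eq_ebbbe} holds trivially.

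Next I would invoke the general fact that if $F\colon\bC\to\bD$ is a $G$-equivariant functor between $G$-categories admitting a right (resp. left) adjoint, then that adjoint is canonically $G$-equivariant, and the unit and counit are $G$-natural transformations (see \cite{ShinderGroupActions}, \cite{Elagin_2011}). Concretely, the equivariant structure $\gamma_g$ on the right adjoint $R$ is the mate of $\beta_g$ under the adjunction $F\dashv R$; it is an isomorphism because $\beta_g$ is, and verifying the cocycle condition \eqref{eq_ebbbe} for $\{\gamma_g\}$ is a diagram chase using \eqref{eq_ebbbe} for $\{\beta_g\}$ together with the cocycle \eqref{eq_eeee} for $\{\epsilon_{g,h}\}$. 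I expect this mate-and-cocycle verification to be the main technical point of the proof.

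With these in hand, recall from the preliminaries that $R_\bA$ is the restriction to $\bA^\perp$ of the right adjoint $\pi\colon\bT\to{}^\perp\bA$ of the inclusion ${}^\perp\bA\hookrightarrow\bT$; that is, $R_\bA$ is the composite $\bA^\perp\hookrightarrow\bT\xrightarrow{\pi}{}^\perp\bA$. Both factors are $G$-equivariant -- the inclusion by the first paragraph and $\pi$ by the adjoint principle -- so $R_\bA$ is $G$-equivariant, and likewise $L_\bA$ using the left adjoint. Applying this with $\bA=\bA_{i+1}$ and restricting to the $G$-invariant subcategory $\bA_i\subseteq\bA_{i+1}^\perp$ (whose image $R_{\bA_{i+1}}\bA_i\subseteq{}^\perp\bA_{i+1}$ is $G$-invariant by Remark~\ref{rem_mutationOfGinvariantSOD}) shows that $\bA_i\to R_{\bA_{i+1}}(\bA_i)$ is $G$-equivariant; since it is already an equivalence by the general theory of admissible SOD-s, it is a $G$-equivariant equivalence. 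The functor $\bA_{i+1}\to L_{\bA_i}(\bA_{i+1})$ is handled symmetrically.

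Finally, for the ``in particular'' statement I would argue by induction on the length of a sequence of elementary mutations connecting the two SOD-s. By Remark~\ref{rem_mutationOfGinvariantSOD} every intermediate SOD is again $G$-invariant, so the previous paragraph applies at each step. A single elementary mutation leaves all components unchanged except that it transposes two adjacent slots and replaces one component $\bA_i$ by $R_{\bA_{i+1}}(\bA_i)$, or $\bA_{i+1}$ by $L_{\bA_i}(\bA_{i+1})$, which is $G$-equivariantly equivalent to the original. Hence the unordered collection of components, taken up to $G$-equivariant equivalence, is unchanged by each elementary mutation, and therefore the multisets $\{\bA_1,\dots,\bA_n\}$ and $\{\bB_1,\dots,\bB_n\}$ agree up to permutation and $G$-equivariant equivalence.
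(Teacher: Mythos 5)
Your proof is correct and follows essentially the same route as the paper's: both decompose the mutation functor as the ($G$-equivariant) embedding of a $G$-invariant subcategory followed by the right adjoint of another such embedding, invoke the fact that adjoints of $G$-equivariant functors are $G$-equivariant (the paper cites \cite[Prop. 3.9]{ShinderGroupActions} for exactly the mate construction you sketch), and settle the second claim by induction on a chain of elementary mutations using Remark~\ref{rem_mutationOfGinvariantSOD}. Your write-up merely spells out details the paper leaves implicit (invariance of orthogonals, the cocycle check for the mate), so no gap.
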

\begin{proof}
    We check the claim for the right mutation.
    Recall that the mutation functor $\bA_i\to R_{\bA_{i+1}}(\bA_i)$ is the composition of the embedding functor $\bA_i\to \bT$ and the right adjoint functor $\bT\to {}^{\perp}\bA_{i+1}$ to the embedding 
    $^{\perp}\bA_{i+1}\to \bT$. The embedding of a $G$-invariant subcategory is naturally a $G$-equivariant functor, and the adjoint functor to a  $G$-equivariant functor is $G$-equivariant, see~\cite[Prop. 3.9]{ShinderGroupActions}. The second claim follows by induction.
\end{proof}

\begin{definition}
We say that an object $E$ in a $G$-category $\bC$ is \emph{$G$-invariant} if $\rho_g(E)$ is isomorphic to $E$ for all $g\in G$.
\end{definition}

\begin{definition}
\label{def_equivariant}
Let $\bC$ be a $G$-category and $E\in \bC$ be a $G$-invariant object. An \emph{equivariant structure} on $E$ (or a \emph{$G$-linearisation} of $E$) is given by a collection of isomorphisms for all $g$
$$\theta_g\colon E\to \rho_g(E)$$
subject to conditions for all $g,h$
\[\theta_{gh}=\epsilon_{g,h}\circ\rho_g(\theta_h)\circ\theta_g\]
in $\Hom(E,\rho_{gh}(E))$.

A $G$-invariant object is said to be \emph{$G$-linearisable} if it admits a $G$-equivariant structure. 
A \emph{$G$-equivariant object} $(E,\theta)$ in $\bC$ is an object $E\in\bC$ with fixed equivariant structure $\theta=(\theta_g)_{g\in G}$. 

A \emph{morphism between equivariant objects} $(E_1,\theta_1)$ and $(E_2,\theta_2)$ is given by a morphism $f\colon E_1\to E_2$ such that for all $g$
\[\rho_g(f)\circ\theta_{1,g}=\theta_{2,g}\circ f\]
in $\Hom(E_1, \rho_g(E_2))$.
Thus, one gets the category of $G$-equivariant objects in $\bC$ that we denote $\bC^G$.
\end{definition}

\begin{definition}
Let $\bC,\bD$ be $\kk$-linear categories and $\tau\in\Aut(\kk)$. We say that a functor $\phi\colon\bC\to\bD$ is \emph{$\tau$-linear} if $\phi(\lambda f)=\tau(\lambda)\phi(f)$ for any $\lambda\in\kk$ and a morphism $f$ in $\bC$.

Let $\sigma\colon G\to\Aut(\kk)$ be a homomorphism. We say that a $G$-action $(\rho_g,\epsilon_{g,h})$ on a $\kk$-linear category $\bC$ is \emph{$\sigma$-linear} if 
the functor $\rho_g\colon \bC\to \bC$ is $\sigma(g)$-linear for any $g\in G$. 

We will denote the multiplicative group $\kk^*$ with the induced $G$-action by $\kk^*_{\sigma}$: $g\in G$ takes $\lambda\in\kk^*$ to $\sigma(g)(\lambda)$. 
\end{definition}

\medskip

In this paper we will mostly be interested in categories with group actions as in the two examples below.
\begin{example}    
\label{example_G-action-on-X}
Let $X$ be an algebraic variety over $\kk$ with an action of a group $G$. Then the categories $\Coh X$ and $\Db(X)$ have $G$-actions given by push-forward functors $\rho_g=g_*$ so
that 
$g_*h_* = (gh)_*$ and we set $\epsilon_{g,h} = \id$. 
If a $G$-action is $\kk$-linear then the induced $G$-actions on $\Coh X$ and $\Db(X)$ are $\kk$-linear. More generally, if one has $\rH^0(X,\cO_X)=\kk$ then a $G$-action on $X$ defines a $G$-action $\sigma$ on $\kk$, and the induced $G$-actions on $\Coh X$ and $\Db(X)$ are $\sigma$-linear.

For a $G$-morphism of $G$-varieties $f\colon X\to Y$ the functors $f^*, f_*$ between $\Coh X, \Coh Y$ and between $\Db(X), \Db(Y)$ are $G$-equivariant.
\end{example}

\begin{example}
\label{example_kpoint}
Let $\kk$ be a field and $\mathsf{pt}_\kk$ be the category with one object $\bullet$ such that $\End(\bullet)=\kk$. Then an action $\rho$ of a group $G$ on  $\mathsf{pt}_\kk$ (by additive functors) is given by 
\begin{itemize}
    \item a homomorphism $\sigma\colon G\to\Aut(\kk)$ giving the action of $G$ on morphisms: $\sigma(g)=\rho_g$, and
    \item a collection $\alpha_{g,h}\in\kk^*$ giving isomorphisms $\epsilon_{g,h}$,   
\end{itemize}
where $\alpha_{g,h}$ are subject to relations~\eqref{eq_eeee}:
$$\alpha_{f,gh}\cdot \rho_f(\alpha_{g,h})=\alpha_{fg,h}\cdot \alpha_{f,g}$$
for $f,g,h$. These relations mean exactly that $\alpha$ is a $2$-cocycle of $G$ in the $G$-module $\kk^*_{\sigma}$. 
Vice versa, any homomorphism $\rho$ together with a $2$-cocycle $\alpha$ in $\kk^*_{\sigma}$ define a $G$-action on $\mathsf{pt}_\kk$ as above.
\end{example}

\begin{lemma}
\label{lemma_actiononapoint}
Two actions of $G$ on $\mathsf{pt}_\kk$ given by  $(\sigma,\alpha)$ and $(\sigma',\alpha')$ are $G$-equivariantly equivalent if and only if for some $\tau\in\Aut(\kk)$ one has $\sigma'=\tau\sigma\tau^{-1}$ and $[\alpha']=\tau([\alpha])\in \rH^2(G,\kk^*_{\sigma'})$ (where $\tau\colon \kk^*_{\sigma}\to \kk^*_{\sigma'}$ is an isomorphism of $G$-modules). In this case the equivalence is $\tau$-linear.

In particular, two such actions  are $G$-equivariantly \textbf{$\kk$-linearly} equivalent if and only if $\sigma=\sigma'$ and $[\alpha]=[\alpha']\in \rH^2(G,\kk^*_{\sigma})$. 
\end{lemma}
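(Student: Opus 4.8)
The plan is to unwind the definitions of $G$-equivariant functor and of its compatibility condition for the one-object category $\mathsf{pt}_\kk$, reducing everything to an explicit identity in $\kk^*$ which I then recognise as the coboundary relation in group cohomology. Throughout I regard the equivalence as going from the category with action $(\sigma,\alpha)$ to the one with action $(\sigma',\alpha')$.

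First I would pin down what an equivalence $\phi\colon\mathsf{pt}_\kk\to\mathsf{pt}_\kk$ can be. Since there is a single object $\bullet$, $\phi$ fixes it and acts on $\End(\bullet)=\kk$ as a ring endomorphism; being an equivalence forces this to be a field automorphism $\tau\in\Aut(\kk)$, and $\phi$ is then tautologically $\tau$-linear. A $G$-equivariant structure on $\phi$ is a family of natural isomorphisms $\beta_g\colon\phi\rho_g^{\rm src}\to\rho_g^{\rm tgt}\phi$, and since all functors send $\bullet$ to $\bullet$, each $\beta_g$ is encoded by a single element $\beta_g\in\kk^*$ (its value at $\bullet$).

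Next I would extract the two conditions. Naturality of $\beta_g$ evaluated at a morphism $f\in\kk$ reads $\sigma'(g)(\tau(f))\cdot\beta_g=\beta_g\cdot\tau(\sigma(g)(f))$ in $\kk$; since $\kk$ is commutative this cancels to $\sigma'(g)\circ\tau=\tau\circ\sigma(g)$ for all $g$, i.e. $\sigma'=\tau\sigma\tau^{-1}$. This is exactly the statement that $\tau\colon\kk^*_\sigma\to\kk^*_{\sigma'}$ is an isomorphism of $G$-modules, so it induces an isomorphism $\tau\colon\rH^2(G,\kk^*_\sigma)\xrightarrow{\sim}\rH^2(G,\kk^*_{\sigma'})$. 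Then I would evaluate the compatibility diagram \eqref{eq_ebbbe} at $\bullet$: computing the components of the whiskered arrows ($\beta_g\rho_h^{\rm src}\mapsto\beta_g$, $\rho_g^{\rm tgt}\beta_h\mapsto\sigma'(g)(\beta_h)$, $\epsilon_{g,h}^{\rm tgt}\phi\mapsto\alpha'_{g,h}$, $\phi\epsilon_{g,h}^{\rm src}\mapsto\tau(\alpha_{g,h})$, $\beta_{gh}\mapsto\beta_{gh}$) yields $\alpha'_{g,h}\cdot\sigma'(g)(\beta_h)\cdot\beta_g=\beta_{gh}\cdot\tau(\alpha_{g,h})$, which rearranges to
\[
\tau(\alpha_{g,h})\cdot(\alpha'_{g,h})^{-1}=\frac{\sigma'(g)(\beta_h)\,\beta_g}{\beta_{gh}}=(\delta\beta)(g,h),
\]
the coboundary of the $1$-cochain $g\mapsto\beta_g$ in the $G$-module $\kk^*_{\sigma'}$. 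Hence a $G$-equivariant structure exists precisely when $\tau(\alpha)$ and $\alpha'$ are cohomologous, i.e. $\tau([\alpha])=[\alpha']\in\rH^2(G,\kk^*_{\sigma'})$.

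For the forward implication I would simply read $\tau$ and the $\beta_g$ off a given equivariant equivalence and quote the two displayed identities. For the converse, given $\tau$ with $\sigma'=\tau\sigma\tau^{-1}$ and $\tau([\alpha])=[\alpha']$, I would choose a $1$-cochain $\beta$ with $\delta\beta=\tau(\alpha)(\alpha')^{-1}$, let $\phi$ act as $\tau$ on morphisms, and equip it with the isomorphisms $\beta_g$; running the two computations above in reverse shows $\phi$ is $G$-equivariant, and $\tau^{-1}$ furnishes a quasi-inverse, so $\phi$ is an equivalence, manifestly $\tau$-linear. The final assertion is then immediate: a functor that is simultaneously $\tau$-linear and $\kk$-linear forces $\tau=\id$, under which the two conditions collapse to $\sigma=\sigma'$ and $[\alpha]=[\alpha']$. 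I expect no serious obstacle here; the only delicate points are the bookkeeping of whiskered natural transformations in \eqref{eq_ebbbe} and the orientation of the coboundary (i.e. tracking $\beta$ versus $\beta^{-1}$), together with setting up the $G$-module identification via $\tau$ so that all classes genuinely live in $\rH^2(G,\kk^*_{\sigma'})$.
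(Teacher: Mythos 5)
Your proposal is correct and follows essentially the same route as the paper's proof: unwind what a functor and natural transformations are on the one-object category, note that naturality forces $\sigma'=\tau\sigma\tau^{-1}$, and evaluate the compatibility diagram \eqref{eq_ebbbe} at $\bullet$ to obtain $\alpha'_{g,h}\cdot\sigma'(g)(\beta_h)\cdot\beta_g=\beta_{gh}\cdot\tau(\alpha_{g,h})$, i.e.\ that $\tau(\alpha)/\alpha'$ is the coboundary of $\beta$ in $\kk^*_{\sigma'}$. Your version is in fact slightly more careful than the paper's (which leaves the naturality step and the converse construction implicit, and writes $\sigma(g)(\beta_h)$ where the module is really $\kk^*_{\sigma'}$), but the substance is identical.
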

\begin{proof}
A $G$-equivariant  equivalence $\phi\colon \mathsf{pt}_\kk\to \mathsf{pt}_\kk$ is given by a functor $\phi$, which acts on morphisms by a field automorphism $\tau$, and a collection of isomorphisms $\beta_g\in\Aut(\bullet)=\kk^*$ satisfying the compatibility condition~\eqref{eq_ebbbe}:
$$\alpha'_{g,h} \cdot \sigma(g)(\beta_{h})\cdot \beta_g=\beta_{gh}\cdot \tau(\alpha_{g,h}),$$
which means exactly that $\tau(\alpha)/\alpha'$ is the coboundary of $\beta$.
\end{proof}

\begin{definition}
\label{def_obstruction}
Let $\bC$ be a $\kk$-linear category with a $\sigma$-linear action of a group $G$ and $E\in\bC$ be a $G$-invariant object with $\End(E)=\kk$. The \emph{(twisting) cohomology class associated to $E$} is the class $[\alpha]\in\rH^2(G,\kk_\sigma^*)$ obtained in the following way. 

The full subcategory $\tilde\bE\subset\bC$ of objects isomorphic to $E$ is $G$-invariant and so the action on $\bC$ restricts to a $G$-action on $\tilde\bE$.
The full subcategory $\bE\subset \Tilde{\bE}$ with one object $E$ is equivalent to $\Tilde{\bE}$, and so the $G$-action on $\Tilde{\bE}$ induces a $G$-action on $\bE$ (defined up to a $\kk$-linear $G$-equivalence).
The category $\bE$ is the one from Example~\ref{example_kpoint}, hence a cohomology class $[\alpha]\in \rH^2(G,\kk^*_{\sigma})$ is well-defined by Lemma~\ref{lemma_actiononapoint}.

Alternatively, we can define the twisting class as follows. For any $g\in G$, fix an isomorphism $\theta_g\colon E\to \rho_g(E)$. Take $\alpha_{g_1,g_2}\in\kk^*$ such that 
\begin{equation}
\label{eq_obstruction}
\theta_{g_1g_2}\cdot \alpha_{g_1,g_2}=\epsilon_{g_1,g_2}\circ \rho_{g_1}(\theta_{g_2})\circ\theta_{g_1}.
\end{equation}
One can check that $\alpha$ is a $2$-cocycle and its cohomology class is as above. 
\end{definition}

\begin{lemma}
\label{lemma_obstruction}
Under the assumptions of Definition~\ref{def_obstruction}, $E$ has a $G$-equivariant structure if and only if $[\alpha]=1$ in $\rH^2(G,\kk^*_{\sigma})$.
\end{lemma}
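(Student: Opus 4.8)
The plan is to prove both implications directly from the explicit $2$-cocycle description in Definition~\ref{def_obstruction}, exploiting two facts: the class $[\alpha]\in\rH^2(G,\kk^*_\sigma)$ is independent of the chosen isomorphisms $\theta_g\colon E\to\rho_g(E)$, and modifying the $\theta_g$ by scalars changes $\alpha$ by a coboundary. The only structural input needed beyond the definition is that each $\rho_g$ is $\sigma(g)$-linear, which is exactly what forces the Galois twist in the coboundary formula.

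For the ``only if'' direction, suppose $E$ carries an equivariant structure $(\theta_g)$, so that $\theta_{gh}=\epsilon_{g,h}\circ\rho_g(\theta_h)\circ\theta_g$ for all $g,h$. Since $[\alpha]$ may be computed from any choice of isomorphisms $E\to\rho_g(E)$, I would compute it using this particular $(\theta_g)$. Plugging into the defining relation~\eqref{eq_obstruction}, the equivariance identity gives $\theta_{gh}\cdot\alpha_{g,h}=\theta_{gh}$, hence $\alpha_{g,h}=1$ in $\Aut(E)=\kk^*$ for all $g,h$, and therefore $[\alpha]=1$.

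For the ``if'' direction, fix any isomorphisms $\theta_g\colon E\to\rho_g(E)$ and let $\alpha$ be the resulting cocycle. Assuming $[\alpha]=1$, write $\alpha=d\beta$ for a $1$-cochain $\beta\colon G\to\kk^*$, that is $\alpha_{g,h}=\sigma(g)(\beta_h)\cdot\beta_g\cdot\beta_{gh}^{-1}$ (the convention matching the proof of Lemma~\ref{lemma_actiononapoint}). I would then set $\theta'_g:=\beta_g^{-1}\theta_g$ and check that $(\theta'_g)$ is an equivariant structure. This verification is the computational heart: using $\sigma$-linearity of $\rho_g$ one has $\rho_g(\theta'_h)=\sigma(g)(\beta_h^{-1})\rho_g(\theta_h)$, so that $\epsilon_{g,h}\circ\rho_g(\theta'_h)\circ\theta'_g$ equals $\sigma(g)(\beta_h^{-1})\,\beta_g^{-1}\,\alpha_{g,h}\cdot\theta_{gh}$; substituting the coboundary expression for $\alpha_{g,h}$ collapses this to $\beta_{gh}^{-1}\theta_{gh}=\theta'_{gh}$, which is precisely the defining relation of an equivariant structure.

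The main obstacle is bookkeeping rather than conceptual: one must get the direction of the scalar twist and the $\sigma$-action right in the coboundary identity, since a sign or inverse error there is invisible in the algebraically closed untwisted case but fatal in general. Matching conventions with Lemma~\ref{lemma_actiononapoint}, whose proof already records the relevant coboundary relation, resolves this, and the remaining steps are immediate.
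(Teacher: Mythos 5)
Your proof is correct and follows essentially the same route as the paper: the paper's (very terse) proof likewise observes that rescaling the $\theta_g$ by scalars changes $\alpha$ by a coboundary via~\eqref{eq_obstruction}, and that an equivariant structure is precisely a choice of $\theta$-s with all $\alpha_{g_1,g_2}=1$. You have simply written out the coboundary computation (with the correct $\sigma$-twisted convention $\alpha_{g,h}=\sigma(g)(\beta_h)\,\beta_g\,\beta_{gh}^{-1}$ and the replacement $\theta'_g=\beta_g^{-1}\theta_g$) that the paper leaves implicit.
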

\begin{proof}
Changing the $\theta$-s by scalars, the $\alpha$-s change by a coboundary by~\eqref{eq_obstruction}. By definition, $E$ has an equivariant structure if and only if  one can choose $\theta$-s so that all $\alpha_{g_1,g_2}=1$ in~\eqref{eq_obstruction}, so the claim follows.
\end{proof}

Hence, the cohomology  class  $[\alpha]$ is the obstruction for $E$ to possess an equivariant structure.
If an invariant object fails to be equivariant, one can fix this by introducing
\begin{definition}
\label{def_twistedequivariant}
Let $G$ be a group, $\sigma\colon G\to \Aut \kk$ be a homomorphism and $\alpha\in Z^2(G,\kk_{\sigma}^*)$ be a $2$-cocycle.
\begin{enumerate}
\item 
Let $\bC$ be a $\kk$-linear additive category with a $\sigma$-linear $G$-action.   An \emph{$\alpha$-twisted $G$-equivariant object} is given by an object $E\in \bC$ and a collection $(\theta_g)_{g\in G}$ of isomorphisms $\theta_g\colon E\to \rho_g(E)$ satisfying~\eqref{eq_obstruction} for all $g_1,g_2\in G$. Morphisms between $\alpha$-twisted $G$-equivariant objects are defined as in Definition~\ref{def_equivariant}, so they form a category that we denote by $\bC^{G,\alpha}$.

\item
As a special case, let $X$ is an algebraic variety over $\kk$ with a $\sigma$-linear $G$-action. Then the category $\Coh (X)$ has a $\sigma$-linear $G$-action as in Example~\ref{example_G-action-on-X}. So $\alpha$-twisted $G$-equivariant coherent sheaves  are defined, and we denote the category of these by $\Coh^{G,\alpha} (X)$.

\item By taking $X=\Spec\kk$ above, one defines $\alpha$-twisted representations of $G$ over $\kk$, and we will denote the category of these by $\Rep(G,\alpha)$.
\end{enumerate}
\end{definition}

One can see that the category $\Coh^{G,\alpha} X$ depends, up to an equivalence, only on the cohomology class~$[\alpha]$.
The following properties follow from the above definition, see also~\cite[Prop. 1.2]{Elagin}.
\begin{lemma}
\label{lemma_alphaalpha}
Under the assumptions of Definition~\ref{def_twistedequivariant}(1), let $C\in \bC^{G,\alpha_1}$ and $V\in \Rep(G,\alpha_2)$. Then 
\begin{enumerate}
    \item $C\otimes_{\kk} V\in \bC^{G,\alpha_1\alpha_2}$.  
\end{enumerate}

Under the assumptions of Definition~\ref{def_twistedequivariant}(2), let $F_1\in\Coh^{G,\alpha_1}(X),F_2\in\Coh^{G,\alpha_2}(X)$. Then 
\begin{enumerate}[resume]
    \item $F_1\otimes F_2\in\Coh^{G,\alpha_1\alpha_2}(X)$ (this is a non-derived tensor product),
    \item $F_1^{\vee}\in\Coh^{G,\alpha_1^{-1}}(X)$.
\end{enumerate}

Finally, let $X_1,X_2$ be as in Definition~\ref{def_twistedequivariant}(2) and $f\colon X_1\to X_2$ be a $G$-equivariant morphism. Assume  $F_1\in\Coh^{G,\alpha}(X_1),F_2\in\Coh^{G,\alpha}(X_2)$. Then 
\begin{enumerate}[resume]
    \item $f_*(F_1)\in\Coh^{G,\alpha}(X_2)$,
    \item $f^*(F_2)\in\Coh^{G,\alpha}(X_1)$.
\end{enumerate}
\end{lemma}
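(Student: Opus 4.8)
The plan is to prove all five statements by the same mechanism: in each case I write down an explicit family of isomorphisms $\theta_g$ on the output object, assembled from the given linearisations of the inputs together with the canonical compatibility isomorphisms of the functor in play, and then I verify the twisted cocycle identity~\eqref{eq_obstruction} by direct substitution. The scalar $\alpha_{g_1,g_2}\in\kk^*$ passes through every functor here because all functors involved ($-\otimes_\kk V$, $-\otimes-$, $(-)^\vee$, $f_*$, $f^*$) are $\kk$-linear, so the only thing to track is how the input cocycles $\alpha_i$ recombine.

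First I would treat the tensor statement (1), which is the model computation. Set $\theta_g:=\theta^C_g\otimes\theta^V_g$, interpreted via the canonical isomorphism $\rho_g(C\otimes_\kk V)\cong\rho_g(C)\otimes_\kk\rho_g(V)$; this exists because $\rho_g$ is additive and hence commutes with the finite biproduct $-\otimes_\kk V$, while the $\sigma(g)$-linearity of $\rho_g$ identifies the resulting $\kk$-structure on the $V$-slots with the $\sigma(g)$-twist recorded by $\rho_g(V)\in\Rep(G,\alpha_2)$. One then computes
\[
\epsilon_{g_1,g_2}\circ\rho_{g_1}(\theta_{g_2})\circ\theta_{g_1}
=\bigl(\epsilon_{g_1,g_2}\circ\rho_{g_1}(\theta^C_{g_2})\circ\theta^C_{g_1}\bigr)\otimes\bigl(\rho_{g_1}(\theta^V_{g_2})\circ\theta^V_{g_1}\bigr),
\]
where on the right the $V$-factor carries the trivial $\epsilon=\id$ of $\Rep(G,\alpha_2)$. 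Substituting the $\alpha_1$-twisted identity for $\theta^C$ and the $\alpha_2$-twisted identity for $\theta^V$ yields $\theta_{g_1g_2}\cdot(\alpha_1\alpha_2)_{g_1,g_2}$, which is exactly the $\alpha_1\alpha_2$-twisted condition. Statement (2) is identical, with both factors now coherent sheaves on $X$ and $\rho_g=g_*$ strictly monoidal with $\epsilon=\id$ by Example~\ref{example_G-action-on-X}; the non-derived tensor product is used so that $g_*(F_1\otimes F_2)=g_*F_1\otimes g_*F_2$ holds on the nose.

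For the dual (3), I would set $\theta'_g:=\bigl((\theta_g)^\vee\bigr)^{-1}\colon F^\vee\to\rho_g(F^\vee)$, using that $g_*$ commutes with $(-)^\vee$ because $g$ is an automorphism. Since dualising is contravariant and inverts composition, applying $(-)^\vee$ to the $\alpha_1$-twisted identity for $\theta$ and then inverting turns the factor $\alpha_1$ into $\alpha_1^{-1}$, giving the $\alpha_1^{-1}$-twisted condition. Finally, statements (4) and (5) are instances of the general principle that a $G$-equivariant $\kk$-linear functor preserves twisted equivariant objects: by the last sentence of Example~\ref{example_G-action-on-X} the functors $f_*$ and $f^*$ carry $G$-equivariant structures $\beta_g\colon f_*\rho^{X_1}_g\to\rho^{X_2}_g f_*$ (and dually for $f^*$), so I would set $\theta'_g:=\beta_g\circ f_*(\theta_g)$ and verify~\eqref{eq_obstruction} using the compatibility diagram~\eqref{eq_ebbbe} for $\beta$ together with $\kk$-linearity of $f_*$; the cocycle $\alpha$ is carried through unchanged.

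The computations are all routine diagram chases, and the genuine content is organisational. The one point deserving care — and the place I expect to spend the most effort — is the bookkeeping in part (1): making the isomorphism $\rho_g(C\otimes_\kk V)\cong\rho_g(C)\otimes_\kk\rho_g(V)$ precise and checking its coherence with $\epsilon_{g_1,g_2}$, since this is exactly where the $\sigma$-semilinearity of the action genuinely enters. Once that isomorphism and its coherence are pinned down, the remaining four verifications reduce to the monoidal, contravariant, and $G$-equivariant formalism above with $\epsilon=\id$, and are immediate.
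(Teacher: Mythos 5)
Your proof is correct, and it coincides with what the paper intends: the paper gives no argument at all for this lemma, stating only that the properties ``follow from the above definition'' (with a pointer to \cite[Prop. 1.2]{Elagin}), and your explicit construction of the twisted linearisations $\theta_g$ on each output object together with the verification of~\eqref{eq_obstruction} is exactly the routine check being left to the reader. In particular, your identification of the two genuinely non-automatic points --- the $\sigma$-semilinear bookkeeping in the isomorphism $\rho_g(C\otimes_\kk V)\cong\rho_g(C)\otimes_\kk\rho_g(V)$ for part (1), and the inversion step that converts $\alpha_1$ into $\alpha_1^{-1}$ in part (3) --- is accurate and complete.
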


\begin{example}
\label{example_cocycleforPn}
Let $n\ge 1$, $V$ be an $n$-dimensional $\kk$-vector space, and $\P^{n-1}_{\kk}=\P(V)$.
Let $\Aut(\P^{n-1}_{\kk})$ be the group of all (not only $\kk$-linear) scheme automorphisms of $\P^{n-1}_{\kk}$. Let $\Aut^{\mathrm{slin}}(\AA^n_{\kk})$ be the group of semi-linear 
(i.e., preserving linear functions) scheme automorphisms of  $\AA^n_{\kk}$.
There is an exact sequence of groups
\begin{equation}
    \label{eq_universalcocycle}
    1\to \kk^*\to \Aut^{\mathrm{slin}}(\AA^n_{\kk})\to \Aut(\P^{n-1}_{\kk})\to 1,
\end{equation}
let $\xi\in Z^2(\Aut(\P^{n-1}_{\kk}), \kk^*)$ be a corresponding cocycle. There is a natural $\xi$-twisted representation of $\Aut(\P^{n-1}_{\kk})$ in $V$, the space of linear functions on $\AA^n_{\kk}$. Also one can see that for any $k\in\Z$ the line bundle $\cO(-k)$ on $\P^{n-1}_{\kk}$ is $\xi^k$-twisted $\Aut(\P^{n-1}_{\kk})$-equivariant. Indeed, for $k=-1$ one remarks that the tautological line bundle  $\cO(-1)$ is an invariant subbundle in the constant bundle $\cO\otimes V$ and uses Lemma~\ref{lemma_alphaalpha}(1), then for general $k$ this follows by Lemma~\ref{lemma_alphaalpha}(2,3).

Now let $G$ be a group acting on $\P^{n-1}_{\kk}$. Restricting $\xi$ to $G$ along the homomorphism $G\to \Aut(\P^{n-1}_{\kk})$, we get a cocycle $\alpha\in Z^2(G,\kk^*)$ (note that $\kk^*$ carries a $G$-action given by the $G$-action  on $\rH^0(\P^{n-1}_\kk,\cO)=\kk$). The cocycle $\alpha$  corresponds to the central extension
\begin{equation}
    \label{eq_cocycleforPn}
    1\to \kk^*\to \Tilde{G}\to G\to 1
\end{equation}
induced from~\eqref{eq_universalcocycle}. As above, the line bundle $\cO(-k)$ on $\P^{n-1}_{\kk}$ is $\alpha^k$-twisted $G$-equivariant for any $k\in\Z$.
The cohomology class $[\alpha]$ vanishes if and only if~\eqref{eq_cocycleforPn} splits (hence the action of $G$ on $\P^{n-1}$ factors through $\AA^n$) and if and only if the line bundles $\cO(k)$ are all (untwisted) $G$-equivariant. 

We finally remark that $[\xi^n]=[\alpha^n]=1$, for example, because the line bundle $\cO_{\P^{n-1}}(-n)\cong \omega_{\P^{n-1}}$ is always equivariant.
\end{example}

Now let us consider an  example more general than Example~\ref{example_kpoint}. 
\begin{example}
Let $A$ be a commutative $\kk$-algebra, and $A^*$ be the group of its invertible elements. Let
$\mathsf{pt}_A$ be the category with one object $\bullet$ such that $\End(\bullet)=A$. 
As in Example~\ref{example_kpoint}, a $G$-action on $\mathsf{pt}_A$ is the same as a homomorphism $\rho$ from $G$ to $\Aut(A)$ (the group of ring automorphisms of $A$) and a cocycle $\alpha=(\alpha_{g,h})\in Z^2(G,A^*_{\rho})$. Indeed, let us check that the map $g\mapsto \rho_g\in \Aut (A)$, provided by a $G$-action on $\mathsf{pt}_A$, is a homomorphism. By definition~\ref{def_groupaction}, for $g,h\in G$ and $a\in \End(\bullet)$ one has $\alpha_{g,h}\cdot \rho_g(\rho_h(a))=\rho_{gh}(a)\cdot \alpha_{g,h}\in A$, and by commutativity of $A$ one gets 
$\rho_g(\rho_h(a))=\rho_{gh}(a)$. 
\end{example}

\begin{lemma}
\label{lemma_actiononapoint2}
Two actions of $G$ on $\mathsf{pt}_A$ given by  $(\rho,\alpha)$ and $(\rho',\alpha')$ are $G$-equivariantly equivalent if and only if for some $\tau\in\Aut(A)$ one has $\rho'=\tau\rho\tau^{-1}$ and $[\alpha']=\tau([\alpha])\in \rH^2(G,A^*_{\rho'})$ (where $\tau\colon A^*_{\rho}\to A^*_{\rho'}$ is an isomorphism of $G$-modules). 

In particular, two such actions  are $G$-equivariantly \textbf{$\kk$-linearly} equivalent if and only if 
the above is satisfied for some \textbf{$\kk$-linear} automorphism $\tau\in \Aut_{\kk}(A)$.
\end{lemma}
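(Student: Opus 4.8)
The plan is to mimic the proof of Lemma~\ref{lemma_actiononapoint} almost verbatim, replacing the scalar group $\kk^*$ by the unit group $A^*$. The only genuinely new input is pinning down exactly when two autoequivalences of $\mathsf{pt}_A$ become isomorphic, and this is the one spot where commutativity of $A$ is essential.

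First I would describe a $G$-equivariant equivalence $\phi\colon (\mathsf{pt}_A,\rho,\alpha)\to(\mathsf{pt}_A,\rho',\alpha')$ concretely. Since $\mathsf{pt}_A$ has a single object $\bullet$ with $\End(\bullet)=A$, any autoequivalence $\phi$ fixes $\bullet$ and acts on morphisms by a ring automorphism $\tau\in\Aut(A)$; its $G$-equivariant structure is a collection of functorial isomorphisms $\beta_g\colon \phi\rho_g\to\rho'_g\phi$, each of which is an element $\beta_g\in\Aut(\bullet)=A^*$.

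The key step is to observe that the mere existence of the $\beta_g$ already forces $\rho'=\tau\rho\tau^{-1}$. Indeed, the functors $\phi\rho_g$ and $\rho'_g\phi$ act on $A$ by $\tau\circ\rho_g$ and $\rho'_g\circ\tau$ respectively, and naturality of $\beta_g\in A^*$ gives $(\rho'_g\tau)(a)\cdot\beta_g=\beta_g\cdot(\tau\rho_g)(a)$ for all $a\in A$. Because $A$ is commutative the unit $\beta_g$ cancels, leaving $\tau\rho_g=\rho'_g\tau$, i.e. $\rho'_g=\tau\rho_g\tau^{-1}$. This is precisely where commutativity is indispensable: in the noncommutative case one would only obtain agreement up to an inner automorphism, and this is the main subtlety of the argument. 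Once this holds, $\tau\colon A^*_\rho\to A^*_{\rho'}$ is an isomorphism of $G$-modules, so that $\tau(\alpha)\in Z^2(G,A^*_{\rho'})$ is a well-defined cocycle.

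Next I would read off the cohomological condition from the compatibility diagram~\eqref{eq_ebbbe}. Evaluating it on $\bullet$ and composing along the two paths (all morphisms are units of the commutative ring $A$, so composition is just multiplication) yields
\[
\alpha'_{g,h}\cdot\rho'_g(\beta_h)\cdot\beta_g=\beta_{gh}\cdot\tau(\alpha_{g,h}),
\]
which rearranges to $\tau(\alpha_{g,h})\cdot\alpha'_{g,h}{}^{-1}=\rho'_g(\beta_h)\,\beta_g\,\beta_{gh}^{-1}$, the coboundary of the $1$-cochain $(\beta_g)$ in $A^*_{\rho'}$. Hence a $G$-equivariant equivalence exists if and only if $\rho'=\tau\rho\tau^{-1}$ for some $\tau\in\Aut(A)$ and $\tau(\alpha)$, $\alpha'$ differ by a coboundary, i.e. $[\alpha']=\tau([\alpha])$ in $\rH^2(G,A^*_{\rho'})$. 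Conversely, given such $\tau$ and $\beta$, one defines $\phi$ to act by $\tau$ with equivariant structure $(\beta_g)$ and verifies~\eqref{eq_ebbbe} by running the computation backwards. For the final assertion, one notes that $\phi$ is $\kk$-linear exactly when $\tau$ is a $\kk$-linear automorphism of $A$, and the entire argument is unchanged under this restriction.
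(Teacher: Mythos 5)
Your proof is correct and follows essentially the same route as the paper: the paper's proof of this lemma is literally ``the same as for Lemma~\ref{lemma_actiononapoint}'', i.e.\ identify an equivariant equivalence with a ring automorphism $\tau\in\Aut(A)$ together with units $\beta_g\in A^*$, and read off from the compatibility diagram~\eqref{eq_ebbbe} that $\tau(\alpha)/\alpha'$ is the coboundary of $\beta$. Your extra step deriving $\rho'=\tau\rho\tau^{-1}$ from naturality of $\beta_g$ via commutativity of $A$ is exactly the point the paper handles (for the actions themselves) in the example preceding the lemma and leaves implicit in the proof, so it is a welcome but not divergent addition.
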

\begin{proof}
    The same as for Lemma~\ref{lemma_actiononapoint}.
\end{proof}

We will be interested in the special case when $A=\kk^n=\kk\times\ldots\times \kk$. We fix a homomorphism $\sigma \colon G\to \Aut (\kk)$ and consider only $\sigma$-linear $G$-actions on $\mathsf{pt}_A$. 
\begin{lemma}
\label{lemma_actiononapoint3}
Let $A=\kk^n$ with idempotents $e_1,\ldots,e_n$.
A $\sigma$-linear action of $G$ on $\mathsf{pt}_A$ is given by 
a homomorphism $\rho\colon G\to S_n$ to the symmetric group and a $2$-cocycle of $G$ in $(\kk^*)_{\rho,\sigma}^n$, where the
$G$-action on $(\kk^*)_{\rho,\sigma}^n$ is the following: $g(\lambda \cdot e_i)=\sigma(g)(\lambda)\cdot e_{\rho(i)}$. 
Two such actions given by 
$(\rho,\alpha)$ and $(\rho',\alpha')$ are $G$-equivariantly $\kk$-linearly equivalent if and only if for some $\tau\in S_n$ one has $\rho'=\tau\rho\tau^{-1}$ and $[\alpha']=\tau([\alpha])\in \rH^2(G,(\kk^*)^n_{\rho',\sigma})$.
\end{lemma}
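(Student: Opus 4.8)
The plan is to deduce this statement from Lemma~\ref{lemma_actiononapoint2} by making the two relevant automorphism groups of $A=\kk^n$ completely explicit. First I would recall that every ring automorphism of $A=\kk^n$ permutes the set of primitive idempotents $\{e_1,\ldots,e_n\}$, since these are intrinsically characterised inside $A$. Thus there is a splitting $\Aut(A)\cong \Aut(\kk)^n\rtimes S_n$: a ring automorphism $\phi$ is the datum of a permutation $\pi\in S_n$ with $\phi(e_i)=e_{\pi(i)}$ together with a field automorphism of $\kk$ on each factor recording the action on scalars. The example preceding Lemma~\ref{lemma_actiononapoint2} already tells us that a $G$-action on $\mathsf{pt}_A$ is the same as a homomorphism $\rho\colon G\to\Aut(A)$ together with a cocycle $\alpha\in Z^2(G,A^*_\rho)$, so the whole task is to see what $\sigma$-linearity imposes on $\rho$.

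Next I would impose $\sigma$-linearity to pin down the scalar component. Writing the action of $g$ as $\rho_g(\lambda e_i)=\tau_{g,i}(\lambda)\,e_{\rho(g)(i)}$ for field automorphisms $\tau_{g,i}$, the requirement $\rho_g(\lambda\cdot 1)=\sigma(g)(\lambda)\cdot 1$ — using that $\rho_g(1)=1$ and that the $\kk$-linear structure on $\mathsf{pt}_A$ is the diagonal one $\lambda\mapsto \lambda\sum_i e_i$ — forces $\tau_{g,i}=\sigma(g)$ for every $i$. Hence a $\sigma$-linear ring automorphism of $\kk^n$ is exactly a permutation in $S_n$ with $\sigma(g)$ acting on all scalars, so the permutation part defines a homomorphism $\rho\colon G\to S_n$ and the induced action on $A^*=(\kk^*)^n$ is precisely $g(\lambda e_i)=\sigma(g)(\lambda)\,e_{\rho(g)(i)}$. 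Combined with the general description just recalled, this yields the first assertion: a $\sigma$-linear $G$-action on $\mathsf{pt}_A$ is a homomorphism $\rho\colon G\to S_n$ plus a $2$-cocycle in $(\kk^*)^n_{\rho,\sigma}$.

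For the equivalence criterion I would invoke the $\kk$-linear part of Lemma~\ref{lemma_actiononapoint2}, which states that $(\rho,\alpha)$ and $(\rho',\alpha')$ are $G$-equivariantly $\kk$-linearly equivalent if and only if $\rho'=\tau\rho\tau^{-1}$ and $[\alpha']=\tau([\alpha])$ for some $\tau\in\Aut_{\kk}(A)$. It then remains only to identify $\Aut_{\kk}(A)$: repeating the computation of the previous paragraph with $\sigma=\id$ (so that $\kk$-linearity forces the field-automorphism component on each factor to be the identity) shows that the $\kk$-linear ring automorphisms of $\kk^n$ are exactly the permutations of the idempotents, i.e.\ $\Aut_{\kk}(A)\cong S_n$. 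Substituting $\tau\in S_n$ into the criterion of Lemma~\ref{lemma_actiononapoint2}, and noting that conjugation and the transport of the cohomology class are unaffected by the fixed scalar action $\sigma$, gives exactly the stated condition.

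This argument is essentially a bookkeeping specialisation, so there is no genuinely hard step; the only point demanding care is the clean separation of the permutation datum from the scalar datum. Concretely, one must verify that $\sigma$-linearity rigidly pins the field-automorphism component of each $\rho_g$ to be $\sigma(g)$ on \emph{every} factor (and the identity in the $\kk$-linear case), so that the semidirect product $\Aut(\kk)^n\rtimes S_n$ collapses to $S_n$ in both the classification and the equivalence statement.
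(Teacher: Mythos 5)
Your proof is correct and takes essentially the same route as the paper's own (very brief) proof: both deduce the statement from Lemma~\ref{lemma_actiononapoint2} by observing that a $\sigma(g)$-linear ring automorphism of $\kk^n$ is determined by its permutation of the idempotents $e_1,\ldots,e_n$, and that $\Aut_{\kk}(\kk^n)=S_n$. Your explicit wreath-product decomposition $\Aut(\kk^n)\cong \Aut(\kk)^n\rtimes S_n$ and the computation pinning every scalar component of $\rho_g$ to $\sigma(g)$ merely spell out the bookkeeping that the paper leaves implicit.
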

\begin{proof}
Follows from Lemma~\ref{lemma_actiononapoint2}. Indeed, a $\sigma(g)$-linear automorphism $\rho_g$ of $\kk^n$ is uniquely determined by its action on the idempotents $e_1,\ldots,e_n$, hence an element in $S_n$ that we will also denote $\rho_g$. One gets a homomorphism $\rho\colon G\to S_n$ as the composition $G\to \Aut (\kk^n)\to S_n$. For the second statement note that a $\kk$-linear automorphism of $\kk^n$ is given by a permutation $\tau\in S_n$.
\end{proof}

The following definition generalises Definition~\ref{def_obstruction}.
\begin{definition}
\label{def_obstruction-for-blocks}
Suppose that $\bC$ is a $\kk$-linear category with a $\sigma$-linear action of a group $G$. Assume there are objects $E_1,\ldots,E_n\in\bC$ such that 
\begin{equation}
\label{eq_EorthoE}
    \Hom(E_i,E_i)=\kk, \quad \Hom(E_i,E_j)=0\quad\text{for $i\ne j$},
\end{equation}
and $G$ permutes $E_1,\ldots,E_n$: for all $g\in G$ and $i$ one has $\rho_g(E_i)\cong E_j$ for some $j$.
Take $E=E_1\oplus \ldots\oplus E_n$, then $E$ is $G$-invariant and $\End(E)=\kk^n$. Arguing as in Definition~\ref{def_obstruction}, we get a $G$-module structure $(\kk^*)^n_{\rho,\sigma}$ on $(\kk^*)^n=\Aut(E)$ (where $\rho\colon G\to S_n$ is the $G$-action on objects $E_1,\ldots,E_n$) and a \emph{twisting cohomology class} $[\alpha]\in \rH^2(G,(\kk^*)^n_{\rho,\sigma})$, associated to  
$\{E_1,\ldots,E_n\}$.
\end{definition}

Below we explain that the class $[\alpha]$ determines the action on the subcategory in $\bC$ generated by  $E_1,\ldots,E_n$ uniquely up to an equivalence. 
For our convenience, we restrict to the more special case of triangulated categories.

\medskip
Let $\bT$ be a triangulated $\kk$-linear category with a $\sigma$-linear action of a group $G$ (for some homomorphism $\sigma\colon G\to\Aut(\kk)$). We will use the following terminology.
\begin{definition}\label{def_Gblock}
A weakly exceptional block of objects $E_1,\ldots,E_n$ in $\bT$ is called a \emph{$G$-block} if $G$ permutes $E_1,\ldots,E_n$: for any $g\in G$ and $i$ one has $\rho_g(E_i)\cong E_j$ for some $j$. A $G$-block is \emph{transitive} if the action of $G$ on $E_1,\ldots,E_n$ is transitive: for any $i,j$ there is $g\in G$ such that $\rho_g(E_i)\cong E_j$.
\end{definition}

Assume $\bT$ is generated by a $G$-transitive exceptional block $E_1,\ldots,E_n$. A twisting cohomology class $[\alpha]\in \rH^2(G,(\kk^*)^n_{\rho,\sigma})$ is introduced in Definition~\ref{def_obstruction-for-blocks}, where the homomorphism $\rho\colon G\to S_n$ gives the $G$-action on the objects $E_1,\ldots,E_n$.

\begin{proposition}
\label{prop_Z}
A triangulated $\kk$-linear category with a $\sigma$-linear $G$-action and generated by an exceptional  $G$-block is determined by a transitive $G$-action on $\{1,\ldots,n\}$ given by a homomorphism $\rho\colon G\to S_n$ and a cohomology class $[\alpha]\in \rH^2(G,(\kk^*)_{\rho,\sigma}^{n})$. More precisely, let $\bT_1, \bT_2$ be two triangulated $\kk$-linear categories with $\sigma$-linear $G$-actions and generated by exceptional $G$-blocks. For $i=1,2$, let $\rho_i \colon G\to S_{n_i}$ be the corresponding homomorphisms and  $[\alpha_i]\in \rH^2(G,(\kk^*)_{\rho_i,\sigma}^{n_i})$ be the corresponding cohomology classes. Then $\bT_1$ is $\kk$-linearly $G$-equivariantly exact equivalent to $\bT_2$ if and only if 
\begin{equation}
\label{eq_alphaalpha}
   \text{$n_1=n_2, \quad$ for some $\tau\in S_{n_1}$ one has 
   $\rho_2=\tau\rho_1\tau^{-1},\quad$ and $[\alpha_2]=\tau([\alpha_1])\in \rH^2(G,(\kk^*)^n_{\rho_2,\sigma})$.}
\end{equation}
\end{proposition}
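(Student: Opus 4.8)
The plan is to reduce the statement, via a tilting (Morita-type) argument, to the classification of $\sigma$-linear $G$-actions on the one-object category $\mathsf{pt}_{\kk^n}$ already obtained in Lemma~\ref{lemma_actiononapoint3}. This is the block analogue of the way Lemma~\ref{lemma_obstruction} reduced the single-object case to Lemma~\ref{lemma_actiononapoint}. So suppose $\bT$ is generated by a $G$-transitive exceptional block $E_1,\ldots,E_n$ and set $E=E_1\oplus\cdots\oplus E_n$. By the block orthogonality \eqref{eq_EorthoE} we have $\Hom^\bullet(E,E)=\End(E)=\kk^n$, concentrated in degree $0$, and $E$ is $G$-invariant since $G$ permutes the summands. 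Hence the full one-object subcategory $\bE\subset\bT$ on $E$ is equivalent to $\mathsf{pt}_{\kk^n}$ and inherits a $\sigma$-linear $G$-action, which by Lemma~\ref{lemma_actiononapoint3} is classified up to $\kk$-linear $G$-equivariant equivalence precisely by the pair $(\rho,[\alpha])$ appearing in Definition~\ref{def_obstruction-for-blocks}.

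The structural input is that $\bT$ is the triangulated hull of $\bE$. Because the block is completely orthogonal and each $E_i$ is exceptional, every object of $\bT$ is a finite direct sum of shifts $E_i[j]$, and the functor $F\mapsto \Hom^\bullet(E,F)$ is a $\kk$-linear exact equivalence $\bT\xrightarrow{\sim}\Dperf(\kk^n)$. This functor is $G$-equivariant once $\Dperf(\kk^n)$ is equipped with the $G$-action induced from $\bE$: the invariance isomorphisms $\theta_g\colon E\to\rho_g(E)$ supply the equivariant structure, and the resulting cocycle discrepancy is exactly $\alpha$, just as in \eqref{eq_obstruction}. Consequently the $G$-action on $\bT$ is determined up to $\kk$-linear $G$-equivariant equivalence by the $\mathsf{pt}_{\kk^n}$-action on $\bE$. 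This proves that $(\rho,[\alpha])$ determines $\bT$, and gives the ``if'' direction: if \eqref{eq_alphaalpha} holds, then Lemma~\ref{lemma_actiononapoint3} produces a $G$-equivariant $\kk$-linear equivalence $\bE_1\simeq\bE_2$, which transports along the two tilting equivalences to a $G$-equivariant equivalence $\bT_1\simeq\bT_2$.

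For the ``only if'' direction, let $\Phi\colon\bT_1\to\bT_2$ be a $\kk$-linear exact $G$-equivariant equivalence. An object of $\bT_i$ is exceptional if and only if it is a single shift $E_j^{(i)}[d]$, since $\End=\kk$ forces indecomposability with no repeated or mixed summands. Thus $\Phi$ preserves the finite set of exceptional objects modulo shift, forcing $n_1=n_2=:n$ and inducing a bijection $\tau\in S_n$ with $\Phi(E_j^{(1)})\cong E_{\tau(j)}^{(2)}[d_j]$. Comparing $\rho_g^{(2)}\Phi(E_j^{(1)})$ with $\Phi(\rho_g^{(1)}E_j^{(1)})$, $G$-equivariance yields $\rho_2=\tau\rho_1\tau^{-1}$ on the permutation part and $d_j=d_{\rho_1(g)(j)}$ on the shift part; by transitivity of $\rho_1$ the shift is uniform, so after composing with a global shift $\Phi$ carries the block of $\bT_1$ onto the block of $\bT_2$. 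Restricting to $\bE_1\to\bE_2$ and invoking Lemma~\ref{lemma_actiononapoint3} then gives $[\alpha_2]=\tau([\alpha_1])$, which is \eqref{eq_alphaalpha}.

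The main obstacle is verifying the $G$-equivariance of the tilting equivalence with care, namely that the entire $G$-action on the triangulated category $\bT$ is faithfully encoded by the finite data $(\rho,\alpha)$ on the semisimple algebra $\End(E)=\kk^n$; together with the shift-rigidity used to normalise the block in the converse. Both are formal consequences of $E$ being a $G$-invariant generator with endomorphism algebra concentrated in degree $0$, and all genuinely arithmetic content -- the conjugacy of the two homomorphisms $\rho_i$ and the $\tau$-action on $\rH^2(G,(\kk^*)^n_{\rho,\sigma})$ -- is already packaged in Lemma~\ref{lemma_actiononapoint3}.
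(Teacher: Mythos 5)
Your proof is correct and takes essentially the same route as the paper: both directions reduce to the classification of $\sigma$-linear $G$-actions on the one-object category with endomorphism algebra $\kk^n$ (Lemma~\ref{lemma_actiononapoint3}) via the $G$-invariant object $E=E_1\oplus\cdots\oplus E_n$, using that every object of $\bT$ is a finite direct sum of shifts of the block members. Your tilting formulation of the extension step ($\bT\simeq\Dperf(\kk^n)$) and your explicit transitivity argument for normalising the shifts in the converse are only cosmetic refinements of the paper's direct extension through the subcategories $\hat{\bE}_i$ of sums of shifts and their idempotent closure.
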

\begin{proof}
First assume that $\phi\colon \bT_1\to \bT_2$ is a $\kk$-linear $G$-equivariant exact equivalence. 
For $i=1,2$ let 
$E_i$ be the direct sum of objects in the block, 
$\bE_i\subset \bT_i$ be the full subcategory with one object $E_i$, and
$\Tilde{\bE}_i\subset \bT_i$ be the full subcategory with objects isomorphic to $E_i$. 

Changing $\phi$ by a shift if necessary one can assume that $\phi(E_1)$ is in $\Tilde{\bE}_2$. It follows then that $\phi$ restricts to a $\kk$-linear $G$-equivariant  equivalence between $\Tilde{\bE}_1$ and $\Tilde{\bE}_2$. Then the induced $G$-actions on $\bE_1$ and $\bE_2$ are
$\kk$-linearly equivalent. By Lemma~\ref{lemma_actiononapoint3}, \eqref{eq_alphaalpha} follows.

Now assume that \eqref{eq_alphaalpha} holds. By Lemma~\ref{lemma_actiononapoint3} it follows that $\bE_1$ and $\bE_2$ are $G$-equivariantly $\kk$-linearly equivalent,  let $\phi\colon  \bE_1\to \bE_2$ be such equivalence.
For $i=1,2$ let 
$\hat{\bE}_i\subset \bT_i$ be the full subcategory with objects isomorphic to finite direct sums of shifts of $E_i$.

For any $F\in\hat{\bE}_i$ fix an isomorphism $F\cong \oplus_j E_1[d_j]$ and define
$\phi(F)$ to be $\oplus_j \phi(E_1)[d_i]$.
We leave it to the reader to define $\phi$ on morphisms and to deduce an equivariant equivalence 
$\hat{\bE}_1\to \hat{\bE}_2$. Now remark that~$\bT_i$ is an idempotent closure of $\hat{\bE}_i$ and extend $\phi$ to an equivariant equivalence between $\bT_1$ and~$\bT_2$.
\end{proof}

There is a different way to introduce a twisting cohomology class associated to a $G$-block. 
As before, assume that objects $E_1,\ldots,E_n$ of a $\kk$-linear category $\bC$ form one orbit under $\sigma$-linear $G$-action on $\bC$ and satisfy~\eqref{eq_EorthoE}. One can pick an object, e.g. $E_1$, and consider its stabilising subgroup $H\subset G$, that is, $H=\{h\in G\mid \rho_h(E_1)\cong E_1\}$. Then $E_1$ is $H$-invariant, and there is the associated cohomology class   
\begin{equation*}
    [\alpha_H]\in \rH^2(H,\kk^*_{\sigma}).
\end{equation*}
This class $[\alpha_H]$ determines the action on the subcategory generated by $E_1,\ldots,E_n$ up to an equivariant equivalence. Indeed, the $G$-module $(\kk^*)^n_{\rho,\sigma}$ is induced from the $H$-module $\kk^*_{\sigma}$, and by Shapiro's Lemma one has
$$\rH^2(G,(\kk^*)^n_{\rho,\sigma})\cong \rH^2(H,\kk^*_{\sigma}).$$
Therefore $[\alpha_H]$ determines $[\alpha]$ uniquely, and the latter determines the action. We obtain the following

\begin{corollary}
\label{cor_Z}
A triangulated $\kk$-linear category with a $\sigma$-linear $G$-action and generated by a transitive exceptional $G$-block is determined by a subgroup of finite index $H\subset G$ and a cohomology class $[\alpha_H]\in \rH^2(H,\kk^*_{\sigma})$. In particular, the twisting class $[\alpha]\in \rH^2(G,(\kk^*)^n_{\rho,\sigma})$ associated with the block is trivial if and only if the twisting class $[\alpha_H]\in \rH^2(H,\kk^*_{\sigma})$ associated with one object is trivial. 
\end{corollary}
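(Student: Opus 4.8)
The plan is to combine Proposition~\ref{prop_Z}, which says the category is determined up to $\kk$-linear $G$-equivariant equivalence by the pair $(\rho,[\alpha])$ with $\rho\colon G\to S_n$ transitive and $[\alpha]\in\rH^2(G,(\kk^*)^n_{\rho,\sigma})$, with Shapiro's Lemma in order to repackage this data in terms of the stabiliser subgroup of a single object. First I would fix the object $E_1$ and let $H=\{h\in G\mid \rho_h(E_1)\cong E_1\}$ be its stabiliser. Since the $G$-action on $\{E_1,\ldots,E_n\}$ is transitive, the orbit--stabiliser theorem identifies the $G$-set $\{1,\ldots,n\}$ with the coset space $G/H$; in particular $[G:H]=n$ is finite, and the permutation representation $\rho$ is, up to conjugation in $S_n$ (coming from the choice of base object and the labelling of cosets), exactly the action of $G$ on $G/H$. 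Thus $H$ up to $G$-conjugacy, together with $\rho$, carry the same information, and a conjugation $\tau\in S_n$ as in \eqref{eq_alphaalpha} corresponds to conjugating $H$ inside $G$.

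Next I would identify the $G$-module $(\kk^*)^n_{\rho,\sigma}$: its coordinates are indexed by $G/H$, the group $G$ permutes them via $\rho$, and the stabiliser $H$ of the $E_1$-coordinate acts on that coordinate through $\sigma$. This is precisely the induced (equivalently, since $[G:H]<\infty$, coinduced) module $\Ind_H^G(\kk^*_\sigma)$. Shapiro's Lemma then yields a natural isomorphism
\[
\rH^2\bigl(G,(\kk^*)^n_{\rho,\sigma}\bigr)\;\cong\;\rH^2\bigl(H,\kk^*_\sigma\bigr),
\]
realised concretely by restricting a $2$-cocycle to $H\times H$ and then projecting $(\kk^*)^n\to\kk^*$ onto the $E_1$-coordinate. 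Let $[\alpha_H]\in\rH^2(H,\kk^*_\sigma)$ denote the image of $[\alpha]$ under this isomorphism.

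The main point to verify is that this $[\alpha_H]$ coincides with the twisting class of $E_1$ as an $H$-invariant object in the sense of Definition~\ref{def_obstruction}. To see this I would unwind the two constructions in parallel. In Definition~\ref{def_obstruction-for-blocks} one chooses isomorphisms $\theta_g\colon E\to\rho_g(E)$ for the sum $E=E_1\oplus\cdots\oplus E_n$ and reads off $\alpha_{g_1,g_2}\in(\kk^*)^n$ from \eqref{eq_obstruction}; for $g=h\in H$ the automorphism $\rho_h$ fixes the $E_1$-summand, so the $E_1$-component of $\theta_h$ is an isomorphism $E_1\to\rho_h(E_1)$, which is exactly the datum used in Definition~\ref{def_obstruction} for the $H$-invariant object $E_1$. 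Projecting the relation \eqref{eq_obstruction} onto the $E_1$-coordinate and restricting to $h_1,h_2\in H$ then reproduces verbatim the relation \eqref{eq_obstruction} for $E_1$ over $H$. Hence the restriction--projection of $\alpha$ is a representative of $[\alpha_H]$, matching the explicit Shapiro isomorphism. I expect this bookkeeping, matching the two definitions of twisting class through the concrete form of the Shapiro isomorphism, to be the only genuine obstacle; everything else is a direct translation.

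Finally, since the Shapiro isomorphism is an isomorphism of groups, $[\alpha]=1$ in $\rH^2(G,(\kk^*)^n_{\rho,\sigma})$ if and only if $[\alpha_H]=1$ in $\rH^2(H,\kk^*_\sigma)$, which is the last assertion of the corollary. Combining this with the first two paragraphs, the pair $(H,[\alpha_H])$ (with $H$ taken up to $G$-conjugacy) carries exactly the data $(\rho,[\alpha])$ up to the equivalence \eqref{eq_alphaalpha}, so by Proposition~\ref{prop_Z} it determines the category up to $\kk$-linear $G$-equivariant exact equivalence.
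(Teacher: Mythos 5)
Your proposal is correct and follows essentially the same route as the paper: identify $(\kk^*)^n_{\rho,\sigma}$ with the module induced from the $H$-module $\kk^*_{\sigma}$ via the orbit--stabiliser identification, apply Shapiro's Lemma to pass between $\rH^2(G,(\kk^*)^n_{\rho,\sigma})$ and $\rH^2(H,\kk^*_{\sigma})$, and then invoke Proposition~\ref{prop_Z}. The only difference is that you carry out the cocycle-level check that the concrete Shapiro map (restriction to $H$ followed by projection onto the $E_1$-coordinate) intertwines the twisting classes of Definition~\ref{def_obstruction-for-blocks} and Definition~\ref{def_obstruction}, a compatibility the paper asserts without spelling out.
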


\subsection{Galois descent for semi-orthogonal decompositions}
\label{section_descent_for_SODs}
Below we recall how to do Galois descent for semi-orthogonal decompositions, specifically for exceptional blocks. Such results are not new, see e.g.~\cite{Elagin_2012},~\cite{AuelBernardara},~\cite{BDM} but we find it useful to give definitions and some proofs for three reasons: for completeness, to take care of the case of infinite field extension, and to provide a treatment as elementary as possible.

Let $\kk$ be a  field, and let $\LL/\kk$ be a Galois extension with
Galois group $G=\Gal(\LL/\kk)$. Let $X$ be a variety over $\kk$ and $X_{\LL}=X\times_{\kk}\Spec\LL$ be the scalar extension. Denote by $p\colon X_{\LL}\to X$ the natural morphism. For a thick subcategory $\bA\subset \Db(X)$ define $p^*\bA\subset \Db(X_{\LL})$  as the thick subcategory generated by the objects of the form $p^*A, A\in \bA$.
For a thick subcategory $\bB\subset \Db(X_{\LL})$ define $p_*\bB\subset \Db(X)$  as the full subcategory whose objects are such $A\in \Db(X)$ that $p^*A\in\bB$.
\begin{lemma}
\label{lemma_subcatdescent}
In the above assumptions, the correspondence $\bA\mapsto p^*\bA$, $\bB\mapsto p_*\bB$ gives a bijection between thick  subcategories in $\Db(X)$ and $G$-invariant thick subcategories in $\Db(X_{\LL})$.
\end{lemma}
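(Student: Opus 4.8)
The plan is to show that the two assignments $\bA\mapsto p^*\bA$ and $\bB\mapsto p_*\bB$ are mutually inverse, by reducing everything to adjunction and base change for $p$. Recall from Example~\ref{example_G-action-on-X} that each $g\in G=\Gal(\LL/\kk)$ is an automorphism of $X_\LL$ over $X$, so $p\circ g=p$, and the induced $G$-action on $\Db(X_\LL)$ is $g_*=(g^{-1})^*$. I would first treat the case of a \emph{finite} Galois extension, where $p$ is finite étale of degree $n=|G|$, and isolate two formulas. First, the projection formula gives $p_*p^*A\cong A\otimes_\kk\LL\cong A^{\oplus n}$, so that $A$ is a direct summand of $p_*p^*A$. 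Second, the descent formula $p^*p_*B\cong\bigoplus_{g\in G}g_*B$, obtained from flat base change applied to the Cartesian square built from two copies of $p$, together with the Galois decomposition $\LL\otimes_\kk\LL\cong\prod_{g\in G}\LL$, which identifies $X_\LL\times_X X_\LL\cong\bigsqcup_{g\in G}X_\LL$.

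Next I would check that the two assignments land where claimed. For a thick subcategory $\bA\subset\Db(X)$, each generator $p^*A$ of $p^*\bA$ satisfies $g_*p^*A=(g^{-1})^*p^*A=(p\,g^{-1})^*A=p^*A$, so the generators are $G$-fixed; since $G$ acts by exact autoequivalences, $p^*\bA$ is $G$-invariant. For a thick subcategory $\bB\subset\Db(X_\LL)$, the set $p_*\bB=\{A\in\Db(X):p^*A\in\bB\}$ is thick, because $p^*$ is exact and $\bB$ is closed under shifts, cones, and direct summands.

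I would then establish the two round-trip identities in the finite case. For $p_*p^*\bA=\bA$: the inclusion $\bA\subseteq p_*p^*\bA$ is clear, and conversely $p_*$ sends each generator $p^*A'$ ($A'\in\bA$) to $p_*p^*A'\cong(A')^{\oplus n}\in\bA$ and is exact, so $p_*(p^*\bA)\subseteq\bA$; hence $p^*A\in p^*\bA$ forces $p_*p^*A\in\bA$, and $A\in\bA$ as a direct summand. For $p^*p_*\bB=\bB$ with $\bB$ $G$-invariant: the inclusion $p^*p_*\bB\subseteq\bB$ holds by construction, and conversely, given $B\in\bB$, $G$-invariance gives $g_*B\in\bB$ for all $g$, so $p^*p_*B\cong\bigoplus_{g}g_*B\in\bB$; thus $p_*B\in p_*\bB$, and $B$ appears as the $g=e$ summand of $p^*p_*B\in p^*p_*\bB$, whence $B\in p^*p_*\bB$.

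The main obstacle is the passage to \emph{infinite} $\LL/\kk$, where $p$ is no longer finite and the formulas above must be replaced by finite-level versions. Here every object and morphism of $\Db(X_\LL)$ descends to $X_\mathbb{M}$ for some finite Galois $\mathbb{M}/\kk$ with $\kk\subset\mathbb{M}\subset\LL$; using the finite pushforward $(p_\mathbb{M})_*$ and the same base-change computation one gets $p^*(p_\mathbb{M})_*B_\mathbb{M}\cong\bigoplus_{\bar g\in\Gal(\mathbb{M}/\kk)}g_*B$, which suffices to run the $p^*p_*\bB=\bB$ argument without ever invoking an infinite pushforward. The genuinely delicate point is the other direction, $p_*p^*\bA=\bA$: one must descend the relation $p^*A\in p^*\bA$ to a finite level, i.e.\ show that pullback along $X_\LL\to X_\mathbb{M}$ reflects thick-subcategory membership. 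I would handle this by choosing $\mathbb{M}$ large enough to descend the finitely many objects and morphisms witnessing $p^*A\in p^*\bA$, and invoking the faithful flatness and conservativity of $(-)\otimes_\mathbb{M}\LL$ to reduce to the finite case over $X_\mathbb{M}$. This is precisely the elementary but subtle ingredient needed for infinite extensions, and it is also covered by the descent results of~\cite{Elagin_2012, BDM}.
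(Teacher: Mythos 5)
Your proposal is correct and follows essentially the same route as the paper: the same two key formulas ($p_*p^*(-)\cong(-)^{\oplus|G|}$ via the projection formula, and $p^*p_*(-)\cong\bigoplus_{g\in G}\rho_g(-)$ via base change) drive the finite case, and the infinite case is handled exactly as in the paper by descending to a finite Galois subextension, where your "finitely many objects and morphisms witnessing $p^*A\in p^*\bA$" is what the paper makes precise through its generation diagram together with the retraction $cd=\id_{p^*A}$. The only cosmetic difference is that the paper introduces the auxiliary subcategory $\tilde{p}_*\bB$ (thickly generated by the objects $p_*B$, $B\in\bB$) and proves it coincides with $p_*\bB$ in the finite case, whereas you argue directly from the preimage definition of $p_*\bB$.
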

\begin{proof}
First assume that $\LL/\kk$ is finite. Note two functorial isomorphisms:
\begin{align}
\label{eq_pp1}
    p_*p^*(-)&\cong (-)\otimes_\kk\LL\cong (-)^{\oplus |G|},\\
\label{eq_pp2}
    p^*p_*(-)&\cong \oplus_{g\in G} \rho_g(-),
\end{align}
where the first one is the projection formula and the second one is~\cite[Lemma 2.13]{BDM}.
We define, for a $G$-invariant thick subcategory $\bB\subset\Db(X_{\LL})$, the subcategory $\tp_*\bB\subset \Db(X)$ as  the thick subcategory generated by the object $p_*B$, $B\in\bB$ (this does not work for infinite field extensions because $p_*$ does not preserve coherent sheaves).

We claim that $\tp_*\bB=p_*\bB$. For $\tp_*\bB\subset p_*\bB$: note that any generator $p_*B$ of $\tp_*\bB$ is in $p^*\bB$ because $p^*p_*B=\oplus_g\rho_g(B)\in\bB$ by~\eqref{eq_pp2} (as $\bB$ is $G$-invariant). For $p_*\bB\subset \tp_*\bB$: assume $A\in p_*\bB$, then $p^*A\in \bB$ and $p_*p^*A\in \tp_*\bB$, hence $A\in\tp_*\bB$ as a direct summand in $p_*p^*A$ by~\eqref{eq_pp1}.

We check that $\bA=p_*p^*\bA$. Inclusion $\bA\subset p_*p^*\bA$ is evident. For inclusion $p_*p^*\bA\subset \bA$: $p_*p^*\bA=\tp_*p^*\bA$ is generated by objects $p_*p^*A\cong A\otimes L$ which belong to $\bA$.

Next we check $\bB=p^*p_*\bB$. For $\bB\subset p^*p_*\bB$: note that any $B$ is a direct summand in $\oplus_g \rho_g(B)\cong p^*p_*B$ which is in $p^*\tp_*\bB=p^*p_*\bB$. Inclusion $p^*p_*\bB\subset \bB$ is obvious.

Now we consider the general case and do not assume that $\LL/\kk$ is finite.  
Not surprisingly, everything will follow from the fact that any object/morphism in $\Db(X_{\LL})$ is defined over some subfield $\FF\subset \LL$ such that $\FF/\kk$ is Galois and finite.
Inclusions $\bA\subset p_*p^*\bA$ and $p^*p_*\bB\subset \bB$ follow from definitions as before. For $p_*p^*\bA\subset \bA$: assume $A\in p_*p^*\bA$, then $p^*A\in p^*\bA$. By definition, this means that there exists a generation diagram for $p^*A$:
\begin{equation}
    \label{eq_generation}
    0=B_0\xrightarrow{b_1} B_1\xrightarrow{b_2} B_2\to\ldots\xrightarrow{b_n} B_n\begin{smallmatrix}
        \xrightarrow{c} \\ \xleftarrow{d} 
    \end{smallmatrix}   
    p^*A,
\end{equation}
where $Cone(b_i)=p^*A_i$ for $A_i\in\bA$ and $cd=\id_{p^*A}$. 
One can find a finite Galois extension $\FF/\kk$ such that~\eqref{eq_generation} is defined over $\FF$: that is,~\eqref{eq_generation} is isomorphic to the pullback of the similar generating diagram for $p_{\FF}^*A$ in $\Db(X_{\FF})$ (where $p_{\FF}\colon X_{\FF}\to X$ is the natural morphism). Then $p_{\FF}^*A$ is in $p_{\FF}^*\bA$ and therefore $A$ is in $(p_{\FF})_*p_{\FF}^*\bA$, which is $\bA$ by the above considerations of the finite case.

Now we check that $\bB\subset p^*p_*\bB$. Let $B\in \bB$. One can find a finite Galois extension $\FF/\kk$ such that~$B$ is defined over $\FF$. Let $p_{\FF}\colon X_{\FF}\to X$ and $q\colon X_{\LL}\to X_{\FF}$ be the natural morphisms, then $B=q^*B'$ for some $B'\in\Db(X_{\FF})$.  Let $B'_1,\ldots, B'_n$ be the $\Gal(\FF/\kk)$-orbit of $B'$, then $q^*B'_1,\ldots,q^*B'_n$ is the $G$-orbit of $B$, hence $q^*B'_i\in\bB$ for all $i$. One has 
$$p^* (p_{\FF})_*B'\cong q^*p_{\FF}^*(p_{\FF})_*B'\cong \oplus_i q^*B'_i\in\bB$$
by~\eqref{eq_pp2}. It follows that  $(p_{\FF})_*B'\in p_*\bB$ (by definition) and 
$\oplus_i q^*B'_i\cong p^* (p_{\FF})_*B'\in p^*p_*\bB$. Therefore $B\cong q^*B'\in p^*p_*\bB$ as a direct summand, and we are done.
\end{proof}

\begin{remark}
One can prove that the subcategory $p^*\bA$ is in fact the closure under direct summands of the subcategory with objects $p^*A$, $A\in\bA$ (no adding of cones is needed), see the proof of Proposition~\ref{prop_descent} below.
\end{remark}
 
\begin{proposition}
\label{prop_SODdescent}
In the above assumptions there is a bijection between SOD-s of $\Db(X)$ and $G$-invariant SOD-s of $\Db(X_{\LL})$ which takes $\Db(X)=\langle\bA_1,\ldots,\bA_n\rangle$ to 
$\Db(X_{\LL})=\langle p^*\bA_1,\ldots,p^*\bA_n\rangle$, and $\Db(X_{\LL})=\langle\bB_1,\ldots,\bB_n\rangle$ to 
$\Db(X)=\langle p_*\bB_1,\ldots,p_*\bB_n\rangle$.
\end{proposition}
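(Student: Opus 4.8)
The plan is to lean on the bijection of Lemma~\ref{lemma_subcatdescent} between thick subcategories of $\Db(X)$ and $G$-invariant thick subcategories of $\Db(X_\LL)$, for which the assignments $\bA\mapsto p^*\bA$ and $\bB\mapsto p_*\bB$ are mutually inverse and inclusion-preserving. Granting this, it suffices to prove that an ordered family $\bA_1,\ldots,\bA_n$ of thick subcategories forms an SOD of $\Db(X)$ \emph{if and only if} $p^*\bA_1,\ldots,p^*\bA_n$ forms a $G$-invariant SOD of $\Db(X_\LL)$; the two assignments then restrict to mutually inverse bijections on SOD-s. There are three conditions to match up: $G$-invariance of the pulled-back pieces, the generation axiom, and semi-orthogonality. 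I expect the last of these, in the case of an infinite extension, to be the only genuine difficulty.

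The $G$-invariance and generation conditions are formal. Each $g\in G=\Gal(\LL/\kk)$ acts on $X_\LL$ over $X$, so $p\circ g=p$ and hence $\rho_g\circ p^*=g_*\,p^*\cong p^*$; thus every generator $p^*A$ of $p^*\bA_i$ is $G$-invariant, and since the $\rho_g$ are exact autoequivalences preserving the thick subcategory generated, $p^*\bA_i$ is $G$-invariant. For generation, since $p^*$ is exact and carries generation diagrams to generation diagrams, the thick subcategory generated by $p^*\bA_1,\ldots,p^*\bA_n$ equals $p^*\bA$, where $\bA=\thick\langle\bA_1,\ldots,\bA_n\rangle$. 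As the bijection of Lemma~\ref{lemma_subcatdescent} is inclusion-preserving and sends $\Db(X)$ to $\Db(X_\LL)$, we get $\bA=\Db(X)$ if and only if $p^*\bA=\Db(X_\LL)$, so the generation axiom on one side is equivalent to the generation axiom on the other.

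The heart of the matter is semi-orthogonality, which I reduce to a single Hom-comparison on generators: vanishing of $\Hom^{\bullet}$ between two thick subcategories may be tested on generators, since the relevant orthogonals are thick. Hence it is enough to compare, for $A_i\in\bA_i$, $A_j\in\bA_j$ and all $m$, the groups $\Hom_X(A_j,A_i[m])$ and $\Hom_{X_\LL}(p^*A_j,p^*A_i[m])$. When $\LL/\kk$ is finite, the adjunction $p^*\dashv p_*$ together with the projection formula \eqref{eq_pp1} gives
\[
\Hom_{X_\LL}(p^*A_j,p^*A_i[m])\cong\Hom_X\!\big(A_j,\,p_*p^*A_i[m]\big)\cong\Hom_X(A_j,A_i[m])^{\oplus|G|},
\]
so one group vanishes precisely when the other does. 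Reading this equivalence forwards shows that a semi-orthogonal family pulls back to a semi-orthogonal one; reading it backwards, and using that $A_i\in p_*\bB_i$ means exactly $p^*A_i\in\bB_i$, shows that semi-orthogonality is reflected. Together with the previous paragraph and Lemma~\ref{lemma_subcatdescent}, this settles the finite case.

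For an infinite Galois extension the obstacle is precisely that $p_*$ no longer preserves $\Db$, so the displayed formula is unavailable verbatim; this is the point flagged before the statement. I would dispatch it exactly as in the proof of Lemma~\ref{lemma_subcatdescent}: every object and every morphism of $\Db(X_\LL)$ is already defined over some finite Galois subextension $\FF/\kk$ inside $\LL$, so writing $p_\FF\colon X_\FF\to X$ for the natural map and noting that the transition maps are given by extension of scalars, one obtains
\[
\Hom_{X_\LL}(p^*A_j,p^*A_i[m])\;\cong\;\varinjlim_{\FF}\ \Hom_{X_\FF}(p_\FF^*A_j,p_\FF^*A_i[m])\;\cong\;\Hom_X(A_j,A_i[m])\otimes_\kk\LL,
\]
the colimit running over finite Galois $\FF/\kk$ and the last isomorphism coming from the finite case applied to each $\FF/\kk$. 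Since $\LL$ is faithfully flat over $\kk$, this vanishes if and only if $\Hom_X(A_j,A_i[m])$ does, so the Hom-comparison persists to the infinite case. Combining the three verifications with the bijection of Lemma~\ref{lemma_subcatdescent} yields the proposition.
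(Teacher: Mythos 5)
Your proof is correct and takes essentially the same route as the paper's: both rest on the bijection of Lemma~\ref{lemma_subcatdescent} together with the two observations that semi-orthogonality and generation are preserved and reflected by $p^*$. The paper records these as ``easy observations'' without proof, whereas you supply the details (the adjunction/projection-formula Hom-comparison in the finite case, and reduction to finite Galois subextensions, i.e.\ flat base change, in the infinite case), which is exactly what the paper leaves implicit.
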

\begin{proof}
    Follows from Lemma~\ref{lemma_subcatdescent} and two easy observations: two objects $F_1,F_2$ in $\Db(X)$ are semi-orthogonal if and only if $p^*F_1, p^*F_2$ are semi-orthogonal in $\Db(X_{\LL})$; an object $F$ generates $\Db(X)$ if and only if $p^*F$ generates $\Db(X_{\LL})$. 
\end{proof}

One would like to relate subcategories of $\Db(X)$ and $G$-invariant subcategories of $\Db(X_{\LL})$ as abstract triangulated categories.
We restrict ourselves to the case of finite Galois extensions.

Recall that, for a category $\bB$ with a $G$-action we denote by $\bB^G$ the category of $G$-equivariant objects in $\bB$, see Definition~\ref{def_equivariant}.

For a $\kk$-linear idempotent complete category $\bA$ and a field extension $\kk\subset \LL$, one can define the scalar extension $\bA_{\LL}$ as the idempotent completion of the following category $\Tilde{\bA}_{\LL}$:  objects are the same as in $\bA$, and $\Hom$-spaces are $\Hom_{\bA}(-,-)\otimes_\kk\LL$, see e.g.~\cite[\S 5.1]{Laugwitz}. Note that $\Tilde{\bA}_{\LL}$ and ${\bA}_{\LL}$ carry natural $\Gal(\LL/\kk)$-actions.

\begin{proposition}[See~{\cite[\S 8.3]{Laugwitz}}]
\label{prop_descent}
Let $\kk\subset \LL$ be a finite Galois extension, $G=\Gal(\LL/\kk)$, $X$ be a variety over $\kk$ and $X_{\LL}=X\times_\kk \Spec \LL\xra{p} X$ be a scalar extension. Let $\bA\subset \Db(X)$ be a thick subcategory and $\bB=p^*\bA\subset \Db(X_{\LL})$ be the corresponding $G$-invariant thick subcategory. Then $\bA$ is equivalent to $\bB^G$, and $\bB$ is equivalent to $\bA_{\LL}$.
\end{proposition}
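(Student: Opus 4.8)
The statement splits into two claims, $\bA\simeq\bB^G$ and $\bB\simeq\bA_{\LL}$, and the plan is to organise the proof around the single functor $P\colon A\mapsto p^*A$ together with the projection formula \eqref{eq_pp1} and the descent formula \eqref{eq_pp2}. The computation that feeds both claims is
\[\Hom_{\Db(X_{\LL})}(p^*A_1,p^*A_2)\cong\Hom_{\Db(X)}(A_1,p_*p^*A_2)\cong\Hom_{\Db(X)}(A_1,A_2)\otimes_{\kk}\LL,\]
where the first isomorphism is $(p^*,p_*)$-adjunction and the second is \eqref{eq_pp1}. Under this identification the $G$-action arising from the canonical equivariant structures on $p^*A_1,p^*A_2$ is the Galois action on the $\LL$-factor, so passing to $G$-invariants and using $\LL^G=\kk$ gives $\Hom(p^*A_1,p^*A_2)^G\cong\Hom(A_1,A_2)$.

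For $\bA\simeq\bB^G$ I would first upgrade $P$ to a functor $\Db(X)\to\Db(X_{\LL})^G$, $A\mapsto(p^*A,\mathrm{can})$, show it is an equivalence, and then restrict. Full faithfulness is exactly the invariants computation above (taken with $\bA=\Db(X)$). For essential surjectivity I would invoke effective Galois descent: since $p$ is a $G$-torsor, pullback induces an exact equivalence of abelian categories $\Coh(X)\xrightarrow{\sim}\Coh^{G}(X_{\LL})$. It matters that this holds in arbitrary characteristic — it is the Morita equivalence attached to $\LL\rtimes G\cong\End_{\kk}(\LL)$, so no averaging over $G$ is involved. Deriving this gives $\Db(X)\simeq\Db(\Coh^{G}(X_{\LL}))$, which for finite $G$ is identified with the equivariant derived category $\Db(X_{\LL})^G$ as in \cite{Elagin_2011}, \cite{Elagin_2012}; the composite is $P$. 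The restriction is then formal: an object $(E,\theta)\in\bB^{G}$ has $E\in\bB=p^*\bA$, and writing $(E,\theta)\cong P(A)$ gives $p^*A=E\in\bB$, whence $A\in\bA$ by Lemma~\ref{lemma_subcatdescent}; conversely $P(\bA)\subseteq\bB^G$. Thus $P$ restricts to an equivalence $\bA\xrightarrow{\sim}\bB^G$.

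For $\bB\simeq\bA_{\LL}$ I would use the displayed Hom-isomorphism \emph{before} taking invariants: it says precisely that $A\mapsto p^*A$ defines a fully faithful functor $\widetilde{\bA}_{\LL}\to\Db(X_{\LL})$, since $\Hom_{\widetilde{\bA}_{\LL}}(A_1,A_2)=\Hom_{\bA}(A_1,A_2)\otimes_{\kk}\LL$. Passing to idempotent completions, $\bA_{\LL}$ embeds fully faithfully into the idempotent-complete $\Db(X_{\LL})$ with essential image $\bB'$, the closure under direct summands of $\{p^*A:A\in\bA\}$. It remains to prove $\bB'=\bB$. The inclusion $\bB'\subseteq\bB$ is clear since $\bB$ is thick. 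For the reverse, take $B\in\bB$; as $\bB$ is $G$-invariant, $p^*p_*B\cong\bigoplus_{g\in G}\rho_g(B)\in\bB$ by \eqref{eq_pp2}, hence $p_*B\in\bA$ by Lemma~\ref{lemma_subcatdescent}, so $p^*p_*B\in\bB'$; as $B$ is a direct summand of $p^*p_*B$ we get $B\in\bB'$. Thus $\bB=\bB'\simeq\bA_{\LL}$. This argument also establishes the Remark preceding the statement, namely that $p^*\bA$ is already the summand-closure of the objects $p^*A$, with no cones added.

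The hard part will be the essential-surjectivity half of the first claim. In characteristic dividing $|G|$ one cannot extract the descended object by averaging, so the real content is the effectivity of Galois descent for coherent sheaves (the Morita equivalence above) together with the comparison between the derived category of equivariant sheaves and equivariant objects of the derived category. Everything else, including the entire second claim, is formal once the Hom-identification and the summand trick via \eqref{eq_pp2} are in place.
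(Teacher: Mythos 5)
Your proposal is correct. For the first claim ($\bA\simeq\bB^G$) it is essentially the paper's argument: both rest on the global equivalence $\Db(X)\simeq\Db(X_{\LL})^G$, restricted to $\bA$ via the identity $\bA=(p^*)^{-1}(\bB)$ coming from Lemma~\ref{lemma_subcatdescent}; the paper simply cites \cite[Th. 7.3]{Elagin_2011} for the global equivalence where you sketch its proof (your observation that the Morita equivalence $\LL\#G\cong\End_{\kk}(\LL)$ makes descent effective without averaging, hence in all characteristics, is the right reason this works --- the $\kk$-linear analogue of the comparison would require $|G|$ invertible, and it is the semilinearity of the Galois action that saves the day). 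Where you genuinely diverge is the second claim. Both proofs reduce it to showing that the thick subcategory $\bB=p^*\bA$ equals the mere summand-closure $\bB'$ of $\{p^*A : A\in\bA\}$ (the content of the Remark preceding the proposition). The paper does this by proving $\bB'$ is closed under cones: for $f\colon B_1\to B_2$ between summands of $p^*A_1, p^*A_2$ it extends $f$ by zero to $\of\colon p^*A_1\to p^*A_2$, uses \eqref{eq_pp2} to realise $\of$ as a summand of $p^*p_*\of$, and \eqref{eq_pp1} to see $\Cone(p_*\of)\in\bA$, so that $\Cone(f)$ is a summand of $p^*\Cone(p_*\of)$. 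You instead show directly that every $B\in\bB$ is a summand of $p^*(p_*B)$ with $p_*B\in\bA$, combining the $G$-invariance of $\bB$, formula \eqref{eq_pp2}, and Lemma~\ref{lemma_subcatdescent}. Your route is shorter and recycles the summand trick already used inside the proof of Lemma~\ref{lemma_subcatdescent}; the paper's cone argument buys a little independence, in that it uses only the projection formulas \eqref{eq_pp1}, \eqref{eq_pp2} and never the $G$-invariance of $\bB$ or the bijection of the Lemma. Both are complete proofs; there is no gap in yours.
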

\begin{proof}
For the first statement  we recall that $\Db(X)$ is equivalent to $\Db(X_{\LL})^G$
(e.g. by~\cite[Th. 7.3]{Elagin_2011} applied to the morphism $p\colon X_{\LL}\to X$).
Consider the commutative diagram, where $u$ is the forgetful functor:
$$\xymatrix{&\Db(X_{\LL})&\\ \Db(X)\ar[ru]^{p^*}\ar[rr]^{\sim}_{\phi} && \Db(X_{\LL})^G.\ar[lu]_u}$$
Now, the equivalence $\bA\cong \bB^G$ follows from diagram chase:
$\bA=(p^*)^{-1}(\bB)=\phi^{-1}u^{-1}(\bB)=\phi^{-1}(\bB^G)$.

For the second statement, note that $\Tilde{\bA}_{\LL}$ is equivalent to the full subcategory  $\tp^*\bA\subset \Db(X_{\LL})$ with objects $p^*A$, $A\in \bA$. So we are to check that $p^*\bA$ is obtained from $\tp^*\bA$ by adding direct summands only. Let $\overline{\tp^*\bA}$ be the closure of $\tp^*\bA$ under direct summands. We need to check that $\overline{\tp^*\bA}$ is closed under  taking shifts (obvious) and cones. 

Let $B_1,B_2\in \overline{\tp^*\bA}$ and $f\colon B_1\to B_2$. Then there are $B'_1,B'_2\in \Db(X_{\LL})$, $A_1,A_2\in \bA$ such that $p^*A_i\cong B_i\oplus B'_i$ for $i=1,2$. Let $\of=(\begin{smallmatrix}
    f & 0 \\ 0 & 0
\end{smallmatrix})\colon p^*A_1\to p^*A_2$ be the morphism, then $Cone(f)$ is a direct summand in $Cone (\of)$. It follows from~\eqref{eq_pp2} that $\of$ is a direct summand in $p^*p_*\of$ and hence $Cone(\of)$ is a direct summand in $Cone (p^*p_*\of)\cong p^*Cone(p_*\of)$. Observe that $p_*\of$ is a morphism $p_*p^*A_1\to p_*p^*A_2$ and by~\eqref{eq_pp1} we have $p_*p^*A_i\in\bA$ for $i=1,2$, so that $Cone(p_*\of)\in \bA$ as well. Hence $Cone (p^*p_*\of)\in \tp^*\bA$  and therefore $Cone(f)\in \overline{\tp^*\bA}$. We conclude that $\overline{\tp^*\bA}$ is thick and so $\overline{\tp^*\bA}=p^*\bA$ as needed.
\end{proof}

Next we specialise to the case of subcategories generated by exceptional blocks, and discuss descent for these in more details. Good references on this subject include~\cite[Section 2]{AuelBernardara} and~\cite[Section 2.2]{BDM}. 

Recall Definitions~\ref{def_exceptionalobject} and~\ref{def_exceptionalcollection} of $G$-blocks and $G$-transitive blocks of (weakly) exceptional objects.
\begin{proposition}
\label{prop_descentforblocks}
    Take the assumptions of Lemma~\ref{lemma_subcatdescent} (field extension $\LL/\kk$ can be infinite).
\begin{enumerate}
    \item $\bA$ is generated by a block of weakly exceptional objects if and only if $\bB$ is generated by a $G$-block of weakly exceptional objects. The block generating $\bA$ has one object if and only if the block generating $\bB$ is $G$-transitive.
\end{enumerate}
Assume additionally that a group $H$ acts on $X$ by $\kk$-linear automorphisms, so that $H\times G$ acts on $X_{\bar\kk}$. 
\begin{enumerate}[start=2]
    \item $\bA$ is generated by an $H$-block of weakly exceptional objects if and only if $\bB$ is generated by an $H\times G$-block of weakly exceptional objects. The block generating $\bA$ is $H$-transitive if and only if the block generating $\bB$ is $H\times G$-transitive.
\end{enumerate}
\end{proposition}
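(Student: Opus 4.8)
The plan is to translate the combinatorial condition ``generated by a (weakly exceptional) block'' into the algebraic condition ``equivalent to $\Db(\modd B)$ for a finite-dimensional semisimple algebra $B$'', and then to run Galois descent at the level of these algebras. Indeed, a subcategory generated by a weakly exceptional block $E_1,\dots,E_n$ is the same datum as an equivalence with $\Db(\modd B)$ for the semisimple $\kk$-algebra $B=\prod_i\End(E_i)$: the number of block objects equals the number of simple factors of $B$, and the block has a single object exactly when $B$ is simple, its unique simple module being the weakly exceptional generator with division endomorphism algebra. Over $\LL$ the same dictionary identifies ``$\bB$ generated by a $G$-block'' with ``$\bB\simeq\Db(\modd C)$ for a semisimple $\LL$-algebra $C$'', the $G$-action permuting the simple factors of $C$, and ``$G$-transitive'' with ``$G$ acts transitively on those factors''. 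Before starting I would reduce to a finite Galois extension: a block is a finite set of objects, all defined over some finite Galois subextension $\FF/\kk$, exactly as in the proof of Lemma~\ref{lemma_subcatdescent}, so it suffices to descend along $\FF/\kk$ and invoke Proposition~\ref{prop_descent}.

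For the forward implication I would base change. Flat base change gives $\Hom_{X_\LL}(p^*E_i,p^*E_j)\cong\Hom_X(E_i,E_j)\otimes_\kk\LL$, so $\End(p^*E_i)=\End(E_i)\otimes_\kk\LL$ is semisimple because $\End(E_i)$ is a division algebra and $\LL/\kk$ is separable. Decomposing each $p^*E_i$ along the primitive idempotents of this semisimple algebra yields a block of weakly exceptional objects over $\LL$ generating $\bB=p^*\bA$. Since $p\circ\gamma_g=p$ forces $\rho_g\circ p^*\cong p^*$, each $\rho_g$ permutes these indecomposable summands, so the block is a $G$-block. Transitivity in the case $n=1$ holds because $G=\Gal(\LL/\kk)$ acts transitively on the simple factors of $\End(E)\otimes_\kk\LL$: these correspond to the primitive idempotents of $Z(\End(E))\otimes_\kk\LL$, i.e.\ to the $\kk$-embeddings of the field $Z(\End(E))$ into $\LL$, which form a single Galois orbit.

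For the converse I would set $F=\bigoplus_k F_k$ for the generating $G$-block, so $C=\End(F)=\prod_k\End(F_k)$ is a semisimple $\LL$-algebra, and compute the endomorphism algebra of the descended generator $p_*F\in\bA$. Using $p^*p_*F\cong\bigoplus_{g\in G}\rho_g F$ from \eqref{eq_pp2} together with adjunction, $\End_X(p_*F)\cong\Hom_{X_\LL}(\bigoplus_g\rho_g F,F)\cong C\rtimes_\alpha G$, the crossed product for the semilinear $G$-action on $C$ (with twisting cocycle $\alpha$). The key computation is the identification $\LL\rtimes_{\Gal(\LL/\kk)}\cong\End_\kk(\LL)$, which is the content of the normal basis theorem and of Hilbert~90; from it one gets that $C\rtimes_\alpha G$ is a finite-dimensional semisimple $\kk$-algebra $B'$ with $B'\otimes_\kk\LL\cong C$, whose simple factors biject with the $G$-orbits of simple factors of $C$. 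Hence $\bA=\langle p_*F\rangle\simeq\Db(\modd B')$ is generated by a block, and it has a single object iff $B'$ is simple iff the $G$-block is transitive.

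The main obstacle I anticipate is exactly this crossed-product identification: a naively chosen descended generator such as $p_*F_k$ is in general \emph{not} weakly exceptional, since its endomorphism algebra is simple but need not be a division algebra (it is a matrix algebra over one), so one must argue at the level of the generated subcategory and extract the genuine weakly exceptional generator as the simple $B'$-module. This is also where the semilinearity of the Galois action (nontrivial $\sigma$, via Hilbert~90) is indispensable: it is what forces a single orbit to descend to a single object, in contrast with a purely $\kk$-linear action. Part~(2) then follows from Part~(1) by treating the $\kk$-linear $H$-action as a spectator: base change sends an $H$-block $\{E_i\}$ to the collection $\{F_{i,s}\}$ on which $H$ acts through its permutation of the $E_i$ and $G$ acts by the Galois permutation of Part~(1), so $H\times G$ permutes the $F_{i,s}$; combining $H$-transitivity on $\{E_i\}$ with the $G$-transitivity of each fibre $\{F_{i,s}\}_s$ yields $H\times G$-transitivity, and the converse is read off the same bijection of orbits.
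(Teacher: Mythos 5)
Your proposal is correct in substance, but for the crucial part of the statement it takes a genuinely different (and heavier) route than the paper. The shared starting point is the same dictionary: a subcategory generated by a weakly exceptional block is one generated by a single object with no higher self-Exts whose endomorphism algebra is finite-dimensional semisimple (the paper calls such objects \emph{semi-exceptional}). For the equivalence ``block over $\kk$ if and only if $G$-block over $\LL$'' the paper then only needs the algebra-level fact that $A$ is semisimple if and only if $A\otimes_\kk\LL$ is, which is exactly your forward direction. The real divergence is the ``one object if and only if transitive'' claim: the paper deduces it \emph{formally} from Lemma~\ref{lemma_subcatdescent}, since a block has one object (resp.\ a $G$-block is transitive) exactly when the generated category has no non-trivial thick (resp.\ $G$-invariant thick) subcategories, and these subcategories are in bijection; this needs no crossed products and works verbatim for infinite $\LL/\kk$. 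You instead descend the generator explicitly as $p_*F$, identify $\End(p_*F)$ with a crossed product $C\rtimes_\alpha G$, and invoke classical crossed-product theory (semisimplicity, and the bijection between simple factors of $C\rtimes_\alpha G$ and $G$-orbits of simple factors of $C$); this is what forces your preliminary reduction to a finite subextension, since $p_*$ does not preserve coherence when $\LL/\kk$ is infinite, and that reduction itself needs a little care (one must check the $G$-block is, together with its Galois symmetry, already realized over some finite Galois $\FF/\kk$). What your route buys is finer information: identifying the descended endomorphism algebra as a crossed product is essentially the content of the paper's later Lemma~\ref{lemma_alphaD} and Proposition~\ref{prop_alphaD} (matching the twisting class with the Brauer class of $D$), so you prove this proposition and a chunk of those results simultaneously, at the cost of more machinery.

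Two inaccuracies, neither fatal. First, your claim $B'\otimes_\kk\LL\cong C$ fails on dimension grounds: since $p^*p_*F\cong F^{\oplus|G|}$ by \eqref{eq_pp2}, the correct statement is $B'\otimes_\kk\LL\cong M_{|G|}(C)$, i.e.\ the base change of $B'$ is only Morita-equivalent to $C$ --- a phenomenon you yourself flag when warning that $p_*F_k$ need not be weakly exceptional; only semisimplicity of $B'$ and the orbit count of its simple factors are used, so the argument survives. Second, the simple factors of $\End(E)\otimes_\kk\LL$ correspond to $\kk$-embeddings of $Z(\End(E))$ into $\LL$ only when such embeddings exist (e.g.\ $\LL=\bar\kk$); in general they correspond to the irreducible factors over $\LL$ of a defining polynomial, but $\Gal(\LL/\kk)$ still permutes these transitively, which is all you need.
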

\begin{proof}
For the purpose of the proof, let us call an object $E$ of $\kk$-linear triangulated category \emph{semi-exceptional} if $\Hom^i(E,E)=0$ for $i\ne 0$ and $\End(E)$ is a semi-simple finite-dimensional $\kk$-algebra. One can see that (provided that the category is idempotent complete, which is always true in our examples) an object is semi-exceptional if and only if it has the form $E_1^{d_1}\oplus\ldots\oplus E_n^{d_n}$ for some block $E_1,\ldots, E_n$ of weakly exceptional objects.

With this terminology, the first statement in (1) is equivalent to the following: an object $E$ in $\Db(X)$ is semi-exceptional if and only if $p^*E\in\Db(X_{\LL})$ is semi-exceptional. This, in turn, follows from the fact that a $\kk$-algebra $A$ is semi-simple if and only if $A\otimes_{\kk}\LL$ is semi-simple. We leave the details to the reader. Then the second statement in (1) is due to the fact that $\bA$ has non-trivial thick subcategories  if and only if $\bB$ has non-trivial thick $G$-invariant subcategories, see Lemma~\ref{lemma_subcatdescent}.

Item (2) follows easily: $\bA$ is $H$-invariant if and only if $\bB$ is $H$-invariant for the first  part, and the second part is proved as in part (1).
\end{proof}

\begin{lemma}
    \label{lemma_alphaD}
    Take the assumptions as in Lemma~\ref{lemma_subcatdescent} and suppose that $\LL/\kk$ is finite.
    Suppose $\bA$ is generated by a $D$-exceptional object $E$ for a division $\kk$-algebra $D$, and 
    $p^*E\cong F^d$ for some $\LL$-exceptional object $F\in\bB$ with $d\ge 1$ (so that $F$ is $G$-invariant and generates $\bB$).
    Let $[\alpha]\in \rH^2(G, \LL^*)$ be the class associated with $F$ (by Definition~\ref{def_obstruction}). 
    Then  $[\alpha]$ corresponds to the class $[D]$ of the central simple algebra~$D$.
\end{lemma}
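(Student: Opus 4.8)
The plan is to recognize both classes as living in the relative Brauer group $\Br(\LL/\kk)\cong\rH^2(G,\LL^*)$ and to exhibit a Morita equivalence between the division algebra $D$ and the crossed product algebra attached to the cocycle $\alpha$.

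First I would extract the algebra isomorphism forced by the hypotheses. Since $\LL/\kk$ is finite and $p$ is finite \'etale, base change for $\Hom$-spaces (via the projection formula \eqref{eq_pp1}) gives $\End_{X_\LL}(p^*E)\cong\End_X(E)\otimes_\kk\LL=D\otimes_\kk\LL$, whereas the decomposition $p^*E\cong F^d$ with $F$ an $\LL$-exceptional object yields $\End_{X_\LL}(p^*E)\cong M_d(\LL)$. Hence $D\otimes_\kk\LL\cong M_d(\LL)$. Comparing centres and using that $\LL/\kk$ is separable shows $Z(D)\otimes_\kk\LL\cong\LL$, so $Z(D)=\kk$: thus $D$ is central simple over $\kk$, split by $\LL$, of degree $d$. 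This already justifies the phrase \emph{central simple algebra $D$} and places $[D]\in\Br(\LL/\kk)\cong\rH^2(G,\LL^*)$, so the statement is meaningful.

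Next I would identify the relevant categories. On one side, $\bA=\langle E\rangle$ is generated by a single object with endomorphism algebra $D$ and no higher self-extensions, so by tilting $\bA\cong\Dperf(D)$. On the other side, $\bB=\langle F\rangle$ as a $G$-category is, by Proposition~\ref{prop_Z} applied with $n=1$ and trivial permutation, the standard $\sigma$-linear $G$-category generated by one $\LL$-exceptional object with twisting class $[\alpha]$; its category of equivariant objects is the category of $\alpha$-twisted $G$-representations over $\LL$, that is, modules over the crossed product algebra $A_\alpha:=(\LL/\kk,G,\alpha)$, which is central simple over $\kk$ with Brauer class $[\alpha]$. Invoking $\bA\cong\bB^G$ from Proposition~\ref{prop_descent}, I obtain a $\kk$-linear exact equivalence $\Dperf(D)\cong\bA\cong\bB^G\cong\Dperf(A_\alpha)$. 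To make this concrete (and to pin down that it is $D$ rather than some Morita-trivial variant that appears), I would also analyze $W:=\Hom_{X_\LL}(F,p^*E)$ directly: the canonical equivariant structure on $p^*E$ together with chosen isomorphisms $\theta_g\colon F\to\rho_g(F)$ equips $W$ with a right $\LL$-action and a $\sigma$-semilinear twisted $G$-action $T_g$, and the defining relation \eqref{eq_obstruction} of $\alpha$ translates into $T_{g_1}T_{g_2}=T_{g_1g_2}\circ(-\cdot\alpha_{g_1,g_2})$, exhibiting $W$ as an $A_\alpha$-module; the left action of the $G$-invariants $(D\otimes_\kk\LL)^G=D$ commutes with $A_\alpha$ and identifies $D\xrightarrow{\sim}\End_{A_\alpha}(W)$, which forces the division part of $A_\alpha$ to be $D$.

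Finally I would conclude: a $\kk$-linear exact equivalence between the perfect derived categories of the two finite-dimensional semisimple (indeed central simple) $\kk$-algebras $D$ and $A_\alpha$ is a Morita equivalence, and Morita-equivalent central simple $\kk$-algebras have equal class in $\Br(\kk)$; therefore $[D]=[A_\alpha]=[\alpha]$. I expect the main obstacle to be purely a matter of \textbf{conventions}: tracking left versus right module structures, $D$ versus $D^{\mathrm{op}}$, and the chosen normalization of the isomorphism $\rH^2(G,\LL^*)\cong\Br(\LL/\kk)$ via crossed products. These choices could a priori produce $[\alpha]^{-1}=[D^{\mathrm{op}}]$ instead of $[\alpha]=[D]$, so the real work is to fix compatible conventions throughout (for the twisting class in Definition~\ref{def_obstruction}, for the $A_\alpha$-module structure on $W$, and for the crossed product) so that the correspondence comes out as equality rather than inversion.
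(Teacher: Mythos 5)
Your proposal is correct and follows essentially the same route as the paper's proof: both invoke Proposition~\ref{prop_descent} to identify $\langle E\rangle$ with $\langle F\rangle^{G}$, both recognize the equivariant category as modules over the crossed product algebra (the paper's skew product $\LL\#_{\alpha^{-1}}G$, your $A_\alpha$), and both conclude by Morita equivalence of central simple $\kk$-algebras. The convention issue you single out as the real work is resolved in the paper by a double inversion that cancels — the equivalence $\langle F\rangle^{G}\cong \Db(\Rep_{\LL}(G,\alpha^{-1}))$ (via $V\mapsto F\otimes V$) together with the passage from right $D$-modules to left $D^{\opp}$-modules yields $[\alpha^{-1}]=[D^{\opp}]$, which is exactly equivalent to $[\alpha]=[D]$; note that your feared outcome $[\alpha]^{-1}=[D^{\mathrm{op}}]$ is in fact that same (correct) statement, the genuinely wrong outcome to guard against being $[\alpha]=[D^{\opp}]$.
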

\begin{proof}
    By Proposition~\ref{prop_descent} we have that $\langle F\rangle^{G}\cong \langle E\rangle$.  Consider the composition of equivalences 
    $$\Db(\Rep_{\LL}(G,\alpha^{-1}))\xrightarrow{\sim} \langle F\rangle^{G}\xrightarrow{\sim} \langle E\rangle\xrightarrow{\sim} \Db(\modd D)\xrightarrow{\sim} \Db(D^{\mathrm{op}}\mmod),$$
    where the first functor is given by $V\mapsto F\otimes V$, see Lemma~\ref{lemma_alphaalpha} and~\cite[Lemma 2.10]{Elagin}.
    Abelian category $\Rep_{\LL}(G,\alpha^{-1})$ of twisted representations is 
    equivalent to the module category of the skew product $ \LL\# _{\alpha^{-1}} G$ defined as follows. 
    As a vector space $\LL\# _{\alpha^{-1}} G=\LL\otimes_{\kk} \kk[G]$ and the multiplication is given by
    $$(\lambda\otimes g)(\mu\otimes h)=(\lambda\cdot g(\mu)\cdot \alpha^{-1}_{g,h}\otimes gh).$$
    Then algebra $\LL\# _{\alpha^{-1}} G$ is central over $\kk$ and simple, and its Brauer class corresponds to $[\alpha^{-1}]$ under the isomorphism $\Br(\LL/\kk)\cong \rH^2(G,\LL^*)$ (see~\cite[Ch. 8.4]{Jacobson_1989_BasicAlgebraII}). By definitions, left $\LL\# _{\alpha^{-1}} G$-modules are $\alpha^{-1}$-twisted representations of $G$ over $\LL$. Hence we get an equivalence
    $$\Db((\LL\# _{\alpha^{-1}} G)\mmod)\cong \Db(D^{\mathrm{op}}\mmod),$$
    and therefore $\LL\# _{\alpha^{-1}} G$ is Morita-equivalent to $D^{\mathrm{op}}$. Consequently, $[\alpha^{-1}]=[D^{\mathrm{op}}]$, and $[\alpha]=[D]$ as needed.
\end{proof}

Assume now that the field $\kk$ is perfect.
Triangulated $\kk$-linear categories generated by a weakly exceptional object $E$ are determined by the division algebra $\End(E)$. According to Corollary~\ref{cor_Z}, triangulated $\bar\kk$-linear categories with twisted $G=\Gal(\bar\kk/\kk)$-action and generated by a $G$-transitive  exceptional block  are classified by a subgroup $H\subset G$ of finite index and a cohomology class in $\rH^2(H,\bar\kk^*)$. In view of Proposition~\ref{prop_descentforblocks}(2), these two descriptions are related:
\begin{proposition}
\label{prop_alphaD}
For a perfect field $\kk$ and a variety $X$ over $\kk$, let $G=\Gal(\bar\kk/\kk)$, and $p\colon X_{\bar\kk}\to X$ be the natural morphism. Let $E\in\Db(X)$ be a $D$-exceptional object, where $D$ is a division $\kk$-algebra.
Let $\FF\subset D$ be the centre of $D$, $n=[\FF:\kk]$, and $d^2=\dim_{\FF}D$. Then  
$$p^*E\cong E_1^{d}\oplus\ldots\oplus E_n^{d},$$
where $(E_1,\ldots,E_n)$   is a $G$-transitive exceptional block in $\Db(X_{\bar\kk})$.
Moreover, let $H\subset G$ be the stabiliser of $E_1$ and $[\alpha]\in \rH^2(H,\bar\kk^*)$ be the class associated with the $H$-action on $E_1$ (by Definition~\ref{def_obstruction}). Then $\FF\cong \bar\kk^H$ and $[\alpha]$ corresponds to the class in $\Br(\FF)$ of the central simple $\FF$-algebra~$D$.
\end{proposition}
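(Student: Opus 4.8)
The plan is to reduce everything to a base-change computation of $\End(p^*E)$ together with the finite-extension result of Lemma~\ref{lemma_alphaD}. First I would compute the endomorphism algebra after extending scalars. By flat base change and exceptionality of $E$ one has $\Hom^k_{X_{\bar\kk}}(p^*E,p^*E)\cong \Hom^k_X(E,E)\otimes_\kk\bar\kk$, so $\End(p^*E)\cong D\otimes_\kk\bar\kk$ while $\Hom^k(p^*E,p^*E)=0$ for $k\neq 0$. Since $\kk$ is perfect, $\FF/\kk$ is finite separable, hence $\FF\otimes_\kk\bar\kk\cong\bar\kk^{\,n}$ with the $n$ factors indexed by the $G$-set $\Hom_\kk(\FF,\bar\kk)$, on which $G$ acts transitively. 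As $\bar\kk$ splits the central simple $\FF$-algebra $D$ of degree $d$, we have $D\otimes_\FF\bar\kk\cong M_d(\bar\kk)$, and therefore
\[
\End(p^*E)\;\cong\;D\otimes_\FF(\FF\otimes_\kk\bar\kk)\;\cong\;\prod_{\sigma\in\Hom_\kk(\FF,\bar\kk)}\big(D\otimes_{\FF,\sigma}\bar\kk\big)\;\cong\;M_d(\bar\kk)^{\,n}.
\]
The $n$ primitive central idempotents split $p^*E=\bigoplus_{i=1}^n M_i$ with $\End(M_i)\cong M_d(\bar\kk)$ and $\Hom(M_i,M_j)=0$ for $i\neq j$; idempotent-completeness of $\Db(X_{\bar\kk})$ then refines each $M_i$ to $E_i^{\,d}$ with $\End(E_i)=\bar\kk$. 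The vanishing $\Hom^{k\neq 0}(p^*E,p^*E)=0$ passes to summands, so $(E_1,\dots,E_n)$ is an exceptional block.

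Next I would identify the $G$-action. The canonical isomorphisms $\rho_g(p^*E)\cong p^*E$ induce the semilinear action $\mathrm{id}_D\otimes g$ on $D\otimes_\kk\bar\kk$, which permutes the primitive central idempotents exactly as $G$ permutes $\Hom_\kk(\FF,\bar\kk)$, i.e.\ transitively; hence $G$ permutes $\{E_1,\dots,E_n\}$ transitively and $(E_1,\dots,E_n)$ is a $G$-transitive exceptional block, in accordance with Proposition~\ref{prop_descentforblocks}(1) applied to $\bA=\langle E\rangle$. Writing $E_1$ for the summand attached to a chosen embedding $\sigma_1$, its stabiliser is $H=\{g:g\circ\sigma_1=\sigma_1\}=\Gal(\bar\kk/\sigma_1(\FF))$, so $\bar\kk^H=\sigma_1(\FF)\cong\FF$, as asserted. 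Restricting the $G$-action to $H$ makes $E_1$ an $H$-invariant exceptional object, and Definition~\ref{def_obstruction} attaches to it the class $[\alpha]\in\rH^2(H,\bar\kk^*)=\Br(\FF)$.

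It remains to match $[\alpha]$ with $[D]\in\Br(\FF)$, and this is where the real content lies. I would reduce to Lemma~\ref{lemma_alphaD} by descending $E_1$ to the centre $\FF$: factoring $p=q\circ r$ with $q\colon X_\FF\to X$ and $r\colon X_{\bar\kk}\to X_\FF$ (identifying $\FF=\sigma_1(\FF)\subset\bar\kk$), the diagonal splitting $\FF\otimes_\kk\FF\cong\FF\times R$ produces a summand $\tE$ of $q^*E$ with $\End_{X_\FF}(\tE)\cong D$ and $r^*\tE\cong E_1^{\,d}$. Thus $\tE$ is a $D$-exceptional object over $\FF$ with $D$ central simple over $\FF$. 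Choosing a finite Galois extension $\LL/\FF$ inside $\bar\kk$ that splits $D$, Lemma~\ref{lemma_alphaD} applied over the base field $\FF$ gives $\tE_\LL\cong F^{\,d}$ for an $\LL$-exceptional $F$ whose associated class in $\rH^2(\Gal(\LL/\FF),\LL^*)$ equals $[D]\in\Br(\LL/\FF)$. Finally, $E_1$ is the pullback of $F$ to $\bar\kk$, so its twisting class under $H$ is the inflation of that class along $H\twoheadrightarrow\Gal(\LL/\FF)$; since inflation is the structure map of the colimit $\Br(\FF)=\varinjlim_\LL\Br(\LL/\FF)=\rH^2(H,\bar\kk^*)$, we conclude $[\alpha]=[D]$.

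The main obstacle I anticipate is this last paragraph: carefully descending $E_1$ to a $D$-exceptional object over $\FF$, and then bridging the infinite extension $\bar\kk/\FF$ to the finite-extension statement of Lemma~\ref{lemma_alphaD} while verifying that the twisting class of Definition~\ref{def_obstruction} is compatible with inflation under enlarging the splitting field. By contrast, the earlier steps—the base-change computation of $\End(p^*E)$, the idempotent decomposition, and the identification $\bar\kk^H\cong\FF$—are formal once separability of $\FF/\kk$ is invoked.
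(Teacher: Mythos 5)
Your proof is correct and follows the same overall route as the paper's: decompose $p^*E$ using semisimplicity of $\End(p^*E)\cong D\otimes_\kk\bar\kk$, descend $E_1$ to the centre $\FF$, and conclude via Lemma~\ref{lemma_alphaD} together with inflation along $H\twoheadrightarrow\Gal(\LL/\FF)$. The differences are in execution, and they mostly work in your favour. Where the paper invokes Proposition~\ref{prop_descentforblocks} to get semi-exceptionality of $p^*E$ and identifies $\bar\kk^H\cong\FF$ only a posteriori (via the chain $D\cong(\End(E'^{\,r})\otimes_{\FF'}\bar\kk)^H\cong M_r(\End(E'))$, which forces $r=1$ and $\End(E')\cong D$, and then a degree count), you read off the $G$-set of primitive central idempotents of $D\otimes_\kk\bar\kk$ as $\Hom_\kk(\FF,\bar\kk)$ directly, which gives transitivity, the stabiliser $H=\Gal(\bar\kk/\sigma_1(\FF))$, and $\bar\kk^H\cong\FF$ in one stroke. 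Likewise, your construction of the descended object $\tE$ via the diagonal idempotent of $\FF\otimes_\kk\FF$ yields $\End_{X_\FF}(\tE)\cong D$ and $r^*\tE\cong E_1^{\,d}$ immediately, whereas the paper obtains its object $E'$ from the descent correspondence and must compute its endomorphism algebra and multiplicity afterwards.

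One wording slip should be fixed in your final step: Lemma~\ref{lemma_alphaD} does not \emph{give} the splitting $\tE_\LL\cong F^{\,d}$; that isomorphism is a \emph{hypothesis} of the lemma. It does hold for your choice of $\LL$ (any finite Galois extension of $\FF$ inside $\bar\kk$ splitting $D$): since $\End(\tE_\LL)\cong D\otimes_\FF\LL\cong M_d(\LL)$ and the higher Hom's vanish, the same idempotent-splitting argument you used over $\bar\kk$ produces an $\LL$-exceptional $F$ with $\tE_\LL\cong F^{\,d}$, and then Krull--Schmidt (valid in the Hom-finite idempotent-complete category $\Db(X_{\bar\kk})$) applied to $(F_{\bar\kk})^d\cong E_1^{\,d}$ gives $F_{\bar\kk}\cong E_1$, which is exactly what your inflation step needs. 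With that verification inserted the argument is complete; the paper makes the alternative choice of $\LL$ large enough that $E_1$ is defined over $\LL$, and both choices work.
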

\begin{proof}
    By the proof of Proposition~\ref{prop_descentforblocks}, $p^*E$ is a semi-exceptional object: 
    $p^*E\cong E_1^{d_1}\oplus\ldots\oplus E_{n'}^{d_{n'}}$ for some $G$-transitive exceptional block $E_1, \ldots, E_{n'}$ on $X_{\bar\kk}$.
    One has $d_1=\ldots=d_{n'}=:d'$ because $p^*E$ is $G$-invariant. Computing the centre in 
    $$D\otimes_{\kk}\bar\kk\cong \End(p^*E)\cong M_{d'}(\bar \kk)^{\times n'}$$
    one sees that $n=n'$, and by dimension count one gets $d=d'$.

    Let $\FF'=(\bar\kk)^{H}$ be the invariant subfield so that $H=\Gal(\bar\kk/\FF')$ and $[\FF':\kk]=n$. Decompose
    $p$ as $X_{\bar\kk}\xrightarrow{q} X_{\FF'}\xrightarrow{p_{\FF}} X$. By Proposition~\ref{prop_descentforblocks}, the $H$-transitive block $E_1$ corresponds to a weakly exceptional object $E'$ on $X_{\FF'}$. Moreover, for some $r$ one has $q^*(E^{'r})\cong E_1^d$. 
    Next we compute
    $$D= (D\otimes_{\kk}\bar\kk)^G\cong (\End(E_1^d)\times\ldots\times \End(E_n^d))^G=\End(E_1^d)^H=(\End(E^{'r})\otimes_{\FF'}\bar\kk)^H=\End(E^{'r})\cong M_r(\End(E')).$$
    It follows that $r=1$ and $\End(E')\cong D$. Note that $\FF'\subset Z(\End(E'))$ so $\FF'$ embeds into $Z(D)=\FF$. 
    Recall that $\deg_{\kk}\FF'=\deg_{\kk}\FF=n$, hence $\FF'\cong \FF$.
    
    We see that the field extension from $\kk$ to $\FF=Z(\End(E))$ splits from $E$ a weakly exceptional direct summand~$E'$ with multiplicity $1$ and such that $\End(E')\cong \End(E)$.

    Now we explain the last statement about $[\alpha]$. One can choose a finite Galois extension $\FF'\subset \LL$ such that $E_1$ is defined over $\LL$, and let $E'_1\in \Db(X_{\LL})$ be the corresponding $\LL$-exceptional object. Let $H'=\Gal(\LL/\FF')$, then $E'_1$ is $H'$-invariant, moreover, the associated class $[\alpha']\in \rH^2(H',\LL^*)$ goes to $[\alpha]\in \rH^2(H,\bar\kk^*)$ under the epimorphism $H\to H'$.
    Now the claim follows from Lemma~\ref{lemma_alphaD} which we apply to the field extension $\FF'\subset \LL$ and the objects $E'\in \Db(X_{\FF'}),E'_1\in\Db(X_{\LL})$.
\end{proof}

\begin{example}
\label{example_SB}
Let $X$ be a Severi--Brauer variety over $\kk$, that is, a twisted form of $\P^{n-1}$ for some $n\ge 2$. Let~$A$ be the corresponding central simple algebra over $\kk$ of degree $n$ and $[\alpha]\in \rH^2(G,\bar\kk^*)$ the cohomology class corresponding to the Brauer class $[A]$. 
Then the twisting class  associated with $\cO(-1)$ (see Example~\ref{example_cocycleforPn}) coincides with $[\alpha]$, see e.g.~\cite[Cor. 4.7]{Bernardara}.
The $G$-invariant subcategory $\langle\cO(-1)\rangle\subset\Db(\P^{n-1}_{\bar\kk})$ descends to a subcategory $\bA\subset \Db(X)$, generated by a $D$-exceptional object $E$, where $D$ is a division algebra representing the class $[A]$ (see Proposition~\ref{prop_alphaD}). Note that the index $r=\ind [A]=\deg D$ divides $n=\deg A$ and that $E$ is a vector bundle of rank $r$. In the notation of Proposition~\ref{prop_alphaD} we have $\FF=\kk$, $n=1$, $d=r$, and $p^*E\cong \cO(-1)^r$.
\end{example}

\section{Atomic theories}

\subsection{Basics}

To construct an atomic theory means to specify some SOD-s (called atomic) of $\Db(X)$ for each variety $X$ in a certain class. This class should satisfy some assumptions that we discuss below.
\begin{definition}
    By an \emph{atomic domain} over a field $\kk$ we will mean a subcategory (usually not full) in the category  $\cVar_{\kk}$  of varieties over $\kk$ such that all morphisms in the subcategory are  derived contractions (see Definition~\ref{def_dc}). 

    If $G$ is a group, a \emph{$G$-atomic domain} over $\kk$ is a subcategory in the category  $G-\cVar_{\kk}$  of varieties over~$\kk$ with a $G$-action and $G$-equivariant morphisms such that all morphisms in the subcategory are derived contractions.
\end{definition}

\begin{definition}
\label{def_SAT}
Let  $\cV$ be an atomic domain over $\kk$.
An \emph{atomic theory} for $\cV$ assigns, to any variety $X$ in $\cV$,
a class of semi-orthogonal decompositions of $\Db(X)$, called \emph{atomic decompositions}, such that
\begin{enumerate}[label=({A\arabic*}),start=1]
    \item\label{ax:kA1} atomic decompositions form one orbit under mutations,
    \item\label{ax:kA2} for $X,Y$ in $\cV$ and a morphism $f\colon X\to Y$ in $\cV$, there are atomic decompositions
    $$\Db(X)=\langle \bA_1,\ldots,\bA_n,\bA_{n+1},\ldots,\bA_m\rangle,\quad \Db(Y)=\langle \bB_{n+1},\ldots,\bB_m\rangle,$$
    such that $f^*\bB_j=\bA_j$ (are equal as subcategories) for $j=n+1,\ldots,m$.
    \end{enumerate}
\end{definition}

More generally, we define the equivariant version of atomic theories. We do not assume that group actions are  $\kk$-linear.
For a group $G$ and a $G$-variety $X$, there is a natural $G$-action on the categories $\Coh(X)$ and $\Db(X)$ given by 
$\rho_g(-)=g_*(-)$
(where $\epsilon_{g,h}$ are canonical isomorphisms). We recall that a subcategory $\bT\subset \Db(X)$ is \emph{$G$-invariant} if $\rho_g(\bT)=\bT$ for all $g\in G$.
An SOD of $\Db(X)$ is \emph{$G$-invariant} if it consists of $G$-invariant subcategories.

\begin{definition}
\label{def_GSAT}
Let $G$ be a group and $\cV$ be a $G$-atomic domain over $\kk$. 
A \emph{$G$-equivariant atomic theory} for $\cV$ assigns, to any $G$-variety $X$ in $\cV$,
a class of $G$-invariant semi-orthogonal decompositions of $\Db(X)$, called \emph{atomic decompositions},  such that
\begin{enumerate}[label=({A\arabic*})]
    \item\label{ax:A1} atomic decompositions  form one orbit under mutations,
    \item\label{ax:A2} for any $X,Y$ in $\cV$ and any $G$-equivariant morphism $f\colon X\to Y$ in $\cV$, there exist atomic decompositions
    $$\Db(X)=\langle \bA_1,\ldots,\bA_n,\bA_{n+1},\ldots,\bA_m\rangle,\quad \Db(Y)=\langle \bB_{n+1},\ldots,\bB_m\rangle,$$
    such that $f^*\bB_j=\bA_j$ (are equal as subcategories) for $j=n+1,\ldots,m$.
    \end{enumerate}
\end{definition}

\begin{remark}
By Remark~\ref{rem_mutationOfGinvariantSOD}, to define $G$-equivariant atomic theory on some $X$, it suffices to specify just \emph{one} $G$-invariant SOD of $\Db(X)$ --- all other atomic SOD-s will be its mutations.
\end{remark}

\medskip

We believe that atomic theories for different groups should be compatible in a way that we explain next.
\begin{definition}
\label{def_refinement}
We say that an SOD $\bT=\langle \bB_1,\ldots, \bB_m\rangle$ is a \emph{refinement} of an SOD $\bT=\langle \bA_1,\ldots, \bA_n\rangle$ if there exist integers $0=a_0<a_1<\cdots <a_{n-1}<a_n=m$
     such that \[\bA_i=\langle \bB_{a_{i-1}+1},\ldots,\bB_{a_i}\rangle,\qquad i=1,\ldots, n,\]
    where the equality is meant as subcategories of $\bT$. 
\end{definition}

\begin{definition}\label{def_subgroupcompatible}
    Let $G$ be a group and $H\subset G$ be a subgroup. Let $\cV_G$ be a $G$-atomic domain and $\cV_H$ be an $H$-atomic domain over $\kk$.
    We say that a $G$-atomic theory for $\cV_G$ is \emph{subgroup compatible} with an $H$-atomic theory for $\cV_H$ if the following holds:
    \begin{enumerate}[label=(\alph*)]
        \item the restriction functor $G-\cVar_{\kk}\to H-\cVar_{\kk}$ sends $\cV_G$ to $\cV_H$, and
        \item for any $X\in\cV_G$, there exists a $G$-atomic decomposition $\Db(X)=\langle \bA_1,\ldots, \bA_n\rangle$, and an $H$-atomic decomposition $\Db(X)=\langle \bB_1,\ldots, \bB_m\rangle$ such that $\langle \bB_1,\ldots, \bB_m\rangle$ is a refinement of $\langle \bA_1,\ldots, \bA_n\rangle$.
    \end{enumerate}
\end{definition}

\begin{definition}\label{def_extcompatible}
    Let $H$ be a group and $\kk\subset \FF$ be a field extension. Let $\cV_{\kk}$ and $\cV_{\FF}$ be $H$-atomic domains over  $\kk$ and $\FF$ respectively, where we suppose that $H$ acts on varieties $\kk$- (resp. $\FF$-) linearly.
    We say that an $H$-atomic theory for $\cV_{\kk}$ is \emph{field extension compatible} with an $H$-atomic theory for $\cV_{\FF}$ if the following holds:
    \begin{enumerate}[label=(\alph*)]
        \item the scalar extension functor sends $\cV_{\kk}$ to $\cV_{\FF}$, and
        \item for any $X\in\cV_{\kk}$, there exist  $H$-atomic decompositions $\Db(X)=\langle \bA_1,\ldots, \bA_n\rangle$ and  $\Db(X_{\FF})=\langle \bB_1,\ldots, \bB_m\rangle$ such that $\langle\bB_1,\ldots,\bB_{m}\rangle$ refines $\langle q^*\bA_1,\ldots,q^*\bA_{n}\rangle$, where $q\colon X_{\FF}\to X$ is the natural morphism.
    \end{enumerate}
\end{definition}

\medskip

Assume that the field  $\kk$ is algebraically closed and an equivariant atomic theory for some $G$-atomic domain $\cV$ over $\kk$ is given.

\begin{definition}
For $X\in\cV$, the \emph{atoms} of $X$ are ($G$-equivalence classes of) $G$-categories $\bA_1,\ldots,\bA_n$, where  $\Db(X)=\langle\bA_1,\ldots,\bA_n\rangle$ is an atomic SOD. 
\end{definition}

By Lemma~\ref{lemma_atomswelldefined}, the atoms are  well-defined.

\begin{definition}
\label{def_permtype}
Let us say that a $G$-atom $\bA \subset \Db(X)$ is \emph{of permutation type}
if $\bA$ is generated by a $G$-transitive block
of exceptional objects.
\end{definition}

As a category, a permutation type atom $\bA$ is equivalent to a direct sum of copies of $\Db(\kk)$ hence it has a quite straightforward  structure, in particular its Serre functor is trivial so that $\Hom(F, G) \cong \Hom(G, F)^*$ for all $F, G \in \bA$. Furthermore, every thick triangulated subcategory $\bA' \subset \bA$ 
(not necessarily $G$-invariant)
is also generated by a block of exceptional objects.

By Proposition \ref{prop_Z}, a $G$-atom of permutation type is characterized by a $G$-action on $\kk$, a $G$-action on a finite set $Z$ of objects, and a \emph{twisting class} $[\alpha] \in \rH^2(G, (\kk^*)^Z)$.

\begin{definition}
\label{def_non-twisted-atom}
We say that an atom $\bA$ of permutation type is \emph{non-twisted} if the twisting class $[\alpha]$ vanishes. 
\end{definition}

\begin{lemma}
\cite[6.2, 6.3]{BCDP-finite}
\label{lem:blowup-perm}
Let $p\colon \wt{X} \to X$ be the blow-up of a finite $G$-orbit $Z$ on a smooth projective $G$-surface $X$, assume $p$ is a morphism from $\cV$. Then the $G$-atoms of $\wt{X}$ are the $G$-atoms of $X$ together with one non-twisted permutation type atom 
$G$-equivalent to $\Db(Z)$.
\end{lemma}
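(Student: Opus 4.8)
The plan is to read the atomic decomposition of $\wt X$ directly off Orlov's blow-up formula and then identify the one new piece. Since $\kk$ is algebraically closed, the orbit $Z=\{P_1,\dots,P_n\}$ consists of $\kk$-points and the exceptional divisors $E_1,\dots,E_n$ are disjoint $(-1)$-curves permuted by $G$. By Proposition~\ref{prop_blowup2} (a special case of Proposition~\ref{blowupformula}) there is a semi-orthogonal decomposition
\[
\Db(\wt X)=\left\langle \cK,\; p^*\Db(X)\right\rangle,\qquad
\cK=\left\langle \cO_{E_1}(-1),\dots,\cO_{E_n}(-1)\right\rangle=\ker(p_*).
\]
First I would check that this SOD is $G$-invariant: the component $p^*\Db(X)$ is $G$-invariant because $p^*$ is $G$-equivariant (Example~\ref{example_G-action-on-X}) and fully faithful (Lemma~\ref{lemma_Bridgeland}), while $\cK$ is $G$-invariant because $G$ permutes the $E_i$ and hence the objects $\cO_{E_i}(-1)$.

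Given an atomic SOD $\Db(X)=\langle\bA_1,\dots,\bA_m\rangle$, axiom~\ref{ax:A2} together with the $G$-invariant decomposition above shows that $\langle \cK, p^*\bA_1,\dots,p^*\bA_m\rangle$ is an atomic SOD of $\wt X$. Because $p^*$ is a fully faithful $G$-equivariant functor, each $p^*\bA_i$ is $G$-equivalent to $\bA_i$, so by Lemma~\ref{lemma_atomswelldefined} the atoms of $X$ reappear in $\wt X$ unchanged, and it remains only to analyse $\cK$.

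The objects $\cO_{E_i}(-1)$ are exceptional and pairwise completely orthogonal (disjoint supports), and $G$ permutes them transitively since $Z$ is a single orbit; hence they form a transitive exceptional $G$-block, so $\cK$ is a permutation type atom in the sense of Definition~\ref{def_permtype}. As the block is $G$-transitive, $\cK$ admits no nontrivial $G$-invariant semi-orthogonal decomposition, so it is a single atom rather than being split further. The key geometric input I would then establish is that $\cO_{E_1}(-1)$ carries an $H$-equivariant structure for $H=\Stab_G(P_1)$: indeed $H$ fixes $P_1$, hence acts linearly on the tangent space $T_{P_1}X$; the exceptional divisor is $E_1=\P(T_{P_1}X)$, and $\cO_{E_1}(-1)$ is the tautological subbundle of the $H$-equivariant constant bundle $T_{P_1}X\otimes\cO_{E_1}$, so it is $H$-equivariant as an $H$-invariant subbundle by Lemma~\ref{lemma_alphaalpha}.

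This single fact yields both remaining claims. By Corollary~\ref{cor_Z} and Lemma~\ref{lemma_obstruction}, the twisting class $[\alpha]\in\rH^2(G,(\kk^*)^n)$ of the block vanishes if and only if the class $[\alpha_H]\in\rH^2(H,\kk^*)$ attached to $\cO_{E_1}(-1)$ does, and the latter is trivial since we have exhibited an $H$-equivariant structure; thus $\cK$ is non-twisted. Moreover, inducing this structure to the $G$-equivariant line bundle $\cO_{\wt Z}(-1)$ on $\wt Z=\sqcup_i E_i$ makes the fully faithful functor $\Phi=j_*(\pi^*(-)\otimes\cO_{\wt Z}(-1))$ of Proposition~\ref{blowupformula}, whose image is $\cK$, into a $G$-equivariant equivalence $\Db(Z)\xrightarrow{\sim}\cK$, so $\cK$ is $G$-equivalent to $\Db(Z)$. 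The expected main obstacle is precisely this construction of the $H$-equivariant structure on $\cO_{E_1}(-1)$ --- equivalently, the splitting of the central extension of Example~\ref{example_cocycleforPn} forced by the stabiliser fixing the blown-up point; once it is in place, everything else is a formal consequence of Orlov's formula and the $G$-equivariance of pullback, pushforward, and tensoring by equivariant bundles.
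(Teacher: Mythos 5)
Your proof is correct, and its skeleton is the same as the paper's: Orlov's blow-up formula, $G$-invariance of $\langle \cK, p^*\Db(X)\rangle$, and the observation that a $G$-transitive orthogonal exceptional block admits no nontrivial $G$-invariant SOD, so that axiom (A2) forces $\cK=\ker p_*$ to be a single new atom while the atoms of $X$ pull back unchanged. Where you genuinely diverge is in proving that this atom is non-twisted and $G$-equivalent to $\Db(Z)$. The paper never touches the geometry of the exceptional divisors: it invokes the second SOD of Proposition~\ref{prop_blowup2}, namely $\Db(\wt{X})=\langle p^*\Db(X),\langle \cO_{E_1},\ldots,\cO_{E_n}\rangle\rangle$, together with Lemma~\ref{lemma_atomswelldefined} (mutation functors between components of $G$-invariant SODs are $G$-equivariant equivalences) to get $G$-equivalences $\langle\cO_{E_i}(-1)\rangle_i\cong\langle\cO_{E_i}\rangle_i\cong\Db(Z)$; since the structure sheaves $\cO_{E_i}$ are canonically $G$-equivariant (they come from the $G$-invariant subscheme $\wt{Z}$), the twisting class vanishes by transport along these equivalences. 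You instead linearize at the fixed point: $H=\Stab_G(P_1)$ acts on $T_{P_1}X$, so the tautological bundle on $E_1=\P(T_{P_1}X)$ is genuinely $H$-equivariant, and Corollary~\ref{cor_Z} plus Lemma~\ref{lemma_obstruction} then kill the twisting class. Both arguments are valid; the paper's mutation trick is shorter and purely categorical, while your route is more geometric, explains \emph{why} the extension of Example~\ref{example_cocycleforPn} splits here (the stabiliser fixes the blown-up point, so the projective action on the exceptional curve lifts to the tangent space), and in addition makes the Orlov functor itself $G$-equivariant. One small correction: since the paper does not assume the $G$-action is $\kk$-linear (e.g.\ in the arithmetic case), $H$ acts only \emph{semilinearly} on $T_{P_1}X$; this is harmless, because Example~\ref{example_cocycleforPn} is formulated precisely for semilinear automorphisms of $\AA^n_{\kk}$, but your word ``linearly'' should be ``semilinearly''.
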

\begin{proof}
By the blow-up formula given in Proposition \ref{prop_blowup2}
we have
$$
\Db(\tX) = \left\langle  \left\langle\begin{smallmatrix}
\cO_{E_{1}}(-1)\\ \ldots \\ \cO_{E_{n}}(-1)
\end{smallmatrix} \right\rangle, p^*\Db(X)\right\rangle =
\left\langle p^*\Db(X),  \left\langle\begin{smallmatrix}
\cO_{E_{1}}\\ \ldots \\ \cO_{E_{n}}
\end{smallmatrix} \right\rangle\right\rangle.$$
The $G$-category $\left\langle\begin{smallmatrix}
\cO_{E_{1}}(-1)\\ \ldots \\ \cO_{E_{n}}(-1)
\end{smallmatrix} \right\rangle$ is semi-orthogonally indecomposable hence must be a $G$-atom by condition~\ref{ax:A2} from Definition~\ref{def_GSAT}, and the remaining atoms match. Use $G$-equivalences 
$\left\langle\begin{smallmatrix}
\cO_{E_{1}}(-1)\\ \ldots \\ \cO_{E_{n}}(-1)
\end{smallmatrix} \right\rangle\cong \left\langle\begin{smallmatrix}
\cO_{E_{1}}\\ \ldots \\ \cO_{E_{n}}
\end{smallmatrix} \right\rangle \cong \Db(Z)$ to see that this $G$-atom is of permutation type and non-twisted.
\end{proof}

As a warm-up, we construct atomic theory for curves.

\begin{lemma}\label{lem:AtomicDimOne}
    Let $G$ be a group, $\kk$ be algebraically closed and $\cV_G=G-\cVar_{\kk,\le 1}^{DC}$ be the class of smooth projective irreducible $G$-varieties over $\kk$ of dimension $\le 1$ and derived contractions between them. Then a $G$-equivariant atomic theory for $\cV_G$ exists and is unique.  Explicitly,  decompositions
    \begin{align*}
    \Db(\Spec \kk)&=\langle \Db(\Spec \kk)\rangle\\
    \Db(X) &=\langle \cO_X^\perp, \langle \cO_X\rangle\rangle, &&\quad\text{for a rational curve $X$,}\\ 
    \Db(X) &=\langle \Db(X)\rangle, &&\quad\text{for an irrational curve $X$.} 
    \end{align*}
    are atomic.
\end{lemma}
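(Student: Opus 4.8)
The plan is to first classify all morphisms of $\cV_G$, then verify that the three displayed decompositions satisfy axioms \ref{ax:A1}--\ref{ax:A2} of Definition~\ref{def_GSAT}, and finally prove uniqueness by an indecomposability argument. First I would record that over the algebraically closed field $\kk$ a smooth projective irreducible curve of genus $0$ is isomorphic to $\P^1$, so the objects of $\cV_G$ are $\Spec\kk$, copies of $\P^1$, and curves $X$ with $g(X)\ge 1$, each with a $G$-action. A non-constant morphism $f\colon X\to Y$ of curves is finite flat of some degree $e$, so $\rR f_*\cO_X=f_*\cO_X$ is locally free of rank $e$, and $\rR f_*\cO_X=\cO_Y$ forces $e=1$, i.e. $f$ is an isomorphism; a constant morphism is never a derived contraction since then $\rR f_*\cO_X$ is supported on a point. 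The structure morphism $\pi_X\colon X\to\Spec\kk$ satisfies $\rR\pi_{X*}\cO_X=\cO_{\Spec\kk}$ exactly when $\rH^1(X,\cO_X)=0$, i.e. when $g(X)=0$. Hence the only morphisms of $\cV_G$ are the isomorphisms together with the structure maps $\pi\colon X\to\Spec\kk$ of rational curves $X\cong\P^1$.

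For existence I would declare the atomic decompositions on each object to be the mutation orbit of the displayed SOD; this yields \ref{ax:A1} at once, using Remark~\ref{rem_mutationOfGinvariantSOD}. All three decompositions are $G$-invariant: the one-block ones trivially, and for $X\cong\P^1$ the structure sheaf $\cO_X$ is an exceptional object with $g_*\cO_X\cong\cO_X$ for all $g\in G$, so $\langle\cO_X\rangle$ and hence $\cO_X^\perp$ are $G$-invariant. For \ref{ax:A2} the only case beyond isomorphisms (where it holds with $n=0$, since $f^*\langle\cO_Y\rangle=\langle\cO_X\rangle$ and $f^*\cO_Y^\perp=\cO_X^\perp$) is $\pi\colon X\to\Spec\kk$ with $X\cong\P^1$: here $\Db(\Spec\kk)=\langle\Db(\Spec\kk)\rangle$, the functor $\pi^*$ is fully faithful by Lemma~\ref{lemma_Bridgeland}, and $\pi^*\Db(\Spec\kk)=\langle\pi^*\cO_{\Spec\kk}\rangle=\langle\cO_X\rangle$, which is exactly the rightmost block of $\Db(X)=\langle\cO_X^\perp,\langle\cO_X\rangle\rangle$. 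Thus \ref{ax:A2} holds with $n=1$, $m=2$.

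For uniqueness I would argue object by object. The category $\Db(\Spec\kk)$ is generated by the single exceptional object $\cO_{\Spec\kk}$ and admits no nontrivial SOD, so in any atomic theory the atomic decomposition of $\Spec\kk$ is forced to be $\langle\Db(\Spec\kk)\rangle$. For an irrational curve $X$ the only morphisms of $\cV_G$ incident to $X$ are isomorphisms, so \ref{ax:A2} imposes no constraint; uniqueness then rests entirely on the indecomposability of $\Db(X)$ for $g(X)\ge 1$ (Okawa's theorem on derived categories of curves), which forces the atomic decomposition to be $\langle\Db(X)\rangle$. Finally, for $X\cong\P^1$ I would apply \ref{ax:A2} to $\pi$: any compatible atomic decomposition must have $\pi^*\Db(\Spec\kk)=\langle\cO_X\rangle$ as its rightmost block, with the preceding blocks forming an SOD of $\cO_X^\perp$; but $\cO_X^\perp$ is generated by the single exceptional object $\cO_X(-1)$ (cf.\ Example~\ref{example_Beilinson}), hence indecomposable, so the decomposition is precisely $\langle\cO_X^\perp,\langle\cO_X\rangle\rangle$, and by \ref{ax:A1} the whole atomic class is its mutation orbit.

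The hard part, and the only genuinely nonformal input, is the uniqueness on irrational curves: because no nontrivial morphisms of $\cV_G$ touch them, nothing in the axioms would rule out an alternative (nontrivial) atomic assignment except the indecomposability of $\Db(X)$ for $g(X)\ge 1$, so this external theorem is exactly what the statement needs. By contrast the $\P^1$ case reduces, through its single structure morphism and Lemma~\ref{lemma_Bridgeland}, to the elementary indecomposability of a category generated by one exceptional object, and all remaining verifications are bookkeeping with the definitions.
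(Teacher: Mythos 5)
Your proof is correct and follows essentially the same route as the paper's: classify the derived contractions in dimension $\le 1$, let axiom \ref{ax:A2} force the splitting of $\langle\cO_X\rangle$ for rational curves, and invoke indecomposability ($\cO_X^\perp\simeq\Db(\Spec\kk)$ for rational $X$, Okawa's theorem for irrational $X$) to rule out any further splitting. The paper's proof is simply a terser version of yours, asserting without detail the morphism classification and the existence/$G$-invariance bookkeeping that you verify explicitly.
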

\begin{proof}
    Note that all derived contractions in $\cV_G$ are either isomorphisms or of the form
    $X\to \Spec \kk$, where $X$ is a rational curve.
    Relations (A2) from Definition~\ref{def_GSAT} impose that 
    $\langle\cO_X\rangle$ splits as an atom in $\Db(X)$ for $X$ a rational curve (and 
    nothing more). Also note that for a curve $X$ the category
    \begin{itemize}
        \item $\cO_X^\perp$ for  rational $X$,
        \item $\Db(X)$ for irrational $X$
    \end{itemize}
    is indecomposable (see~\cite{Okawa} for irrational curves), so further splitting is not possible.
\end{proof}

\begin{remark}\label{rem_dimOneSubgroupComp}
    It is immediate that  the $G$-atomic decompositions in Lemma~\ref{lem:AtomicDimOne} do not actually depend on $G$. Hence, if $H\subset G$ is a subgroup then $G$- and $H$-atomic theories in dimension $\le  1$ are subgroup compatible.
\end{remark}

In Sections~\ref{sec_atomictheoryMFS} and \ref{sec_construction}, we construct a $G$-atomic theory for $G$-surfaces which we believe is \emph{canonical}. Our main result can be rephrased as follows:

\begin{theorem}[{Theorem~\ref{thm_construction}, Proposition~\ref{prop_subgroupcompstd}}]
\label{thm:atoms-surfaces}
Let $\kk$ be an algebraically closed field. 
For any group~$G$ there exists a $G$-equivariant atomic theory for smooth projective irreducible $G$-varieties over $\kk$ of dimension $\le 2$ and rational derived contractions.    
Moreover, for any subgroup $H\subset G$, the constructed theories are subgroup compatible.
\end{theorem}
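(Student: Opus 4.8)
The plan is to build the theory inductively around the $G$-equivariant MMP for surfaces, treating $G$-Mori fibre spaces and blow-ups of finite $G$-orbits as the two elementary building blocks. First I would record that every $G$-surface $Y$ admits a birational $G$-morphism, factoring as a sequence of blow-ups of finite $G$-orbits, to either a surface $X$ with $K_X$ nef or a $G$-Mori fibre space $X/B$. On a K-nef endpoint I would assign the trivial one-piece decomposition $\Db(X)=\langle\Db(X)\rangle$, matching the MMP philosophy (these surfaces are not decomposed); in dimension $\le 1$ the atomic theory is already supplied and shown unique by Lemma~\ref{lem:AtomicDimOne}. The substance of the construction is therefore concentrated at the $G$-Mori fibre space level.

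Next I would define atomic decompositions on $G$-Mori fibre spaces $X/B$. For $B=\Spec\kk$ with $X$ a del Pezzo surface of degree $\ge 5$, I would take the three-block decomposition $\cS(X)$ of Lemma~\ref{lem:3blockdecomp}; this is canonical precisely because each block is preserved by $\Aut(X)$ and hence is $G$-invariant for every action. For del Pezzo surfaces of degree $\le 4$ I would use the two-atom decomposition $\langle\cO_X^\perp,\langle\cO_X\rangle\rangle$ (with the extra symmetry in degrees $1,2$ recorded by Lemma~\ref{lemma_fCY}), and for conic bundles $X/B$ the decomposition induced by the fibration together with the base. The critical point is \emph{well-definedness}: the assigned mutation class must not depend on which Mori fibre space model is used. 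By the Sarkisov program, any two $G$-Mori fibre space models of a $G$-surface are joined by a chain of $G$-Sarkisov links, so it suffices to check that for each link $\chi\colon X_1/B_1\dashrightarrow X_2/B_2$ with roof $Z/B$ as in Definition~\ref{def:SarkisovLinks}, the two decompositions pulled back along $\sigma_1$ and $\sigma_2$ to $\Db(Z)$ are mutation-equivalent. Lemma~\ref{lem:StandardDecompbirrich} already accomplishes this for the birationally rich del Pezzo surfaces of degrees in $\{5,6,8,9\}$ via explicit mutation sequences.

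Then I would extend from Mori fibre spaces (and K-nef surfaces) to all $G$-surfaces: for a chosen factorisation into blow-ups $f_i$ of finite $G$-orbits $Z_i$, I augment the base decomposition by one permutation-type atom $\ker(f_{i*})\cong\Db(Z_i)$ per blow-up, as in Lemma~\ref{lem:blowup-perm} and Proposition~\ref{prop_blowup2}. To verify axiom~\ref{ax:A1} I would show that any two such factorisations yield mutation-equivalent decompositions, reducing via the blow-up formula and the model-independence above to the Sarkisov compatibility already established. Axiom~\ref{ax:A2} then holds because for a blow-up the kernel atoms split off exactly as prescribed, and for a Mori fibre space structure map $f^*$ is fully faithful (Lemma~\ref{lemma_Bridgeland}) and carries the base atoms onto a subcollection of the total decomposition. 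Finally, subgroup compatibility for $H\subset G$ follows because every piece is defined by $\Aut(X)$-invariant data: a $G$-block (or blow-up orbit atom) refines canonically into a disjoint union of $H$-blocks, and in dimension $\le 1$ the refinement is trivial by Remark~\ref{rem_dimOneSubgroupComp}.

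The hard part will be the exhaustive case-by-case verification of Sarkisov link compatibility. While Lemma~\ref{lem:StandardDecompbirrich} settles the birationally rich del Pezzo cases, the complete argument must handle every link in the numerical classification of Proposition~\ref{prop:SarkisovLinks}: the type~I and type~III links that change the degree, the Bertini- and Geiser-type~II links over $\Spec\kk$, the type~II links over a curve (elementary transformations of conic bundles), and the type~IV links between two conic bundle structures. For each one I expect to have to produce an explicit finite chain of mutations on $\Db(Z)$ identifying the two induced decompositions, and it is precisely here that essentially all of the computational labour of the proof resides.
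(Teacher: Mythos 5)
Your overall route is the same as the paper's: the same standard decompositions on $G$-Mori fibre spaces (Karpov--Nogin three-block decompositions for degree $\ge 5$, $\langle\cO_X^\perp,\langle\cO_X\rangle\rangle$ below, one atom for K-nef surfaces), the same extension by kernel atoms along blow-up factorisations, and the same case-by-case Sarkisov verification. However, there is a genuine gap in your well-definedness step. You assert that, since any two $G$-Mori fibre space models $f\colon Y\to X$ and $f'\colon Y\to X'$ are joined by a chain of $G$-Sarkisov links, ``it suffices'' to check that each link induces mutation-equivalent decompositions on its roof $Z$. This reduction is not automatic: link compatibility produces a mutation-equivalence on $\Db(Z_i)$, and to transport it to $\Db(Y)$ one needs morphisms $g_i\colon Y\to Z_i$ compatible with the chain, i.e.\ one needs the factorisation into links to be \emph{dominated by $Y$} (the ``merry-go-round'' diagram of \cite[Rem.~16.73]{LamyCremona}, which is exactly what the paper's proof of Theorem~\ref{thm_compatible_gives_atomictheory} uses, and which the authors explicitly arranged to have recorded in that reference). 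The ordinary Sarkisov program gives a chain whose intermediate models and roofs need receive no morphism from $Y$ at all. Trying to repair this with a common resolution $W$ dominating $Y$ and all roofs only yields mutation-equivalence of the two decompositions on $\Db(W)$, i.e.\ after augmenting both by $\cK(W\to Y)$, and there is no cancellation principle that lets you strip off that kernel and descend the equivalence to $\Db(Y)$. So your suffices-claim requires the strengthened, $Y$-dominated Sarkisov factorisation, which you neither invoke nor prove.

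A secondary, smaller gap concerns subgroup compatibility. Your argument that ``every piece is defined by $\Aut(X)$-invariant data, so a $G$-block refines canonically into $H$-blocks'' only handles the case where the $G$-minimal model is also $H$-minimal. In general the $H$-atomic decomposition of a $G$-minimal surface $X$ is defined via a \emph{different} $H$-minimal model (for instance, a $G$-minimal del Pezzo surface of degree $6$ with $H$ trivial contracts to $\P^2$, and the $H$-theory uses the Beilinson collection there), so one must prove that the decomposition induced from the $H$-model via the blow-up formula is mutation-equivalent to a refinement of the Karpov--Nogin decomposition on $X$. That is precisely the content of Lemma~\ref{lem:StandardDecompbirrich}, combined with running the $H$-MMP over the base $B$ as in the paper's Proposition~\ref{prop_subgroupcompstd}; you cite this lemma for Sarkisov compatibility but never where it is actually needed here.
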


We sketch the construction of the canonical atomic theory in dimension $2$ that we perform in the next sections.

\begin{enumerate}
\item We use the atomic theory in dimension $\le 1$  constructed in Lemma~\ref{lem:AtomicDimOne}.
\item We define explicitly atomic SOD-s of $\Db(X)$ for any $G$-Mori fibre space $X/B$, where $X$ is a surface. We denote these SOD-s by $\cB(X/B)$. 
\item Next we define atomic SOD-s for an arbitrary surface $Y$. We choose a birational contraction $f\colon Y\to X$ to a $G$-Mori fibre space $X/B$, pull $\cB(X/B)$ back to $Y$ and augment the SOD $\Db(Y)=\langle \ker f_*, f^*\cB(X/B)\rangle$ by splitting $\ker f_*$ as follows. We  choose a decomposition of $f$ into blow-ups of $G$-orbits $Z_1,\ldots,Z_n$ and write $\ker f_*=\langle \Db(Z_1),\ldots,\Db(Z_n)\rangle$ inductively  using the blow-up formula. We denote the SOD for $\Db(Y)$ defined thus by $\cA(f\colon Y\to X;\cB(X/B))$.
\item We prove next that the atomic SOD-s $\cB(X/B)$ that we defined for Mori fibre spaces are compatible with each other under isomorphisms and under Sarkisov links.
\item Then we deduce that $\cA(f\colon Y\to X;\cB(X/B))$ is independent (up to mutation) on the choices of $f\colon Y\to X$, its decomposition into blow-ups, and the Mori fibre space structure $X/B$. It follows that the SOD-s $\cA:=\cA(f\colon Y\to X;\cB(X/B))$  form an atomic theory.
\item Finally we check subgroup compatibility.
\end{enumerate}

In Section~\ref{section_descent_for_AT} we deduce the following statement
from  Theorem~\ref{thm:atoms-surfaces} by Galois descent. 
\begin{theorem}\label{thm:atoms-surfaces2}
Let $\kk$ be a perfect field.
For any group $H$, there exists an $H$-equivariant atomic theory for smooth projective geometrically integral $H$-varieties over $\kk$ of dimension $\le 2$ \emph{with $\kk$-linear group action}.  
These theories are subgroup compatible and compatible under algebraic field extensions.
\end{theorem}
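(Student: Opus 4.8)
The goal is to transfer the $H$-equivariant atomic theory over the algebraically closed field $\ol{\kk}$, which exists by Theorem~\ref{thm:atoms-surfaces}, down to the perfect base field $\kk$ using Galois descent. The setup is: we have a variety $X$ over $\kk$ with a $\kk$-linear $H$-action, and we wish to assign atomic decompositions to $\Db(X)$. Passing to $X_{\ol\kk} = X \times_\kk \Spec \ol\kk$, the full symmetry group acting is $\tilde{G} := H \times G$ where $G = \Gal(\ol\kk/\kk)$, since the two actions commute (this is precisely the combined action of type (3) in the preliminaries). The first step is therefore to apply Theorem~\ref{thm:atoms-surfaces} to the group $\tilde G = H \times G$ and the $\tilde G$-variety $X_{\ol\kk}$, obtaining a $\tilde G$-equivariant atomic theory on $X_{\ol\kk}$, whose atomic decompositions are in particular $G$-invariant.

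\textbf{Descent of the decompositions.} The second step is to descend each $\tilde G$-atomic decomposition $\Db(X_{\ol\kk}) = \langle \bB_1,\ldots,\bB_m\rangle$ to a decomposition of $\Db(X)$. Since these decompositions are $G$-invariant (because $G \subset \tilde G$), Proposition~\ref{prop_SODdescent} applies and produces an SOD $\Db(X) = \langle p_*\bB_1, \ldots, p_*\bB_m\rangle$, where $p\colon X_{\ol\kk}\to X$. I would declare these to be the atomic decompositions over $\kk$. One must then verify the two axioms of Definition~\ref{def_GSAT}. For axiom \ref{ax:A1}, I would use Proposition~\ref{prop_SODdescent} together with Lemma~\ref{lemma_subcatdescent} to show that mutation-equivalence is preserved under $p_*$: a mutation of a $G$-invariant SOD upstairs is again $G$-invariant by Remark~\ref{rem_mutationOfGinvariantSOD}, and the bijection of Lemma~\ref{lemma_subcatdescent} is compatible with the formation of orthogonals defining mutations, so mutations downstairs correspond to mutations upstairs. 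For axiom \ref{ax:A2}, given a rational derived contraction $f\colon Y\to X$ over $\kk$, I would base-change to $f_{\ol\kk}\colon Y_{\ol\kk}\to X_{\ol\kk}$, apply axiom \ref{ax:A2} upstairs (using that $f_{\ol\kk}$ is a rational derived contraction with respect to $\tilde G$ by Definition~\ref{def_rdc}), and descend the resulting compatible decompositions; the key point is that $f^*$ and $p^*$ commute up to the natural base-change isomorphism $f_{\ol\kk}^* p^* \cong p^* f^*$, so the equalities $f_{\ol\kk}^*\bB_j = \bA_j$ upstairs descend to $f^* (p_* \bB_j) = p_* \bA_j$ downstairs.

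\textbf{Compatibilities.} For subgroup compatibility with $H'\subset H$, I would run the entire descent for both $\tilde G = H\times G$ and $\tilde G' = H'\times G$ and invoke the subgroup compatibility part of Theorem~\ref{thm:atoms-surfaces} for the inclusion $H'\times G \subset H\times G$: an $(H\times G)$-atomic decomposition upstairs refines to an $(H'\times G)$-atomic one, and since $p_*$ preserves refinements (being a bijection on thick subcategories by Lemma~\ref{lemma_subcatdescent}, compatible with the nesting in Definition~\ref{def_refinement}), the descended decompositions inherit the refinement property of Definition~\ref{def_subgroupcompatible}. For field-extension compatibility with $\kk \subset \FF$ (with $\FF$ algebraic over $\kk$, hence also perfect), I would compare the two descents via the tower $\Gal(\ol\kk/\FF) \subset \Gal(\ol\kk/\kk)$: both atomic theories arise from the \emph{same} $\tilde G$-theory on $X_{\ol\kk}$, descended along subgroups $H\times\Gal(\ol\kk/\FF)$ and $H\times\Gal(\ol\kk/\kk)$ respectively, and the transitivity of descent (factoring $p$ through $X_\FF$) yields that the decomposition over $\FF$ refines the scalar extension of the one over $\kk$, as required by Definition~\ref{def_extcompatible}.

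\textbf{Main obstacle.} I expect the main technical difficulty to lie in the careful bookkeeping of the two commuting group actions and in verifying that all the descent operations—mutation, refinement, and pullback along $f$—interact correctly with the functors $p_*$ and $p^*$. In particular, the compatibility of $f^*$ with descent (axiom \ref{ax:A2}) requires that the base-change isomorphism $f_{\ol\kk}^* p^* \cong p^* f^*$ carries $G$-invariant subcategories to $G$-invariant subcategories coherently, and that the descended subcategory $p_*\bA_j$ really equals $f^*(p_*\bB_j)$ as a subcategory rather than merely being equivalent; this hinges on Lemma~\ref{lemma_subcatdescent} giving an honest bijection of subcategories, not just an equivalence of categories. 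The infinite degree of $\ol\kk/\kk$ is handled by the general (non-finite) case of Lemma~\ref{lemma_subcatdescent} and Proposition~\ref{prop_SODdescent}, so no additional finiteness argument is needed beyond what is already established.
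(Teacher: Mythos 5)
Your proposal is correct and follows essentially the same route as the paper: the paper also applies Theorem~\ref{thm:atoms-surfaces} to the group $H\times G$ acting on $X_{\ol\kk}$, descends $G$-invariant atomic decompositions via the bijection of Lemma~\ref{lemma_subcatdescent}/Proposition~\ref{prop_SODdescent} (packaged there as a standalone descent statement, Proposition~\ref{prop_ATdescent}, whose proof verifies \ref{ax:A1} and \ref{ax:A2} exactly as you do, including the key computation $p^*(f^*p_*\bB_j)=(f_{\ol\kk})^*p^*p_*\bB_j=\bA_j=p^*(p_*\bA_j)$), and deduces both compatibilities from the upstairs subgroup compatibility for $H'\times G\subset H\times G$ and $H\times\Gal(\ol\kk/\FF)\subset H\times\Gal(\ol\kk/\kk)$. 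The only detail the paper makes explicit that you leave implicit is that $X_{\ol\kk}$ is genuinely an $(H\times G)$-surface, i.e.\ the action on $N_1(X_{\ol\kk})$ has finite orbits (since every curve on $X_{\ol\kk}$ is defined over a finite extension), and that scalar extension sends rational derived contractions to rational derived contractions, so that Theorem~\ref{thm:atoms-surfaces} indeed applies upstairs.
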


\begin{remark}
Note that the group action in Theorem~\ref{thm:atoms-surfaces2} is supposed to be $\kk$-linear while in Theorem~\ref{thm:atoms-surfaces} it is not. So neither theorem is a special case of the other.
\end{remark}

\subsection{Atomic theory for surfaces: extending theory from Mori fibre spaces}

\label{sec_atomictheoryMFS}

In this section we assume that $\kk$ is algebraically closed and fix a group $G$. Recall that by a $G$-variety we mean a smooth projective irreducible variety over $\kk$ with a $G$-action (which is not required to be $\kk$-linear). Moreover, we require that the $G$-action on $N_1(X)$ has finite orbits. By a $G$-morphism of $G$-varieties we mean a morphism over $\kk$ commuting with the group action. Further in this section we will always work in $G$-equivariant category, assuming (sometimes implicitly) that varieties are $G$-varieties and morphisms are $G$-morphisms.

We have seen that in dimension $\le 1$ there is unique atomic theory for the class of all derived contractions. However, we believe that in dimension $\le 2$ it is reasonable to restrict to the smaller atomic domain formed only by \emph{rational} derived contractions, see Definition~\ref{def_rdc} and Proposition~\ref{prop_elementary}.
That is, we consider only morphisms coming from the MMP: birational contractions and structure morphisms of Mori fibre spaces (and their compositions). 
We denote the atomic domain of $G$-varieties of dimension $\le 2$ over $\kk$  and  rational derived contractions by $G-\cVar_{\kk,\le 2}^{\MMP}$.

We will denote the category of $G$-surfaces of Kodaira dimension $\ge 0$ by 
$G-\cVar_{\kk, 2}^{\kappa\ge 0}$, while $G-\cVar_{\kk, \le 2}^{\kappa< 0}$ will denote the category of $G$-surfaces birational to a Mori fibre space and \textbf{all} $G$-varieties of dimension $\le 1$. We will denote the corresponding atomic domains with morphisms being rational derived contraction by 
$G-\cVar_{\kk, 2}^{\kappa\ge 0, \MMP}$ and $G-\cVar_{\kk, \le 2}^{\kappa<0, \MMP}$ respectively.
Note that these two subdomains are disjoint in $G-\cVar_{\kk,\le 2}^{\MMP}$, i.e. there are no rational derived contractions between them:
$$G-\cVar_{\kk, \le 2}^{\MMP}=G-\cVar_{\kk, 2}^{\kappa\ge 0, \MMP}\bigsqcup G-\cVar_{\kk, \le 2}^{\kappa<0, \MMP}.$$
Therefore, atomic theories for these two subdomains can be constructed independently.  

\bigskip
First, we observe that, given a $G$-equivariant birational contraction of surfaces $f\colon Y\to X$, there is an SOD given by the blow-up formula:
$$\Db(Y)=\langle \ker f_*, p^*\Db(X)\rangle$$
and a natural SOD of $\ker f_*$ into components corresponding to blow-ups of $G$-orbits. Here is the construction.

For the blow-up $f\colon X\to Y$ at one $G$-orbit, let $E_1,\ldots,E_d\subset Y$ be the exceptional divisors. We introduce the notation (see Proposition~\ref{blowupformula})
$$\bK_f:=\ker f_*=\langle \cO_{E_1}(-1),\ldots,\cO_{E_d}(-1)\rangle,$$
this is a $G$-invariant subcategory in $\Db(Y)$, equivalent to $\Db(\kk^{\times d})$.
Next, if $f=f_1\circ\cdots\circ f_n$ is a composition of blow-ups of $G$-orbits then one has a $G$-invariant SOD (again by Proposition~\ref{blowupformula})
\begin{equation}\label{eq_Kf}
    \ker f_*=\langle \bK_{f_n},f_{n}^*\bK_{f_{n-1}},\ldots, (f_2\ldots f_n)^*\bK_{f_1} \rangle.
\end{equation} 
Explicitly, for $i=1,\ldots,n$, let $E_{ij}$ ($j=1\ldots d_i$) be the exceptional divisors of the blow-up $f_i$. 
Then~\eqref{eq_Kf} is
\begin{equation}
\ker f_*=\left\langle 
        \left\langle \begin{smallmatrix}
        \cO_{E_{n1}}(-1)\\ \ldots \\ \cO_{E_{nd_n}}(-1)
        \end{smallmatrix} \right\rangle,\ldots,
        \left\langle \begin{smallmatrix}
        \cO_{E_{11}}(-1)\\ \ldots \\ \cO_{E_{1d_1}}(-1)
        \end{smallmatrix} \right\rangle
        \right\rangle,
\end{equation}
where we identify the sheaves and their pull-backs on $Y$ (by abuse of notation).

The following lemma says that this SOD of $\ker f_*$ does not depend on the chosen factorisation of $f$ into blow-ups, up to mutations of (mutually) orthogonal components.

\begin{lemma}
\label{lemma_ker}
    Let $f\colon Y\to X$ be a birational $G$-morphism between smooth $G$-surfaces, and let \[f_1\circ\cdots\circ f_n = f = f'_1\circ\cdots\circ f'_n\]
    be two decompositions of $f$ into a sequence of blow-ups of $G$-orbits.
    Then, the two SOD-s of $\ker f_*$
    \begin{align*}
        \langle \bK_{f_n},f_{n}^*\bK_{f_{n-1}},\ldots, (f_2\ldots f_n)^*\bK_{f_1} \rangle, \\
        \langle {\bK_{f'_n}},{f'_{n}}^*\bK_{f'_{n-1}},\ldots, (f'_2\ldots f'_n)^*\bK_{f'_1} \rangle &
    \end{align*}
    are the same up to mutations of orthogonal components.
\end{lemma}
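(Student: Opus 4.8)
The statement is that the SOD of $\ker f_*$ induced by a factorisation of $f$ into blow-ups of $G$-orbits is independent, up to mutations of orthogonal components, of the chosen factorisation. The key structural fact I would exploit is that the blow-up formula realises each $\bK_{f_i}$ (pulled back to $Y$) as a block of completely orthogonal exceptional objects of the form $\cO_E(-1)$, where $E$ ranges over the $(-1)$-curves that are the total transforms of the exceptional divisors. So the whole of $\ker f_*$ is generated by a single collection of weakly exceptional objects, grouped into blocks according to the factorisation. The plan is therefore to show that the \emph{underlying set} of generating objects of $\ker f_*$ is essentially the same for the two factorisations, and that the two block groupings differ only by permutations that move mutually orthogonal objects past each other.

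\textbf{Key steps.} First I would reduce to the geometric situation: by the standard theory of birational morphisms between smooth surfaces, any two factorisations of $f$ into blow-ups of points (here, $G$-orbits of points) are related by a sequence of elementary transpositions which swap the order of blowing up two points whose exceptional configurations are ``independent'' --- concretely, swapping adjacent blow-ups $f_i, f_{i+1}$ is permitted whenever the corresponding centres do not infinitely-nearly dominate one another, i.e. when the total transforms of their exceptional divisors are disjoint $(-1)$-curves meeting the relevant classes trivially. In the $G$-equivariant setting the centres are $G$-orbits, but the same combinatorics applies orbit-by-orbit, since the action permutes the exceptional divisors within each block. Second, for each such elementary swap I would check at the level of $\Db(Y)$ that the two corresponding adjacent components of the SOD are completely orthogonal: the objects $\cO_{E}(-1)$ for curves $E$ coming from the two independent centres satisfy $\Hom^\bullet(\cO_{E}(-1),\cO_{E'}(-1))=0$ in both directions, because disjoint $(-1)$-curves give $\RHom$ concentrated along each curve separately. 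Hence swapping them is precisely a mutation of orthogonal components (which acts as a trivial identity mutation, merely reordering). Third, I would assemble these local swaps: since the two factorisations are connected by a finite chain of elementary transpositions, and each transposition induces a mutation of orthogonal components on the induced SOD, composing them shows the two SOD-s of $\ker f_*$ lie in the same mutation-orbit of orthogonal reorderings.

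\textbf{Main obstacle.} The delicate point is the bookkeeping of which swaps are \emph{orthogonal}. When one centre is infinitely near another (the second orbit lies on the exceptional divisor of the first), the corresponding objects $\cO_E(-1)$ are \emph{not} orthogonal --- there is a nontrivial $\Hom$ coming from the intersection of the strict transform with the new exceptional curve --- and such adjacent pairs cannot be freely transposed. I would need to argue that the ordering forced by these non-orthogonal (infinitely-near) relations is \emph{already the same} in both factorisations, so that the only freedom being used is the permutation of genuinely orthogonal blocks. Concretely, the partial order on exceptional divisors given by ``$E$ dominates $E'$ if $E'$ appears in a later blow-up supported over $E$'' is an invariant of $f$ itself (it is read off from the geometry of $Y\to X$, not from the factorisation), and any valid factorisation must linearise this fixed partial order; two linearisations of the same partial order differ by transpositions of incomparable (hence orthogonal) elements. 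Making this precise --- identifying the categorical orthogonality relation on the $\cO_E(-1)$ with the incomparability relation in this geometric partial order, and checking it is compatible with the $G$-action on orbits --- is the crux, and would rely on Lemma~\ref{lemma_3mutations} and Lemma~\ref{lemma_OE} to handle the elementary mutations and on Proposition~\ref{prop_blowup2} to set up each block. Once orthogonality is matched to incomparability, the conclusion follows formally.
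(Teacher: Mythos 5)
Your proposal is correct and follows essentially the same route as the paper's proof: reduce to elementary transpositions of adjacent blow-ups of distinct (independent) $G$-orbits, observe that each such swap exchanges completely orthogonal components $\bK_{f_i}$ and hence is a mutation of orthogonal components, and conclude by induction. Your extra discussion of the partial order of infinitely-near centres and its linearisations is precisely the combinatorial fact the paper invokes implicitly in its first sentence, so you have simply made explicit what the paper leaves to the reader.
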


\begin{proof}
Any two decompositions $f=f_1\circ\cdots\circ f_n = f = f'_1\circ\cdots\circ f'_n$ can be obtained from each other by the following operations: if $f_i$,$f_{i+1}$ are two blow-ups of two distinct $G$-orbits, one can blow these orbits up in different order: $\tf_{i+1}$, $\tf_i$ and replace $f_1\circ\cdots\circ f_i\circ f_{i+1}\circ \ldots\circ f_n$ with $f_1\circ\cdots\circ \tf_{i+1}\circ \tf_{i}\circ \ldots\circ f_n$. Then the corresponding kernels are orthogonal: $\bK_{f_i}=\tf_{i+1}^*\bK_{\tf_i}$ is orthogonal to $f_i^*\bK_{f_{i+1}}=\bK_{\tf_{i+1}}$, and two corresponding SOD-s of $\Db(Y)$ are obtained from each other by a mutation of two orthogonal components. Hence the claim follows by induction.
\end{proof}

Lemma~\ref{lemma_ker} motivates the following notation for SOD-s. 

\begin{definition}
\label{def_AgivenBandf}
    Let $f\colon Y\to X$ be a birational $G$-morphism between $G$-surfaces.
    We denote by 
    $$\cK(f\colon Y\to X), \quad\text{or simply by}\quad \cK(f)$$ 
    any of the $G$-invariant SOD-s of $\ker f_*$ from Lemma~\ref{lemma_ker}.
    Furthermore, if $\cB$ is a $G$-invariant SOD of $\Db(X)$ we write (where $\cE$ stands for extension)
    $$\cE(f\colon Y\to X;\cB):=\langle \cK(f),f^*\cB\rangle,$$
    which is a $G$-invariant SOD of $\Db(Y)$. 
\end{definition}

Note that $\cE(f\colon Y\to X;\cB)$ is well-defined up to mutations of  orthogonal components in $\cK(f)$. 
Also note that, for birational morphisms $Z\xra{g} Y\xra{f} X$ and an SOD $\cB$ of $\Db(X)$ 
\begin{equation}
\label{eq_Acompos}
\cE(fg\colon Z\to X,\cB)\quad\text{is mutation-equivalent to}\quad \cE(g\colon Z\to Y;\cE(f\colon Y\to X;\cB)).
\end{equation}

\bigskip

Recall from Lemma~\ref{lem:AtomicDimOne} that for smooth projective varieties of dimension $\le 1$ over an algebraically closed field $\kk$ there exists unique atomic theory, and the following are atomic decompositions:
\begin{enumerate}
    \item $\cA_{\le 1}(\Spec\kk):=\langle \Db(\Spec\kk) \rangle $,
    \item $\cA_{\le 1}(C):=\langle \Db(C) \rangle $ if $C$ is a curve of genus $g(C)\ge1$, and 
    \item $\cA_{\le 1}(\P^1):=\langle\langle \cO(-1)\rangle,\langle\cO \rangle \rangle$.
\end{enumerate}

The next theorem states that, in order to define an atomic theory for surfaces and rational derived contractions, it suffices to define atomic SOD-s only for the Mori fibre spaces and to check compatibility conditions between them. Then the theory extends uniquely to all surfaces by splitting kernels of birational contractions as in  Definition~\ref{def_AgivenBandf}. Note that this theorem applies only to the surfaces that are birational to a Mori fibre space --- construction of an atomic theory for other surfaces is much easier, see Proposition~\ref{prop_compatible_gives_atomictheory}.

\begin{theorem}
\label{thm_compatible_gives_atomictheory}
Assume that for every $G$-Mori fibre space $\pi\colon X\to B$ a $G$-invariant SOD $\cB(X/B)$ of $\Db(X)$ is chosen such that the following conditions are satisfied:
\begin{enumerate}[label=(\roman*)]
    \item\label{i_rk0} compatibility with atomic decomposition on the base:\\
    $\cB(X/B)$ is obtained from $\langle\ker \pi_*,\pi^*\cA_{\le 1}(B) \rangle$ by splitting the kernel into several components.
    \item\label{i_rk1} compatibility with isomorphisms of $G$-Mori fibre spaces:\\
    if $\phi\colon X_1\to X_2$ is an isomorphism between $G$-Mori fibre spaces $X_1/B_1$ and $X_2/B_2$ (see Definition~\ref{def_isoMFS}), then $\phi^*\cB(X_2/B_2)$ is equal to $\cB(X_1/B_1)$.    
    \item\label{i_rk2} compatibility with Sarkisov links:\\ 
    for any $G$-Sarkisov link $\chi\colon X_1\dashrightarrow X_2$ (see Definition~\ref{def:SarkisovLinks})
\[\begin{tikzcd}        &Z\ar[dl,swap,"p"]\ar[dr,"q"]&\\        X_1\ar[d,swap]\ar[dashed,rr,"\chi"]&& X_2\ar[d]\\ 
        B_1\ar[dr,swap]&& B_2\ar[dl,]\\
        &B,&
    \end{tikzcd}\]
    the following  SOD-s for $\Db(Z)$
    are mutation-equivalent:
    \begin{align*}
        \cE(p\colon Z\to X_1,\cB(X_1/B_1))=&\langle \cK(p), p^*\cB(X_1/B_1)\rangle, \\
        \cE(q\colon Z\to X_2,\cB(X_2/B_2))=&\langle \cK(q), q^*\cB(X_2/B_2)\rangle.
    \end{align*}   
\end{enumerate}

Then, the following hold:
\begin{enumerate}
\item\label{i_Awelldef} For any $G$-surface $Y$ and any two $G$-equivariant birational contractions $f\colon Y\to X$ and $f'\colon Y\to X'$ to $G$-Mori fibre spaces $X/B,X'/B'$
the SOD-s \[\cE(f\colon Y\to X;\cB(X/B))\quad\text{and}\quad \cE(f'\colon Y\to X';\cB(X'/B'))\] 
of $\Db(Y)$ are mutation-equivalent.
\item\label{i_atomictheory} 
For each $Y\in G-\cVar_{\kk, \le 2}^{\kappa<0, \MMP}$ define $\cA(Y)$ as follows: 
\begin{itemize}
    \item if $\dim Y\le 1$ set $\cA(Y):=\cA_{\le 1}(Y)$, 
    \item if $\dim Y=2$ set  $\cA(Y):=\cE(f\colon Y\to X;\cB(X/B))$,
where $f\colon Y\to X$ is a birational morphism onto a $G$-Mori fibre space $X/B$.
\end{itemize}
Then $Y\mapsto \cA(Y)$ defines a $G$-atomic theory on $G-\cVar_{\kk, \le 2}^{\kappa<0, \MMP}$.
\end{enumerate}
\end{theorem}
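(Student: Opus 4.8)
The plan is to establish \eqref{i_Awelldef} first and then read off \eqref{i_atomictheory} from it together with the axioms of Definition~\ref{def_GSAT}. For \eqref{i_Awelldef}, fix $Y$ with the two contractions $f\colon Y\to X$, $f'\colon Y\to X'$ and set $\chi:=f'\circ f^{-1}\colon X\dashrightarrow X'$, a $G$-birational map between the $G$-Mori fibre spaces $X/B$ and $X'/B'$. I would factor $\chi$ by the $G$-equivariant Sarkisov program \cite{Isk96,DolgachevIskovskikh,LamyCremona} into a chain
\[X=X_0\dashrightarrow X_1\dashrightarrow\cdots\dashrightarrow X_k=X',\]
where each step is either an isomorphism of $G$-Mori fibre spaces or a $G$-Sarkisov link with roof $Z_i$ and contractions $p_i\colon Z_i\to X_{i-1}$, $q_i\colon Z_i\to X_i$. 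The decisive geometric input, which is precisely the refined form of the factorisation recorded in \cite{LamyCremona}, is that this factorisation can be chosen compatibly with the fixed resolution: since $Y$ dominates both $X$ and $X'$ it dominates the minimal $G$-resolution of $\chi$, which in turn dominates every roof $Z_i$; hence there are birational $G$-contractions $\pi_i\colon Y\to Z_i$ through which $Y\to X_{i-1}$ and $Y\to X_i$ factor as $p_i\circ\pi_i$ and $q_i\circ\pi_i$. In particular the entire comparison can be carried out on $Y$ itself, with no need to descend along an auxiliary resolution.

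Granting this, I would compare the consecutive SOD-s $\cA_i:=\cE(Y\to X_i;\cB(X_i/B_i))$ of $\Db(Y)$. If the $i$-th step is an isomorphism $\phi\colon X_{i-1}\to X_i$ of $G$-Mori fibre spaces, then $\cK(Y\to X_{i-1})=\cK(Y\to X_i)$ and $(Y\to X_i)^*=(Y\to X_{i-1})^*\circ\phi^*$, so condition~\ref{i_rk1} gives $\cA_{i-1}=\cA_i$ outright. If the $i$-th step is a genuine link, then applying \eqref{eq_Acompos} to the two factorisations $Y\xrightarrow{\pi_i}Z_i\xrightarrow{p_i}X_{i-1}$ and $Y\xrightarrow{\pi_i}Z_i\xrightarrow{q_i}X_i$ yields
\[\cA_{i-1}\sim\cE(\pi_i;\cE(p_i;\cB(X_{i-1}/B_{i-1}))),\qquad \cA_i\sim\cE(\pi_i;\cE(q_i;\cB(X_i/B_i))),\]
where $\sim$ denotes mutation-equivalence. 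Condition~\ref{i_rk2} asserts that the two inner SOD-s are mutation-equivalent on $\Db(Z_i)$; since $\pi_i^*$ is fully faithful (Lemma~\ref{lemma_Bridgeland}) and $G$-equivariant, the assignment $\cC\mapsto\cE(\pi_i;\cC)=\langle\cK(\pi_i),\pi_i^*\cC\rangle$ carries mutations of $\cC$ to mutations fixing the block $\cK(\pi_i)$, so it preserves mutation-equivalence. Thus $\cA_{i-1}\sim\cA_i$ at every step, and chaining over $i=1,\dots,k$ gives $\cE(f;\cB(X/B))\sim\cE(f';\cB(X'/B'))$, which is \eqref{i_Awelldef}.

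For \eqref{i_atomictheory} I would first note that $\cA(Y)$ is a well-defined mutation class: in dimension $\le 1$ this is the uniqueness in Lemma~\ref{lem:AtomicDimOne}, and in dimension $2$ it is exactly \eqref{i_Awelldef}. Declaring the atomic decompositions of $\Db(Y)$ to be the members of this class makes axiom~\ref{ax:A1} hold by construction. For axiom~\ref{ax:A2} it suffices, by Proposition~\ref{prop_elementary} and the compatibility \eqref{eq_Acompos} of $\cE(-;-)$ with compositions, to treat the two kinds of generating morphisms. For a birational $G$-contraction $f\colon Y_1\to Y_2$ of surfaces, choosing $g\colon Y_2\to X$ to a Mori fibre space gives $\cA(Y_1)\sim\cE(gf;\cB(X/B))\sim\cE(f;\cE(g;\cB(X/B)))=\langle\cK(f),f^*\cA(Y_2)\rangle$, which is of the form demanded by axiom~\ref{ax:A2}. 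For a Mori fibre space structure $\pi\colon X\to B$ one has $\cA(X)=\cB(X/B)$, and condition~\ref{i_rk0} says this refines $\langle\ker\pi_*,\pi^*\cA_{\le 1}(B)\rangle$, so its terminal block is $\pi^*\cA_{\le 1}(B)=\pi^*\cA(B)$, again as required; the splitting of the kernel affects only the leading components and so does not interfere with axiom~\ref{ax:A2}.

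The hard part is entirely geometric and lies in the second paragraph: securing a Sarkisov factorisation of $\chi$ all of whose roofs are dominated by the fixed resolution $Y$. Without this one is forced onto a common resolution $W$ of $Y$ and the roofs and must then descend a mutation-equivalence of the images $\cE(W\to Y;-)$ back along $W\to Y$ — a step that is genuinely delicate, because mutations on $W$ can mix $\ker(W\to Y)_*$ with the pulled-back part and so need not respect the block $\langle\ker(W\to Y)_*,\cdot\rangle$. Relying on the refined Sarkisov decomposition of \cite{LamyCremona} removes this difficulty, after which the category-theoretic steps (full faithfulness of pullback and stability of mutation-equivalence under $\cE(\pi;-)$) are routine.
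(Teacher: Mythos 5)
Your proposal is correct and takes essentially the same route as the paper's own proof: both rely on the Sarkisov factorisation of $f'\circ f^{-1}$ whose links are dominated by $Y$ (the merry-go-round diagram from \cite{LamyCremona}), chain mutation-equivalences on $\Db(Y)$ via \eqref{eq_Acompos} together with conditions \ref{i_rk1} and \ref{i_rk2}, and then verify axioms \ref{ax:A1} and \ref{ax:A2} by reducing, through Proposition~\ref{prop_elementary}, to the individual $G$-MMP contractions. The only (welcome) difference is that you make explicit a step the paper leaves tacit, namely that the operation $\cC\mapsto\langle\cK(\pi_i),\pi_i^*\cC\rangle$ preserves mutation-equivalence because $\pi_i^*$ is fully faithful and $G$-equivariant.
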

\begin{proof}
We first show \eqref{i_Awelldef}.
Consider the composition $\phi=f'f^{-1}$ as a birational map $X\dashrightarrow X'$. One of the following holds (see \cite[Th. 16.28(1), Rem. 16.73]{LamyCremona} for the rational case): 
\begin{enumerate}[label=(\alph*)]
    \item\label{i_iso} $\phi$ is an isomorphism of Mori fibre spaces, or
    \item\label{i_link}$\phi$ can be decomposed into a sequence of Sarkisov links \textbf{dominated by $Y$}: that is, there exists a commutative \emph{merry-go-round} diagram
$$\xymatrix{ &&&&& Y\ar[llllldd]_(0.7){f_0}\ar[llldd]^(0.6){f_1}\ar[ldd]_(0.7){f_{k-1}}\ar[rdd]^(0.7){f_k} \ar[rrrdd]_(0.6){f_{n-1}}\ar[rrrrrdd]^(0.7){f_n} \ar[lllldd]|(0.7){g_1} \ar[dd]^(0.7){g_k}\ar[rrrrdd]|(0.7){g_n} &&&&&\\
\\
X_0\ar[d] & Z_1 \ar[l]^{p_1}\ar[r]_{q_1} & X_1\ar[d] &\ldots& X_{k-1}\ar[d] &Z_k \ar[l]^{p_k}\ar[r]_{q_k}& X_k\ar[d] &\ldots& X_{n-1}\ar[d] & Z_n \ar[l]^{p_n}\ar[r]_{q_n}& X_n\ar[d] \\
 B_0 && B_1   && B_{k-1} && B_k && B_{n-1} && B_n, \\}$$
where $X_{i-1}\gets Z_i\to X_i$ are Sarkisov links and $f_0=f, f_n=f'$. 
\end{enumerate}

In  case \ref{i_iso} the claim is straightforward: 
$$\cE(f'\colon Y\to X';\cB(X'/B'))\cong \cE(f\colon Y\to X;\cE(\phi\colon X\to X';\cB(X'/B')))= \cE(f\colon Y\to X;\cB(X/B)),$$
where the first mutation equivalence is by~\eqref{eq_Acompos} and the second equality is by assumption \ref{i_rk1}.

In  case \ref{i_link} one has mutation equivalences:
\begin{align*}
\cE(f_0\colon Y\to X_0,\cB(X_0/B_0))&\cong \cE(g_1\colon Y\to Z_1,\cE(p_1\colon Z_1\to X_0,\cB(X_0/B_0))) & \text{by~\eqref{eq_Acompos}}\\
& \cong \cE(g_1\colon Y\to Z_1,\cE(q_1\colon Z_1\to X_1,\cB(X_1/B_1))) & \text{by assumption \ref{i_rk2}}\\
& \cong \cE(f_1\colon Y\to X_1,\cB(X_1/B_1)) & \text{by~\eqref{eq_Acompos}}\\
& \ldots &\\
& \cong \cE(f_n\colon Y\to X_n,\cB(X_n/B_n)).
\end{align*}
This proves \eqref{i_Awelldef}.

We now prove \eqref{i_atomictheory}, that is, we check condition \ref{ax:A2}  of Definition~\ref{def_GSAT}:  for any $G$-varieties $X,Y$ in $G-\cVar_{\kk,\le 2}^{\kappa<0,\MMP}$ and any $G$-equivariant rational derived contraction $f\colon Y\to X$, there exist atomic decompositions
    $$\Db(Y)=\langle \bA_1,\ldots,\bA_n,\bA_{n+1},\ldots,\bA_m\rangle,\quad \Db(X)=\langle \bB_{n+1},\ldots,\bB_m\rangle,$$
    such that $f^*\bB_j=\bA_j$ for $j=n+1,\ldots,m$.
Any such $f$ is a composition of several $G$-MMP contractions (by Proposition~\ref{prop_elementary}), so it suffices to check condition \ref{ax:A2} only when $f$ is a $G$-MMP  contraction:
    
    If $\dim(Y)\le  1$, then it holds 
    by assumption since $\cA$ extends the atomic theory in dimension $\le  1$. 

    If $f\colon Y\to X$ is a $G$-Mori fibre space, then one can define $\cA(Y)$ as $\cB(Y/X)$ and then \ref{ax:A2} holds by assumption \ref{i_rk0}.

    If $f\colon Y\to X$ is the blow-up at one $G$-orbit, one can choose a birational contraction $f_0\colon X\to X_0/B_0$ to a Mori fibre space and define $\cA(X)$ as $\cE(f_0,\cB(X_0/B_0))$ and $\cA(Y)$ as $\cE(f_0f,\cB(X_0/B_0))$. Then \ref{ax:A2} follows from~\eqref{eq_Acompos}:
    $$\cA(Y)=\cE(f_0f,\cB(X_0/B_0))\cong \cE(f,\cE(f_0,\cB(X_0/B_0)))=\cE(f,\cA(X))=\langle \ker f_*, f^*\cA(X)\rangle.$$
\end{proof}

We finish this section with a (much easier) counterpart of Theorem~\ref{thm_compatible_gives_atomictheory} for  surfaces birational to a surface $X$ with $K_X$ nef. 
Recall that these are the surfaces of Kodaira dimension $\ge 0$ and that we  denote the atomic domain formed by $G$-surfaces of Kodaira dimension $\ge 0$ and rational derived contractions between them by $G-\cVar_{\kk, 2}^{\kappa\ge 0, \MMP}$. 
\begin{proposition}
\label{prop_compatible_gives_atomictheory}
For any $G$-surface $X$ with $K_X$ nef, we set all $\Db(X)$ to be its unique atom.
For each $Y\in G-\cVar_{\kk, 2}^{\kappa\ge 0, \MMP}$ define 
$\cA(Y):=\cE(f\colon Y\to X;\Db(X))$,
where $f\colon Y\to X$ is a birational $G$-morphism onto a $G$-surface $X$ with $K_X$ nef.
Then $Y\mapsto \cA(Y)$ defines a $G$-atomic theory on $G-\cVar_{\kk, 2}^{\kappa\ge 0, \MMP}$.
\end{proposition}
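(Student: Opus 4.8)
The plan is to follow the same two-step structure as the proof of Theorem~\ref{thm_compatible_gives_atomictheory}, but to exploit the crucial simplification available in Kodaira dimension $\ge 0$: a smooth projective surface with $\kappa \ge 0$ is not uniruled, so it admits \emph{no} $G$-Mori fibre space structure, and its minimal model (the surface with $K$ nef in its birational class) is \emph{unique} up to isomorphism. This classical uniqueness replaces the entire Sarkisov-link compatibility analysis of conditions \ref{i_rk1} and \ref{i_rk2} of Theorem~\ref{thm_compatible_gives_atomictheory} by a single fact, which is what makes the statement much easier. Concretely, I must check axioms \ref{ax:A1} and \ref{ax:A2} of Definition~\ref{def_GSAT} for the assignment $Y\mapsto \cA(Y)=\cE(f\colon Y\to X;\Db(X))$.

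First I would establish well-definedness of $\cA(Y)$, i.e.\ the analogue of Theorem~\ref{thm_compatible_gives_atomictheory}\eqref{i_Awelldef}. Given two $G$-equivariant birational contractions $f\colon Y\to X$ and $f'\colon Y\to X'$ onto $G$-surfaces with $K_X, K_{X'}$ nef, the composite $\phi = f' f^{-1}\colon X \dashrightarrow X'$ is a birational map between two minimal models of a non-uniruled surface, hence an isomorphism by the uniqueness of minimal models; it is $G$-equivariant because $f$ and $f'$ are. Since $\phi$ is an isomorphism, $\ker\phi_* = 0$ and $\cE(\phi\colon X\to X';\Db(X')) = \phi^*\Db(X') = \Db(X)$, so $\phi^*$ carries the single atom to the single atom. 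Combining the composition formula~\eqref{eq_Acompos} with $f' = \phi f$ then gives
\[
\cE(f';\Db(X')) \ \cong\ \cE\big(f;\cE(\phi;\Db(X'))\big) = \cE(f;\Db(X)),
\]
where $\cong$ denotes mutation-equivalence. Thus $\cA(Y)$ is independent of the chosen contraction up to mutation, and defining the atomic decompositions of $\Db(Y)$ to be the mutation orbit of $\cA(Y)$ secures axiom \ref{ax:A1}.

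Next I would verify axiom \ref{ax:A2}. The key observation is that every morphism in $G-\cVar_{\kk,2}^{\kappa\ge 0,\MMP}$ is birational: by Proposition~\ref{prop_elementary} any such $f$ is a composition of $G$-MMP contractions, and since both source and target are non-uniruled surfaces, no fibre-space contraction can occur, so all the pieces are blow-ups of finite $G$-orbits. As in the proof of Theorem~\ref{thm_compatible_gives_atomictheory}\eqref{i_atomictheory}, it therefore suffices to treat a single blow-up $f\colon Y\to X$ of a $G$-orbit. Choosing a birational contraction $g\colon X\to X_0$ onto the minimal model and setting $\cA(X)=\cE(g;\Db(X_0))$ and $\cA(Y)=\cE(gf;\Db(X_0))$, the composition formula~\eqref{eq_Acompos} yields
\[
\cA(Y)=\cE(gf;\Db(X_0))\ \cong\ \cE\big(f;\cE(g;\Db(X_0))\big)=\langle \ker f_*,\, f^*\cA(X)\rangle,
\]
which is precisely the compatibility required by \ref{ax:A2}, with $f^*$ identifying each atom of $\cA(X)$ with a component of $\cA(Y)$ and $\ker f_*$ splitting off as the new atoms.

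Finally, the one point that genuinely needs care — and the closest thing to an obstacle — is the $G$-equivariance of the uniqueness of minimal models: I must confirm that the classical non-equivariant isomorphism between minimal models is compatible with the group action. This is automatic here, because the isomorphism $\phi$ above is produced as $f' f^{-1}$ from $G$-morphisms and hence commutes with $G$ on the nose, so no separate equivariant minimal model theory is invoked. Everything else is formal manipulation with the $\cE(-;-)$ notation and the already-established composition formula~\eqref{eq_Acompos}, exactly paralleling the structure of the proof of Theorem~\ref{thm_compatible_gives_atomictheory} but with the Sarkisov-link step removed.
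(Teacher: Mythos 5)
Your proof is correct and follows essentially the same route as the paper: the paper likewise reduces well-definedness of $\cA(Y)$ to the uniqueness of the minimal model and of the contraction to it (citing \cite[Proposition III.4.6]{BPV} together with Lemma~\ref{lemma_ker}), and verifies axiom~\ref{ax:A2} by noting that every morphism in $G-\cVar_{\kk, 2}^{\kappa\ge 0, \MMP}$ is a composition of blow-ups of $G$-orbits and arguing as in the proof of Theorem~\ref{thm_compatible_gives_atomictheory}. Your only deviation --- obtaining independence of the chosen contraction from the $G$-equivariant isomorphism $\phi=f'f^{-1}$ between minimal models combined with the composition formula~\eqref{eq_Acompos}, rather than from uniqueness of the contraction morphism itself --- is just an equivalent packaging of the same classical fact.
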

\begin{proof}
Note that if $Y$ is birational to a surface with nef canonical divisor, then there exists a unique birational contraction $f\colon Y\to X$ to a unique surface $X$ with nef canonical class \cite[Proposition III.4.6]{BPV}. 
Hence $\cA(Y)$ is well-defined up to mutations by Lemma~\ref{lemma_ker}. 

To prove that SOD-s $\cA(Y)$ form an atomic theory, we need to check condition~\ref{ax:A2} from Definition~\ref{def_GSAT}. Note that any morphism $f\colon Y_1\to Y_2$ in $G-\cVar_{\kk, 2}^{\kappa\ge 0, \MMP}$ is a composition of blow ups, therefore we can argue as in the proof of Theorem~\ref{thm_compatible_gives_atomictheory}.
\end{proof}

\subsection{Atomic theory for surfaces: defining theory for Mori fibre spaces}
\label{sec_construction}

In this section we verify that the conditions of Theorem~\ref{thm_compatible_gives_atomictheory} are satisfied and thus obtain an atomic theory for surfaces and rational derived contractions. That is, for any $G$-Mori fibre space $X/B$ of dimension $2$ we specify a $G$-invariant semi-orthogonal decomposition of $\Db(X)$ (which we call \emph{standard}) so that three conditions of Theorem~\ref{thm_compatible_gives_atomictheory} are fulfilled: the chosen SOD-s are compatible with atomic SOD-s of $B$, compatible with isomorphisms of Mori fibre spaces, and compatible with Sarkisov links (up to mutations).

We work with the same assumptions as in the previous section.

\begin{definition}[standard decompositions for Mori fibre spaces]
\label{def_standard}
Let $G$ be a group, $X$ be a $G$-surface, and  $\pi\colon X\to B$ be a $G$-Mori fibre space. We consider the following SOD $\cBstd(X/B)$ of $\Db(X)$.

If $X$ is \textbf{not $G$-birationally rich} (see Definition \ref{def:BirationallyRich}), we set \[\cBstd(X/B):=\langle \ker  \pi_*,\pi^*\cA_{\le 1}(B) \rangle,\]
    which consists of $2$ or $3$ pieces depending on $\cA_{\le 1}(B)$, namely
    \begin{align*}
        \cBstd(X/\Spec\kk):= & \langle\cO_X^{\perp}{},\langle\cO_X\rangle\rangle\quad\text{if $X$ is a $G$-minimal del Pezzo surface of degree $\le  4$,}\\
        \cBstd(X/\P^1):= & \langle\ker \pi_*, \langle \pi^*\cO_{\P^1}(-1)\rangle, \langle\cO_X\rangle \rangle\quad\text{if $X/\P^1$ is a $G$-Mori conic bundle of degree $\le  4$,}\\
        \cBstd(X/B):= & \langle \ker  \pi_*,\pi^*\Db(B) \rangle\quad\text{if $B$ is a curve of genus $g(B)\ge 1$.}
    \end{align*}
    
    If $X$ is \textbf{$G$-birationally rich}, we use the notation from Lemma~\ref{lem:3blockdecomp}. 
    If $B\cong \Spec\kk$, we set $\cBstd(X/\Spec\kk)=\cS(X)$ to be the $3$-block decomposition of 
    Karpov--Nogin, namely
    \begin{align*}
        \cBstd(\P^2/\Spec\kk):= & \left\langle 
        \langle\cO(-2H)\rangle, 
        \langle\cO(-H)\rangle, 
        \langle\cO\rangle
        \right\rangle,\\
        \cBstd(\P^1\times\P^1/\Spec\kk):= & \left\langle 
        \langle\cO(-h_1-h_2)\rangle,
        \left\langle \begin{smallmatrix}
        \cO(-h_1)\\ \cO(-h_2)
        \end{smallmatrix} \right\rangle,
        \langle\cO\rangle
        \right\rangle,\\
        \cBstd(X_6/\Spec\kk):= & \left\langle
         \left\langle \begin{smallmatrix}
        \cO(-H_1)\\ \cO(-H_2)
        \end{smallmatrix} \right\rangle,
        \left\langle \begin{smallmatrix}
        \cO(-h_1)\\ \cO(-h_2) \\ \cO(-h_3)
        \end{smallmatrix} \right\rangle,
        \langle\cO\rangle
        \right\rangle,\\
        \cBstd(X_5/\Spec\kk):= & \left\langle
         \langle \cE\rangle,
        \left\langle \begin{smallmatrix}
        \cO(-h_1)\\ \ldots \\ \cO(-h_5)
        \end{smallmatrix} \right\rangle,
        \langle\cO\rangle
        \right\rangle.
    \end{align*}
    If $B\cong\P^1$, we set 
    \begin{align*}
        \cBstd(\bF_n/\P^1):= & \left\langle
         \left\langle \cO(-s-h) \right\rangle,
        \left\langle \cO(-s) \right\rangle,
        \langle \cO(-h) \rangle, 
        \langle\cO\rangle
        \right\rangle,\\
        \cBstd(X_6/\P^1):= & \left\langle
         \left\langle \begin{smallmatrix}
        \cO(-H_1)\\ \cO(-H_2)
        \end{smallmatrix} \right\rangle,
        \left\langle \begin{smallmatrix}
        \cO(-h_1)\\ \cO(-h_2)
        \end{smallmatrix} \right\rangle,
        \langle \cO(-h_3) \rangle, 
        \langle\cO\rangle
        \right\rangle,\\
        \cBstd(X_5/\P^1):= & \left\langle
         \langle \cE\rangle,
        \left\langle \begin{smallmatrix}
        \cO(-h_1)\\ \ldots \\ \cO(-h_4)
        \end{smallmatrix} \right\rangle,
        \langle \cO(-h_5)\rangle,
        \langle\cO\rangle
        \right\rangle,
    \end{align*}
    where $h$ (respectively $h_3$, respectively $h_5$) is the $0$-class giving $\pi\colon X\to \P^1$, and $s\in\Pic(\bF_n)$ is the negative section.

Note that all these decompositions are $G$-invariant.
We will call $\cBstd(X/B)$ \emph{standard atomic SOD} of a $G$-Mori fibre space $X/B$.
\end{definition}

\begin{remark}
While the definition of $\cBstd(X/B)$ was done case by case for birationally rich surfaces, there is an alternative uniform way to define it in these cases. Unless  $X=\P^2$ or $\FF_n$, $n\ge1$, one has:
\[\cBstd(X/B)=\langle \bA,\left\langle \begin{smallmatrix}
        \cO(-h_1)\\ \ldots \\ \cO(-h_r)
        \end{smallmatrix} \right\rangle,\pi^*\cA_{\le 1}(B)\rangle,\]
        where $h_1,\ldots,h_r$ are the $0$-classes contracted to a point by $\pi\colon X\to B$, and $\bA$ is the orthogonal complement.
\end{remark}

\begin{lemma}
\label{lemma_std_compbase}
For any $G$-Mori fibre space $X/B$ the SOD $\cBstd(X/B)$ has the form 
\begin{equation}
\label{eq_Kstd}
\langle \bB_1,\ldots,\bB_m, \pi^*\cA_{\le 1}(B) \rangle.
\end{equation}
Therefore, the SOD-s $\cBstd(X/B)$ satisfy condition \ref{i_rk0} from Theorem~\ref{thm_compatible_gives_atomictheory}.
Furthermore, we have 
\begin{equation*}
    \ker \pi_*=\langle \bB_1,\ldots,\bB_m\rangle.
\end{equation*}

\end{lemma}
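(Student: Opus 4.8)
The plan is to reduce the whole statement to the general semi-orthogonal decomposition attached to the derived contraction $\pi$, and then to read off the shape of $\cBstd(X/B)$ from Definition~\ref{def_standard} case by case.

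First I would record that $\pi\colon X\to B$ is a rational derived contraction in every case. For $B=\Spec\kk$ the morphism $\pi$ is the structure map of a del Pezzo surface, for which $\rH^i(X,\cO_X)=0$ for $i>0$, so $\rR\pi_*\cO_X=\cO_B$; for $B$ a curve, $\pi$ is a conic bundle whose fibres are (possibly degenerate) conics, hence again $\rR\pi_*\cO_X=\cO_B$. The generic fibre is geometrically rational in both situations. Thus Lemma~\ref{lemma_Bridgeland} applies: $\pi^*$ is fully faithful and there is a semi-orthogonal decomposition $\Db(X)=\langle \ker\pi_*,\pi^*\Db(B)\rangle$. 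By the formula $\bA_1=\langle\bA_2,\ldots,\bA_n\rangle^{\perp}$ for the first component of an SOD, this gives $\ker\pi_*=(\pi^*\Db(B))^{\perp}$.

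Next I would identify the last component(s) of $\cBstd(X/B)$ with $\pi^*\cA_{\le 1}(B)$, reading off Definition~\ref{def_standard}. For $B=\Spec\kk$ we have $\pi^*\cA_{\le 1}(\Spec\kk)=\langle\cO_X\rangle$, and $\langle\cO_X\rangle$ is the final piece in each $B=\Spec\kk$ entry (both in the non-rich case $\langle\cO_X^{\perp},\langle\cO_X\rangle\rangle$ and in the Karpov--Nogin $3$-block decompositions). For $B=\P^1$ we use $\cO_X(h)\cong\pi^*\cO_{\P^1}(1)$, where $h$ is the $0$-class defining $\pi$, so $\pi^*\cA_{\le 1}(\P^1)=\langle\langle\cO_X(-h)\rangle,\langle\cO_X\rangle\rangle$, and these are exactly the last two pieces appearing in each $B=\P^1$ entry (matching $\pi^*\cO_{\P^1}(-1)=\cO_X(-h)$ with the specific named $0$-class $h$, $h_3$, or $h_5$). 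For $B$ a curve of genus $\ge 1$ we have $\cA_{\le 1}(B)=\langle\Db(B)\rangle$ and the final piece is $\pi^*\Db(B)$ by definition. In all cases $\cBstd(X/B)$ is a genuine SOD (Lemma~\ref{lem:3blockdecomp} in the birationally rich cases, the blow-up and derived contraction formulas otherwise) of the asserted form $\langle\bB_1,\ldots,\bB_m,\pi^*\cA_{\le 1}(B)\rangle$.

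Finally I would conclude. Since $\cA_{\le 1}(B)$ generates $\Db(B)$, the last component(s) $\pi^*\cA_{\le 1}(B)$ generate $\pi^*\Db(B)$ as a thick subcategory. Regarding $\cBstd(X/B)$ as the two-term SOD $\langle\langle\bB_1,\ldots,\bB_m\rangle,\pi^*\Db(B)\rangle$ and applying the same component formula yields $\langle\bB_1,\ldots,\bB_m\rangle=(\pi^*\Db(B))^{\perp}=\ker\pi_*$, which is the displayed identity; it also shows that $\cBstd(X/B)$ is obtained from $\langle\ker\pi_*,\pi^*\cA_{\le 1}(B)\rangle$ by splitting the kernel, i.e.\ condition~\ref{i_rk0}. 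I expect the only mild obstacle to be bookkeeping: verifying in the conic bundle cases that $\pi$ is genuinely a derived contraction despite singular fibres, and matching each pulled-back class $\pi^*\cO_{\P^1}(-1)=\cO_X(-h)$ with the precise $0$-class named in the corresponding entry of Definition~\ref{def_standard}.
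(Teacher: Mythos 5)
Your proof is correct and follows essentially the same route as the paper's own (very terse) proof: the paper likewise reads the form \eqref{eq_Kstd} directly off Definition~\ref{def_standard} and deduces $\ker\pi_*=\langle\bB_1,\ldots,\bB_m\rangle$ from Lemma~\ref{lemma_Bridgeland}, exactly the two steps you carry out, just with the case-by-case matching and the orthogonal-complement (Lemma~\ref{lemma_thelastone}-style) argument spelled out.
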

\begin{proof}
The first part of the statement follows immediately from Definition~\ref{def_standard}.
The last claim holds by Lemma \ref{lemma_Bridgeland}.
\end{proof}

We sometimes denote the SOD of $\ker \pi_*$ given in Lemma \ref{lemma_std_compbase} by $\cK^{\rm std}(\pi)$ so that
Definition~\ref{def_standard} takes the form
$$\cBstd(X/B)=\langle\cK^{\rm std}(\pi),\pi^*\cA_{\le  1}(B)\rangle.$$

\begin{lemma}\label{lem_std_compbaseAndIso}
The semi-orthogonal decompositions $\cBstd(X/B)$ satisfy condition \ref{i_rk1} in Theorem~\ref{thm_compatible_gives_atomictheory}: they are compatible with isomorphisms of Mori fibre spaces.
\end{lemma}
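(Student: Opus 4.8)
The plan is to show that if $\varphi\colon X_1\to X_2$ is an isomorphism of $G$-Mori fibre spaces $X_1/B_1$ and $X_2/B_2$, then $\varphi^*\cBstd(X_2/B_2)=\cBstd(X_1/B_1)$. By Definition~\ref{def_isoMFS} there is a $G$-equivariant commutative square relating $\pi_1,\pi_2$ and an isomorphism $B_1\xrightarrow{\sim}B_2$, so $\varphi$ respects the fibration structures. The key observation is that the standard decompositions in Definition~\ref{def_standard} are built entirely out of data intrinsic to the pair $(X,\pi)$: the subcategory $\ker\pi_*$, the pulled-back base decomposition $\pi^*\cA_{\le 1}(B)$, the $0$-classes $h_i$ contracted by $\pi$, the $1$-classes $H_j$, the negative section $s$, and the exceptional bundle $\cE$ on degree-$5$ surfaces. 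Since all of these are determined by the $G$-equivariant isomorphism class of the Mori fibre space, $\varphi^*$ must carry each piece of $\cBstd(X_2/B_2)$ to the corresponding piece of $\cBstd(X_1/B_1)$.

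First I would dispose of the case distinction. An isomorphism of $G$-Mori fibre spaces preserves degree $K_X^2$ and the base $B$ (up to isomorphism), hence preserves whether $X$ is $G$-birationally rich and preserves the type of base (point, $\P^1$, or higher-genus curve). Thus $X_1$ and $X_2$ fall into the same case of Definition~\ref{def_standard}, and it suffices to match the pieces within that case. In the non-birationally-rich cases the decomposition is literally $\langle\ker\pi_*,\pi^*\cA_{\le 1}(B)\rangle$ (or its refinement using $\pi^*\cO_{\P^1}(-1)$ for conic bundles), so I would only need that $\varphi^*\ker\pi_{2*}=\ker\pi_{1*}$, which follows from $\pi_1=\psi^{-1}\circ\pi_2\circ\varphi$ for the base isomorphism $\psi$, together with $\varphi^*\pi_2^*=\pi_1^*(\psi^{-1})^*$ and the fact that $\cA_{\le 1}$ is an isomorphism invariant of the base (Lemma~\ref{lem:AtomicDimOne}, Remark~\ref{rem_dimOneSubgroupComp}).

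For the birationally rich cases I would argue that an isomorphism of surfaces induces an isometry of Picard lattices commuting with the intersection form and sending $K_{X_2}$ to $K_{X_1}$, hence sending $r$-classes to $r$-classes (the defining condition $D^2+D\cdot K=-2$, $D^2=r$ is preserved). Therefore $\varphi^*$ permutes $0$-classes among themselves and $1$-classes among themselves, and when a fibration is specified it sends the distinguished fibre class $h$ (the one giving $\pi_2$) to the fibre class giving $\pi_1$, because $\pi_1=\psi^{-1}\circ\pi_2\circ\varphi$ forces $\varphi^*\cO(h^{(2)})\cong\cO(h^{(1)})$. Since each block of $\cS(X)$ consists of line bundles indexed precisely by the full sets of $0$-classes and $1$-classes (and $\cO$, and $\cE$, and the section $s$ on $\bF_n$), and $\cE$ is the unique exceptional bundle generating its block by Lemma~\ref{lem:3blockdecomp}, the functor $\varphi^*$ carries each block of $\cBstd(X_2/B_2)$ onto the matching block of $\cBstd(X_1/B_1)$.

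The main obstacle, and the only place requiring genuine care, is the degree-$5$ block containing the rank-$2$ bundle $\cE$: I must check that $\varphi^*\cE\cong\cE$ rather than merely landing in the same block. This follows because $\cE$ is characterised up to isomorphism as the exceptional bundle fitting into the universal extension \eqref{eq_vectorbundleE} for each $i$ with $h_i+H_i=-K_X$ (as established in the proof of Lemma~\ref{lem:3blockdecomp}, all such $\cE_i$ are isomorphic), and $\varphi^*$ preserves this extension datum because it preserves $K_X$, the classes $h_i,H_i$, and exceptionality. For the $\bF_n$ case I would additionally note that $\varphi^*$ sends the negative section $s$ of $\bF_n$ to the negative section, since the negative section is the unique irreducible curve of self-intersection $-n<0$ (for $n\ge 1$; for $n=0$ both rulings are symmetric and the two-element set $\{h_1,h_2\}$ is preserved as a block). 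Hence every piece matches and $\varphi^*\cBstd(X_2/B_2)=\cBstd(X_1/B_1)$, establishing condition~\ref{i_rk1}.
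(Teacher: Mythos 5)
Your proof is correct and rests on the same underlying idea as the paper's --- the blocks of $\cBstd$ are pinned down by divisor-class data that any isomorphism of $G$-Mori fibre spaces must preserve --- but the two arguments diverge at the one genuinely delicate point. The paper's proof only verifies that \emph{one} of the two kernel components in the birationally rich case is invariantly defined in terms of divisor classes (the block of $0$-classes), and then invokes Lemma~\ref{lemma_thelastone} to conclude that the complementary component is automatically preserved as well; it therefore never has to touch the bundle $\cE$ or the $1$-class block at all. You instead check every block directly, in particular proving $\varphi^*\cE\cong\cE$ by characterising $\cE$ as the unique exceptional bundle fitting into the extension \eqref{eq_vectorbundleE}. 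That argument is valid: since $\Ext^1(\cO(-h_i),\cO(-H_i))$ is one-dimensional, the non-split extension is unique up to isomorphism, and exceptionality of $\varphi^*\cE$ rules out the split one; combined with $\varphi^*$ being a lattice isometry taking $K_{X_2}$ to $K_{X_1}$ (so $r$-classes go to $r$-classes and the pairing $h_i+H_i=-K$ is respected), this forces $\varphi^*\cE\cong\cE$. So your route is more laborious but self-contained, while the paper's is slicker at the cost of being less explicit. One small imprecision on your side: for $\bF_0/\P^1$ the classes $s$ and $h$ occupy \emph{separate} pieces of $\cBstd(\bF_0/\P^1)$, not a common two-element block (you are thinking of the Karpov--Nogin decomposition of $\P^1\times\P^1$ over a point); but your argument survives, since $\varphi^*$ fixes $h$ because it commutes with the given fibrations, and then fixes $s$ as the unique class with $s^2=0$ and $s\cdot h=1$.
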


\begin{proof}
    Let 
    \[\begin{tikzcd}    X_1\ar[r,"\phi"]\ar[d,"\pi_1",swap] & X_2\ar[d,"\pi_2"]\\ 
    B_1\ar[r,"\psi"] &B_2
    \end{tikzcd}\]
    be an isomorphism between two $G$-Mori fibre spaces $X_1/B_1$ and $X_2/B_2$.
    Then $\psi^*\cA_{\le 1}(B_2)$ coincides with $\cA_{\le  1}(B_1)$, so it is enough to compare $\cK^{\rm std}(\pi_1)$ with $\cK^{\rm std}(\pi_2)$ in each case of Definition~\ref{def_standard}.
    If $X$ is not $G$-birationally rich, then $\cK^{\rm std}(\pi_i)=\ker \pi_i$ has one component and the claim follows directly.
    If $X$ is $G$-birationally rich, then $\cK^{\rm std}(\pi_1)$ and $\cK^{\rm std}(\pi_2)$ both split into two components. One checks that at least one of them is preserved by all isomorphisms $X_1\to X_2$ since it is defined invariantly in terms of certain divisor classes. Hence, the other component is preserved, too.
    So the pull-back  $\phi^*\cBstd(X_2/B_2)$ equals $\cBstd(X_1/B_1)$.
\end{proof}

\begin{proposition}
\label{prop_sarkisov}
The semi-orthogonal decompositions $\cBstd(X/B)$ satisfy condition \ref{i_rk2} in Theorem~\ref{thm_compatible_gives_atomictheory}. That is, let $\chi\colon X_1\dashrightarrow X_2$ be a $G$-Sarkisov link 
 \[\begin{tikzcd}        &Z\ar[dl,swap,"\sigma_1"]\ar[dr,"\sigma_2"]&\\        X_1\ar[d,swap,"\pi_1"]\ar[dashed,rr,"\chi"]&& X_2\ar[d,"\pi_2"]\\ 
        B_1\ar[dr,"\beta_1",swap]&& B_2\ar[dl,"\beta_2"]\\
        &B.&
    \end{tikzcd}\] 
    Then the following SOD-s of $\Db(Z)$ are mutation-equivalent:
    \begin{equation*}
        \langle \cK(\sigma_1), \sigma_1^*\cBstd(X_1/B_1)\rangle \cong
        \langle \cK(\sigma_2), \sigma_2^*\cBstd(X_2/B_2)\rangle.
    \end{equation*}
\end{proposition}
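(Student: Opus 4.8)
The plan is to verify the claimed mutation-equivalence case by case, following the numerical classification of Sarkisov links in Proposition~\ref{prop:SarkisovLinks}. In every case I would first write both SOD-s of $\Db(Z)$ as explicit (weakly exceptional) block collections built from line bundles and the sheaves $\cO_E(-1)$, using the blow-up formula (Proposition~\ref{prop_blowup2}) together with Definition~\ref{def_standard}, and expressing each object through its divisor class in $\Pic(Z)$ as set up in Section~\ref{section_divisorsonsurfaces}. The objective in each case is then to transform one collection into the other by the three elementary mutation moves at our disposal: the $(-1)$-curve mutations of Lemma~\ref{lemma_3mutations} and Lemma~\ref{lemma_OE}, the $0$-class (conic-bundle) mutation of Lemma~\ref{lemma_hmutation}, and the ``$-K$'' move sending the first block to the end by twisting with $\cO(-K_Z)$, as in \eqref{eq_SerreMutat1} and in the proof of Lemma~\ref{lem:StandardDecompbirrich}. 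Two preliminary reductions simplify the bookkeeping: since a type III link is the inverse of a type I link and mutation-equivalence is symmetric, it suffices to treat types I, II and IV; and by Remark~\ref{rem_mutationOfGinvariantSOD} every mutation preserves $G$-invariance, so I never need to track it by hand.

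The cleanest family is the one where the roof $Z$ itself admits a three-block decomposition $\cS(Z)$, i.e. $Z$ is a del Pezzo surface of degree in $\{5,6,8,9\}$ with $Z\not\cong\bF_1$. When both $X_1/B_1$ and $X_2/B_2$ are del Pezzo structures over $\Spec\kk$ (type II with both endpoints a minimal del Pezzo whose roof has one of these degrees, e.g. the symmetric triple $(8,5,6)$-type situations), Lemma~\ref{lem:StandardDecompbirrich} already shows that each of $\langle \cK(\sigma_i),\sigma_i^*\cS(X_i)\rangle$ is mutation-equivalent to a refinement of $\cS(Z)$, and the two refinements then agree up to mutation of orthogonal blocks. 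For the type~I links $(9,5)$ and $(8,6)$, which mix a del Pezzo $X_1/\Spec\kk$ with a conic-bundle $X_2=Z$ over $\P^1$, I would exploit that $\cBstd(X/\P^1)$ for a birationally rich conic bundle is, by inspection of Definition~\ref{def_standard}, already a refinement of the del Pezzo decomposition $\cS(X)$ (the middle $0$-class block is split off the chosen fibre class); this bridges the conic-bundle picture to the $\cS(Z)$ picture, after which Lemma~\ref{lem:StandardDecompbirrich} applies to the birational morphism $\sigma_1\colon Z\to X_1$.

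For the short decompositions attached to surfaces that are not birationally rich, the comparison is more structural than computational. By Lemma~\ref{lemma_std_compbase} both standard SOD-s have the shape $\langle \cK^{\mathrm{std}}(\pi_i),\pi_i^*\cA_{\le1}(B_i)\rangle$, and since $Z\to B$ factors through both sides the pulled-back base component is a common subcategory on $Z$; the comparison therefore reduces to matching the kernel/fibre parts. For the degree-preserving type~II links over a curve and the type~IV links of degree $\le 4$ this is a direct computation in terms of the two $0$-classes $h_1,h_2$ on $Z$, and for the type~I link $(4,3)$ (relating a minimal $dP_4$ to a degree-$3$ conic bundle) a single $(-1)$-curve mutation. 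The Bertini and Geiser type~II links in degrees $1$ and $2$ are handled using Lemma~\ref{lemma_fCY}: the involution's pullback on $\bA=\cO^\perp$ is a power of the Serre functor up to shift, and since the Serre functor is realized by mutations and $\bA$ is an indecomposable atom, $\sigma_1^*\cBstd$ and $\sigma_2^*\cBstd$ differ only by an autoequivalence that acts trivially on the decomposition.

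The main obstacle is the remaining family of links whose endpoints are birationally rich but whose roof $Z$ does \emph{not} admit a three-block decomposition: the roof $Z\cong\bF_1$ in the type~I link $(9,8)$, and the type~II roofs of degree $7,4,3$ appearing in triples such as $(9,7,8)$, $(9,4,5)$, $(9,3,9)$, $(8,4,8)$, $(8,3,5)$, $(6,4,6)$, $(6,3,6)$, together with the higher Geiser/Bertini cases. Here one cannot invoke a standard decomposition $\cS(Z)$ for the intermediate surface, so the two induced full exceptional collections on $Z$ must be matched directly by a chosen sequence of the elementary mutations above. The delicate points are selecting the order of the $0$-class and $(-1)$-curve mutations so that all intermediate blocks stay completely orthogonal (so that the block structure is preserved), and tracking the divisor classes $h_i,H_i,E_{ij}$ through each step; this is precisely the lengthy case analysis the introduction flags as the longest part of the proof.
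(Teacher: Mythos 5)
Your overall strategy is the same as the paper's: a case-by-case check over the classification in Proposition~\ref{prop:SarkisovLinks}, using the elementary mutations of Lemmas~\ref{lemma_3mutations}, \ref{lemma_OE}, \ref{lemma_hmutation} and the $-K$ move, invoking Lemma~\ref{lem:StandardDecompbirrich} when the roof admits a three-block decomposition, and Lemma~\ref{lemma_fCY} for the Bertini/Geiser links; your list of hard cases also correctly isolates the type~II roofs over a point of degree $7,4,3$ and the roof $\bF_1$. However, there is a genuine gap in your treatment of type~II links \emph{over a curve}. You file all of these under the non--birationally-rich discussion and claim they reduce to ``a direct computation in terms of the two $0$-classes $h_1,h_2$ on $Z$''. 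That description does not fit: for an elementary transformation $X_1 \leftarrow Z \to X_2$ over $C$ the objects to be matched are the exceptional divisors $E_i$ of $\sigma_1$ and the strict transforms $E_i'$ of their fibres, and already the identification of $\ker\sigma_{1*}$ with $\ker\sigma_{2*}$ requires a mutation through the common subcategory $g^*\Db(C)$, namely $L_{g^*\Db(C)}(\cO_{E_i})\cong\cO_{E_i'}(-1)[1]$, coming from the exact sequence $0\to\cO_{E_i'}(-1)\to\cO_{E_i\cup E_i'}\to\cO_{E_i}\to0$ together with $\cO_{E_i\cup E_i'}\cong g^*\cO_{pt}$. More seriously, degree-preserving links over a curve also occur between \emph{birationally rich} conic bundles ($K^2\in\{5,6,8\}$), a family that falls entirely outside both your ``not birationally rich'' discussion and your list of main obstacles. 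There $\cBstd(X_i/\P^1)$ carries the extra blocks $\langle\cE\rangle$, $\{\cO(-h_j)\}$, $\{\cO(-H_j)\}$ of Definition~\ref{def_standard}, and after the kernel-matching mutations one must still prove that the residual collections (in degree $5$: $\langle\cE^+\rangle,\{\cO(-h_j-E')\}_{j\le 4}$ versus $\langle\cE'\rangle,\{\cO(-h_j')\}_{j\le 4}$) are mutation-equivalent; these depend on the arbitrary number $n$ of blown-up points, and no finite recipe built from your elementary moves produces the required sequence.

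The paper closes exactly this gap by a structural argument absent from your plan: it shows that both residual collections are strong exceptional with endomorphism algebra the path algebra of the affine quiver $\tD_4$ (degree $5$), respectively $\tA_3$ (degree $6$), so each generates a category equivalent to $\Db(\modd \kk\tD_4)$; then, by the Miyachi--Yekutieli description of autoequivalences of derived categories of such quiver algebras, the two collections differ by a power of the Serre functor up to shift, and Serre-functor powers of components are realized by iterated mutations via~\eqref{eq_SerreMutat1}. (The degree~$8$ case is easier, using that the relevant two-object piece is equivalent to $\Db(\P^1)$.) Unless you supply an argument of this kind, or an explicit mutation sequence valid for all $n$, your proof is incomplete precisely for these links. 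A smaller inaccuracy: for type~IV links of degree $1$ and $2$ a ``direct computation'' also does not suffice; the paper again invokes Lemma~\ref{lemma_fCY} there, a mechanism you correctly anticipate but attribute only to the type~II Bertini/Geiser case.
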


To prove Proposition~\ref{prop_sarkisov}, we go through the list of Sarkisov links and find mutations for each case. While some cases only require mutations as in Section~\ref{sec_derivedcat_coh}, some others are more involved. We have moved the proof to Appendix~\ref{ap_proof_prop} because it is quite lengthy.

    In fact, our proof of Proposition~\ref{prop_sarkisov} shows a slightly stronger statement. For a rational derived contraction $\beta$ in dimension $\le 1$,  denote by $\cK(\beta)$  the trivial  SOD of $\ker \beta_*$. Then  the following SOD-s
    of $\ker f_*$, where $f=\beta_1\pi_1\sigma_1=\beta_2\pi_2\sigma_2\colon Z\to B$, are mutation-equivalent:
   \begin{equation*}
        \langle \cK(\sigma_1), \sigma_1^*\cK^{\rm std}(\pi_1), \sigma_1^*\pi_1^*\cK(\beta_1)\rangle \cong
        \langle \cK(\sigma_2), \sigma_2^*\cK^{\rm std}(\pi_2), \sigma_2^*\pi_2^*\cK(\beta_2)\rangle.
    \end{equation*}

\begin{theorem}\label{thm_construction}
    Let $\kk$ be an  algebraically closed field and $G$ be a group.
    For any $G$-Mori fibre space $X/B$ over $\kk$ of dimension $2$, let $\cBstd(X/B)$ be as in Definition~\ref{def_standard}.
    Consider the following assignment:\\
    for $Y\in G-\cVar_{\kk, \le  2}^{\MMP}$  
    set $\cA(Y):=\cA_{\le 1}(Y)$ if $\dim Y\le 1$, and if $\dim Y=2$ let $f\colon Y\to X$ be any $G$-birational morphism onto a $G$-surface $X$ with nef canonical divisor, or onto a $G$-Mori fibre space $X/B$, and set  \[\cA(Y):=\begin{cases}\cE(f\colon Y\to X,\Db(X)) & \text{if $K_X$ nef,}\\
        \cE(f\colon Y\to X,\cBstd(X/B)) & \text{if $X/B$ is a $G$-Mori fibre space}.
    \end{cases}\]
    Then the semi-orthogonal decompositions $\cA(Y)$ of $\Db(Y)$  do not depend on the choice of $f$ (up to mutations) and define an atomic theory for $G-\cVar_{\kk, \le 2}^{\MMP}$.
\end{theorem}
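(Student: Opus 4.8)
The plan is to reduce the statement to the machinery already assembled, namely Theorem~\ref{thm_compatible_gives_atomictheory} and Proposition~\ref{prop_compatible_gives_atomictheory}, together with the disjoint decomposition $G-\cVar_{\kk,\le 2}^{\MMP}=G-\cVar_{\kk,2}^{\kappa\ge 0,\MMP}\sqcup G-\cVar_{\kk,\le 2}^{\kappa<0,\MMP}$. First I would verify that the standard decompositions $\cBstd(X/B)$ of Definition~\ref{def_standard} satisfy the three hypotheses of Theorem~\ref{thm_compatible_gives_atomictheory}: condition \ref{i_rk0} (compatibility with the atomic decomposition on the base) is Lemma~\ref{lemma_std_compbase}, condition \ref{i_rk1} (compatibility with isomorphisms of Mori fibre spaces) is Lemma~\ref{lem_std_compbaseAndIso}, and condition \ref{i_rk2} (compatibility with Sarkisov links) is Proposition~\ref{prop_sarkisov}. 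With all three verified, Theorem~\ref{thm_compatible_gives_atomictheory}\eqref{i_atomictheory} immediately produces a $G$-atomic theory on the subdomain $G-\cVar_{\kk,\le 2}^{\kappa<0,\MMP}$ of surfaces birational to a Mori fibre space (together with all varieties of dimension $\le 1$), and the independence of $\cA(Y)$ of the chosen contraction $f$, up to mutation, is precisely part \eqref{i_Awelldef} of that theorem.

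Second, I would treat the complementary subdomain $G-\cVar_{\kk,2}^{\kappa\ge 0,\MMP}$ of surfaces of non-negative Kodaira dimension, where Proposition~\ref{prop_compatible_gives_atomictheory} applies verbatim: for any such $Y$ there is a \emph{unique} birational contraction $f\colon Y\to X$ to a surface with $K_X$ nef, one declares $\Db(X)$ to be a single atom, and sets $\cA(Y):=\cE(f\colon Y\to X;\Db(X))$, which is well-defined up to mutation by Lemma~\ref{lemma_ker}. The $G$-equivariant MMP for surfaces guarantees that every dimension-$2$ object of the full domain lands in exactly one of the two subdomains, and that the corresponding target ($K$-nef surface or $G$-Mori fibre space) exists; hence the case distinction in the definition of $\cA(Y)$ is unambiguous and exhaustive.

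Finally I would patch the two theories into a single atomic theory on $G-\cVar_{\kk,\le 2}^{\MMP}$. The crucial structural point is that the two subdomains are disjoint and carry \emph{no} rational derived contractions between them, so every morphism of $G-\cVar_{\kk,\le 2}^{\MMP}$ lies entirely within one subdomain. Consequently axiom \ref{ax:A1} (atomic decompositions form one mutation orbit) is a per-object property already checked on each piece, and axiom \ref{ax:A2} (compatibility along $f\colon Y\to X$) only ever has to be checked for $f$ inside a single subdomain, where it holds by the two constructions above. The varieties of dimension $\le 1$ belong to $G-\cVar_{\kk,\le 2}^{\kappa<0,\MMP}$ and receive the decompositions $\cA_{\le 1}$ of Lemma~\ref{lem:AtomicDimOne}, so no conflict arises. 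This yields the claimed atomic theory.

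The genuine mathematical content, and hence the main obstacle, is entirely concentrated in Proposition~\ref{prop_sarkisov}: establishing mutation-equivalence of the two induced decompositions on the roof $Z$ of every $G$-Sarkisov link requires a case-by-case traversal of the numerical classification in Proposition~\ref{prop:SarkisovLinks}, and is for that reason deferred to the appendix. Granting that proposition, the argument above is pure bookkeeping, combining the three compatibility checks with the abstract extension results.
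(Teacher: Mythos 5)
Your proposal is correct and follows essentially the same route as the paper's own proof: the paper likewise splits $G-\cVar_{\kk,\le 2}^{\MMP}$ into the disjoint subdomains $G-\cVar_{\kk,2}^{\kappa\ge 0,\MMP}$ and $G-\cVar_{\kk,\le 2}^{\kappa<0,\MMP}$, invokes Theorem~\ref{thm_compatible_gives_atomictheory} on the second subdomain after verifying its three hypotheses via Lemma~\ref{lemma_std_compbase}, Lemma~\ref{lem_std_compbaseAndIso}, and Proposition~\ref{prop_sarkisov}, and applies Proposition~\ref{prop_compatible_gives_atomictheory} on the first. Your additional remarks (independence of $f$ coming from part~\eqref{i_Awelldef}, the absence of morphisms between the subdomains making the patching trivial, and the mathematical weight resting on Proposition~\ref{prop_sarkisov}) are all accurate and consistent with the paper.
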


\begin{proof}
Recall that the atomic domain
$G-\cVar_{\kk, \le 2}^{\MMP}$ is the disjoint union of its subdomains $G-\cVar_{\kk, 2}^{\kappa\ge 0, \MMP}$ (surfaces birational to surfaces with nef canonical class) and $G-\cVar_{\kk, \le 2}^{\kappa<0, \MMP}$ (surfaces birational to a Mori fibre space, all curves and points). Therefore it suffices to check the claims for these subdomains.

By Theorem~\ref{thm_compatible_gives_atomictheory}, SOD-s $\cA(Y)$ for $Y\in G-\cVar_{\kk, \le 2}^{\kappa<0}$ form an atomic theory on $G-\cVar_{\kk, \le 2}^{\kappa<0, \MMP}$: indeed
the three conditions of Theorem~\ref{thm_compatible_gives_atomictheory} are satisfied by the SOD-s $\cBstd$ by Lemma~\ref{lemma_std_compbase}, Lemma~\ref{lem_std_compbaseAndIso}, and Proposition~\ref{prop_sarkisov} respectively. Also, by Proposition~\ref{prop_compatible_gives_atomictheory}, SOD-s $\cA(Y)$ for $Y\in G-\cVar_{\kk,  2}^{\kappa\ge 0}$ form an atomic theory on $G-\cVar_{\kk, 2}^{\kappa\ge 0, \MMP}$. 
\end{proof}

\begin{proposition}\label{prop_subgroupcompstd}
    Let $G$ be a group and $H\subset G$ a subgroup. The $G$-atomic theory $\cA_G$ for $G-\cVar_{\kk,\le 2}^{\MMP}$ and the $H$-atomic theory $\cA_H$ for $H-\cVar_{\kk,\le 2}^{\MMP}$ constructed in Theorem~\ref{thm_construction} are subgroup compatible.
\end{proposition}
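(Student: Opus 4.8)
The plan is to verify the two conditions of Definition~\ref{def_subgroupcompatible}. Condition (a) is essentially formal: restricting the $G$-action to $H$ preserves the finite-orbit condition on $N_1(X)$, since each $H$-orbit lies inside a finite $G$-orbit; and by Proposition~\ref{prop_elementary} the property of being a rational derived contraction is characterised by geometric rationality of the generic fibre, which makes no reference to the acting group. Hence a $G$-equivariant rational derived contraction is in particular an $H$-equivariant one, so the restriction functor sends $G-\cVar^{\MMP}_{\kk,\le 2}$ into $H-\cVar^{\MMP}_{\kk,\le 2}$.

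For condition (b) I would fix $X$ and choose a $G$-birational contraction $f\colon X\to X_G$ to a $G$-Mori fibre space $X_G/B_G$ (or a $G$-surface with $K$ nef), so that $\cA_G(X)=\cE(f;\cBstd(X_G/B_G))$. Running the $H$-MMP on $X_G$ produces an $H$-birational contraction $g\colon X_G\to X_H$ to an $H$-Mori fibre space $X_H/B_H$, and then $gf$ is an $H$-birational contraction, so by Theorem~\ref{thm_construction} and \eqref{eq_Acompos} one has $\cA_H(X)=\cE(gf;\cBstd(X_H/B_H))\cong\cE(f;\cE(g;\cBstd(X_H/B_H)))$. Because $f^*$ is fully faithful and maps blocks to blocks preserving order, and because the $H$-orbit decomposition of $\ker f_*$ refines the $G$-orbit decomposition (each $G$-orbit block of the mutually orthogonal objects $\cO_{E_i}(-1)$ splits into completely orthogonal $H$-orbit sub-blocks), the operation $\cE(f;-)$ carries refinements to refinements. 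This reduces the whole statement to the single claim: for a $G$-Mori fibre space $X_G/B_G$, the SOD $\cE(g;\cBstd(X_H/B_H))$ refines $\cBstd(X_G/B_G)$.

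I would prove this reduced claim along the case division of Definition~\ref{def_standard}. The nef case is trivial, the nef minimal model being group-independent. When $X_G$ is not $G$-birationally rich and $B_G=\Spec\kk$, every standard decomposition ends in the block $\langle\cO\rangle$, which is preserved by $g^*$, so $\cE(g;\cBstd(X_H/B_H))$ ends in $\langle\cO\rangle$ and Lemma~\ref{lemma_thelastone} forces the remaining blocks to generate $\cO^{\perp}$; hence it refines $\langle\cO^{\perp},\langle\cO\rangle\rangle$. When $B_G$ is a curve I would take $g$ to be the relative $H$-MMP over $B_G$, keeping the fibration, so that $X_H/B_G$ is again a conic bundle and $\cBstd(X_H/B_G)$ ends in the same final blocks $\langle\cO(-\bar h)\rangle,\langle\cO\rangle$ (resp.\ $\bar\pi^*\Db(B)$ for higher genus); again Lemma~\ref{lemma_thelastone} identifies the complement with $\ker\pi_*$. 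When $X_G$ is $G$-birationally rich, $X_H$ is again birationally rich, and from the model list of Corollary~\ref{cor:bir-rich-models} it is either a del Pezzo surface of degree $\ge 5$ (possibly carrying a conic-bundle structure) or some $\bF_n$. In the del Pezzo case $\cBstd(X_H/B_H)$ refines the Karpov--Nogin decomposition $\cS(X_H)$ directly from Definition~\ref{def_standard} (the conic-bundle version merely splits one block of $\cS$), and then $\cE(g;\cS(X_H))$ refines $\cS(X_G)=\cBstd(X_G/B_G)$ by Lemma~\ref{lem:StandardDecompbirrich}; transitivity of refinement gives the claim. In the remaining case $X_G=\bF_n$ is already $H$-minimal over $\P^1$, so $g=\id$ and $\cE(g;\cBstd(X_H/B_H))=\cBstd(X_G/\P^1)$, i.e.\ equality.

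The hard part is precisely the birationally rich case, where $X_G$ need not be $H$-minimal and the comparison genuinely invokes the compatibility of the three-block Karpov--Nogin decompositions under blow-up, Lemma~\ref{lem:StandardDecompbirrich}. The bookkeeping I expect to require care is checking that the $H$-minimal model $X_H$ reached from a birationally rich del Pezzo $X_G$ stays a del Pezzo of degree in $\{5,6,8,9\}$: this follows because contracting a $(-1)$-curve on a del Pezzo surface again yields a del Pezzo surface, so the surfaces $\bF_n$ with $n\ge 2$ (which are not del Pezzo) never arise as $X_H$ out of a del Pezzo $X_G$, and this is exactly the hypothesis under which Lemma~\ref{lem:StandardDecompbirrich} applies.
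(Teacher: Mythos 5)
Your overall strategy coincides with the paper's: reduce via the kernel-refinement property to comparing standard decompositions on Mori fibre spaces, match the base components using Lemma~\ref{lemma_thelastone} in the non-rich cases, and invoke Lemma~\ref{lem:StandardDecompbirrich} in the birationally rich case. However, the ``bookkeeping'' you flag at the end is precisely where the write-up has two genuine holes. First, your claim that excluding $\bF_n$ with $n\ge 2$ ``is exactly the hypothesis under which Lemma~\ref{lem:StandardDecompbirrich} applies'' is false: that lemma also excludes $\bF_1$, which \emph{is} a del Pezzo surface of degree $8$ and \emph{can} arise as the endpoint of an $H$-MMP run, namely as the $H$-Mori fibre space $\bF_1/\P^1$ (blow down two of the three exceptional curves of a $G$-minimal sextic del Pezzo and then choose the fibration rather than the contraction to $\P^2$). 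For $\bF_1$ there is no Karpov--Nogin decomposition $\cS(\bF_1)$ at all: in $\cBstd(\bF_1/\P^1)=\langle\cO(-s-h),\cO(-s),\cO(-h),\cO\rangle$ one has $\Hom^1(\cO(-s),\cO(-h))\cong\rH^1(\bF_1,\cO(s-h))\ne 0$, so the middle terms cannot be merged into a block, and your chain of refinements has nothing to refine. Second, your case division (which you announce follows Definition~\ref{def_standard}) omits the case where $X_G$ itself is a birationally rich $G$-Mori \emph{conic bundle} of degree $6$ or $5$, i.e.\ $\cBstd(X_6/\P^1)$ or $\cBstd(X_5/\P^1)$: these standard decompositions are strictly finer than $\langle\ker\pi_*,\pi^*\cA_{\le 1}(\P^1)\rangle$, so your curve-base argument (which only produces a refinement of the latter, via Lemma~\ref{lemma_thelastone}) is insufficient there, while your del Pezzo argument assumes $B_G=\Spec\kk$ so that $\cBstd(X_G/B_G)=\cS(X_G)$.

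Both holes are closed by one and the same adjustment, which is how the paper argues: reduce to \emph{minimal} surfaces rather than to arbitrary Mori fibre space endpoints. If you insist that the $G$-model $X_G$ is $G$-minimal, the degree $6$ and $5$ conic bundles disappear from the target list, since they are never $G$-minimal (Remark~\ref{rem:non-minimal-C}, Proposition~\ref{prop_list}); and if you insist that the $H$-MMP endpoint $X_H$ is an $H$-minimal \emph{surface}, not merely an $H$-Mori fibre space, then $\bF_1$ is ruled out (it is never minimal), forcing the further contraction $\bF_1\to\P^2$, after which Lemma~\ref{lem:StandardDecompbirrich} applies. You have the freedom to make both choices, because Theorem~\ref{thm_construction} makes $\cA_G(X)$ and $\cA_H(X)$ independent of them up to mutation; but as written your argument does not exercise that freedom, and the two unhandled cases are exactly the ones that break it.
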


\begin{proof}
We have already observed in Remark~\ref{rem_dimOneSubgroupComp} that the $G$- and $H$-atomic theories in dimension $\le  1$ are subgroup compatible. Let $X$ be a $G$-surface. We need to show that the $H$-atomic decomposition $\cA_H(X)$ is (mutation-equivalent) to a refinement of the $G$-atomic decomposition $\cA_G(X)$.

We first reduce to the case when $X$ is $G$-minimal:
    Let $f\colon X\to Z$ be a birational $G$-morphism onto a $G$-minimal surface $Z$. We write $\cK_H(f)$ and $\cK_G(f)$ for the SOD of  $\ker f_*$ from Definition \ref{def_AgivenBandf} for the respective action. Note that $\cK_H(f)$ is (mutation-equivalent to) a refinement of $\cK_G(f)$.
    Indeed, 
    this holds by Lemma~\ref{lemma_ker}, because a blow-up of a finite $G$-orbit is a composition of blow-ups of finite $H$-orbits.
    We have  $\cA_H(X)=\langle \cK_H(f),\cA_H(Z)\rangle$ and similarly for~$G$. So if $\cA_H(Z)$ is a refinement of $\cA_G(Z)$, then $\cA_H(X)$ refines $\cA_G(X)$.

    We will now check the condition for a $G$-minimal surface $X$.
    If $K_X$ is nef, $\cA_G(X)=\cA_H(X)=\Db(X)$ and there is nothing to verify.
    It remains to consider the case when $X$ is $G$-minimal with a $G$-Mori fibre space structure $\pi\colon X\to B$.
    By running  $H$-MMP for $X$ over $B$, one can find an $H$-equivariant birational morphism $f\colon X\to Y$ over $B$ such that $Y/B'$ is an $H$-Mori fibre space and $B'/B$ is an isomorphism or a map $\P^1\to \Spec\kk$. There is a commutative diagram
    \[\begin{tikzcd}
        X\ar[d,"\pi",swap]\ar[r,"f"] &Y\ar[d,"\pi'"] \\
         B&B'.\ar[l, "\beta", swap] 
    \end{tikzcd}\]    
     We will check that 
     \begin{align*}\cA_H(X)=\langle \cK_H(f), f^*\cB_H^{\rm std}(Y/B')\rangle  &=
     \langle \cK_H(f), f^*{\cK^{\mathrm{std}}_H}(\pi'), f^*(\pi')^*\cA_{\le 1}(B')\rangle
     \end{align*}
     is (up to mutations) a refinement of 
     $$\cA_G(X)=\cB_G^{\rm std}(X/B)=\langle {\cK^{\mathrm{std}}_G}(\pi), \pi^*\cA_{\le 1}(B)\rangle.$$

    If $X$ is not $G$-birationally rich, then ${\cK^{\mathrm{std}}_G}(\pi)=\ker \pi_*$ consists of one component, and the result follows since $\beta^*\cA_{\le 1}(B)$ is a right sub-SOD in $\cA_{\le 1}(B')$ (by the axioms of atomic theory in dimension $\le 1$).

    Assume now that $X$ is $G$-birationally rich. 
    We consider two cases:  either
    \begin{enumerate*}
        \item\label{PfsubgroupcompB} $B \cong \P^1$ and then $B=B'$, or
        \item\label{PfsubgroupcompK} $B=\Spec\kk$.        
    \end{enumerate*}
    Since $X$ is $G$-minimal, case \eqref{PfsubgroupcompB} happens only if $X$ is a Hirzebruch surface $\bF_n$ (by Proposition~\ref{prop_list} and Remark~\ref{rem:non-minimal-C}). So $f$ is an isomorphism over $\P^1$ and there is nothing to prove.
    In case \eqref{PfsubgroupcompK}, we have that $X$ is either a del Pezzo surface of degree $\{5,6,9\}$, or $X\cong \P^1\times\P^1$ (again by Proposition~\ref{prop_list}). We can assume that $Y$ is $H$-minimal and so $Y$ is also such  surface. We have that $\cB_G^{\rm std}(X/B)=\cS(X)$ is the $3$-block decomposition from Lemma~\ref{lem:3blockdecomp}.  Note that in almost all cases one has $B'=\Spec\kk$ and so $\cB_H^{\rm std}(Y/B')=\cS(Y)$.
    The only exception is $Y\cong \P^1\times\P^1$, where it is also possible that $B'\cong \P^1$ and then $\cB_H^{\rm std}(Y/B') = \cB_H^{\rm std}(\P^1 \times \P^1/\P^1)$, which has $4$ pieces,  refines $\cS(Y) = \cS(\P^1 \times \P^1)$, which has $3$ pieces.
    The result now follows:
     \begin{align*}
     \cA_H(X)=  \langle \cK_H(f),\, & f^*\cB_H^{\rm std}(Y/B')\rangle,  &&\text{which refines} \\
       \langle \ker(f_*),\, & f^*\cB_H^{\rm std}(Y/B')\rangle,  &&\text{which refines}\\
      \langle \ker(f_*),\, & f^*\cS(Y)\rangle, &&\text{which refines}\quad
      \cS(X)=\cB_G^{\rm std}(X/B)=\cA_G(X)    
     \end{align*}
    by Lemma~\ref{lem:StandardDecompbirrich} up to mutations.
\end{proof}

\subsection{Atomic theory for surfaces over non-closed fields}
\label{section_descent_for_AT}

We explain here, using descent techniques from Section~\ref{section_descent_for_SODs}, that for a perfect field $\kk$, building atomic theory for varieties over $\kk$ is the same as building Galois-equivariant atomic theory for varieties over $\bar\kk$. And the latter was constructed in Section~\ref{sec_construction}, so atomic theory for surfaces over any perfect field follows.

\begin{proposition}
\label{prop_ATdescent}
We assume that $\LL/\kk$ is a Galois field extension and $G=\Gal(\LL/\kk)$.
Let $H$ be a group and $\cV_{\kk}$ be an $H$-atomic domain over $\kk$, where we assume that $H$ acts on varieties $\kk$-linearly.  For any $X\in \cV_{\kk}$, there is an action of $H\times G$ on $X_{\LL}=X\times_\kk \Spec\LL$, coming from commuting $\kk$-linear actions of $H$  on $X$ and of $G$ on $\Spec\LL$. Assume $\cV_{\LL}$ is an $H\times G$-atomic domain of  varieties over $\LL$ such that scalar extension $X\mapsto X_\LL, f\mapsto f_\LL$ is a functor from $\cV_{\kk}$ to $\cV_\LL$. Then  $H\times G$-equivariant  atomic theory for $\cV_{\LL}$ yields $H$-equivariant atomic theory for $\cV_\kk$.    
\end{proposition}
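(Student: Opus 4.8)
The plan is to push the given $H\times G$-atomic theory over $\LL$ down to $\kk$ by Galois descent of semi-orthogonal decompositions (Proposition~\ref{prop_SODdescent}), and then to check the axioms \ref{ax:A1} and \ref{ax:A2} of Definition~\ref{def_GSAT} for the residual $H$-action. For each $X\in\cV_\kk$ I would fix the natural morphism $p_X\colon X_\LL\to X$: the $\kk$-linear $H$-action on $X$ and its scalar extension to $X_\LL$ make $p_X$ an $H$-equivariant morphism, commuting with the Galois $G$-action on $X_\LL$ used in Section~\ref{section_descent_for_SODs}; in particular $p_X^*\rho_h=\rho_h p_X^*$. Given an $H\times G$-atomic SOD $\langle\tilde{\bA}_1,\dots,\tilde{\bA}_m\rangle$ of $\Db(X_\LL)$, each piece is $G$-invariant, so Proposition~\ref{prop_SODdescent} lets me descend it to the SOD $\langle(p_X)_*\tilde{\bA}_1,\dots,(p_X)_*\tilde{\bA}_m\rangle$ of $\Db(X)$, and I declare these descents to be the atomic decompositions of $\Db(X)$. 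Each descended piece is $H$-invariant, since $\rho_h\big((p_X)_*\tilde{\bA}_j\big)=(p_X)_*\big(\rho_h\tilde{\bA}_j\big)=(p_X)_*\tilde{\bA}_j$; hence the descended SOD is $H$-invariant, as required.

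For \ref{ax:A1} I would first record that the bijection of Proposition~\ref{prop_SODdescent} intertwines mutations: for an SOD $\langle\bA_1,\dots,\bA_n\rangle$ of $\Db(X)$, the two SODs $p_X^*(R_i\langle\bA_\bullet\rangle)$ and $R_i(p_X^*\langle\bA_\bullet\rangle)$ agree in all but one component, so Lemma~\ref{lemma_thelastone} forces them to coincide, and likewise for left mutations. Consequently $p_X^*$ and its inverse $(p_X)_*$ are mutation-equivariant. The $H\times G$-atomic SODs over $\LL$ form a single mutation orbit, every member of which is again $H\times G$-invariant by Remark~\ref{rem_mutationOfGinvariantSOD} and so descends; therefore their descents form a single mutation orbit of $H$-invariant SODs over $\kk$, which is precisely \ref{ax:A1}.

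For \ref{ax:A2}, given a morphism $f\colon X\to Y$ in $\cV_\kk$, scalar extension produces $f_\LL=f\times\id_{\Spec\LL}$ in $\cV_\LL$, and the square $p_Y\circ f_\LL=f\circ p_X$ gives $p_X^*f^*=f_\LL^*p_Y^*$. Applying \ref{ax:A2} of the $\LL$-theory to $f_\LL$ yields $H\times G$-atomic SODs $\Db(X_\LL)=\langle\tilde{\bA}_1,\dots,\tilde{\bA}_m\rangle$ and $\Db(Y_\LL)=\langle\tilde{\bB}_{n+1},\dots,\tilde{\bB}_m\rangle$ with $f_\LL^*\tilde{\bB}_j=\tilde{\bA}_j$. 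Writing $\bA_j=(p_X)_*\tilde{\bA}_j$ and $\bB_j=(p_Y)_*\tilde{\bB}_j$, so that $\tilde{\bA}_j=p_X^*\bA_j$ and $\tilde{\bB}_j=p_Y^*\bB_j$, I then compute $p_X^*(f^*\bB_j)=f_\LL^*p_Y^*\bB_j=f_\LL^*\tilde{\bB}_j=\tilde{\bA}_j=p_X^*\bA_j$; since $p_X^*$ is injective on thick subcategories by Lemma~\ref{lemma_subcatdescent}, I conclude $f^*\bB_j=\bA_j$, which is \ref{ax:A2}.

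The routine verifications I would suppress are that $f_\LL$ remains a derived contraction lying in $\cV_\LL$ (part of the hypothesis) and the bookkeeping of the $H$-action on the descent functors. The step demanding genuine care is the interplay in \ref{ax:A2} of the base-change identity $p_X^*f^*=f_\LL^*p_Y^*$ with the injectivity of $p_X^*$ on thick subcategories, where the descent machinery of Section~\ref{section_descent_for_SODs} does the real work and must be run for possibly infinite $\LL/\kk$ (e.g.\ $\LL=\bar\kk$, $G=\Gal(\bar\kk/\kk)$).
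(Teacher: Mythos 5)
Your proof is correct and takes essentially the same approach as the paper's: descend atomic SODs through the bijection of Proposition~\ref{prop_SODdescent}, verify \ref{ax:A1} via mutation-compatibility of that bijection, and verify \ref{ax:A2} via the base-change identity $p_X^*f^*=f_\LL^*p_Y^*$ combined with injectivity of $p^*$ on thick subcategories from Lemma~\ref{lemma_subcatdescent}. The one welcome addition is that you actually prove the mutation-compatibility (via Lemma~\ref{lemma_thelastone}), a step the paper merely asserts.
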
 
\begin{proof}
Let $X$ be in $\cV_\kk$. Recall the bijection between SOD-s of  $\Db(X)$ and $G$-invariant SOD-s of $\Db(X_\LL)$ from Proposition~\ref{prop_SODdescent}. Note that this bijection sends $H$-invariant SOD-s to $H\times G$-invariant SOD-s.
For any $X$ in $\cV_\kk$, let us say that an $H$-invariant SOD $\Db(X_{\kk})=\langle\bB_1,\ldots,\bB_n\rangle$
is atomic if and only if  the $H\times G$-invariant SOD $\Db(X_{\LL})=\langle p^*\bB_1,\ldots,p^*\bB_n\rangle$ is an atomic SOD for $X_\LL$.
We check that such SOD-s form an $H$-equivariant atomic theory on $\cV_\kk$.

For condition~\ref{ax:A1} in Definition~\ref{def_GSAT}, note that the correspondence in Proposition~\ref{prop_SODdescent} respects mutations: if $\langle\bB_1,\ldots,\bB_n\rangle$ is a mutation of $\langle\bB'_1,\ldots,\bB'_n\rangle$ then $\langle p^*\bB_1,\ldots,p^*\bB_n\rangle$ is a mutation of $\langle p^*\bB'_1,\ldots,p^*\bB'_n\rangle$.
For condition~\ref{ax:A2} in Definition~\ref{def_GSAT}, 
let $f\colon Y\to X$ be a morphism in $\cV_\kk$, then $f_\LL\colon Y_\LL\to X_\LL$ is a morphism in $\cV_\LL$. Choose 
atomic SOD-s 
$\Db(Y_\LL)=\langle \bA_1,\ldots,\bA_n,\bA_{n+1},\ldots,\bA_m\rangle$, $\Db(X_\LL)=\langle \bB_{n+1},\ldots,\bB_m\rangle$, 
such that $(f_\LL)^*\bB_j=\bA_j$ for $j=n+1,\ldots,m$, and consider SOD-s
$\Db(Y)=\langle p_*\bA_1,\ldots,p_*\bA_n,p_*\bA_{n+1},\ldots,p_*\bA_m\rangle$, $\Db(X)=\langle p_*\bB_{n+1},\ldots,p_*\bB_m\rangle$. Then for all  $j=n+1,\ldots,m$ one has
$$p^*(f^*p_*\bB_j)=(f_\LL)^*p^*p_*\bB_j=f^*\bB_j=\bA_j=p^*(p_*\bA_j),$$
hence $f^*p_*\bB_j=p_*\bA_j$ as required.
\end{proof}

Now we are ready for
\begin{proof}[Proof of Theorem~\ref{thm:atoms-surfaces2}]
    We apply Proposition~\ref{prop_ATdescent} to $\LL=\bar\kk$, $G=\Gal(\bar\kk/\kk)$. 
    
    Take $\cV_\kk$ be the atomic domain of $H$-varieties over $\kk$ of dimension $\le 2$,  where the $H$-action is  $\kk$-linear,  and rational derived contractions,  and take $\cV_{\bar\kk}=(H\times G)-\cVar_{\bar\kk, \le 2}^{\MMP}$.
    For any variety $X$ over $\kk$, the action of $G$ on $N_1(X_{\bar\kk})$ has finite orbits (since any irreducible curve on $X_{\bar \kk}$ is defined over a finite field extension of $\kk$). Therefore, for any $H$-surface $X$ over $\kk$, the surface $X_{\bar\kk}$ is an $H\times G$-surface (recall that by Definition~\ref{def_Gvariety} of a $G$-variety, the $G$-action on $N_1$  must have finite orbits).
    Recall that scalar extension preserves rational derived contractions and hence defines a functor $\cV_{\kk}\to \cV_{\bar\kk}$. Theorem~\ref{thm:atoms-surfaces} provides an atomic theory for $\cV_{\bar\kk}$. Now the first claim follows from Proposition~\ref{prop_ATdescent}.

    For compatibility, in view of Proposition~\ref{prop_ATdescent} it is enough to check compatibility of equivariant atomic theories on $X_{\bar\kk}$ under group restriction, which holds by Theorem~\ref{thm:atoms-surfaces}. If $H'\subset H$ is a subgroup, use compatibility of $(H\times G)$- and $(H'\times G)$-equivariant atomic SOD-s for $\Db(X_{\bar\kk})$. If $\kk\subset \FF$ is an algebraic field extension, let $G'=\Gal(\bar \kk/\FF)\subset G$, then  use compatibility of $(H\times G)$- and $(H\times G')$-equivariant atomic SOD-s of $\Db(X_{\bar\kk})$. 
\end{proof}

\section{Applications to rationality and birationality}

In this section we work with the atomic theory for $G$-surfaces constructed in Theorem \ref{thm:atoms-surfaces} and obtain applications to the birational geometry of surfaces.
In subsections~\ref{section_51atomspermtype} and ~\ref{section_52invariants} we consider the general case of $G$-surfaces over an algebraically closed field. Then, in subsection~\ref{section_53geomcase} we consider the \emph{geometric case}
when $\kk$ is algebraically closed
and $G$ is a finite group acting faithfully and $\kk$-linearly on a surface $X$ defined over~$\kk$, 
and in subsection~\ref{section_54arithmcase} we consider the \emph{arithmetic case},
when $\kk$ is a perfect field,
$X$ is surface over $\kk$ and
$G = \Gal(\ol{\kk}/\kk)$ acts on $X_{\ol{\kk}}$.

\subsection{Atoms of permutation type}
\label{section_51atomspermtype}
Let $\kk$ be an algebraically closed field and $X$ be a $G$-surface over $\kk$. 

The atomic theory from Theorem \ref{thm:atoms-surfaces} associates to $X$ a finite collection of triangulated $G$-categories, called the \emph{$G$-atoms} of $X$. 
Recall (Definition~\ref{def_permtype}) that a $G$-atom is of \emph{permutation type} if it is generated by a $G$-transitive block of exceptional objects. To any atom of permutation type a twisting cohomology class $[\alpha]\in\rH^2(G,(\kk^*)^n)$ is associated (see Definition~\ref{def_obstruction-for-blocks}), and the atom is called \emph{non-twisted} if this class vanishes (Definition~\ref{def_non-twisted-atom}). Also, if $H\subset G$ is the stabiliser of one of the objects in the block, a twisting  cohomology class $[\alpha_H]\in \rH^2(H,\kk^*)$ is defined. By Corollary~\ref{cor_Z}, the atom is non-twisted if and only if $[\alpha_H]$ vanishes. 

The main application of the atomic theory is the following:

\begin{proposition}
\label{prop_essential}
$G$-birational surfaces $X$ and $Y$ have the same collection of atoms (with multiplicities), after removing all non-twisted permutation type atoms.
\end{proposition}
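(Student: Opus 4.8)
The plan is to reduce the statement to the structural properties of the atomic theory already established. Since $X$ and $Y$ are $G$-birational, by the Sarkisov program (see \cite{Isk96}, \cite{DolgachevIskovskikh}, \cite[Th.~16.28]{LamyCremona}) the birational map between them can be realised through Mori fibre spaces, and more importantly, both admit $G$-birational contractions to $G$-Mori fibre spaces. Using the weak factorisation provided by the merry-go-round diagram exactly as in the proof of Theorem~\ref{thm_compatible_gives_atomictheory}\eqref{i_Awelldef}, I would first find a common $G$-surface $W$ dominating both $X$ and $Y$ via $G$-equivariant birational morphisms $g\colon W\to X$ and $g'\colon W\to Y$. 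This is possible because any $G$-birational map between smooth projective $G$-surfaces is resolved by a common smooth dominating surface obtained by blowing up finite $G$-orbits.

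The key step is then to compare the atoms of $W$ with those of $X$ and those of $Y$. By the construction of the atomic theory (Theorem~\ref{thm_construction}) and the repeated use of the blow-up splitting in Definition~\ref{def_AgivenBandf}, the atomic decomposition $\cA(W)$ is mutation-equivalent to $\langle \cK(g), g^*\cA(X)\rangle$, and likewise for $g'$ and $Y$. By Lemma~\ref{lemma_atomswelldefined}, the atoms of $W$ (as $G$-categories up to $G$-equivalence) are therefore exactly the atoms of $X$ together with the atoms appearing in $\cK(g)$, counted with multiplicity; symmetrically, they are the atoms of $Y$ together with those in $\cK(g')$. The crucial observation is that, by Lemma~\ref{lem:blowup-perm} (equivalently, by the analysis of $\cK(g)$ via the blow-up formula in Lemma~\ref{lemma_ker}), every atom occurring in $\cK(g)$ or $\cK(g')$ is a non-twisted permutation type atom, since $\cK(g)$ is built inductively from the categories $\bK_{f_i}\cong\Db(Z_i)$ associated to blow-ups of finite $G$-orbits $Z_i$, and each such category is a non-twisted permutation type atom.

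Putting these together, the multiset of atoms of $W$ equals the multiset of atoms of $X$ plus some non-twisted permutation type atoms, and simultaneously equals the multiset of atoms of $Y$ plus some non-twisted permutation type atoms. After deleting all non-twisted permutation type atoms from each multiset, the two reduced multisets must coincide, since they are obtained from the same multiset (the atoms of $W$ with its non-twisted permutation type atoms removed) on both sides. This yields precisely the claim that $X$ and $Y$ have the same collection of atoms with multiplicities after removing all non-twisted permutation type atoms.

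The main obstacle I anticipate is ensuring that the comparison respects multiplicities as unordered collections of $G$-equivalence classes, rather than merely as sets: one must invoke Lemma~\ref{lemma_atomswelldefined} carefully so that mutation-equivalence of the relevant SOD-s of $\Db(W)$ translates into an actual bijection of atoms up to $G$-equivariant equivalence. A secondary point requiring care is that the dominating surface $W$ genuinely lies in the atomic domain $G\md\cVar_{\kk,\le 2}^{\MMP}$, i.e.\ that the morphisms $g,g'$ are compositions of $G$-MMP contractions; this is automatic because blow-ups of finite $G$-orbits are $G$-MMP contractions by Definition~\ref{def_GMMP}, so that $g$ and $g'$ are rational derived contractions and the atomic theory applies to $W$.
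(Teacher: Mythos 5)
Your proof is correct and is essentially the paper's own argument: the paper also connects $X$ and $Y$ by a chain of blow-ups of finite $G$-orbits and applies Lemma~\ref{lem:blowup-perm} to conclude that each blow-up only adds a non-twisted permutation type atom. The only cosmetic difference is your detour through the Sarkisov program and the merry-go-round diagram to produce the common resolution $W$; this machinery is unnecessary (and your side remark that both surfaces admit contractions to $G$-Mori fibre spaces fails when the Kodaira dimension is nonnegative), since equivariant resolution of indeterminacy by blowing up $G$-orbits already gives the required $W$, after which your argument coincides with the paper's.
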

\begin{proof}
    If $X$ and $Y$ are $G$-birational then they can be connected by a chain of blow-ups of finite $G$-orbits (taken in different directions). For one blow-up $\tZ\to Z$ the $G$-atoms of $\tZ$ and $Z$ are the same up to one non-twisted atom of permutation type by Lemma~\ref{lem:blowup-perm}. 
\end{proof}

We will characterize surfaces that have only permutation type atoms.
Recall that $X$ is \emph{$G$-birationally rich} if it is $G$-birational to a $G$-del Pezzo surface of degree $\ge 5$ 
or to a $G$-conic bundle of degree $\ge 5$ (Definition~\ref{def:BirationallyRich}).
Note that for the trivial group and an algebraically closed field, birationally rich means the same thing as rational.

\begin{proposition}
\label{prop:bir-rich}
A $G$-surface $X$ has only permutation type atoms if and only if $X$ is $G$-birationally rich. 
\end{proposition}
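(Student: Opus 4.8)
The plan is to prove both implications by reducing to $G$-minimal models and then reading off the standard decompositions of Definition~\ref{def_standard}, using the classification of $G$-Mori fibre spaces in Proposition~\ref{prop_list}. The key reduction is that, by the blow-up formula (Lemma~\ref{lem:blowup-perm}), blowing up a finite $G$-orbit only adds a non-twisted permutation type atom, while pulling back along a fully faithful $f^*$ (Lemma~\ref{lemma_Bridgeland}) identifies an atom of the model with an atom of $X$ as a plain category, hence preserves both the property of being of permutation type and its negation. So the atoms of an arbitrary $G$-surface $X$ are, up to permutation type atoms, precisely the pulled-back atoms of the $G$-minimal model reached by any birational contraction. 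Throughout I would use that a permutation type atom, forgetting $G$, is a direct sum of copies of $\Db(\kk)$; in particular its symmetrized Euler form $\chi^{\mathrm{sym}}(v,w):=\chi(v,w)+\chi(w,v)$ on the numerical Grothendieck group is positive definite (the completely orthogonal exceptional generators give an orthonormal basis), and its Serre functor is trivial.

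For the \emph{if} direction, suppose $X$ is $G$-birationally rich; by Corollary~\ref{cor:bir-rich-models} its minimal model is of type $\cD_9,\cD_8,\cD_6,\cD_5$ or $\cC_8$, so I only need each piece of $\cBstd$ to be generated by a $G$-transitive exceptional block. The single-object pieces (all blocks for $\P^2$ and $\bF_n$, the block $\langle\cE\rangle$ for $\cD_5$, and each $\langle\cO\rangle$) are trivially of permutation type, so the content is to show that the multi-object $0$-class and $1$-class blocks $\langle\cO(-h_i)\rangle_i$, $\langle\cO(-H_j)\rangle_j$ of Lemma~\ref{lem:3blockdecomp} are acted on \emph{transitively}. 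Here is the crux: the Weyl group acts on $N_1(X)$ fixing $K_X$, and the orthogonal complement of $K_X$ splits as the standard representation spanned by differences of the $0$-classes, plus (for $\cD_6$) the sign line spanned by the difference of the two $1$-classes; the condition $\rho^G(X)=1$ says exactly that $G$ has no nonzero invariants on this complement. Transitivity on the $0$-classes, and swapping of the two $1$-classes on $\cD_6$, are each equivalent to the vanishing of the corresponding invariants, so $G$-minimality delivers precisely the transitivity needed, and all atoms are of permutation type.

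For the \emph{only if} direction I would argue contrapositively: if $X$ is not $G$-birationally rich, I exhibit a non-permutation atom. By Proposition~\ref{prop_list}(2) the minimal model is a del Pezzo surface of degree $\le 4$, a conic bundle over $\P^1$ of degree $\le 4$ (or $\le 0$), a conic bundle over a curve of genus $\ge 1$, or a surface with $K$ nef. In the last two cases the atoms $\pi^*\Db(B)\cong\Db(B)$ with $g(B)\ge1$, respectively $\Db(X_{\min})$, have Serre functor shifting by $1$ or $2$ (and are not exceptionally generated), so are not of permutation type. For the two rational cases I expect a uniform Euler-form obstruction: choose two distinct $(-1)$-curves $C,C'$ with $m:=C\cdot C'\ge1$ — two components of a singular fibre when $K^2<8$, or two incident lines on a del Pezzo surface of degree $\le4$ — so that $\cO_C(-1),\cO_{C'}(-1)$ both lie in the single nontrivial atom $\cO_X^\perp$, respectively $\ker\pi_*$. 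A short computation using $\Hom=\Ext^2=0$ and $\dim\Ext^1=m$ gives the Gram matrix $\left(\begin{smallmatrix} 2 & -2m \\ -2m & 2\end{smallmatrix}\right)$ of $\chi^{\mathrm{sym}}$ on $\langle[\cO_C(-1)],[\cO_{C'}(-1)]\rangle$, which is degenerate for $m=1$ and indefinite for $m\ge2$; so $\chi^{\mathrm{sym}}$ is not positive definite on that atom, and the atom is not of permutation type.

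I expect the main obstacle to be the transitivity claim in the \emph{if} direction: it requires pinning down the Weyl-group representation on $N_1$ for the degree $5,6,8$ del Pezzo surfaces and matching vanishing of invariants (equivalently $\rho^G=1$) with transitivity on the relevant $0$- and $1$-classes, which I would carry out case by case. A secondary point to handle carefully is the degree boundary: the same Euler-form computation applies verbatim to $\cO_X^\perp$ for del Pezzo surfaces of degree $\ge5$, yet this is consistent because there $\cO_X^\perp$ is not a single atom but refines into the permutation type blocks of the Karpov--Nogin decomposition (Lemma~\ref{lem:3blockdecomp}); so I must explicitly use that for degree $\le4$ the standard decomposition of Definition~\ref{def_standard} leaves $\cO_X^\perp$ undivided, making it a genuine atom.
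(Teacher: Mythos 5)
Your proof is correct, and while it shares the paper's skeleton (reduce to a $G$-minimal model via Lemma~\ref{lem:blowup-perm}, then analyse the standard decompositions of Definition~\ref{def_standard}), both halves diverge from the paper's argument in genuine ways. For the \emph{only if} direction the paper does not use the Euler form at all: it shows that the single kernel atom $\ker\pi_*$ of a non-birationally-rich Mori fibre space contains a copy of $\Db(C)$ for some curve $C$ (when $B$ is a curve this comes from a relative degree $-1$ twist of $\pi^*\Db(B)$, cited as Orlov's projective bundle theorem; when $B=\Spec\kk$ one factors through a non-equivariant conic bundle $X\to\P^1$), and then invokes the structural fact that every thick subcategory of a block-generated category is again block-generated. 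Your positive-definiteness obstruction for $\chi^{\mathrm{sym}}$, applied to $\cO_C(-1)$, $\cO_{C'}(-1)$ for two incident $(-1)$-curves inside the kernel atom, is a genuinely different and more elementary route; it even sidesteps the mild imprecision in the paper's citation of Orlov's theorem for conic bundles with singular fibres, since you use those singular fibres directly. One small inaccuracy: $\dim\Ext^1=m$ holds only for transverse intersections, but all you need is $\chi=-C\cdot C'$, which Riemann--Roch gives unconditionally, so nothing breaks. For the \emph{if} direction your argument is the paper's (read the blocks off Definition~\ref{def_standard} for the models of Corollary~\ref{cor:bir-rich-models}), except that you make explicit the $G$-transitivity on the $0$- and $1$-class blocks required by the definition of permutation type; the paper compresses this into ``one sees in Definition~\ref{def_standard}'' and only spells out transitivity in the degree $5$ case (Proposition~\ref{prop:bir-rich2}). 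Your derivation of transitivity from $\rho^G=1$ via invariants in $K_X^{\perp}$ is sound: in each of $\cD_8$, $\cD_6$, $\cD_5$ the relevant classes are linearly independent and no proper sub-orbit sum is proportional to $K_X$, so a non-transitive action would force $\rho^G\ge 2$. So your write-up is, if anything, more complete than the paper's on this point.
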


\begin{proof}
Note that having only atoms of permutation type is a birational property, because any blow-up (resp. blow-down) 
adds (resp. removes) atoms of permutation type by Lemma \ref{lem:blowup-perm}.
Therefore, we can assume that $X$ is $G$-minimal.

If $K_X$ is nef, then there is only one atom $\Db(X)$ and it cannot be generated by a fully orthogonal exceptional collection because its Serre functor $S(-) = -\otimes \omega_X[2]$ is non-trivial.

If there is a $G$-Mori fibre space structure $\pi\colon X\to B$, we show that $\cK^{\rm std}(\pi)$, which is the SOD of $\ker\pi_*$ such that $\cBstd(X/B)=\langle \cK^{\rm std}(\pi), \pi^*\cA(B)\rangle$, consists of permutation type atoms if and only if $X$ is $G$-birationally rich.

Indeed, if $X$ is $G$-birationally rich, then one sees in Definition~\ref{def_standard} that all atoms are of permutation type.
If $X$ is not $G$-birationally rich, then $\cK^{\rm std}(\pi)=\ker\pi_*$ is one atom. We claim that $\ker\pi_*$ contains $\Db(C)$ of some curve $C$, and is therefore not generated by one block (because every thick triangulated subcategory of a category generated by a block is generated by a block). Indeed, if $B$ is a curve, then $\ker \pi_*$ contains a copy of $\Db(B)$ by Orlov's theorem on projective bundles \cite{Orlov-blowup}. If $B=\Spec \kk$, then $X$ is a del Pezzo surface of degree $\le  4$, so $\pi$ factorises through a contraction $X\to \P^1$ (which is not $G$-equivariant), and therefore $\ker\pi_*$ contains a copy of $\Db(\P^1)$.
\end{proof}

\begin{proposition}
\label{prop:bir-rich2}
    A $G$-minimal del Pezzo surface of degree $5$ has only non-twisted permutation type atoms.
\end{proposition}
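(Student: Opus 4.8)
The plan is the following. A $G$-minimal del Pezzo surface $X$ of degree $5$ is $G$-birationally rich, so Proposition~\ref{prop:bir-rich} already guarantees that all its atoms are of permutation type, and by Theorem~\ref{thm_construction} its atomic decomposition is the standard one $\cBstd(X/\Spec\kk)=\cS(X)$ of Lemma~\ref{lem:3blockdecomp}, with atoms
\[
\langle\cE\rangle,\qquad \left\langle \cO(-h_1),\ldots,\cO(-h_5)\right\rangle,\qquad \langle\cO\rangle.
\]
Here $\cE$ and $\cO$ are $G$-invariant (the former by the uniqueness in Lemma~\ref{lem:3blockdecomp}), hence singleton blocks, while the five $0$-classes form a $\QQ$-basis of $N_1(X)_{\QQ}$ permuted by $G$; since $X$ is $G$-minimal of degree $5$ it is a rank-one fibration over $\Spec\kk$, so $\rho^G(X)=1$ equals the number of $G$-orbits and $G$ permutes the $h_i$ transitively. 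Thus the only thing left is to show that the twisting class of each of the three atoms vanishes.

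The engine is a coprimality trick. I will use that twisting classes add under tensor product and negate under dualisation (Lemma~\ref{lemma_alphaalpha}) and are preserved by equivariant pullback, that $\cO_X$ and $\omega_X$ carry canonical equivariant structures (hence trivial twisting class), and that for any $\alpha$-twisted $G$-representation $V$ taking determinants gives $(\dim V)\cdot[\alpha]=0$ in $\rH^2(G,\kk^*_\sigma)$; moreover $\rH^\bullet(X,F)$ is an $\alpha$-twisted representation whenever $F$ is $\alpha$-twisted, by Lemma~\ref{lemma_alphaalpha}(4) applied to $X\to\Spec\kk$. Consequently a class that is simultaneously $m$- and $n$-torsion with $\gcd(m,n)=1$ must vanish.

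Carrying this out: the atom $\langle\cO\rangle$ is non-twisted since $\cO_X$ is equivariant. For the middle atom, set $H=\Stab_G(h_1)$ and let $[\alpha_H]\in\rH^2(H,\kk^*_\sigma)$ be its twisting class (Corollary~\ref{cor_Z}). The conic bundle attached to $h_1$ gives $\cO(h_1)=\pi_1^*\cO_{\P^1}(1)$ with $h^0=2$, and the birational morphism attached to the complementary $1$-class gives $\cO(H_1)=\psi_1^*\cO_{\P^2}(1)$ with $h^0=3$ (Lemma~\ref{lemma_classes}); as $H_1=-K_X-h_1$, these sheaves have twisting classes $-[\alpha_H]$ and $[\alpha_H]$, so their spaces of sections force $2[\alpha_H]=3[\alpha_H]=0$, whence $[\alpha_H]=0$. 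For $\langle\cE\rangle$ with twisting class $[\gamma]$, the rank-two identity $\cE^\vee\cong\cE\otimes(\det\cE)^{-1}=\cE\otimes\omega_X^{-1}$ together with $\det\cE=\cO(-H_i-h_i)=\omega_X$ (from~\eqref{eq_vectorbundleE}, using $H_i+h_i=-K_X$) and Lemma~\ref{lemma_alphaalpha} gives $-[\gamma]=[\gamma]$, i.e. $2[\gamma]=0$; dualising~\eqref{eq_vectorbundleE} to $0\to\cO(h_i)\to\cE^\vee\to\cO(H_i)\to 0$ shows $\rH^\bullet(X,\cE^\vee)$ is concentrated in degree $0$ and $5$-dimensional, so $5[\gamma]=0$, and coprimality forces $[\gamma]=0$.

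I expect the rank-two atom $\langle\cE\rangle$ to be the main obstacle: the self-duality relation only produces $2$-torsion, so the argument genuinely needs a second, coprime relation, supplied here by the $5$-dimensional space $\rH^0(X,\cE^\vee)$. The remaining points — verifying that the relevant cohomology is concentrated in degree $0$, and that the tensor, dual, and pullback formulas for twisting classes apply verbatim for a possibly non-$\kk$-linear (that is, $\sigma$-semilinear) $G$-action — are routine given Section~\ref{section_group-actions}.
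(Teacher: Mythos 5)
Your proof is correct, and while its skeleton matches the paper's (reduce to the standard three-block decomposition $\cS(X)$, use $G$-minimality to get transitivity on the $0$-classes, then kill each twisting class by exhibiting two coprime torsion relations), the mechanism producing those torsion relations is genuinely different. For the block $\langle \cO(-h_1),\ldots,\cO(-h_5)\rangle$ the paper obtains $2$- and $3$-torsion of $[\alpha_H]$ from Example~\ref{example_cocycleforPn}: the $H$-equivariant maps $X\to\P^1$ and $X\to\P^2$ pull back the universal projective-space cocycle, whose class is killed by $n$ on $\P^{n-1}$; you instead use the determinant of the $2$- and $3$-dimensional twisted representations $\rH^0(X,\cO(h_1))$ and $\rH^0(X,\cO(H_1))$. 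These are cousins (the paper's $n$-torsion statement in Example~\ref{example_cocycleforPn} ultimately rests on the same kind of fact), and both use $h_1+H_1=-K_X$ together with the canonical linearisation of $\omega_X$. The real divergence is at $\langle\cE\rangle$: the paper first shows the restriction of $[\alpha_\cE]$ to the index-$5$ stabiliser $H$ vanishes (via the $H$-equivariant contraction $X\to\P^2$) and then invokes restriction--corestriction to get $5[\alpha_\cE]=0$, whereas you get $5[\gamma]=0$ in one stroke from $\det\rH^0(X,\cE^\vee)$, the section space being $5$-dimensional by the dualised sequence $0\to\cO(h_i)\to\cE^\vee\to\cO(H_i)\to 0$. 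Your route avoids restriction--corestriction and any analysis of $H$-atoms, so it is more elementary and self-contained; the paper's route has the mild advantage of only ever manipulating line bundles and the already-developed pullback formalism. Two small points of hygiene, neither a gap: Lemma~\ref{lemma_alphaalpha}(4) concerns the non-derived pushforward, so it literally gives a twisted structure on $\rH^0$ rather than on all of $\rH^\bullet$ --- but $\rH^0$ is all you use, and in each case higher cohomology vanishes (e.g. $\rH^\bullet(X,\cO(h_1))\cong\rH^\bullet(\P^1,\cO(1))$ and $\rH^\bullet(X,\cO(H_1))\cong\rH^\bullet(\P^2,\cO(1))$ since the maps are derived contractions); and the determinant trick $(\dim V)\cdot[\alpha]=0$ should be stated once with a one-line justification (the top exterior power of an $\alpha$-twisted representation is a $1$-dimensional $\alpha^{\dim V}$-twisted representation, and these exist only for trivial classes), noting it holds verbatim for $\sigma$-semilinear actions.
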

    
Note that the other types of birationally rich 
$G$-del Pezzo surfaces $\cD_9$, $\cD_8$, $\cD_6$ can have twisted or non-twisted permutation type atoms. 

\begin{proof}
We need to analyse the
$G$-atoms of the del Pezzo surface $X$ of degree $5$ in more detail. Let $H_1, \ldots, H_5$ be the $1$-classes and  $h_1, \ldots, h_5$ be the $0$-classes on $X$ respectively, one has  $h_i+H_i=-K_X$ by Lemma~\ref{lemma_dualclasses}. 
The group $G$ acts transitively (by $G$-minimality of $X$) on these two sets.
Recall that (by Definition~\ref{def_standard}) the $G$-atoms of $X$ are $\langle\cO\rangle$ (non-twisted), 
$\langle \cO(-h_1), \ldots, \cO(-h_5) \rangle$, and $\langle \cE\rangle$, where $\cE$ is a vector bundle of rank $2$ fitting into 
the exact  sequence~\eqref{eq_vectorbundleE}. Let $H\subset G$ be the stabiliser of $h_1$ (or of $H_1$). We claim that the twisting classes $[\alpha_{\cO(-h_1)}], [\alpha_{\cO(-H_1)}]\in \rH^2(H,\kk^*)$ are trivial. Indeed, $\cO(-h_1)$ is the pull-back of $\cO_{\P^1}(-1)$ under an $H$-equivariant map $X\to \P^1$ given by $|h_1|$. Hence $[\alpha_{\cO(-h_1)}^2]=1$ by Lemma~\ref{lemma_alphaalpha}(5) and Example~\ref{example_cocycleforPn}. Similarly $[\alpha_{\cO(-H_1)}^3]=1$. But $\cO(-h_1)\otimes \cO(-H_1)\cong \omega_X$ is a $G$-equivariant bundle, hence $[\alpha_{\cO(-h_1)}][\alpha_{\cO(-H_1)}]=1$ and both classes  must  vanish.
Therefore the $G$-atom $\langle \cO(-h_1), \ldots, \cO(-h_5) \rangle$ is non-twisted (see Corollary~\ref{cor_Z}). 

It remains to check that the twisting class
$[\alpha_\cE] \in \rH^2(G, \kk^*)$ corresponding to the $G$-atom $\langle \cE \rangle$ is also trivial.
By~\eqref{eq_vectorbundleE}, $\Lambda^2 \cE \cong \omega_X$  and is $G$-linearisable, we deduce that
$[\alpha_{\cE}]^2$ is trivial (using $\cE\cong\cE^\vee\otimes\omega_X$ and  Lemma~\ref{lemma_alphaalpha}(2,3)).
On the other hand, $\langle \cE \rangle$ as an $H$-atom is non-twisted. Indeed, there is an $H$-equivariant birational contraction $X\to \P^2$ given by $|H_1|$, hence $H$-atoms of $X$ are non-twisted or are pulled back from $\P^2$. As above, $H$-twisting classes  pulled back from $\P^2$ have order dividing $3$. Combining this with $[\alpha_{\cE}^2]=1$ we get that the restriction of $[\alpha_{\cE}]$ onto $H$ is trivial. Therefore by the restriction-corestriction formula one has $[\alpha_{\cE}^5]=1$ since $[G:H]=5$. It now follows that  $[\alpha_{\cE}]=1$. 
\end{proof}

We relate permutation atoms and the standard obstructions to rationality of $G$-surfaces,
formulated in terms of the $G$-action on $\Pic(X)$.
Recall that the Bogomolov--Prokhorov obstruction 
(see \cite[\S2.2]{Manin-rational} for the arithmetic case, \cite{BogomolovProkhorov} for the geometric case, and \cite{Kunyavskii} for a more general case)
is
the first cohomology group
$
\rH^1(G, \Pic(X))
$
and the Amitsur obstruction 
(see \cite{Liedtke-SB} for the arithmetic case
and 
\cite[\S6]{BCDP-finite} for the geometric case)
is defined as
\begin{equation}
\label{eq_Amitsur-geom}
\Am(X,G) = \Coker(\Pic^G(X) \to \Pic(X)^G),
\end{equation}
where $\Pic(X)^G\subset \Pic(X)$ denotes the subset of classes of $G$-invariant line bundles, and $\Pic^G(X)$ the set of isomorphism classes of $G$-linearised line bundles.
It is easy to see that  $\Am(X,G)$ and (in the case of finite or profinite groups) $\rH^1(G, \Pic
(X))$ are $G$-birational invariants of $X$.

Recall that a \emph{permutation $G$-module} is a finitely generated free abelian group with a $G$-action that admits an integral basis which is permuted by $G$.

\begin{proposition}\label{ref:prop-atoms-obstructions}
\begin{enumerate}
    \item\label{it:prop-atoms--perm} If all $G$-atoms of $X$ are of permutation type, then $\Pic(X)$ is a stably permutation $G$-module. More precisely, $\Pic(X) \oplus \Z^2$ with the trivial action on $\Z^2$ is a permutation $G$-module. In particular,  we have $\rH^1(H, \Pic(X)) = 0$ for all finite subgroups $H \subset G$.
    \item\label{it:prop-atoms--non-twisted} If all $G$-atoms of $X$ are of non-twisted permutation type, then $\Am(X,H) = 0$ for all $H \subset G$.
\end{enumerate}
\end{proposition}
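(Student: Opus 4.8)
The plan is to reduce both statements to one structural fact: under the hypothesis the Grothendieck group $K_0(X)$ is a permutation $G$-module, and then to transport this to $\Pic(X)$ through an explicit $G$-equivariant splitting of the topological filtration. First I would invoke Proposition~\ref{prop:bir-rich} to conclude that the hypothesis forces $X$ to be $G$-birationally rich, hence a rational surface, so that $\CH_0(X)\cong\Z$. Concatenating the exceptional blocks that generate the atoms $\bA_1,\dots,\bA_n$ of an atomic SOD produces a full exceptional collection $E_1,\dots,E_N$, whence $[E_1],\dots,[E_N]$ is a $\Z$-basis of $K_0(X)$. Since "permutation type" (Definition~\ref{def_permtype}) means exactly that $G$ permutes the objects inside each block, the induced action $g_*$ permutes this basis, so $K_0(X)$ is a permutation $G$-module.

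Next I would identify $K_0(X)$ with $\Pic(X)\oplus\Z^2$ as $G$-modules. The codimension-of-support filtration $F^2\subset F^1\subset K_0(X)$ is $G$-stable, with graded pieces $\Z$ (rank), $\Pic(X)$ (via $c_1$) and $F^2\cong\CH_0(X)\cong\Z$ (via $\chi$); the two copies of $\Z$ carry the trivial action because $g_*$ preserves the rank of a bundle and sends $[\cO_{pt}]$ to the class of another point, both of which are unaffected even when $g$ acts $\sigma$-semilinearly. The rank sequence splits $G$-equivariantly via the fixed class $[\cO_X]$. For $0\to\Z[\cO_{pt}]\to F^1\to\Pic(X)\to 0$ the naive lift $L\mapsto[\cO(L)]-[\cO_X]$ fails to be additive precisely by the intersection form, and Riemann--Roch lets me correct this: setting $q(L)=\chi(\cO_X(L))-\chi(\cO_X)=\tfrac12 L(L-K_X)\in\Z$, the map $s(L)=[\cO(L)]-[\cO_X]-q(L)[\cO_{pt}]$ is a homomorphism satisfying $c_1\circ s=\id$, and it is $G$-equivariant since the intersection form and $K_X$ are $G$-invariant. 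This gives $K_0(X)\cong\Pic(X)\oplus\Z^2$ $G$-equivariantly, proving part~\eqref{it:prop-atoms--perm}; the vanishing $\rH^1(H,\Pic(X))=0$ for finite $H$ then follows by Shapiro's lemma, since permutation modules have trivial $\rH^1$ over finite groups and the two trivial $\Z$-summands contribute nothing.

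For part~\eqref{it:prop-atoms--non-twisted}, I would first note via Corollary~\ref{cor_Z} and restriction of cohomology classes that if every $G$-atom is non-twisted permutation type, then so is every $H$-atom: the stabiliser-class $[\alpha_{H\cap G_1}]$ of an $H$-orbit is the restriction of the vanishing $G$-class $[\alpha_{G_1}]$. Fix $H$. By Lemma~\ref{lemma_obstruction} and the definition \eqref{eq_Amitsur-geom}, it suffices to show that the connecting homomorphism $\delta\colon\Pic(X)^H\to\rH^2(H,\kk^*)$, whose image is $\Am(X,H)$, vanishes. For each $H$-orbit $O$ of exceptional objects the block sum $\bigoplus_{j\in O}E_j$ is $H$-linearisable because the twisting class of Definition~\ref{def_obstruction-for-blocks} vanishes; hence so is its determinant $\bigotimes_{j\in O}\det E_j=c_1\big(\sum_{j\in O}[E_j]\big)$ (via Lemma~\ref{lemma_alphaalpha}), so $\delta$ vanishes on $c_1\big((K_0)^H\big)$. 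Finally $(K_0)^H$ is spanned by $H$-orbit sums (invariants of a permutation module), and from the long exact sequence attached to $0\to\ker c_1\to K_0\to\Pic(X)\to 0$, together with $\ker c_1\cong\Z^2$ trivial and $\rH^1(H,\Z^2)=0$, I get $c_1\big((K_0)^H\big)=\Pic(X)^H$. Therefore $\delta\equiv 0$ and $\Am(X,H)=0$.

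The hard part will be the $G$-equivariant splitting of the topological filtration: the correction term must be simultaneously integral and $G$-invariant, which is exactly what Riemann--Roch supplies through $q(L)=\tfrac12 L(L-K_X)$. A secondary delicate point is the identification $c_1((K_0)^H)=\Pic(X)^H$, where I rely on $\rH^1(H,\Z)=\Hom(H,\Z)=0$; this holds for finite $H$ and, using continuous cohomology, for profinite $H$, which covers both the geometric and arithmetic cases of interest.
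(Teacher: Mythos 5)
Your part (1) is essentially the paper's own proof: birational richness via Proposition~\ref{prop:bir-rich}, the observation that $\rK_0(X)$ is a permutation $G$-module because it is the direct sum of the $\rK_0$'s of permutation-type atoms, and transfer to $\Pic(X)$ through a $G$-equivariant isomorphism $\rK_0(X)\cong\Z\oplus\Pic(X)\oplus\Z$. The only difference is that the paper cites \cite[Lemma 4.2]{KarmazynKuznetsovShinder} for this isomorphism, whereas you reprove it via the Riemann--Roch-corrected splitting $s(L)=[\cO(L)]-[\cO_X]-q(L)[\cO_{pt}]$; your verification is correct (additivity amounts to $([\cO(L_1)]-[\cO_X])\cdot([\cO(L_2)]-[\cO_X])=(L_1\cdot L_2)[\cO_{pt}]$ in $\rK_0$, and equivariance holds because any semilinear automorphism preserves rank, $\chi$, the point class on a rational surface, the intersection form and $K_X$). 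Part (2) is where you genuinely diverge. The paper reduces to $H=G$ and to $G$-minimal $X$, then checks the five minimal types $\cD_9,\cD_8,\cD_6,\cD_5,\cC_8$ case by case, exhibiting explicit linearisable generators of $\Pic(X)^G$ ($K_X$ for $\cD_5,\cD_6$; $\cO(-1)$, $\cO(-1,-1)$, resp.\ $\cO(-h),\cO(-s)$ otherwise). You instead argue uniformly: $\Am(X,H)\cong\Ima(\delta)$ for the twisting-class homomorphism $\delta\colon\Pic(X)^H\to\rH^2(H,\kk^*)$, $\delta$ kills $c_1$ of $H$-orbit sums because determinants of linearisable blocks are linearisable, and orbit sums span $\rK_0(X)^H$, which surjects onto $\Pic(X)^H$. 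This trades the classification input for a determinant/K-theory formalism, and your route is more robust: it applies verbatim whenever $\rK_0$ is a permutation module with linearisable orbit blocks, with no appeal to minimal model lists.

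Two caveats, both fixable. First, you justify surjectivity of $c_1\colon\rK_0(X)^H\to\Pic(X)^H$ by $\Hom(H,\Z)=0$, which, as you admit, covers only finite and profinite $H$; but the proposition asserts $\Am(X,H)=0$ for \emph{every} subgroup $H\subset G$, and an abstract subgroup of a Galois group need not be profinite (e.g.\ the subgroup generated by a Frobenius element is isomorphic to $\Z$). The repair costs nothing: for $L\in\Pic(X)^H$ the class $[\cO(L)]\in\rK_0(X)$ is already $H$-invariant, since $h_*[\cO(L)]=[\cO(h_*L)]=[\cO(L)]$ depends only on the isomorphism class of $h_*L$; hence $c_1$ is surjective on $H$-invariants for arbitrary $H$, with no cohomological input. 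Second, you take $\det E_j$ as though the $E_j$ were sheaves; in general an atom is only known to be generated by a block of objects of $\Db(X)$, and transporting a linearisation through the determinant of a complex requires the Knudsen--Mumford determinant functor (with its sign subtleties). This too is easily repaired: by Proposition~\ref{prop_Z} the twisting class is an invariant of the atom as a $G$-category, so you may compute with the standard generating blocks of Definition~\ref{def_standard}, which consist of actual sheaves (line bundles, the torsion sheaves $\cO_{E_i}(-1)$, and the bundle $\cE$), for which an equivariant structure on the block sum restricts to each orthogonal orbit summand and induces one on its determinant line bundle in the usual way. With these two repairs your argument is a complete and attractive uniform alternative to the paper's case analysis.
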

\begin{proof}
\eqref{it:prop-atoms--perm}
By Proposition \ref{prop:bir-rich}, $X$ must be $G$-birationally rich, in particular, it is a rational surface. We have an isomorphism, 
see e.g. 
\cite[Lemma 4.2]{KarmazynKuznetsovShinder}
\[
\rK_0(X) \cong \Z \oplus \Pic(X) \oplus \Z, \quad
[F] \mapsto (\rank(F), c_1(F), \chi(F)).
\]
By construction it commutes with the $G$-action, where $G$ acts trivially on the two $\Z$ summands.
Thus it suffices to check that $\rK_0(X)$ is a permutation module. We have
\[
\rK_0(X) \cong
\rK_0(\bA_1) \oplus \dots \oplus \rK_0(\bA_n),
\]
where $\bA_i$ are $G$-atoms of permutation type,
and each $\rK_0(\bA_i)$ is a permutation $G$-module by definition of a permutation type atom.
The final statement follows when $G$ is finite as in \cite[Corollary 2.5.2]{BogomolovProkhorov}.

{\eqref{it:prop-atoms--non-twisted}} Since the atomic theory is compatible with passing to a subgroup, 
all $H$-atoms of $X$ are of non-twisted permutation type, and 
we can assume that $H = G$. 
Moreover, we can assume $X$ to be $G$-minimal by Proposition~\ref{prop_essential}.
By Proposition \ref{prop:bir-rich}, $X$ is $G$-birationally rich, hence
by Corollary \ref{cor:bir-rich-models} we have to check the cases $\cD_9$, $\cD_8$, $\cD_6$, $\cD_5$, $\cC_8$. 
We will check that in each case $\Pic(X)^G$ is generated by classes of $G$-linearisable line bundles.

Assume first that $X$ is a $G$-del Pezzo surface. We have $\Pic(X)^G \otimes \QQ = \QQ \cdot K_X$. Since the canonical bundle always admits a $G$-linearisation, if $K_X$ is indivisible, which holds in the cases $\cD_5$ and $\cD_6$, then $\Am(X,G) = 0$. In the case
$\cD_9$, if atoms are non-twisted, then $\cO(-1)$ is $G$-linearisable, and $\Am(X, G) = 0$. Similarly, in the
$\cD_8$ case, if atoms are non-twisted, the line bundle $\cO(-1,-1)$ on $\P^1 \times \P^1$ is $G$-linearisable, and $\Am(X, G) = 0$.

Finally let us consider the $\cC_8$ case, that is $X = \bF_n$. Let $h, s \in \Pic(X)$ be the standard basis. The assumption that $G$-atoms of $X$ are non-twisted implies that both $\cO(-h)$ and $\cO(-s)$ are $G$-linearisable, hence $\Am(X,G) = 0$.
\end{proof}

We will now give an application for surfaces which are not $G$-birationally rich.

\begin{theorem}\label{thm:dP4-bir}
Let $X$ and $X'$ be $G$-minimal del Pezzo surfaces of degree $4$.
Then the following conditions are equivalent:
\begin{enumerate}[label=(\alph*)]
    \item\label{it:dP4--iso} $X$ and $X'$ are $G$-isomorphic;
    \item\label{it:dP4--bir} $X$ and $X'$ are $G$-birational;
    \item\label{it:dP4--atoms} $X$ and $X'$ have the same $G$-atoms.
\end{enumerate}
\end{theorem}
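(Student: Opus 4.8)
The plan is to prove the cycle of implications \ref{it:dP4--iso} $\Rightarrow$ \ref{it:dP4--bir} $\Rightarrow$ \ref{it:dP4--atoms} $\Rightarrow$ \ref{it:dP4--iso}. The first implication is trivial: a $G$-isomorphism is in particular a $G$-birational map. The second implication, \ref{it:dP4--bir} $\Rightarrow$ \ref{it:dP4--atoms}, is where the atomic theory does the work. By Proposition~\ref{prop_essential}, any two $G$-birational surfaces have the same collection of atoms once all non-twisted permutation-type atoms are discarded. So I would first analyse the atoms of a $G$-minimal degree $4$ del Pezzo surface directly from Definition~\ref{def_standard}: such a surface is \emph{not} $G$-birationally rich (Proposition~\ref{prop_list}\eqref{it:SarkisovLinks--NotBirRich--dP}), so its standard decomposition is $\cBstd(X/\Spec\kk)=\langle \cO_X^\perp, \langle \cO_X\rangle\rangle$, giving exactly two atoms: the trivial atom $\langle \cO_X\rangle$ and the nontrivial atom $\bA_X:=\cO_X^\perp$. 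Since $\bA_X$ has nontrivial Serre functor it is \emph{not} of permutation type (as in the proof of Proposition~\ref{prop:bir-rich}), hence it survives the removal in Proposition~\ref{prop_essential}. Thus $X$ and $X'$ being $G$-birational forces a $G$-equivalence $\cO_X^\perp \simeq \cO_{X'}^\perp$, which is precisely the assertion that they have the same $G$-atoms.

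The substantive implication is \ref{it:dP4--atoms} $\Rightarrow$ \ref{it:dP4--iso}: recovering the isomorphism class of $X$ from the $G$-category $\bA_X=\cO_X^\perp$. Here I would invoke the Torelli-type theorem for degree $4$ del Pezzo surfaces due to the first author \cite{AE_Torelli}. The idea is that the triangulated category $\bA_X$, equipped with its Serre functor and the residual $G$-action, carries enough information to reconstruct $X$ up to isomorphism. Concretely, a $G$-equivariant equivalence $\cO_X^\perp \simeq \cO_{X'}^\perp$ should, via the Torelli statement, produce a $G$-isomorphism $X \cong X'$. I would need to check that the equivalence coming from \ref{it:dP4--atoms} is compatible with the extra structure (Serre functor, and the $G$-action recorded on the atoms as in Lemma~\ref{lemma_atomswelldefined}) that the Torelli theorem requires as input.

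The main obstacle I anticipate is precisely this last compatibility check: the definition of ``same $G$-atoms'' gives a $G$-equivariant exact equivalence of the categories $\cO_X^\perp$ and $\cO_{X'}^\perp$, but the Torelli theorem of \cite{AE_Torelli} is presumably phrased over $\kk$ (or for a bare triangulated category with its natural enhancement) rather than in the equivariant language used throughout this paper. I would therefore need to either (i) translate the equivariant equivalence into the form the Torelli theorem accepts, keeping track of how $G$ acts, or (ii) descend via the correspondence of Section~\ref{section_descent_for_SODs} and apply a Torelli statement over the relevant field. A secondary point to verify is that the equivalence can be normalised (by a shift, as in the proof of Proposition~\ref{prop_Z}) so that it respects the natural t-structure or the class of the relevant spherical/exceptional objects that the Torelli reconstruction relies on. Once the input of \cite{AE_Torelli} is matched, the conclusion $X\cong X'$ follows formally, closing the cycle.

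Finally, since the theorem is stated ``for any group action,'' I would remark that the whole argument is uniform in $G$: the structural input (two atoms, one of which is the non-permutation atom $\cO_X^\perp$) holds verbatim in the geometric, arithmetic, and mixed settings, and the descent machinery of Theorem~\ref{thm:atoms-surfaces2} transports the statement to an arbitrary perfect base field when $G=\Gal(\bar\kk/\kk)$.
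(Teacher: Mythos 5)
Your proposal is correct and follows essentially the same route as the paper: the trivial implication \ref{it:dP4--iso} $\Rightarrow$ \ref{it:dP4--bir}, then Proposition~\ref{prop_essential} combined with the standard decomposition $\cBstd(X/\Spec\kk)=\langle \cO_X^\perp,\langle\cO_X\rangle\rangle$ from Definition~\ref{def_standard} for \ref{it:dP4--bir} $\Rightarrow$ \ref{it:dP4--atoms}, and the Torelli theorem of \cite{AE_Torelli} for \ref{it:dP4--atoms} $\Rightarrow$ \ref{it:dP4--iso}. The compatibility issues you flag around matching the equivariant equivalence to the input of \cite{AE_Torelli} are exactly what that citation is meant to absorb; the paper's proof simply defers to it without further elaboration.
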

\begin{proof}
Implication \ref{it:dP4--iso} $\implies$ \ref{it:dP4--bir} is trivial. Implication \ref{it:dP4--bir} $\implies$ \ref{it:dP4--atoms} follows from Proposition~\ref{prop_essential} and Definition~\ref{def_standard} for atoms of a $G$-minimal del Pezzo surface of degree $4$. For the implication \ref{it:dP4--atoms} $\implies$ \ref{it:dP4--iso} see~\cite{AE_Torelli}.
\end{proof}

\subsection{Invariants of birational maps}
\label{section_52invariants}
Let $\kk$ be an algebraically closed field.
Let $\mathrm{Atoms}_G$ denote
the set of equivalence classes of 
atoms of $G$-surfaces.
For every $G$-surface $X$
we can consider 
\[
\mathrm{At}(X) \in \Z[\mathrm{Atoms}_G],
\]
the sum of equivalence classes of $G$-atoms of any atomic semi-orthogonal decomposition of $\Db(X)$.
For any two $G$-birational surfaces $X$ and $Y$, the difference
$\mathrm{At}(Y) - \mathrm{At}(X)$ consists of non-twisted permutation type  $G$-atoms, which we can identify with the corresponding 
finite transitive $G$-sets. We define 
\[
a(X, Y) := \mathrm{At}(Y) - \mathrm{At}(X) \in \mathrm{Burn}(G).
\]
Here $\mathrm{Burn}(G)$ is the Burnside group of $G$, which  is the free abelian group on 
isomorphism classes of
finite transitive $G$-sets.

On the other hand, for every $G$-birational map $\phi$ between $G$-surfaces $X \dto Y$
there is an invariant
\cite{LinShinderZimmermann, LinShinder, KreschTschinkel-invariant}
\[
c^G(\phi) = \ExDiv(\phi^{-1}) - \ExDiv(\phi) \in \mathrm{Burn}(G)
\]
where $\ExDiv(-)$ denotes the finite $G$-set of exceptional divisors of a given birational map.
This is the unique invariant which is additive, i.e. $c^G(\psi \circ \phi) = c^G(\psi) + c^G(\phi)$ and such that if $\phi\colon \tX \to X$ is a blow-up of a $G$-orbit $Z$, then
$
c^G(\phi) = -[Z].
$

For varieties of dimension $\ge 3$ this invariant, restricted to birational \textbf{automorphisms} of a variety, is in general non-zero \cite{LinShinder}.
On the other hand, for surfaces
we get the following, generalizing the same result in the arithmetic case proven in \cite{LinShinderZimmermann}.

\begin{theorem}\label{thm:motivic}
For all $G$-birational maps $\phi\colon X \dto Y$ we have $c^G(\phi) = a(X,Y)$. In particular, we have $c^G(\mathrm{Bir}^G(X)) = 0$ for any $G$-surface $X$.
\end{theorem}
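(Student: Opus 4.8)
The plan is to exhibit the endpoint-dependent quantity $a(X,Y)$ as an invariant of $G$-birational maps satisfying the two properties that characterise $c^G$, and then to invoke the uniqueness of $c^G$ stated above. Concretely, for a $G$-birational map $\phi\colon X \dto Y$ I set $\tilde{c}^G(\phi) := a(X,Y) = \mathrm{At}(Y) - \mathrm{At}(X)$. Here $\mathrm{At}(X) \in \Z[\mathrm{Atoms}_G]$ is well-defined because $G$-atoms are well-defined up to $G$-equivalence by Lemma~\ref{lemma_atomswelldefined}, and by Proposition~\ref{prop_essential} the difference $\mathrm{At}(Y) - \mathrm{At}(X)$ consists only of non-twisted permutation type atoms. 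Thus, after the identification of such atoms with finite transitive $G$-sets, $\tilde{c}^G(\phi)$ is a well-defined element of $\mathrm{Burn}(G)$ depending only on the source and target of $\phi$.

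Next I would verify that $\tilde{c}^G$ has the two defining properties of $c^G$. Additivity is immediate and telescoping: for $\phi\colon X \dto Y$ and $\psi\colon Y \dto Z$ one has $\tilde{c}^G(\psi\circ\phi) = \mathrm{At}(Z) - \mathrm{At}(X) = (\mathrm{At}(Z) - \mathrm{At}(Y)) + (\mathrm{At}(Y) - \mathrm{At}(X)) = \tilde{c}^G(\psi) + \tilde{c}^G(\phi)$. For the normalisation on blow-ups, let $p\colon \tX \to X$ be the blow-up of a $G$-orbit $Z$. By Lemma~\ref{lem:blowup-perm} the $G$-atoms of $\tX$ are those of $X$ together with one non-twisted permutation type atom $G$-equivalent to $\Db(Z)$, which under the identification is the generator $[Z]$ of $\mathrm{Burn}(G)$; hence $\mathrm{At}(\tX) = \mathrm{At}(X) + [Z]$ and $\tilde{c}^G(p) = a(\tX, X) = \mathrm{At}(X) - \mathrm{At}(\tX) = -[Z]$, matching $c^G$.

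With both properties established, the uniqueness of the additive invariant normalised by $c^G(p) = -[Z]$ on blow-ups forces $\tilde{c}^G = c^G$, that is $c^G(\phi) = a(X,Y)$ for every $G$-birational map $\phi\colon X \dto Y$. The final assertion then follows by taking $Y = X$: for $\phi \in \mathrm{Bir}^G(X)$ we obtain $c^G(\phi) = a(X,X) = \mathrm{At}(X) - \mathrm{At}(X) = 0$, so $c^G$ vanishes identically on $\mathrm{Bir}^G(X)$.

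I expect the only delicate point to be the bookkeeping of the identification that lands $a(X,Y)$ in $\mathrm{Burn}(G)$: one must check that the non-twisted permutation atom created by blowing up a single $G$-orbit $Z$ corresponds to the same generator $[Z]$ that records $\ExDiv$ in the construction of $c^G$. This compatibility is exactly what Lemma~\ref{lem:blowup-perm} supplies, so no further work is needed. A reader preferring not to rely on the uniqueness of $c^G$ can instead resolve $\phi$ $G$-equivariantly through a common smooth $G$-surface $W$ with $G$-morphisms $f\colon W\to X$ and $g\colon W\to Y$ that are compositions of blow-ups of $G$-orbits, and then compute both $c^G(\phi) = c^G(g) + c^G(f^{-1})$ and $a(X,Y) = a(X,W) + a(W,Y)$ termwise; here the single geometric input is $G$-equivariant elimination of indeterminacy for surfaces, which is standard.
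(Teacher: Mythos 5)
Your proposal is correct and follows essentially the same route as the paper: both arguments rest on additivity of $c^G$ and of $a(-,-)$ together with the agreement $c^G(p) = -[Z] = a(\tX,X)$ on a blow-up of a $G$-orbit (supplied by Lemma~\ref{lem:blowup-perm}), and then extend to all $G$-birational maps via equivariant factorisation into blow-ups. Whether one phrases the conclusion as "check on generators and use additivity" (the paper) or "invoke the uniqueness characterisation of $c^G$" (your write-up) is only a difference of packaging, since that uniqueness is itself a restatement of the factorisation argument.
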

\begin{proof}
Since both $c^G(\phi)$ and $a(X,Y)$ are additive for compositions, it suffices to check the claim for a blow-up $\phi\colon \wt{X} \to X$ of a $G$-orbit of points $Z \subset X$. In this case we have
\[
c^G(\phi) = -[Z] = a(\wt{X}, X)
\]
and the result follows for all birational maps by additivity.
The final claim follows since $a(X, X) = 0$.
\end{proof}

\subsection{Geometric case: $G$-linearisability and stable $G$-linearisability}
\label{section_53geomcase}
Assume $\kk$ is an algebraically closed field and $G$ is a finite group acting faithfully and $\kk$-linearly on a surface $X$ over $\kk$. Recall that a permutation type atom of $X$ is equivalent to $\Db(\kk^n{-}\mathrm{mod})$ with a $\kk$-linear $G$-action, such atoms are  classified by a finite transitive $G$-set and a cohomology class in $\rH^2(H,\kk^*)$ for the stabiliser $H\subset G$ of an element in the set, see Corollary~\ref{cor_Z}. In particular, non-twisted permutations type atoms are classified by finite transitive $G$-sets.

\begin{example}\label{ex:atoms-P2-linear}
Let $X = \P^2_{\kk}$ with a $\kk$-linear $G$-action given by an injective  homomorphism $G \to \PGL_3(\kk)$. 
Then 
the $G$-atomic decomposition of $X$ is
\[
\Db(X) = \langle \langle\cO(-2)\rangle, \langle\cO(-1)\rangle, \langle\cO\rangle \rangle.
\]
By Example \ref{example_cocycleforPn}
the twisting class $[\alpha] \in \rH^2(G, \kk^*)$ of the permutation atom $\langle \cO(-1) \rangle$ is given as  the restriction of the extension class in $\rH^2(\PGL_3(\kk), \kk^*)$ defined by the exact sequence
\[
1 \to \kk^* \to \GL_3(\kk) \to \PGL_3(\kk) \to 1.
\]
Thus $[\alpha]$ is trivial if and only if the action of $G$ on $\P^2$ factors through $G \to \GL_3(\kk)$.
In this case all $G$-atoms of $X$ are of non-twisted permutation type (again by Example~\ref{example_cocycleforPn}). 
\end{example}

We say that a $G$-surface $X$ is \emph{projectively $G$-linearisable} 
if 
$X$ is $G$-birational to $\P^2$ with some $G$-action.
Furthermore, $X$ is \emph{$G$-linearisable} if it is $G$-birational to $\P^2$ with an action  that factors through $\GL_3(\kk)$. 
Finally, $X$ is \emph{stably $G$-linearisable} if there is $m>0$ such that $X\times \P^{m}$ (with trivial $G$-action on $\P^m$) is $G$-birational to $\P^{m+2}$ with an action  that factors through $\GL_{m+3}(\kk)$. 
These are the same  conventions as in \cite{HT-torsors}, \cite{BvBT-cubics}, \cite{KT-toric}, \cite{Kunyavskii}, however in \cite{PSY} the term linearisable is used for what we call projectively linearisable.

\begin{proposition}\label{prop:Glin-trivial-atoms}
Assume that $X$ is a projectively $G$-linearisable surface. Then
\begin{enumerate}
    \item\label{it:Glin--perm} All atoms of $X$ are of  permutation type.
    \item\label{it:Glin--non-twisted} $X$ is $G$-linearisable if and only if the atoms of $X$ are non-twisted.
\end{enumerate}
\end{proposition}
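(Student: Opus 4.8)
The plan is to reduce both parts to the already-understood case of $\P^2$ via the birational invariance of atoms.

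For part \eqref{it:Glin--perm}, I would observe that projective $G$-linearisability means $X$ is $G$-birational to $\P^2$ equipped with some $G$-action, and $\P^2$ is a del Pezzo surface of degree $9$, hence $G$-birationally rich. Thus $X$ is itself $G$-birationally rich, and Proposition~\ref{prop:bir-rich} immediately gives that all atoms of $X$ are of permutation type. This part requires no further computation.

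For part \eqref{it:Glin--non-twisted}, the central observation is that, once all atoms are of permutation type, the multiset of \emph{twisted} atoms is a $G$-birational invariant: by Proposition~\ref{prop_essential} any two $G$-birational surfaces have the same atoms after discarding non-twisted permutation type atoms, and here every atom is of permutation type, so the surviving atoms are exactly the twisted ones. I would combine this with Example~\ref{ex:atoms-P2-linear}, which computes the atoms of $\P^2$ with a $\PGL_3(\kk)$-action: the atom $\langle \cO(-1)\rangle$ carries the twisting class $[\alpha]\in \rH^2(G,\kk^*)$ obtained by restricting the extension class of $1\to\kk^*\to\GL_3(\kk)\to\PGL_3(\kk)\to 1$, and $[\alpha]=1$ precisely when the action lifts to $\GL_3(\kk)$.

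With these in hand, both implications follow. If $X$ is $G$-linearisable, it is $G$-birational to a copy of $\P^2$ whose action factors through $\GL_3(\kk)$, and all atoms of that $\P^2$ are non-twisted by Example~\ref{ex:atoms-P2-linear}; birational invariance of twisted atoms then forces $X$ to have no twisted atoms. Conversely, if all atoms of $X$ are non-twisted, I would choose a $\P^2$ with some $\PGL_3(\kk)$-action to which $X$ is $G$-birational; invariance shows this $\P^2$ also has no twisted atoms, so $[\alpha]=1$ and the action lifts to $\GL_3(\kk)$, whence $X$ is $G$-linearisable. The only point needing care---the main obstacle---is the bookkeeping that vanishing of $[\alpha]$ for $\cO(-1)$ is equivalent to all three atoms of $\P^2$ being non-twisted; this follows since the twisting classes of $\cO(-1)$ and $\cO(-2)$ are $[\alpha]$ and $[\alpha]^2$ with $[\alpha]^3=1$ by Example~\ref{example_cocycleforPn}, so one vanishes if and only if the other does.
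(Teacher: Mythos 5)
Your proposal is correct and in substance it is the same argument as the paper's: the paper also reduces to $X = \P^2$ by birational invariance of both properties (citing Lemma~\ref{lem:blowup-perm} directly) and then invokes Example~\ref{ex:atoms-P2-linear}. Your routing through Proposition~\ref{prop:bir-rich} for part (1) and Proposition~\ref{prop_essential} for part (2) is just a repackaging, since both of those results rest on the same blow-up Lemma~\ref{lem:blowup-perm}; your extra bookkeeping that $[\alpha]=1$ iff $[\alpha]^2=1$ (given $[\alpha]^3=1$) is also correct and matches the remark at the end of Example~\ref{ex:atoms-P2-linear}.
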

\begin{proof}
By Lemma \ref{lem:blowup-perm} 
these properties are birational invariant.
Thus we can assume $X = \P^2$,
in which case \eqref{it:Glin--perm} and \eqref{it:Glin--non-twisted} are true by Example \ref{ex:atoms-P2-linear}.
\end{proof}

\begin{example}\label{ex:dP5-rigid}
    We describe an example of a $G$-minimal del Pezzo surface that has only non-twisted permutation atoms but is not projectively $G$-linearisable. 
    Let $X$ be a del Pezzo surface
    of degree $5$ over an algebraically closed field $\kk$ of characteristic zero. 
    We have $\Aut(X) = S_5$ and there are $5$ subgroups $G \subset S_5$, up to conjugation,  such that $\rho^G(X) = 1$. The latter condition is equivalent to $G$ acting transitively on the $5$ conic bundle structures on $X$.
    The possible subgroups are listed in \cite[Theorem 6.4]{DolgachevIskovskikh}: 
    \[
    S_5, \; A_5, \; F, \; D_5, \; C_5.
    \]
    Here $F$ is the order $20$ subgroup of $S_5$ generated by $(1\,2\,3\,4\,5)$ and $(2\,3\,5\,4)$. Let $G$ be one of these groups.
    
    By Proposition \ref{prop:bir-rich2} all atoms of $X$
    are non-twisted of permutation type.
    In particular, $X$ is projectively $G$-linearisable if and only if it is $G$-linearisable  by
    Proposition~\ref{prop:Glin-trivial-atoms}.
        
    $G$-linearisability of $X$ is well-understood, see \cite[\S4.2]{PSY} and the references therein.
    It turns out that~$X$ is not
    $G$-linearisable with respect to the first three groups $S_5$, $A_5$, $F$ and is 
    $G$-linearisable with respect to the last two groups $D_5$ and $C_5$.
\end{example}

\begin{conjecture}\label{conj:triv-stab-rat}
If a $G$-surface $X$ has only non-twisted permutation atoms, then $X$ is stably $G$-linearisable.
\end{conjecture}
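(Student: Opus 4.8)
The plan is to deduce stable $G$-linearisability from the two numerical consequences of the hypothesis recorded in Proposition~\ref{ref:prop-atoms-obstructions}: that $\Pic(X)$ is a stably permutation $G$-module and that $\Am(X,H)=0$ for every subgroup $H\subset G$. First I would reduce to the $G$-minimal case. Since blowing up a finite $G$-orbit only adds a non-twisted permutation atom (Lemma~\ref{lem:blowup-perm}), the hypothesis ``all atoms are of non-twisted permutation type'' is a $G$-birational invariant; and stable $G$-linearisability is clearly $G$-birational invariant as well, because $X\times\P^m$ and $X'\times\P^m$ are $G$-birational whenever $X$ and $X'$ are. By Proposition~\ref{prop:bir-rich} the surface $X$ is $G$-birationally rich, so by Corollary~\ref{cor:bir-rich-models} we may assume $X$ is one of the five $G$-minimal models $\cD_9$, $\cD_8$, $\cD_6$, $\cD_5$, $\cC_8$.

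Next I would dispatch the four toric models $\cD_9$, $\cD_8$, $\cD_6$, $\cC_8$ (respectively $\P^2$, $\P^1\times\P^1$, the degree $6$ del Pezzo surface, and $\bF_n$, all toric). For $\cD_9=\P^2$ the non-twisted hypothesis means, by Example~\ref{ex:atoms-P2-linear}, that the action lifts along $\GL_3(\kk)\to\PGL_3(\kk)$, so $X$ is already $G$-linearisable. In the remaining toric cases the vanishing of $\Am(X,G)$ supplies $G$-linearisations of a generating set of toric divisor classes, exactly as extracted in the proof of Proposition~\ref{ref:prop-atoms-obstructions}\eqref{it:prop-atoms--non-twisted} ($\cO(-1,-1)$ for $\cD_8$, and $\cO(-s),\cO(-h)$ for $\cC_8=\bF_n$). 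These linearisations present the Cox ring as a $G$-equivariant polynomial ring on which $G$ acts linearly (by a permutation of the variables twisted by characters); since $\Pic(X)$ is stably permutation, the Néron--Severi torus $T_{\mathrm{NS}}=\Spec\kk[\Pic(X)]$ is quasi-trivial and hence $G$-equivariantly rational, and the universal torsor is $G$-equivariantly open in a $G$-representation. Stable $G$-linearisability in these four cases should then follow from the standard lattice-theoretic theory of stable rationality for tori and toric varieties.

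The essential difficulty is the case $\cD_5$, a $G$-minimal quintic del Pezzo surface. By Proposition~\ref{prop:bir-rich2} its atoms are automatically non-twisted, yet by Example~\ref{ex:dP5-rigid} it need not even be projectively $G$-linearisable (for instance for $G=S_5$, $A_5$, $F$), so a genuinely stable construction is unavoidable. Here I would use that the Cox ring of the quintic del Pezzo surface is the Plücker coordinate ring of $\Gr(2,5)$, with its ten generators indexed by the ten $(-1)$-curves and carrying the natural linear $G$-action through $G\hookrightarrow S_5\cong\Aut(X)$ acting on $\kk^5$. Because $\Pic(X)$ is a permutation module, $T_{\mathrm{NS}}$ is quasi-trivial and $G$-equivariantly rational, and the universal torsor $\cT_X$ is a $T_{\mathrm{NS}}$-torsor over $X$ that embeds $G$-equivariantly as an open subset of the affine cone over $\Gr(2,5)$. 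The no-name method then identifies $X\times T_{\mathrm{NS}}$ birationally with $\cT_X$, reducing the problem to linearising the $G$-action on the Grassmannian cone and on the quasi-trivial torus $T_{\mathrm{NS}}$.

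The hard part will be making the torsor step $G$-equivariant and stable: one must show that the class of $\cT_X$ in the appropriate equivariant cohomology group $\rH^1(-,T_{\mathrm{NS}})^G$ can be killed after multiplying by a projective space, and it is precisely here that the stably permutation structure of $\Pic(X)$ and the vanishing of $\Am(X,H)$ for all $H$ must be combined---the former trivialising the torus stably, the latter providing the $G$-linearisations needed to descend the trivialisation to $X$. I expect the cleanest outcome to be a single uniform statement subsuming all five cases: a $G$-rational surface with $\Pic(X)$ stably permutation and $\Am(X,H)=0$ for every $H\subset G$ is stably $G$-linearisable. The quintic del Pezzo surface with its $S_5$-symmetry is the decisive test of whether this equivariant torsor-triviality can indeed be achieved after stabilisation.
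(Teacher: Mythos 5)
You should first be aware that the statement you set out to prove is stated in the paper as Conjecture~\ref{conj:triv-stab-rat}, not as a theorem: the paper offers only evidence for it (vanishing of the Bogomolov--Prokhorov and Amitsur obstructions via Proposition~\ref{ref:prop-atoms-obstructions}, plus case-by-case citations), and explicitly records that the $\cD_6$ case is only partially known (\cite{HT-torsors}, \cite{KT-toric}). Your reduction steps are sound and agree with the paper's own discussion: birational invariance of the hypothesis (Lemma~\ref{lem:blowup-perm}) and of stable $G$-linearisability, then Proposition~\ref{prop:bir-rich} and Corollary~\ref{cor:bir-rich-models} to reduce to the five minimal models, with $\cD_9$ settled by Example~\ref{ex:atoms-P2-linear}. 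But what follows is a strategy, not a proof, and you concede this yourself: your final paragraph admits that ``the hard part'' --- killing the class of the universal torsor in the appropriate equivariant cohomology after stabilisation --- remains to be done. That step is not a technical loose end; it is precisely the open content of the conjecture.

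Concretely, two things go wrong. First, the claim that the cases $\cD_8$, $\cD_6$, $\cC_8$ ``follow from the standard lattice-theoretic theory of stable rationality for tori and toric varieties'' is unjustified in the equivariant geometric setting. Over a non-closed field, quasi-triviality of the N\'eron--Severi torus kills $\rH^1$ by Hilbert 90 and Shapiro's lemma, which is what makes the torsor method work arithmetically; for a finite group $G$ acting on a variety over an algebraically closed field there is no such vanishing, and neither the existence of a $G$-linearised universal torsor nor its stable $G$-equivariant triviality is controlled by the two conditions you invoke ($\Pic(X)$ stably permutation and $\Am(X,H)=0$ for all $H\subset G$). Those conditions are exactly the conclusion of Proposition~\ref{ref:prop-atoms-obstructions}, which the paper presents as vanishing of the \emph{known} obstructions, i.e.\ as necessary conditions, not sufficient ones; if they sufficed, the $\cD_6$ case would not be open in the very literature the paper cites. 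Moreover, even granting $G$-linearisations of the individual generating line bundles, assembling them into a $G$-action on the Cox ring normalising the torus action is an extension problem whose obstruction your sketch never addresses; and ``stably permutation'' does not mean quasi-trivial, so $T_{\mathrm{NS}}$ is only stably quasi-trivial, weakening every step where you use quasi-triviality on the nose. Second, your difficulty assessment is inverted relative to the state of the art: the $\cD_5$ case you single out as ``the decisive test'' is in fact known, by \cite[Proposition 4.7]{Prokhorov-fields} (the reference the paper gives), whereas the genuinely open case is $\cD_6$, which your proposal passes over in one sentence. In sum, the proposal reproduces the paper's reductions and evidence, appends a plausible but unproven torsor programme, and leaves open exactly the case that makes this statement a conjecture rather than a theorem.
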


Let us explain the evidence for this conjecture.
First note that by Proposition \ref{ref:prop-atoms-obstructions}, the standard obstructions to stable $G$-linearisability, namely the Bogomolov--Prokhorov obstruction and the Amitsur obstruction, both vanish.
By the classification of birationally rich surfaces (see Corollary \ref{cor:bir-rich-models}), the conjecture needs to be checked in the cases $\cD_9$, $\cD_8$, $\cD_6$, $\cD_5$, and $\cC_8$, and it indeed holds in many cases. 
In the~$\cD_9$ case, $X$ is linearisable by Example \ref{ex:atoms-P2-linear}.
See \cite[Proposition 12]{BvBT-cubics} for the $\cD_8$ case.
Some partial results are known for the $\cD_6$ case \cite{HT-torsors}, \cite{KT-toric}.
The result is true by
\cite[Proposition 4.7]{Prokhorov-fields} in the~$\cD_5$ case.

\subsection{Arithmetic case: rationality and birationality over non-closed fields}
\label{section_54arithmcase}
In this section, we consider the arithmetic case, that is
we work over a perfect field $\kk$ with the absolute Galois group  $G = \Gal(\ol{\kk}/\kk)$.

Atomic theory for surfaces over $\kk$ is constructed in~\S\ref{section_descent_for_AT} using descent, see Theorem~\ref{thm:atoms-surfaces2}. That is, we consider $X_{\bar\kk}$ with the natural $G$-action and use $G$-atomic theory on $X_{\bar\kk}$. The atoms of $X$ are then descended from $G$-atoms of 
$X_{\ol{\kk}}$ using Proposition~\ref{prop_SODdescent}. 
In particular, a permutation type $G$-atom on $X_{\ol{\kk}}$ generated by exceptional objects $E_1,\ldots,E_n$ descends to an atom of $X$ generated by a single weakly exceptional object $E$ such that the scalar extension of $E$ is $(E_1\oplus\cdots\oplus E_n)^{\oplus r}$ for some $r$. The atom is non-twisted if and only if $r=1$, see Proposition~\ref{prop_alphaD}.

\begin{definition}
We say that an atom $\bA$ of a surface $X$ over $\kk$ is \emph{small} if $\bA$ is generated by a single weakly exceptional object, and is \emph{big} otherwise.     
\end{definition}

Let $\bA$ be a small atom and  $E \in \bA$ be its weakly exceptional generator (which is unique up to shift). 
Then $D_\bA := \End(E)$ is a finite-dimensional division $\kk$-algebra, and we have an equivalence
\begin{equation}\label{eq:bA-DA}
\bA \cong \Db(\modd D_\bA).
\end{equation}
The centre  $L_\bA$ of $D_\bA$ is a finite field extension of~$\kk$.  We denote by $\ba_\bA=[D_\bA] \in \Br(L_\bA)$ the Brauer class of the central simple algebra $D_\bA$. In this section we will distinguish between a Brauer class $\ba\in\Br(L)$ and its second cohomology class  $[\alpha]\in \rH^2(\Gal(\bar L/L),\bar L^*)$. In particular, we will write the Brauer group additively, so that its trivial element is $0$.

By Proposition \ref{prop_descentforblocks}, small atoms of $X$ are exactly the ones descended from $G$-atoms of $X_{\ol{\kk}}$  of permutation type. By Proposition~\ref{prop_alphaD}, the twisting cohomology class $[\alpha]\in\rH^2(H,\bar\kk^*)$ of the 
corresponding $G$-atom of permutation type matches the Brauer class $\ba_\bA$.

Recall that the \emph{index} $\ind([A])$ of the class $[A] \in \Br(L)$ of a central simple $L$-algebra $A$ (and the index of $A$) is the degree
of the unique division algebra  which is Morita-equivalent to $A$. 

The equivalence class of a small atom
$\bA$ is determined by the isomorphism class of the division algebra~$D_{\bA}$ (using \eqref{eq:bA-DA}), hence by the pair $(L_{\bA}, \ba_\bA)$. We will often denote such an atom as $(L_{\bA}, \ba_\bA)$ or $\Db(L_{\bA}, \ba_\bA)$. Further, 
we associate two positive integers to $\bA$:
 $d_1=[L_\bA:\kk]$ and  $d_2=\deg_{L_\bA}D_{\bA}=\ind \ba_\bA$. We will say that such atom is of type $(d_1,d_2)$.

Surfaces with only small atoms have the following characterization. 

\begin{proposition}\label{prop:bir-rich-non-closed}
A surface $X$ is birationally rich if and only if all its atoms are small.    
\end{proposition}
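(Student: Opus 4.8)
The plan is to reduce the statement to its $G$-equivariant counterpart, Proposition~\ref{prop:bir-rich}, by Galois descent. Write $G = \Gal(\bar\kk/\kk)$ and let $p\colon X_{\bar\kk}\to X$ be the natural morphism. Recall from Theorem~\ref{thm:atoms-surfaces2} (via Proposition~\ref{prop_ATdescent}) that the atomic theory for a surface $X$ over $\kk$ is \emph{defined} as the descent of the $G$-atomic theory for $X_{\bar\kk}$: an SOD $\Db(X) = \langle \bA_1, \ldots, \bA_n\rangle$ is atomic precisely when $\Db(X_{\bar\kk}) = \langle p^*\bA_1, \ldots, p^*\bA_n\rangle$ is a $G$-atomic SOD. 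By Proposition~\ref{prop_SODdescent} the assignment $\bA \mapsto p^*\bA$ is then a bijection between the atoms of $X$ and the $G$-atoms of $X_{\bar\kk}$, so the whole question transfers to the base change.

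First I would record the descent dictionary matching the two notions of smallness, as already indicated in the discussion preceding the proposition. By Proposition~\ref{prop_descentforblocks}(1), an atom $\bA$ of $X$ is generated by a single weakly exceptional object if and only if $p^*\bA$ is generated by a $G$-transitive block of weakly exceptional objects; since weakly exceptional and exceptional coincide over the algebraically closed field $\bar\kk$, the latter says exactly that $p^*\bA$ is a $G$-atom of permutation type in the sense of Definition~\ref{def_permtype}. Feeding this through the bijection above, all atoms of $X$ are small if and only if all $G$-atoms of $X_{\bar\kk}$ are of permutation type.

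Next I would translate birational richness across the base change: $X$ is birationally rich over $\kk$ if and only if $X_{\bar\kk}$, equipped with its tautological Galois $G$-action, is $G$-birationally rich in the sense of Definition~\ref{def:BirationallyRich}. This is the standard dictionary between arithmetic birational geometry over $\kk$ and $G$-equivariant birational geometry of $X_{\bar\kk}$: $\kk$-birational maps correspond to $G$-equivariant birational maps of the base changes, and a Mori fibre space over $\kk$ of degree $\ge 5$ (a del Pezzo surface or a conic bundle) base changes to, and conversely descends from, a $G$-Mori fibre space of the same degree. Granting this, the proposition follows by chaining the three equivalences
\[
X \text{ birationally rich} \iff X_{\bar\kk} \text{ $G$-birationally rich} \iff \text{all $G$-atoms of } X_{\bar\kk} \text{ are of permutation type} \iff \text{all atoms of } X \text{ are small},
\]
where the middle equivalence is Proposition~\ref{prop:bir-rich} and the outer two are the dictionaries just described. (As a consistency check, smallness of all atoms forces $X_{\bar\kk}$ to be $G$-birationally rich and hence rational, so $X$ is automatically geometrically rational, matching the hypothesis built into birational richness.)

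The routine content here is genuinely light once the descent machinery of Section~\ref{section_descent_for_SODs} and Proposition~\ref{prop:bir-rich} are in hand; most of the work is bookkeeping with the small/permutation correspondence. The step demanding the most care is the birational-richness translation: one must verify that a $G$-Mori fibre space witnessing $G$-birational richness of $X_{\bar\kk}$ genuinely \emph{descends} to a Mori fibre space of degree $\ge 5$ over $\kk$, together with its birational map, rather than merely existing over $\bar\kk$. This is where Galois descent for Mori fibre spaces and for birational maps between them is invoked, and it is the only point where anything beyond the formal bijection of atoms is needed.
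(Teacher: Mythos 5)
Your proposal is correct and follows essentially the same route as the paper: the paper's proof is precisely the combination of Proposition~\ref{prop:bir-rich} (the equivariant statement over $\bar\kk$) with the descent dictionary of Proposition~\ref{prop_descentforblocks}, which matches small atoms of $X$ with permutation-type $G$-atoms of $X_{\bar\kk}$. Your additional spelling-out of the atom bijection via Proposition~\ref{prop_SODdescent} and of the standard Galois correspondence between $\kk$-birational geometry and $G$-equivariant birational geometry of $X_{\bar\kk}$ is exactly the implicit content of the paper's one-line proof.
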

\begin{proof}
This follows from Proposition \ref{prop:bir-rich} 
and Proposition \ref{prop_descentforblocks}.
\end{proof}

Recall that a surface $X$ over $\kk$ is \emph{toric}
if there is a torus $T$ defined over $\kk$ (that is, an algebraic group~$T$ over $\kk$ with $T_{\bar\kk}\cong (\mathbb{G}_m)^2$) acting on $X$ with a dense open orbit $U \subset X$, and the action on $U$ is free.
By \cite[Theorem 5.1]{Duncan-toric}
this is equivalent to an \emph{a priori} weaker statement that $X_{\ol{\kk}}$ is toric.

\begin{lemma}\label{lem:bir-rich-toric}     
A surface $X$ is birationally rich if and only if it  admits a toric model.    
If this is the case, we can choose a toric model of type $\cD_9$ (a Severi--Brauer surface), $\cD_8$, $\cD_6$, or $C \times C'$ where $C$ and $C'$ 
are conics.
\end{lemma}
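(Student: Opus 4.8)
The plan is to prove both implications, reading ``$X$ admits a toric model'' as ``$X$ is birational to a smooth projective toric surface over $\kk$''. For the easy implication ($\Leftarrow$), suppose $X$ is birational to a toric surface $T$. Then $T_{\bar\kk}$ is toric, hence rational, so $X$ is geometrically rational. Running the equivariant MMP on $T$ stays inside toric surfaces, since the toric MMP contracts Galois-stable collections of torus-invariant $(-1)$-curves; thus $T$ admits a birational morphism onto a $\kk$-minimal toric surface $T_0$, whose base change is $\P^2$ or a Hirzebruch surface $\bF_n$, of geometric degree $9$ or $8$. Consequently $T_0$ is a Mori fibre space which is a del Pezzo surface of degree $8$ or $9$, or a conic bundle of degree $8$, and each of these is birationally rich by Proposition~\ref{prop_list}. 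As birational richness depends only on the birational class (the ``moreover'' part of Proposition~\ref{prop_list}), $X$ is birationally rich.

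For the forward implication I would first reduce to a minimal model. By Corollary~\ref{cor:bir-rich-models}, a birationally rich $X$ is birational to a $\kk$-minimal surface $Y$ of type $\cD_9$, $\cD_8$, $\cD_6$, $\cD_5$, or $\cC_8$. The del Pezzo types of degree $\ge 6$ are handled at once: a Severi--Brauer surface ($\cD_9$), a form of $\P^1\times\P^1$ with $\rho^G=1$ ($\cD_8$), and a degree $6$ del Pezzo form ($\cD_6$) all have toric base change (respectively $\P^2$, $\P^1\times\P^1$, and the hexagonal del Pezzo surface of degree $6$), hence are themselves toric by \cite[Theorem 5.1]{Duncan-toric}; each is already one of the four listed types. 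For $\cD_5$ I would instead exit the birational class: a del Pezzo surface of degree $5$ over $\kk$ always carries a $\kk$-rational point and is therefore $\kk$-rational, so it is birational to $\P^2$, a toric surface of type $\cD_9$.

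The remaining and hardest case is $\cC_8$, a geometrically rational Mori conic bundle of degree $8$, that is, a $\kk$-form of some $\bF_n$ with $n\neq 1$ (Remark~\ref{rem:non-minimal-C}). Such a surface is toric by \cite{Duncan-toric} since $\bF_n$ is toric, and by performing elementary transformations along its fibres it is birational to a form of $\P^1\times\P^1$, i.e.\ to a del Pezzo surface of degree $8$. To pin down the resulting type I would invoke the biregular classification of rank $\le 2$ toric (equivalently, minimal rational conic bundle) surfaces of \cite{CTKM}, \cite{Blunk}, \cite{Trepalin}: a form of $\P^1\times\P^1$ with $\rho^G=1$, where Galois interchanges the two rulings, is the twisted type $\cD_8$, while one with $\rho^G=2$, where both rulings are defined over $\kk$, splits as a product $C\times C'$ of two conics via the birational morphism $(p_1,p_2)$ to the product of the two bases. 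In either case one lands in the list $\cD_9,\cD_8,\cD_6,C\times C'$, which completes the reduction.

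I expect the main obstacle to be precisely this $\cC_8$ analysis: controlling the birational simplification of a degree $8$ conic bundle to a product (or twisted) quadric surface while keeping the elementary transformations Galois-equivariant, and then matching the outcome against the cited classification, carefully tracking the distinction between the $\rho^G=1$ and $\rho^G=2$ forms of $\P^1\times\P^1$. By contrast, the two auxiliary inputs --- rationality of degree $5$ del Pezzo surfaces and Duncan's toric descent criterion --- slot in cleanly; the real bookkeeping lies in organizing all the minimal models produced by Corollary~\ref{cor:bir-rich-models} into exactly the four toric types, which is where the biregular classifications of \cite{CTKM}, \cite{Blunk}, \cite{Trepalin} are essential.
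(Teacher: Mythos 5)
Your overall strategy coincides with the paper's: reduce via Corollary~\ref{cor:bir-rich-models} to the minimal types, use \cite[Theorem 5.1]{Duncan-toric} for $\cD_9$, $\cD_8$, $\cD_6$, dispose of $\cD_5$ by the existence of a rational point (\cite{SwinnertonDyer-dP5} plus \cite[Theorem 3.15]{Manin-rational}), and handle $\cC_8$ by elementary transformations down to a form of $\bF_0$. The problem is that the step you yourself flag as ``the main obstacle'' --- the $\cC_8$ case --- is exactly the step you leave as an unproved assertion, and it is the only part of the lemma that requires real work. Saying ``by performing elementary transformations along its fibres it is birational to a form of $\P^1\times\P^1$'' begs the question of \emph{which closed points exist} to perform those transformations over a non-closed field, and of what happens for odd $n$. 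The paper's proof supplies precisely this: write $X \to C$ for the twisted form of $\bF_n$ over a conic $C$. If $n$ is odd, $X$ is already rational by \cite[Theorem 1.10(i)]{Manin-rational}, so it has a $\cD_9$ model and no transformations are needed. If $n>0$ is even and $X$ is nonrational, then $X \cong \P_C(\cO \oplus \cO(n))$, and one blows up a closed point \emph{of degree two on the positive section} --- such a point exists because that section is a copy of the conic $C$ --- obtaining a type II Sarkisov link that replaces $n$ by $n-2$; iterating terminates in a form of $\bF_0 \to C$, which is $C\times C'$ by \cite[Lemma 7.3]{ShramovVologodsky}. Note also that since the conic bundle structure over $C$ is preserved by these transformations, one ruling class is automatically Galois-invariant, hence so is the other; your dichotomy $\rho^G=1$ versus $\rho^G=2$ therefore collapses to the product case, and the $\cD_8$ branch never arises at this point of the argument.

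Two smaller remarks. In your converse direction (which the paper treats as immediate), the claim that a $\kk$-minimal toric surface has base change $\P^2$ or $\bF_n$ is false: a del Pezzo surface of degree $6$ with $\rho^G=1$ is a minimal toric surface. This does not damage your conclusion, since degree $6$ Mori fibre spaces are still birationally rich, but the stated dichotomy should be ``degree $9$, $8$, or $6$''. Finally, your appeal to the biregular classifications of \cite{CTKM}, \cite{Blunk}, \cite{Trepalin} is not needed for this lemma (the paper reserves them for Theorem~\ref{thm:almost-same-atoms-bir-rich}); the only external input required in the $\cC_8$ case is the Manin and Shramov--Vologodsky statements above.
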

\begin{proof}
Let $X$ be a minimal birationally rich surface.
By Corollary \ref{cor:bir-rich-models},
the $G$-surface $X_{\ol{\kk}}$
can be of type $\cD_9$, $\cD_8$, $\cD_6$, $\cD_5$, or $\cC_8$.
In the cases $\cD_9$, $\cD_8$, or $\cD_6$ the surface $X_{\ol{\kk}}$ is toric, hence $X$ is toric by the above remark.
In the $\cD_5$ case $X$ has a rational point \cite{SwinnertonDyer-dP5}, so $X$ is  rational  by \cite[Theorem 3.15]{Manin-rational}, hence $X$ has  a $\cD_9$ model. It remains to check that in the $\cC_8$ case
$X$ is birational to a product of two conics. 
Let $X \to C$ be a twisted form of the Hirzebruch surface $\bF_n$. 
If $n$ is odd, then $X$ is a rational surface \cite[Theorem 1.10(i)]{Manin-rational}.
If $n > 0$ is even, then $X$ can be nonrational in which case $X \cong \P_C(\cO \oplus \cO(n))$ where $C$ is a nonrational conic
\cite[Theorem 1.10(ii)]{Manin-rational}.
In this case, blowing up a  closed point of degree two
on the positive section of $X$, we obtain
a Sarkisov link of type II which replaces $n$ by $n-2$.
Eventually, we get a twisted form of the conic bundle $\bF_0 = \P^1 \times \P^1\to \P^1$  in which case $X \cong C \times C'$, see e.g. \cite[Lemma 7.3]{ShramovVologodsky}.
\end{proof}

\begin{remark}
\label{remark_index-pullback}
Small atoms are often related to  morphisms to  Severi--Brauer varieties which can be described in terms of invariant divisor classes \cite{Liedtke-SB}.
Namely, assume that a small atom $\bA$ of a surface $X$ over $\kk$ descends from a permutation-type atom of $X_{\bar \kk}$, which is generated by the $G$-orbit of a line bundle $\cO(D)$. Assume further that $\cO(D)$ is the pull-back of $\cO(k)$, where $k=\pm 1$, under a morphism $f\colon X_{\bar \kk}\xrightarrow{|kD|} \P^{n-1}_{\bar \kk}$. Then the index  of the corresponding Brauer class $\ba\in\Br(L_\bA)$ divides $n$. Indeed, let 
$H\subset G$ be the stabiliser of $\cO(D)$, so that $(\bar\kk)^H\cong L_\bA$. Then $f$ is $H$-equivariant for some Galois $H$-action on $\P^{n-1}_{\bar\kk}$, and $\ba$ matches the twisting class of the $H$-invariant object $\cO(k)$ by Lemma~\ref{lemma_alphaalpha}(5). By Example~\ref{example_SB}, the index of $\ba$ divides $n$.
\end{remark}

\begin{example}\label{ex:bir-rich-non-closed}
Let us describe the small atoms of the toric models from Lemma~\ref{lem:bir-rich-toric} explicitly. 
Here we will use the following notation: we write $(L_m, \ba_n)$ to represent the atom $\bA$, where the field $L_m=L_{\bA}$ has degree $m$ over $\kk$ and the index of the class $\ba_n=\ba_{\bA}$ divides~$n$. 
We refer to the formulas from Definition~\ref{def_standard} for the corresponding $G$-atoms of 
the surfaces $X_{\bar\kk}$. Note that the degree $[L_{\bA}:\kk]$ is the number of objects in the corresponding $G$-block,
see Proposition~\ref{prop_alphaD}.
\begin{enumerate}
    \item Let $X$ be a Severi--Brauer surface corresponding to a central simple algebra $A/\kk$ with the Brauer class $\ba_3$.
    Then the atoms of $X$ are  $(\kk, 0)$, $(\kk, \ba_3)$, $(\kk, 2\ba_3)$ (the index of $\ba_3$ divides $3$ by Example~\ref{example_SB}), see also Example~\ref{example_cocycleforPn}.
    \item Let $X$ be a minimal del Pezzo surface of degree $8$. Then the atoms of $X$ are $(\kk, 0)$, 
    $(L_2, \ba_2)$, $(\kk, \ba_4)$. Here the index of $\ba_2$ divides $2$ by Remark~\ref{remark_index-pullback} since the corresponding atom descends from the $G$-block $\langle\cO(-h_1), \cO(-h_2)\rangle$ and $|h_1|$ defines a map $X_{\bar\kk}\cong \P^1\times \P^1\to \P^1$. Likewise, the index of $\ba_4$ divides $4$ by Remark~\ref{remark_index-pullback} since this atom descends from the $G$-block $\langle\cO(-h_1-h_2)\rangle$ and $|h_1+h_2|$ defines a map $X_{\bar\kk}\cong \P^1\times \P^1\to \P^3$.     
    Note that $X$ is the Weil restriction:
    \begin{equation}
    \label{eq:dP8-RLK}
    X \cong \rR_{L_2/\kk} C
    \end{equation}
    where  $C$ is the conic corresponding to $\ba_2 \in \Br(L_2)$, see \cite[Proof of Proposition 5.3]{CTKM} or 
    \cite[Lemma 7.3]{ShramovVologodsky}.
    Therefore, if $\ba_2=0$ then $X \cong \rR_{L_2/\kk} \P^1_{L_2}\cong \P^1_{\kk}\times \P^1_{\kk}$ and $\ba_4=0$. 
    \item Let $X$ be a minimal del Pezzo surface of degree $6$. Then its atoms are
    $(\kk, 0)$,
    $(L_2, \ba_3)$, 
    $(L_3, \ba_2)$, which are descended from $\langle \cO\rangle$, $\langle \cO(-H_1),\cO(-H_2)\rangle$, $\langle \cO(-h_1),\cO(-h_2),\cO(-h_3)\rangle$.
    Hence $X_{L_2}$ admits a contraction to a Severi--Brauer surface as in Remark~\ref{remark_index-pullback}, and $\ba_3$ is the corresponding Brauer class. Similarly, $X_{L_3}$ admits a conic bundle structure over a conic, whose Brauer class is $\ba_2$. The two atoms
    $(L_2, \ba_3)$, 
    $(L_3, \ba_2)$
    are not independent, see Example \ref{ex:dP6-splittings}.
    \item Let $X = C \times C'$ where the Brauer classes of the conics are $\ba_2$ and $\ba_2'$. Then the atoms of $X$ are $(\kk, 0)$, 
    $(\kk, \ba_2)$,
    $(\kk, \ba_2')$,
    $(\kk, \ba_2+\ba_2')$.
\end{enumerate}
\end{example}

\begin{definition}
\label{def_trivial-atom}
A small atom $\bA$ of $X$ is \emph{trivial} if the corresponding  $G$-atom of $X_{\bar\kk}$ is non-twisted of permutation type. By Proposition~\ref{prop_alphaD}, an atom $\bA$ is trivial if and only if  
the division algebra $D_{\bA}$ is commutative: $D_\bA = L_\bA$, or equivalently if $[D_\bA] = 0 \in \Br(L_{\bA})$.   
\end{definition}

We have the following immediate corollary of Proposition~\ref{prop_essential}.

\begin{lemma}\label{lem:bir-non-closed-atoms}
If $X$ and $Y$ are birational, then they have the same atoms up to removing trivial atoms.
\end{lemma}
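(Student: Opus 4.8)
The plan is to reduce the statement over $\kk$ to the geometric statement of Proposition~\ref{prop_essential} over $\bar\kk$ by Galois descent. First I would note that a $\kk$-birational map $X \dto Y$ extends, after scalar extension, to a $G$-birational map $X_{\bar\kk} \dto Y_{\bar\kk}$ for $G = \Gal(\bar\kk/\kk)$. Since the atomic theory over $\kk$ is by construction (Theorem~\ref{thm:atoms-surfaces2}) obtained by descending the $G$-atomic theory over $\bar\kk$, the atoms of $X$ are precisely the images, under the bijection of Proposition~\ref{prop_SODdescent}, of the $G$-atoms of $X_{\bar\kk}$; likewise for $Y$.

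Next I would verify that this descent is compatible with equivalences and with multiplicities. By Proposition~\ref{prop_descent}, a $G$-invariant atom $\bB \subset \Db(X_{\bar\kk})$ descends to $\bA = \bB^G \subset \Db(X)$, and conversely $\bA_{\bar\kk} \cong \bB$. A $G$-equivariant equivalence between two $G$-atoms induces an equivalence of their categories of $G$-equivariant objects, and conversely an equivalence of descended atoms extends by scalars to a $G$-equivariant equivalence of the corresponding $G$-invariant subcategories. Hence descent yields a multiplicity-preserving bijection between equivalence classes of $G$-atoms of $X_{\bar\kk}$ and equivalence classes of atoms of $X$.

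It then remains to match the two classes of atoms being discarded. By Definition~\ref{def_trivial-atom}, together with Propositions~\ref{prop_descentforblocks} and~\ref{prop_alphaD}, an atom $\bA$ of $X$ is trivial exactly when the corresponding $G$-atom of $X_{\bar\kk}$ is non-twisted of permutation type. Therefore removing all trivial atoms from the multiset of atoms of $X$ corresponds precisely to removing all non-twisted permutation type $G$-atoms from the multiset of $G$-atoms of $X_{\bar\kk}$, and symmetrically for $Y$. Applying Proposition~\ref{prop_essential} to the $G$-birational surfaces $X_{\bar\kk}$ and $Y_{\bar\kk}$ shows that these reduced multisets of $G$-atoms coincide, and transporting the equality back down through the descent bijection gives the claim.

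The only point requiring genuine care — and the step I expect to be the main, if minor, obstacle — is the second paragraph: confirming that the descent correspondence is a true bijection on isomorphism classes of atoms respecting multiplicities, i.e. that $G$-equivalence upstairs matches equivalence downstairs. This is where one must invoke Proposition~\ref{prop_descent} in both directions ($\bA \cong \bB^G$ and $\bB \cong \bA_{\bar\kk}$), rather than treating the corollary as purely formal.
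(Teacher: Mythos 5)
Your proof is correct and follows essentially the same route as the paper: there the lemma is stated as an immediate corollary of Proposition~\ref{prop_essential}, relying on exactly the descent setup you spell out (atoms of $X$ are descents of $G$-atoms of $X_{\bar\kk}$ via Proposition~\ref{prop_SODdescent}, with trivial atoms corresponding to non-twisted permutation type $G$-atoms by Definition~\ref{def_trivial-atom} and Proposition~\ref{prop_alphaD}). The equivalence-class bookkeeping in your second paragraph is the only part the paper leaves implicit, and your treatment of it — only the direction "$G$-equivalence upstairs implies equivalence of descents" is actually needed — is sound.
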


It turns out that, for birationally rich surfaces, the converse to Lemma \ref{lem:bir-non-closed-atoms} is also true, see Theorem~\ref{thm:almost-same-atoms-bir-rich} below, but is harder to prove.
Our next goal is to prove the following particular case (Theorem \ref{thm:rational-atoms}): surfaces with trivial atoms are rational.
We first need some preparation.

Recall that the \emph{index} $\ind(X)$ of a projective variety $X$ is defined as the greatest common divisor of degrees of closed points of $X$, or equivalently as the unique positive integer characterized by
\[
\Ima(\deg\colon \CH_0(X) \to \Z) = \ind(X) \cdot \Z.
\]
If $X$ has a rational point, then $\ind(X) = 1$, but the converse is   false in general.
We need the following 
well-known result, which can be checked case-by-case for the surfaces given in Lemma \ref{lem:bir-rich-toric}, see e.g. \cite{AuelBernardara}. We provide a short uniform proof based on the properties of tori.

\begin{lemma}\label{lem:bir-rich-rationality}
Let $X$ be a birationally rich surface.
The following conditions are equivalent:
\begin{enumerate}
    \item[(a)] $X$ is rational,
    \item[(b)] $X$ is stably rational,
    \item[(c)] $X(\kk) \ne \emptyset$,
    \item[(d)] $\mathrm{ind}(X) = 1$.
\end{enumerate}
\end{lemma}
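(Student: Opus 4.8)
The plan is to prove the cyclic chain of implications
$\text{(a)} \Rightarrow \text{(b)} \Rightarrow \text{(d)} \Rightarrow \text{(c)} \Rightarrow \text{(a)}$, or rather to split the argument into the easy implications that hold for all varieties and the two substantive implications $\text{(d)} \Rightarrow \text{(c)}$ and $\text{(c)} \Rightarrow \text{(a)}$ which genuinely use that $X$ is birationally rich. The implications $\text{(a)} \Rightarrow \text{(b)}$ and $\text{(c)} \Rightarrow \text{(d)}$ are formal: a rational surface is stably rational, and a $\kk$-point gives a closed point of degree $1$ so $\ind(X)=1$. The implication $\text{(a)} \Rightarrow \text{(c)}$ is also immediate since $\P^2$ has rational points and rationality preserves the existence of $\kk$-points on smooth projective surfaces (a birational map between smooth projective surfaces is defined away from finitely many points, and one can move a general rational point into the locus where the inverse map is defined). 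So the real content is to close the loop, for which I would show $\text{(b)} \Rightarrow \text{(d)}$ and then $\text{(d)} \Rightarrow \text{(a)}$ (which subsumes $\text{(d)} \Rightarrow \text{(c)} \Rightarrow \text{(a)}$).

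For $\text{(b)} \Rightarrow \text{(d)}$, I would use that the index is controlled by $\CH_0$ and behaves well under stable birational equivalence: if $X \times \P^m$ is rational (equivalently $\kk$-birational to $\P^{m+2}$), then comparing the images of the degree maps on $\CH_0$ forces $\ind(X)=1$, because $\P^{m+2}$ has a rational point and $\CH_0$ of a smooth projective variety is a stable-birational invariant in the relevant sense. Concretely, $\ind(X \times \P^m) = \ind(X)$ since $\P^m$ has a rational point, and $\ind(\P^{m+2})=1$; invariance of $\CH_0$-degree under birational equivalence of smooth projective varieties then yields $\ind(X)=1$. The cleanest route is the standard fact that for smooth projective $\kk$-varieties, $\ind$ is a birational invariant, applied to $X \times \P^m$ and $\P^{m+2}$.

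The main obstacle, and the only place where the birational-richness hypothesis is essential, is $\text{(d)} \Rightarrow \text{(a)}$: that $\ind(X)=1$ implies rationality. Here I would invoke the structure theory via the toric model provided by Lemma~\ref{lem:bir-rich-toric}, which lets me assume $X$ is one of the surfaces $\cD_9$ (a Severi--Brauer surface), $\cD_8$, $\cD_6$, or a product $C \times C'$ of two conics. In each case the condition $\ind(X)=1$ can be translated into the splitting of the relevant Brauer classes or the existence of a $\kk$-rational point, using the arithmetic of tori and Severi--Brauer varieties: for a Severi--Brauer surface, $\ind(X)=1$ forces the class $\ba_3$ to be trivial (since its index divides $3$ and also divides $\ind(X)$), so $X \cong \P^2$; for a product of conics, $\ind(X)=1$ forces each conic to have a rational point, hence to be $\P^1$, so $X \cong \P^1 \times \P^1$; the $\cD_8$ and $\cD_6$ cases follow similarly from the descriptions of their splitting fields and Brauer classes in Example~\ref{ex:bir-rich-non-closed} together with the Weil restriction description \eqref{eq:dP8-RLK}. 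In every case one concludes $X$ is rational. I expect the careful bookkeeping of which Brauer index divides $\ind(X)$, and the verification that a rational point on the toric model propagates to rationality, to be the delicate part, though all the needed inputs (Swinnerton-Dyer's theorem on $\cD_5$, Manin's rationality criteria, and the toric reduction) are already available in the excerpt.
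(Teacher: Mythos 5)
Your global structure (the easy implications plus a case-by-case analysis of the toric models from Lemma~\ref{lem:bir-rich-toric}) is the classical route, which the paper explicitly acknowledges exists in the literature (``can be checked case-by-case \dots see e.g.\ \cite{AuelBernardara}'') but deliberately avoids: instead it gives a short uniform proof. Concretely, the paper shows $(d)\Rightarrow(c)$ by viewing the dense open orbit $U\subset X$ of the toric model as a $T$-torsor with class $[U]\in \rH^1(\kk,T)$; for any closed point with residue field $L$ one has $U(L)\ne\emptyset$ by \cite{VoskresenskiiKlyachko}, hence $[L:\kk]\cdot[U]=0$ by a norm argument, so $\ind(X)=1$ forces $[U]=0$ and $X(\kk)\ne\emptyset$. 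Then $(c)\Rightarrow(a)$ follows because a rational point gives a dense open torus (again \cite{VoskresenskiiKlyachko}) and two-dimensional tori are rational by Voskresenskii \cite{VoskresenskiiII}. Your preliminary steps are fine: $(a)\Rightarrow(b)$, $(c)\Rightarrow(d)$ are formal, $(a)\Rightarrow(c)$ is Lang--Nishimura (your ``move a general point'' sketch should be replaced by that lemma, which works over any field), and $(b)\Rightarrow(d)$ via birational invariance of the index together with $\ind(X\times\P^m)=\ind(X)$ is correct, though the paper does not even need it ($(b)\Rightarrow(c)$ is already Lang--Nishimura).

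The genuine gap is in your $(d)\Rightarrow(a)$ step, precisely in the cases you dismiss as ``similar''. The $\cD_9$ case (index of a Severi--Brauer variety equals the index of the algebra) and the $C\times C'$ case (projection plus Springer's theorem) are indeed complete. But for $\cD_6$ the required statement --- a minimal del Pezzo surface of degree $6$ with $\ind(X)=1$ has a rational point --- is a substantive theorem (due to Blunk \cite{Blunk}, cf.\ \cite{CTKM}), and it does \emph{not} follow from the atom/Brauer-class bookkeeping of Example~\ref{ex:bir-rich-non-closed}: nothing there relates $\ind(X)$ to the simultaneous splitting of $\ba_2$ and $\ba_3$, and even the index formula of Proposition~\ref{prop:DAm-index-formula} (which in any case requires computing $\Am(X)$) only gives $\ind(X)\cdot|\Am(X)|=\ind(\ba_2)\ind(\ba_3)$, which does not by itself produce a rational point. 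Similarly, the $\cD_8$ case needs an actual argument threading Springer's theorem through the Weil restriction \eqref{eq:dP8-RLK}: an odd-degree point of $X$ gives an odd-degree point of $C$ over $L_2$, hence $C\cong\P^1_{L_2}$, hence $X(\kk)\ne\emptyset$; you gesture at this but do not give it. Finally, in the del Pezzo cases your $(c)\Rightarrow(a)$ conclusion silently invokes Manin's theorem that a del Pezzo surface of degree $\ge 5$ with a rational point is rational \cite{Manin-rational}; that input should be named. As written, the hardest quarter of the equivalence is asserted rather than proven, which is exactly the part the paper's torsor argument was designed to handle uniformly.
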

\begin{proof}
It is clear that (a) $\implies$ (b) $\implies$ (c) $\implies$ (d).
Let us prove the converse implications.
We know that a birationally rich surface $X$ always
admits a toric model (Lemma \ref{lem:bir-rich-toric}). Assume that $\ind(X) = 1$. We will prove that the open dense orbit $U$, and hence $X$, has a rational point by the following standard argument.
Let $[U] \in \rH^1(\kk, T)$ be the class of torsor $U$ \cite[\S5.2]{Serre-GaloisCoh}.
Let $L/\kk$ be a finite extension 
such that $X(L) \ne \emptyset$
then it is known that
$U(L) \ne \emptyset$, i.e. $U_L$ is a trivial $T_L$-torsor 
\cite[Proposition 4]{VoskresenskiiKlyachko}.
Then
\[
[L:\kk]\cdot [U] = \mathrm{Nm}_{L/\kk} [U_L] = 0.
\]

Representing $\ind(X)$ as a finite
integral linear combination
of $[L_i:\kk]$ for finite field extensions $L_i/\kk$ (which are residue fields of certain closed points on $X$) we get that $\ind(X) \cdot[U] = 0$.
Since we assume that $\ind(X) = 1$, we obtain that $[U] = 0$ so that $U \cong T$ has a rational point.
This proves that (d) implies~(c).

To finish the proof, it suffices to check that (c) implies (a). 
First, note that if $X(\kk) \ne \emptyset$, then $X$ has an open dense subset isomorphic to a torus again by
\cite[Proposition 4]{VoskresenskiiKlyachko}. 
It remains to note that all $2$-dimensional tori are rational by a theorem of  
Voskresenskii
\cite{VoskresenskiiII}.
\end{proof}

Recall that $G = \Gal(\ol{\kk}/\kk)$.
The Amitsur group
of a smooth projective variety is defined (see~\cite{Liedtke-SB} and \cite[\S5]{CTKM}) as
$$\Am(X) := \Coker(\Pic(X) \to \Pic(X_{\ol{\kk}})^G) \subset \Br(\kk),$$
cf.~~\eqref{eq_Amitsur-geom} in the geometric case.
Let us 
 define the \emph{derived Amitsur group} as
\begin{equation}\label{eq:def-DAm}
\DAm(X) := \Coker\left(\rK_0(X) \to \rK_0(X_{\ol{\kk}})^G\right).
\end{equation}
Here $\rK_0(-)$ is the Grothendieck group. We note that 
$\DAm(X)$ is an invariant of the derived category $\Db(X)$. The cokernel in \eqref{eq:def-DAm}
has been considered in various contexts, in particular for Severi--Brauer varieties \cite{Karpenko-SB} and 
toric varieties \cite{MP-toric}.

\begin{proposition}\label{prop:DAm-sequence}
For a smooth projective geometrically rational surface $X$ we have a short exact sequence    
\[
0 \to \Z/\ind(X) \to \DAm(X) \to \Am(X) \to 0.
\]
\end{proposition}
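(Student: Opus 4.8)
The plan is to produce the short exact sequence
\[
0 \to \Z/\ind(X) \to \DAm(X) \to \Am(X) \to 0
\]
from the $K$-theory and Picard data. The key structural input is the $G$-equivariant filtration of $\rK_0(X_{\ol\kk})$ by codimension of support, which for a smooth projective \emph{rational} surface splits as a direct sum
\[
\rK_0(X_{\ol\kk}) \cong \Z \oplus \Pic(X_{\ol\kk}) \oplus \Z,
\]
via $[F] \mapsto (\rank F, c_1(F), \chi(F))$, compatibly with the $G$-action where $G$ acts trivially on the outer summands (this is the isomorphism already cited in the proof of Proposition~\ref{ref:prop-atoms-obstructions}). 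The same decomposition holds over $\kk$ after taking $G$-invariants and comparing with the image of $\rK_0(X)$.

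\textbf{Setting up the two exact sequences.} First I would record the two relevant cokernel computations. Applying $(-)^G$ to the split filtration over $\ol\kk$ gives
\[
\rK_0(X_{\ol\kk})^G \cong \Z \oplus \Pic(X_{\ol\kk})^G \oplus \Z,
\]
since the outer summands carry trivial action and $H^0$ is left exact on the middle filtered pieces (one must check that the rank and Euler characteristic filtration remains split after taking invariants, which is automatic here because the two $\Z$'s are $G$-fixed). Over $\kk$, the image of $\rK_0(X) \to \rK_0(X_{\ol\kk})^G$ in the rank coordinate is all of $\Z$ (the structure sheaf has rank one), in the Picard coordinate it is the image of $\Pic(X) \to \Pic(X_{\ol\kk})^G$, and in the $\chi$-coordinate it is $\ind(X)\cdot\Z$, because the degree-zero part of $\rK_0$ surjects onto $\CH_0(X)$ and the image of the degree map is exactly $\ind(X)\cdot \Z$. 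These are the three ingredients; the middle one produces $\Am(X)$ by its very definition, and the third produces $\Z/\ind(X)$.

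\textbf{Assembling the sequence.} Taking cokernels coordinate by coordinate, the rank summand contributes nothing (the map is surjective), the Picard summand contributes $\Am(X)=\Coker(\Pic(X)\to\Pic(X_{\ol\kk})^G)$, and the $\chi$ summand contributes $\Coker(\ind(X)\cdot\Z \hookrightarrow \Z)=\Z/\ind(X)$. Because the filtration by codimension of support is a filtration, not literally a direct sum at the level of the comparison map over $\kk$ (the splitting is only canonical over $\ol\kk$), I would phrase the argument via the snake lemma applied to the filtration $0 \subset \rK_0^{(2)} \subset \rK_0^{(1)} \subset \rK_0$ by codimension of support, whose graded pieces are $\CH_0$, $\Pic$, and $\Z$. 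This realizes $\Z/\ind(X)$ as the cokernel on the lowest graded piece $\CH_0$, sitting as a subgroup of $\DAm(X)$, and $\Am(X)$ as the quotient coming from the $\Pic$ graded piece; the top graded piece (rank) contributes a trivial cokernel and so the sequence is exactly as claimed, with the $\Z/\ind(X)$ landing on the bottom (so as a sub) and $\Am(X)$ on top (so as a quotient).

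\textbf{The main obstacle.} The genuine subtlety is not the $K$-theory bookkeeping but ensuring the filtration comparison behaves well after taking $G$-invariants: one needs that the associated graded of the $G$-invariant filtration injects into the $G$-invariants of the associated graded, and that the relevant $\rH^1(G,-)$ obstructions either vanish or are controlled. For the rank and $\chi$ pieces this is trivial since the action is trivial, so the only place an obstruction could enter is between the $\CH_0$ and $\Pic$ pieces; here I would use that the image of $\deg$ is $\ind(X)\cdot\Z$ by definition and that the extension is split over $\ol\kk$ by the rational-surface decomposition, which forces the graded comparison to be exact and hence the snake lemma to yield precisely the three-term sequence with no cross terms. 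I expect verifying this splitting-after-invariants to be the step requiring the most care, but it follows from the explicit $G$-equivariant isomorphism $\rK_0(X_{\ol\kk}) \cong \Z \oplus \Pic(X_{\ol\kk}) \oplus \Z$ already invoked in the paper.
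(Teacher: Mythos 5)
Your final argument --- the snake lemma applied to the codimension-of-support filtration on $\rK_0$, with graded pieces $\Z$, $\Pic$, $\CH_0$, and the $G$-equivariant splitting $(\rank,c_1,\chi)$ guaranteeing that the invariant filtration on $\rK_0(X_{\ol{\kk}})^G$ has graded pieces $\Z$, $\Pic(X_{\ol{\kk}})^G$, $\Z$ --- is essentially the paper's proof: the paper collapses the top two graded pieces and applies the snake lemma once to the rows $0 \to \CH_0 \to \rK_0 \to \Z\oplus\Pic \to 0$, grounding the identification $F^2\rK_0(X)\cong\CH_0(X)$ in Riemann--Roch without denominators \cite[Example 15.1.5]{Fulton-intersection}. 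However, two repairs are needed. The first concerns your ``assembling'' paragraph, which is false as stated: the image of $\rK_0(X)$ in $\Z \oplus \Pic(X_{\ol{\kk}})^G \oplus \Z$ is not the direct sum of its coordinate projections, and the projection to the $\chi$-coordinate is all of $\Z$, not $\ind(X)\cdot\Z$, because $[\cO_X]$ maps to $(1,0,1)$ (a geometrically rational surface has $\chi(\cO_X)=1$). What equals $\ind(X)\cdot\Z$ is the intersection of the image with the axis $0\oplus 0\oplus\Z$, i.e.\ with $F^2$. This is not a pedantic distinction: a coordinate-wise computation would wrongly give $\DAm(X)\cong \Z/\ind(X)\oplus\Am(X)$, whereas the proposition asserts only an extension, which need not split. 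Your pivot to the filtration argument is the correct fix, so that paragraph should simply be deleted in favour of it.

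The second repair is a genuine gap in the filtration argument itself. You assert that $\Z/\ind(X)$ ``sits as a subgroup of $\DAm(X)$'', but the snake lemma only yields
$\ker(\Pic(X)\to\Pic(X_{\ol{\kk}})^G)\xrightarrow{\ \delta\ }\Z/\ind(X)\to\DAm(X)\to\Am(X)\to 0$,
so left-exactness requires the connecting map $\delta$ to vanish. The clean way to ensure this is the injectivity of $\Pic(X)\to\Pic(X_{\ol{\kk}})^G$, which holds because $X$ is geometrically integral (Hilbert 90 applied to $\rH^0(X_L,\cO_{X_L}^*)=L^*$ for finite Galois subextensions $L/\kk$). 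This is precisely the point the paper's proof singles out (``since the right vertical arrow is injective, the sequence of cokernels gives the desired exact sequence''), and it is the one hypothesis your ``main obstacle'' paragraph never addresses: the obstacle you do discuss --- surjectivity of the graded pieces of the invariant filtration onto the invariants of the graded pieces --- is a real point, and is correctly disposed of by the equivariant splitting (or by $\rH^1(G,\Z)=0$), but it controls the identification of the cokernels, not the injectivity of $\Z/\ind(X)\to\DAm(X)$. Add the Picard injectivity and your proof is complete and coincides with the paper's.
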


\begin{proof}
We apply the Riemann--Roch theorem without denominators \cite[Example 15.1.5]{Fulton-intersection} for $X$ and~$X_{\ol{\kk}}$.
The result follows by applying the snake lemma to the following diagram
\[
\xymatrix{
0 \ar[r] & \Z \ar[r] & 
\rK_0(X_{\ol{\kk}})^G \ar[r]^{(\rank, c_1)\quad} & \Z \oplus \Pic(X_{\ol{\kk}})^G \ar[r] & 0 \\
0 \ar[r] & \CH_0(X) \ar[u]^{\deg} \ar[r] & \rK_0(X) \ar[r]^{(\rank, c_1)\quad} \ar[u] & \Z \oplus \Pic(X) \ar[r] \ar[u] & 0. \\
}
\]
Note that $\CH_0(X_{\bar\kk})\cong \Z$ since $X_{\bar\kk}$ is rational. 
Since the right vertical arrow is injective, the sequence of cokernels gives the desired exact sequence. \end{proof}

We  also define the derived Amitsur group of a central simple algebra by the same formula
\eqref{eq:def-DAm}. Similarly, we can define the derived Amitsur group of any triangulated subcategory in $\Db(X)$.

\begin{lemma}\label{lem:DAm-A}
If $L$ is a finite field extension of $\kk$ and $A$ is a central simple algebra over $L$, then $\DAm(\Db(A)) \cong \Z/n$ where $n$ is the index of $A$.
\end{lemma}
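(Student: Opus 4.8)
The plan is to compute both sides of the defining cokernel \eqref{eq:def-DAm} explicitly and to isolate the one place where the index (rather than the degree) of $A$ enters. First I would observe that $\rK_0(\Db(A)) = \rK_0(\modd A) \cong \Z$: since $A$ is a central simple $L$-algebra it is semisimple, so $\modd A$ is a semisimple abelian category with a single simple object $S$ up to isomorphism, and passing to the bounded derived category does not change $\rK_0$. Thus $\rK_0(\Db(A))$ is freely generated by $[S]$. The map in \eqref{eq:def-DAm} is induced by extension of scalars $M \mapsto M \otimes_\kk \ol\kk$, which lands in $\rK_0(A\otimes_\kk\ol\kk)^G$ because objects coming from $\kk$ have $G$-invariant classes. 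Hence the whole computation reduces to locating the image of $[S]$ inside $\rK_0(A\otimes_\kk\ol\kk)^G$.

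Next I would analyse the target. As $\kk$ is perfect, $L/\kk$ is separable, so $L \otimes_\kk \ol\kk \cong \ol\kk^{\,m}$ with $m = [L:\kk]$, the factors being indexed by the embeddings $\sigma_1,\dots,\sigma_m\colon L \hookrightarrow \ol\kk$, on which $G$ acts transitively with stabiliser $H = \Gal(\ol\kk/L)$. Since $A$ is split by $\ol\kk$, for each $i$ one has $A \otimes_{L,\sigma_i} \ol\kk \cong M_N(\ol\kk)$ with $N = \deg_L A$, whence $A \otimes_\kk \ol\kk \cong \prod_{i=1}^m M_N(\ol\kk)$. Each factor has a single simple module $S_i$, of $\ol\kk$-dimension $N$, so $\rK_0(A \otimes_\kk \ol\kk) \cong \Z[G/H]$ is a permutation $G$-module, and therefore $\rK_0(A \otimes_\kk \ol\kk)^G \cong \Z$, generated by the orbit sum $\sum_{i=1}^m [S_i]$.

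The heart of the argument, which I expect to be the only genuinely arithmetic step, is to pin down the multiplicity with which $[S]$ hits this generator and to see that it equals $\ind(A)$ and not $\deg_L A$. Writing $A \cong M_r(D)$ with $D$ a central division $L$-algebra of degree $n = \ind(A)$, so $\dim_L D = n^2$ and $N = \deg_L A = rn$, the simple module $S \cong D^{\oplus r}$ satisfies $\dim_L S = r n^2$, while $\dim_{\ol\kk} S_i = N = rn$. Comparing dimensions in the decomposition $S \otimes_\kk \ol\kk \cong \prod_{i=1}^m (S \otimes_{L,\sigma_i} \ol\kk)$ forces $S \otimes_{L,\sigma_i} \ol\kk \cong S_i^{\oplus n}$, so that
\[
[S \otimes_\kk \ol\kk] = n \sum_{i=1}^m [S_i]
\]
in $\rK_0(A \otimes_\kk \ol\kk)^G \cong \Z$. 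Consequently the image of $\rK_0(\Db(A)) \to \rK_0(A \otimes_\kk \ol\kk)^G$ is exactly $n\Z$, and the cokernel is $\Z/n$ with $n = \ind(A)$, as claimed. The remaining details, namely the identification $A \otimes_\kk \ol\kk \cong \prod_i M_N(\ol\kk)$ compatibly with the $G$-action and the verification that $G$ permutes the $S_i$ transitively, are routine and follow from separability of $L/\kk$ together with the splitting of $A$ over $\ol\kk$.
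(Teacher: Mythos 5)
Your proof is correct and follows essentially the same route as the paper: both identify $\rK_0(\Db(A))$ and $\rK_0(A\otimes_\kk\ol\kk)^G$ as infinite cyclic groups and compute the index of the image of the base-change map to be $\ind(A)$. The only cosmetic difference is that the paper first invokes Morita invariance to assume $A$ is a division algebra and then extracts the index by comparing dimensions (the images $dn\Z$ and $dn^2\Z$ under the dimension maps to $\Z$), whereas you keep $A = M_r(D)$ general and exhibit the multiplicity directly via $S\otimes_{L,\sigma_i}\ol\kk \cong S_i^{\oplus n}$; these two computations are equivalent.
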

\begin{proof}
We can assume that $A$ is a division algebra, so that $\dim_L(A) = n^2$.
Let $d = [L:\kk]$ so that
\[
A \otimes_{\kk} \ol{\kk} \cong 
M_n(\ol{\kk})^{\times d}.
\]
We have the following diagram of infinite cyclic groups and injective maps between them:
\[
\xymatrix{
{\rK_0(M_n(\ol{\kk})^{\times d}})^G \ar[r]^{\quad\quad \dim_{\ol{\kk}}} & \Z \\ 
{\rK_0(A)} \ar[u]_{-\otimes \ol{\kk}} \ar[r]^{\quad\dim_{\kk}} & \Z. \ar[u]_= \\ 
}
\]
The image of the top horizontal arrow is $dn\Z$, and the image of the bottom horizontal arrow is $d n^2\Z$, thus the index of the image of the left vertical arrow is $n$.
\end{proof}

Recall that by Proposition \ref{prop:bir-rich-non-closed} birationally rich surfaces have only small atoms $\bA_1, \ldots, \bA_m$,
and that each small atom has an index, defined as the index of the corresponding division algebra.
In particular, an atom is trivial if and only if it has index one.

\begin{proposition}\label{prop:DAm-index-formula} 
For a birationally rich surface $X$ with atoms $\bA_1, \ldots \bA_m$, one has the formula
\begin{equation}\label{eq:index-formula}
\ind(X) \cdot |\Am(X)| = |\DAm(X)| = \prod_{i=1}^m \ind(\bA_i).
\end{equation}
\end{proposition}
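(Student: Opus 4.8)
The plan is to prove the two equalities separately. The first equality $\ind(X)\cdot|\Am(X)| = |\DAm(X)|$ is essentially immediate from Proposition~\ref{prop:DAm-sequence}. Indeed, a birationally rich surface is geometrically rational, so $X$ satisfies the hypotheses of that proposition and we obtain a short exact sequence $0\to \Z/\ind(X)\to \DAm(X)\to \Am(X)\to 0$. Since $\Am(X)\subset \Br(\kk)$ is finite and $\Z/\ind(X)$ is finite, $\DAm(X)$ is finite and multiplicativity of cardinality in a short exact sequence of finite abelian groups gives $|\DAm(X)| = \ind(X)\cdot|\Am(X)|$.

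For the second equality I would show that $\DAm$ is additive along the atomic semi-orthogonal decomposition. Fix an atomic SOD $\Db(X) = \langle \bA_1,\ldots,\bA_m\rangle$. By Galois descent (Proposition~\ref{prop_SODdescent}) this arises from a $G$-invariant SOD $\Db(X_{\bar\kk}) = \langle p^*\bA_1,\ldots,p^*\bA_m\rangle$, where $p\colon X_{\bar\kk}\to X$ is the natural morphism. Since $\rK_0$ is additive over semi-orthogonal decompositions, $\rK_0(X) = \bigoplus_i \rK_0(\bA_i)$, and as each $p^*\bA_i$ is $G$-invariant, $\rK_0(X_{\bar\kk}) = \bigoplus_i \rK_0(p^*\bA_i)$ is a decomposition of $G$-modules, so that $\rK_0(X_{\bar\kk})^G = \bigoplus_i \rK_0(p^*\bA_i)^G$. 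The natural map $\rK_0(X)\to \rK_0(X_{\bar\kk})^G$ sends $\rK_0(\bA_i)$ into $\rK_0(p^*\bA_i)^G$, because pullbacks of objects of $\bA_i$ generate $p^*\bA_i$ and classes defined over $\kk$ become $G$-invariant. Hence the map respects the direct sums and
\[
\DAm(X) = \bigoplus_{i=1}^m \DAm(\bA_i),\qquad \DAm(\bA_i) := \Coker\bigl(\rK_0(\bA_i)\to \rK_0(p^*\bA_i)^G\bigr).
\]

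It then remains to identify each factor. As $X$ is birationally rich, all atoms are small (Proposition~\ref{prop:bir-rich-non-closed}), so $\bA_i\cong \Db(\modd D_{\bA_i})$ via~\eqref{eq:bA-DA} for a finite-dimensional division algebra $D_{\bA_i}$ with centre $L_{\bA_i}$. This equivalence is $\kk$-linear and scalar extension is functorial, so the scalar extension of $\bA_i$, together with its $G$-action, agrees with that of $\Db(D_{\bA_i})$; consequently $\DAm(\bA_i) = \DAm(\Db(D_{\bA_i}))$. By Lemma~\ref{lem:DAm-A} the latter is $\Z/\ind(\bA_i)$, where $\ind(\bA_i) = \deg_{L_{\bA_i}} D_{\bA_i}$. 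Taking orders yields $|\DAm(X)| = \prod_{i=1}^m |\DAm(\bA_i)| = \prod_{i=1}^m \ind(\bA_i)$, establishing the second equality and completing the proof.

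The main obstacle I anticipate is the additivity step, specifically the bookkeeping needed to check that the descent map $\rK_0(X)\to \rK_0(X_{\bar\kk})^G$ is compatible with both direct sum decompositions. The crucial input is that each piece $p^*\bA_i$ is $G$-invariant, which is exactly what descent of the SOD provides; granting this, the commutation of $(-)^G$ with the finite direct sum and the containment of $p^*\rK_0(\bA_i)$ in $\rK_0(p^*\bA_i)^G$ are formal. A secondary, minor point is the identification of the abstractly defined $\DAm(\bA_i)$ with the derived Amitsur group of the central simple algebra $D_{\bA_i}$, which follows once one observes the scalar extensions coincide as $G$-objects.
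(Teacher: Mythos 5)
Your proposal is correct and takes essentially the same route as the paper: the first equality via the short exact sequence of Proposition~\ref{prop:DAm-sequence}, and the second via additivity of $\DAm$ over the atomic semi-orthogonal decomposition combined with the equivalence \eqref{eq:bA-DA} and Lemma~\ref{lem:DAm-A}. The paper asserts the isomorphism $\DAm(X) \cong \bigoplus_{i=1}^m \DAm(\bA_i)$ without elaboration; your Galois-descent bookkeeping (using $G$-invariance of the pieces $p^*\bA_i$ and compatibility of invariants with finite direct sums) simply spells out what the paper leaves implicit.
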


\begin{proof}
The first equality follows from Proposition \ref{prop:DAm-sequence}.
The second equality follows from 
\[
\DAm(X)=\DAm(\Db(X)) \cong \bigoplus_{i=1}^m \DAm(\bA_i),
\]
equivalences \eqref{eq:bA-DA} and Lemma \ref{lem:DAm-A}.
\end{proof}

\begin{theorem}\label{thm:rational-atoms}
A surface $X$ is rational if and only if all its atoms are
trivial.   
\end{theorem}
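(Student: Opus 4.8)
The plan is to prove the two implications separately, using as black boxes the index formula of Proposition~\ref{prop:DAm-index-formula} and the rationality criterion for birationally rich surfaces of Lemma~\ref{lem:bir-rich-rationality}. The final assembly is short; all the real content is packaged into those two earlier statements.

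For the 'only if' direction, I would start from the assumption that $X$ is $\kk$-rational, i.e.\ $\kk$-birational to $\P^2_{\kk}$. The surface $\P^2_{\kk}$ is the split Severi--Brauer surface, so by Example~\ref{ex:bir-rich-non-closed}(1) with vanishing Brauer class its three atoms are all copies of $\Db(\kk)$, hence trivial in the sense of Definition~\ref{def_trivial-atom}. By Lemma~\ref{lem:bir-non-closed-atoms}, $\kk$-birational surfaces carry the same atoms up to adding or removing trivial atoms; since every atom of $\P^2_{\kk}$ is trivial, the same must hold for $X$.

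For the 'if' direction, I would assume that all atoms $\bA_1,\ldots,\bA_m$ of $X$ are trivial. In particular they are all small, so $X$ is birationally rich by Proposition~\ref{prop:bir-rich-non-closed}. Triviality means $\ind(\bA_i)=1$ for each $i$, whence $\prod_{i=1}^{m}\ind(\bA_i)=1$. The index formula of Proposition~\ref{prop:DAm-index-formula} then reads $\ind(X)\cdot|\Am(X)|=1$, and since both factors are positive integers this forces $\ind(X)=1$ (and incidentally $|\Am(X)|=1$). As $X$ is birationally rich, Lemma~\ref{lem:bir-rich-rationality} upgrades $\ind(X)=1$ to rationality of $X$, closing the argument.

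The main obstacle is therefore not in this short derivation but in the two ingredients it consumes. The substantive work lies in Proposition~\ref{prop:DAm-index-formula}, where one must identify the derived Amitsur group as an additive invariant that decomposes atom-by-atom and compute it via Lemma~\ref{lem:DAm-A}, and in Lemma~\ref{lem:bir-rich-rationality}, where the existence of a toric model (Lemma~\ref{lem:bir-rich-toric}) together with torsor-theoretic facts about two-dimensional tori is used to pass from $\ind(X)=1$ to actual $\kk$-rationality. Once those are in place, the equivalence claimed here follows formally.
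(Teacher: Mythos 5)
Your proof is correct and follows essentially the same route as the paper: the forward direction via Lemma~\ref{lem:bir-non-closed-atoms} and the triviality of the atoms of $\P^2$ (Example~\ref{ex:bir-rich-non-closed}), and the converse via birational richness, the index formula $\ind(X)\cdot|\Am(X)|=\prod_i\ind(\bA_i)$ of Proposition~\ref{prop:DAm-index-formula}, and Lemma~\ref{lem:bir-rich-rationality}. The only cosmetic difference is that you invoke Proposition~\ref{prop:bir-rich-non-closed} where the paper cites its equivariant source Proposition~\ref{prop:bir-rich}, which is immaterial.
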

\begin{proof}
If $X$ is rational, then its atoms are trivial by Lemma \ref{lem:bir-non-closed-atoms} because the atoms of $\P^2$ are trivial (see Example \ref{ex:bir-rich-non-closed}).
Conversely, assume that the atoms of $X$
are trivial.
By Proposition \ref{prop:bir-rich}, $X$ is birationally rich. 
By Proposition \ref{prop:DAm-index-formula},
we have $\ind(X) = 1$.
Finally, by Lemma \ref{lem:bir-rich-rationality} $X$ is rational.
\end{proof}

\begin{remark}
Interesting semi-orthogonal decompositions for higher-dimensional arithmetic toric varieties have been constructed in \cite{BDM} and \cite{BDLM-rationality}, and it is quite likely that those decompositions are atomic in an appropriate sense.
However, as shown in \cite{BDLM-rationality}, we should not expect that the 'if' implication of Theorem \ref{thm:rational-atoms} holds in higher dimension.
\end{remark}

\begin{corollary}\label{cor:finite-fields}
If $\kk$ is a finite field (or more generally, any field $\kk$ such that for all finite extensions $L/\kk$ we have $\Br(L) = 0$), then every birationally rich surface over $\kk$ is rational.
\end{corollary}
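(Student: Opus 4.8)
The plan is to combine Theorem~\ref{thm:rational-atoms}, which characterises rationality of birationally rich surfaces in terms of triviality of their atoms, with the vanishing hypothesis on Brauer groups. First I would recall that by Proposition~\ref{prop:bir-rich-non-closed} every birationally rich surface $X$ has only small atoms $\bA_1,\ldots,\bA_m$, and that each small atom $\bA_i$ is determined by a pair $(L_{\bA_i},\ba_{\bA_i})$, where $L_{\bA_i}/\kk$ is a finite field extension and $\ba_{\bA_i}\in\Br(L_{\bA_i})$ is the Brauer class of the associated division algebra $D_{\bA_i}$ (see the discussion around \eqref{eq:bA-DA} and Definition~\ref{def_trivial-atom}).

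The key step is then almost immediate under the hypothesis. Since each $L_{\bA_i}$ is a finite extension of $\kk$, our assumption guarantees $\Br(L_{\bA_i})=0$, and therefore $\ba_{\bA_i}=0\in\Br(L_{\bA_i})$ for every $i$. By Definition~\ref{def_trivial-atom}, an atom $\bA_i$ is trivial precisely when $[D_{\bA_i}]=0\in\Br(L_{\bA_i})$, so all atoms of $X$ are trivial. Applying the 'if' direction of Theorem~\ref{thm:rational-atoms}, we conclude that $X$ is rational.

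To handle the case of a finite field $\kk$, I would simply invoke the classical fact that $\Br(L)=0$ for any finite field $L$ (Wedderburn's theorem that finite division rings are commutative, so there are no nontrivial central division algebras), and that every finite extension of a finite field is again finite. Thus the hypothesis of the parenthetical general statement is satisfied, and the corollary for finite fields follows as a special case. I do not anticipate any genuine obstacle here: the corollary is a direct formal consequence of Theorem~\ref{thm:rational-atoms} once the Brauer-theoretic input is supplied, and the only thing to be careful about is that the vanishing must hold for \emph{all} finite extensions $L/\kk$ (not merely for $\kk$ itself), since the fields $L_{\bA_i}$ arising as centres of the atoms need not equal $\kk$.
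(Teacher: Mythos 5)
Your proposal is correct and follows essentially the same route as the paper's own proof: atoms of a birationally rich surface are small, hence of the form $\Db(D_\bA)$ with $D_\bA$ a division algebra whose centre $L_\bA$ is a finite extension of $\kk$, the Brauer-vanishing hypothesis forces $D_\bA = L_\bA$ so all atoms are trivial, and Theorem~\ref{thm:rational-atoms} concludes. Your explicit invocation of Wedderburn's theorem for the finite-field case and your remark that vanishing is needed for all finite extensions $L/\kk$ are exactly the (implicit) ingredients of the paper's argument.
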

\begin{proof}
Since $X$ is birationally rich, each of its atom $\bA$ has the form $\Db(D_\bA)$, where $D_\bA$ is a finite-dimensional division $\kk$-algebra with centre $L_\bA$. By the assumption on $\kk$, we have that $D_\bA=L_\bA$ is commutative, i.e. the atom is trivial.
Now the result follows from Theorem \ref{thm:rational-atoms}.
\end{proof}

We say that an atom $\bA \subset \Db(X)$ is \emph{split} by an algebraic field extension $L/\kk$ if the atomic refinement of the subcategory $\bA_{L} \subset \Db(X_L)$ consists only of trivial atoms (see Definition~\ref{def_refinement} and Theorem~\ref{thm:atoms-surfaces2}).

\begin{corollary}\label{cor:splitting-atoms}
For any surface $X$ and an algebraic  field extension $L/\kk$, the base change $X_L$ is a rational surface if and only if $L$ splits all atoms of $X$.
\end{corollary}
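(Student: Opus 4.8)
The plan is to reduce the statement to the already-established rationality criterion in Theorem~\ref{thm:rational-atoms}, using the field-extension compatibility of the atomic theory (Theorem~\ref{thm:atoms-surfaces2}). The key observation is that $X_L$ is again a surface over $L$, and it is birationally rich over $L$ if and only if $X$ is birationally rich over $\kk$: indeed birational richness is detected by the geometric type of $X_{\bar\kk}$ by Corollary~\ref{cor:bir-rich-models}, and $(X_L)_{\bar L} \cong X_{\bar\kk}$ as $\Gal(\bar L/L)$-varieties with $\Gal(\bar L/L) \subset \Gal(\bar\kk/\kk)$ acting by restriction. So throughout we may work with the birationally rich case, where all atoms are small.

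First I would invoke Theorem~\ref{thm:rational-atoms} applied to the surface $X_L$ over the field $L$: the base change $X_L$ is rational if and only if \emph{all} atoms of $X_L$ are trivial. Thus the corollary reduces to showing that the atoms of $X_L$ are all trivial precisely when $L$ splits every atom of $X$ in the sense of the definition preceding the statement. This is essentially a tautology once one unwinds the definitions, but it needs the field-extension compatibility to connect the atomic decomposition of $\Db(X_L)$ with that of $\Db(X)$. By Theorem~\ref{thm:atoms-surfaces2}, the $H$-atomic theories over $\kk$ and over $L$ are field-extension compatible (Definition~\ref{def_extcompatible}), so there is an atomic decomposition of $\Db(X_L)$ refining the pullback $\langle q^*\bA_1, \ldots, q^*\bA_n\rangle$ of an atomic decomposition of $\Db(X)$, where $q\colon X_L \to X$ is the natural morphism. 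Consequently the atoms of $X_L$ are exactly the atoms appearing in the atomic refinements of the individual subcategories $(\bA_i)_L \subset \Db(X_L)$, taken over all atoms $\bA_i$ of $X$.

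The crux is then the following equivalence: all atoms of $X_L$ are trivial if and only if, for each atom $\bA_i$ of $X$, the atomic refinement of $(\bA_i)_L$ consists only of trivial atoms — which is exactly the definition of $L$ splitting $\bA_i$. The forward direction is immediate since each atom of $X_L$ arises in the refinement of some $(\bA_i)_L$; the reverse direction is equally immediate since triviality of every piece of every refinement $(\bA_i)_L$ means every atom of $X_L$ is trivial. Combining this with the reduction above yields: $X_L$ is rational $\iff$ all atoms of $X_L$ are trivial $\iff$ $L$ splits all atoms of $X$.

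The main obstacle I anticipate is not conceptual but bookkeeping: one must verify that the atomic decomposition of $\Db(X_L)$ is genuinely built out of the refinements of the pulled-back atoms $(\bA_i)_L$, and that no trivial atoms are spuriously created or destroyed by the refinement process. This is controlled by the descent formalism of Section~\ref{section_descent_for_SODs} together with Proposition~\ref{prop_alphaD}, which identifies triviality of a small atom with the vanishing of its Brauer class; base change to $L$ acts on these Brauer classes by the restriction map $\Br(L_{\bA}) \to \Br(L_\bA \otimes_\kk L)$, and a split atom is precisely one whose Brauer class restricts to zero on each factor. The only care needed is when $L_\bA \otimes_\kk L$ is not a field, in which case the single small atom $\bA_i$ refines into several small atoms over $L$ indexed by the factor fields; but each such factor is trivial exactly when the restricted Brauer class vanishes there, so the equivalence persists component by component. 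I do not expect genuine difficulty here, only the need to state the refinement carefully.
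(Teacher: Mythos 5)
Your core argument is correct and coincides with the paper's proof, which is exactly the one-line deduction from Theorem~\ref{thm:rational-atoms}: apply that theorem to $X_L$ over $L$, and observe that by field-extension compatibility (Theorem~\ref{thm:atoms-surfaces2}, Definition~\ref{def_extcompatible}) the atoms of $X_L$ are precisely the pieces of the atomic refinements of the subcategories $\bA_L$ for the atoms $\bA$ of $X$, so that ``all atoms of $X_L$ are trivial'' is, by the definition of splitting, the same statement as ``$L$ splits every atom of $X$''.

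However, your opening ``key observation'' is false, and you should delete it. You assert that $X_L$ is birationally rich over $L$ if and only if $X$ is birationally rich over $\kk$, because ``birational richness is detected by the geometric type of $X_{\bar\kk}$''. This misreads Corollary~\ref{cor:bir-rich-models}: birational richness is an equivariant property (the contractions there are $G$-equivariant onto $G$-minimal models), not a property of the underlying variety $X_{\bar\kk}$, and only the forward implication survives base change. The converse fails: a minimal del Pezzo surface $X$ of degree $4$ over $\kk$ is never birationally rich (Proposition~\ref{prop_list}), yet $X_{\bar\kk}$ is rational, so taking $L=\bar\kk$ (or a suitable finite extension splitting the Galois action on the lines) makes $X_L$ rational and hence birationally rich. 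Consequently the reduction ``throughout we may work with the birationally rich case, where all atoms are small'' is unjustified, and it would matter precisely in the most interesting instance of the corollary --- a non-birationally-rich surface that becomes rational after base change, where one must handle the refinement of a big atom. Fortunately, the argument of your second and third paragraphs never uses smallness of atoms or birational richness, so the proof stands once that remark is removed; the Brauer-class bookkeeping in your final paragraph should likewise be flagged as applying only to small atoms, since for a big atom the notion of splitting is governed directly by the atomic refinement and not by any Brauer class.
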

\begin{proof}
This follows from Theorem \ref{thm:rational-atoms}.
\end{proof}

\begin{example}\label{ex:dP6-splittings}
Let $X$ be a minimal del Pezzo surface of degree $6$.
Then its atomic decomposition is
\[
\Db(X) = \langle \Db(L_2, \ba_3), \Db(L_3, \ba_2), \Db(\kk) \rangle
\]
for some field extensions $L_i/\kk$ of degree $i$ and a  Brauer classes $\ba_j\in\Br(L_i)$ of index dividing $j$ (see Example \ref{ex:bir-rich-non-closed}).
By the index formula \eqref{eq:index-formula}
we have 
\[
\ind(X) = \ind(\ba_2) \cdot \ind(\ba_3) \in \{1, 2, 3, 6\}.
\]

The atoms are not independent: we check below that $L_2$ splits the atom $(L_3,\ba_2)$, and $L_3$ splits the atom $(L_2,\ba_3)$. For the first, extend the base field to $L_2$.
Note that $\Db(L_2, \ba_3)_{L_2}\cong \Db(L_2, \ba_3)\times \Db(L_2, \ba_3')$ and must split into two atoms by Proposition~\ref{prop:bir-rich-non-closed}. 
Then the atomic decomposition  of $X_{L_2}$ is 
$$\Db(X_{L_2})=\langle\Db(L_2, \ba_3), \Db(L_2, \ba_3'), \Db(L_6, (\ba_2)_{L_6}), \Db(L_2)\rangle,$$ 
where $L_6=L_3\otimes_{\kk} L_2$ is a degree $6$ field extension of $\kk$ and $(\ba_2)_{L_6}$ is the image of $\ba_2$ under $\Br(L_3)\to \Br(L_6)$.
We claim that $(\ba_2)_{L_6}=0$. Otherwise, $X_{L_2}$ has an atom of type $(3,2)$. 
Let $X_{L_2}\to Z$ be a contraction to a minimal surface,
then $Z$ also has this atom. Hence $Z$ must be a minimal del Pezzo surface of degree $6$ (because other possible minimal surfaces do not have such atoms by Corollary~\ref{cor:bir-rich-models} and Example~\ref{ex:bir-rich-non-closed}), therefore $X_{L_2}=Z$. But $X_{L_2}$ has four atoms, a contradiction.
Note that by Example~\ref{ex:bir-rich-non-closed}(3) there is a contraction from $X_{L_2}$ to a Severi--Brauer surface, which  has $(L_2,\ba_3)$ and $(L_2,\ba'_3=-\ba_3)$ as atoms.

Similarly one can show that the atom $(L_2,\ba_3)$ is split by 
$L_3$, that is, $(\ba_3)_{L_6}=0$.
\end{example}

\begin{proposition}\label{prop:rich-irr}
\begin{enumerate}
\item\label{it:rich-irr--irr} If $X$ is a birationally rich surface which is not rational, then it is birational to 
a minimal surface
of \textbf{exactly} one of the following types: $\cD_9$, $\cD_8$, $\cD_6$, or $C \times C'$,
where $C$ and $C'$ are conics. Moreover, the type is determined by the non-trivial atoms of $X$.
\item\label{it:rich-irr--same} Let $X$ and $Y$ be two minimal surfaces of the same
type (rational or irrational) among $\cD_9$, $\cD_8$, $\cD_6$, or $C \times C'$.
Assume that $X$ and $Y$ have the same atoms.
Then in cases $\cD_8$ and $\cD_6$, $X$ and $Y$ are isomorphic.
In cases $\cD_9$ and $C \times C'$, $X$ and $Y$ are birational.
\end{enumerate}
\end{proposition}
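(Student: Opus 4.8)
The plan is to read off the type of $X$ from its atoms using the explicit tables in Example~\ref{ex:bir-rich-non-closed}, and then to feed the resulting atom data into the known biregular and birational classifications.

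For part~\eqref{it:rich-irr--irr}, Lemma~\ref{lem:bir-rich-toric} already guarantees that $X$ is birational to a minimal model of at least one of the four types, so the real content is the word \emph{exactly}. I would organise the argument around the type $(d_1,d_2)=([L_\bA:\kk],\ind\ba_\bA)$ of each non-trivial small atom. Example~\ref{ex:bir-rich-non-closed} shows that a non-trivial atom of $\cD_9$ has type $(1,3)$; of $\cD_8$ has type $(2,2)$ (and possibly $(1,2)$ or $(1,4)$); of $\cD_6$ has type $(2,3)$ or $(3,2)$; and of $C\times C'$ has type $(1,2)$ only. These lists are pairwise disjoint except for the overlap of type $(1,2)$ between $\cD_8$ and $C\times C'$. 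Two facts then finish the job. First, by Theorem~\ref{thm:rational-atoms} non-rationality of $X$ is equivalent to the existence of a non-trivial atom, and a short case check shows that such an atom is always of the characteristic type of its family. Second, a non-rational $\cD_8$ necessarily has $\ba_2\neq0$ (otherwise $X\cong\P^1\times\P^1$ is rational, see Example~\ref{ex:bir-rich-non-closed}(2)), hence always carries a $(2,2)$ atom, which is never present for $C\times C'$. Since atoms are birational invariants up to trivial atoms (Lemma~\ref{lem:bir-non-closed-atoms}), the collection of non-trivial atoms of $X$ therefore singles out one type, which proves both existence and uniqueness.

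For part~\eqref{it:rich-irr--same}, I would argue type by type, using that a small atom determines the pair $(L_\bA,\ba_\bA)$ up to isomorphism, see~\eqref{eq:bA-DA}. In the $\cD_8$ case, equal atoms give $L_2\cong M_2$ and $\ba_2=\ba_2'$; since $X\cong\rR_{L_2/\kk}C$ with $C$ the conic of class $\ba_2$ (Example~\ref{ex:bir-rich-non-closed}(2)) and a conic over a field is determined by its Brauer class, functoriality of Weil restriction yields $X\cong Y$. In the $\cD_6$ case, equal atoms give matching data $(L_2,\ba_3)$ and $(L_3,\ba_2)$, and I would invoke the biregular classification of minimal sextic del Pezzo surfaces of Blunk~\cite{Blunk} (see also~\cite{CTKM}), which says precisely that this data determines the surface, so $X\cong Y$. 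In the $\cD_9$ case, the atoms $\{0,\ba_3,2\ba_3\}$ determine the cyclic group $\langle\ba_3\rangle$, so $\ba_3'\in\{\ba_3,-\ba_3\}$; thus either $Y\cong X$ or $Y\cong X^{\opp}$ is the opposite Severi--Brauer surface, and the standard Cremona involution provides a birational map $X\dto X^{\opp}$, so $X$ and $Y$ are birational. In the $C\times C'$ case, the atoms determine the Amitsur subgroup $\langle[C],[C']\rangle\subset\Br(\kk)$, and two products of conics with the same Amitsur subgroup are birational (cf.~\cite{ShramovVologodsky}), so again $X$ and $Y$ are birational.

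The hard part will be the sextic case $\cD_6$: one must match the atom data $(L_2,\ba_3)$ and $(L_3,\ba_2)$ exactly with the invariants classifying degree~$6$ del Pezzo surfaces, and justify that for a $G$-minimal $X$ the centres $L_2,L_3$ are genuine fields rather than étale algebras (which follows from $G$-transitivity of the two generating blocks, forced by $\rho^G(X)=1$). A secondary subtlety, already flagged above, is separating $\cD_8$ from $C\times C'$ in part~\eqref{it:rich-irr--irr}; this rests entirely on the presence of the $(2,2)$ atom together with the criterion $\ba_2\neq0$ for non-rationality of a degree~$8$ model.
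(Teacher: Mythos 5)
Your proposal is correct and follows essentially the same route as the paper: Lemma~\ref{lem:bir-rich-toric} plus the atom tables of Example~\ref{ex:bir-rich-non-closed} for part~(1) (including the same key observation, via Example~\ref{ex:bir-rich-non-closed}(2), that a non-rational $\cD_8$ must carry a non-trivial degree-two atom, which separates it from $C\times C'$), and for part~(2) the same four ingredients — the Weil-restriction description for $\cD_8$, Blunk's classification for $\cD_6$, the opposite algebra plus standard Cremona involution for $\cD_9$, and the Koll\'ar--Trepalin birationality of products of conics for $C\times C'$. One harmless slip: the atom $(\kk,\ba+\ba')$ of $C\times C'$ can have index $4$, so the overlap with $\cD_8$ in your table includes type $(1,4)$ as well as $(1,2)$, but your criterion (presence of an atom with quadratic centre) resolves this case in exactly the same way.
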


\begin{proof}
\eqref{it:rich-irr--irr} By Lemma \ref{lem:bir-rich-toric}
$X$ has a model of type 
$\cD_9$, $\cD_8$, $\cD_6$, or $C \times C'$.
Atoms of surfaces of these types are given in Example \ref{ex:bir-rich-non-closed}.
Since $X$ is not rational, at least one of these atoms is nontrivial by Theorem \ref{thm:rational-atoms}. 
Therefore, the type of $X$ is uniquely determined by the presence of the following atoms, where we write $([L:\kk], \ind(\ba))$ for an atom $(L, \ba)$:
\[\begin{array}{|c|c|c|c|}
     \hline 
     \cD_9&\cD_8&\cD_6&C\times C'  \\\hline
     (1,3)&(2,2)&(3,2)\text{ or } (2, 3) &(1,2) \\
     \hline
\end{array}
\]
Note that
in the case $\cD_8$, the atom $(L_2, \ba_2)$ is not trivial as otherwise also $(\kk,\ba_4)$ is trivial by Example~\ref{ex:bir-rich-non-closed}(2).  

\eqref{it:rich-irr--same} We consider the cases separately:
\begin{itemize}
  \item $\cD_9$: $X$ is isomorphic to $\SB(A)$ or $\SB(A^\opp)$, where $A$ is the degree $3$ central simple algebra corresponding to $\ba \in \Br(\kk)$, and these two surfaces are birational by the standard Cremona transformation;
    \item $\cD_8$: $X$ and $Y$ are isomorphic by \eqref{eq:dP8-RLK}; 
    \item $\cD_6$:
 $X$ and $Y$ are isomorphic by a result of Blunk \cite{Blunk}, as explained in 
 \cite[Example 10.2]{Duncan-toric}, 
  see also 
  \cite[Theorem 4.2]{CTKM};   
    \item $C \times C'$: 
    let $C$ and $C'$ correspond to Brauer classes $\ba, \ba' \in \Br(\kk)$, then the atoms of $C \times C'$ are
    \[
    (\kk, 0), (\kk, \ba), 
    (\kk, \ba'), (\kk, \ba + \ba'). 
    \]
    If $\ba + \ba'$ is the class of some conic $C''$ then the surfaces $C \times C'$  $C \times C''$, and $C' \times C''$ are birational by
    \cite[Lemma 11]{Kollar-conics}, \cite[Theorem 1.5]{Trepalin}.
    If $\ba + \ba'$ is not the class of a conic then $C$, $C'$, and the product $C \times C'$ are uniquely defined  by the atoms.
 \end{itemize}
\end{proof}

\begin{theorem}\label{thm:almost-same-atoms-bir-rich}
Let $X$ and $Y$ be birationally rich surfaces. Then $X$ and $Y$ are birational if and only if they have the same atoms, up to removing trivial atoms.
\end{theorem}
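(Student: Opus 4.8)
The plan is to prove the two implications separately, with the forward direction following immediately from earlier results and the converse requiring the structural classification we have assembled.

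For the forward direction, suppose $X$ and $Y$ are birational. By Lemma~\ref{lem:bir-non-closed-atoms}, birational surfaces have the same atoms up to removing trivial atoms, so this implication is immediate and requires no further work.

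For the converse, suppose $X$ and $Y$ are birationally rich surfaces with the same atoms up to trivial atoms. The strategy is to reduce to the case of minimal surfaces and then invoke Proposition~\ref{prop:rich-irr}. First I would dispose of the rational case: if all non-trivial atoms of $X$ (equivalently of $Y$) are absent, i.e. both surfaces have only trivial atoms, then by Theorem~\ref{thm:rational-atoms} both $X$ and $Y$ are rational, hence birational to $\P^2$ and therefore to each other. So I may assume that $X$ and $Y$ are \emph{not} rational, and in particular have at least one non-trivial atom in common. Passing to minimal models $X \to X_{\min}$ and $Y \to Y_{\min}$ (via birational contractions, which only remove trivial atoms by Lemma~\ref{lem:blowup-perm} and Proposition~\ref{prop_alphaD}), the minimal surfaces $X_{\min}$ and $Y_{\min}$ retain the same non-trivial atoms. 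By Proposition~\ref{prop:rich-irr}\eqref{it:rich-irr--irr}, each of $X_{\min}, Y_{\min}$ is of exactly one of the types $\cD_9$, $\cD_8$, $\cD_6$, or $C \times C'$, and \textbf{this type is determined by the non-trivial atoms}. Since $X$ and $Y$ share the same non-trivial atoms, $X_{\min}$ and $Y_{\min}$ must be of the same type.

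Now I would apply Proposition~\ref{prop:rich-irr}\eqref{it:rich-irr--same}: two minimal surfaces of the same type with the same atoms are isomorphic in the cases $\cD_8$ and $\cD_6$, and birational in the cases $\cD_9$ and $C \times C'$. In all four cases we conclude that $X_{\min}$ and $Y_{\min}$ are birational, hence $X$ and $Y$ are birational, completing the proof. The main obstacle in this argument is entirely absorbed into Proposition~\ref{prop:rich-irr}, which packages the delicate case-by-case biregular classification of minimal toric surfaces of rank one (using the results of Blunk, Trepalin, and the Weil-restriction description in \eqref{eq:dP8-RLK}); once that proposition is in hand, the theorem follows by the clean bookkeeping above. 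I should take care in the reduction step that the choice of minimal model is compatible with the atom comparison — specifically, that contracting to a minimal model does not merge or alter the non-trivial atoms, which is guaranteed because each birational contraction adds only trivial (non-twisted permutation type) atoms by Lemma~\ref{lem:blowup-perm}.
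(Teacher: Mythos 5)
Your forward direction and the overall reduction (dispose of the rational case via Theorem~\ref{thm:rational-atoms}, reduce to minimal models of the four types via Proposition~\ref{prop:rich-irr}\eqref{it:rich-irr--irr}) match the paper. But there is a genuine gap in your final step: Proposition~\ref{prop:rich-irr}\eqref{it:rich-irr--same} requires that the two minimal surfaces have \emph{the same atoms}, whereas your hypothesis only gives \emph{the same non-trivial atoms}, and these are not equivalent in the $\cD_6$ case. Concretely, a minimal del Pezzo surface of degree $6$ has atoms $(\kk,0)$, $(L_2,\ba_3)$, $(L_3,\ba_2)$, and when $\ind(X)=\ind(Y)=2$ one has atoms $(\kk,0),(L_2,0),(L_3,\ba)$ for $X$ and $(\kk,0),(L_2',0),(L_3,\ba)$ for $Y$, where $L_2$ and $L_2'$ are quadratic field extensions that need \emph{not} be isomorphic: the trivial atoms $(L_2,0)$ and $(L_2',0)$ are inequivalent categories, so $X$ and $Y$ do not have the same atoms and Proposition~\ref{prop:rich-irr}\eqref{it:rich-irr--same} simply does not apply. (The same issue occurs for $\ind=3$ with $L_3\not\cong L_3'$.) Indeed your argument would conclude that $X$ and $Y$ are isomorphic in the $\cD_6$ case, which is false in general; they are only birational, precisely because a birational map must exchange one trivial atom for another. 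In the cases $\cD_9$, $\cD_8$, $C\times C'$ your argument does go through, but only after the additional observation (which you omit) that in those cases every trivial atom is equivalent to $\Db(\kk)$, so same non-trivial atoms does imply same atoms.

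The paper closes this gap with a geometric construction that your proposal has no substitute for. In the $\cD_6$ case with $\ind(X)=\ind(Y)=2$, it uses Example~\ref{ex:dP6-splittings} and Corollary~\ref{cor:splitting-atoms} to see that $Y_{L_2}$ is rational, hence $Y$ carries a closed point with residue field $L_2$; it then verifies a general position condition (the point avoids the $(-1)$-curves and cannot lie on a conic defined over $\ol{\kk}$, since such a conic would be unique and hence defined over $\kk$, contradicting Picard rank one), and blows it up to produce a $(6,4,6)$ Sarkisov link of type II from $Y$ to a degree $6$ surface $X'$ whose atoms are $(\kk,0),(L_2,0),(L_3,\ba)$ — the same as those of $X$. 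Only then does Proposition~\ref{prop:rich-irr}\eqref{it:rich-irr--same} apply, giving $X\cong X'$ and hence $X$ birational to $Y$; the $\ind=3$ case is analogous but needs a further argument ruling out that two points of the Galois orbit lie on a common conic, which uses the assumption $L_3\not\cong L_3'$. This case-by-case Sarkisov analysis is the substantive content of the converse and cannot be absorbed into the "clean bookkeeping" you describe.
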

\begin{proof}
Birational surfaces have the same atoms, up to removing trivial atoms, by Lemma \ref{lem:bir-non-closed-atoms}.

Conversely, let $X$ and $Y$ satisfy the condition on the atoms. We  assume that not all atoms in $X$ and~$Y$ are trivial, otherwise $X$ and $Y$ are both rational by Theorem \ref{thm:rational-atoms}.
Applying Proposition \ref{prop:rich-irr}(1), we can assume that $X$ and $Y$ are minimal surfaces of one of the types $\cD_9$, $\cD_8$, $\cD_6$, $C \times C'$, and they are of the same type.

In cases $\cD_9$, $\cD_8$, and $C \times C'$, the trivial atoms are all equivalent to $\Db(\kk)$, see Example~\ref{ex:bir-rich-non-closed}. Thus if the nontrivial atoms of $X$ and $Y$ are the same, then in fact, $X$ and $Y$ have the same atoms and the result follows from Proposition \ref{prop:rich-irr}(2).

It remains to prove the result in the case $\cD_6$. Note that by the index formula~\eqref{eq:index-formula}, $\ind(X) = \ind(Y) \in \{1, 2, 3, 6\}$.
    In the case $\ind(X) = \ind(Y)=1$ both $X$ and $Y$ are rational by Lemma \ref{lem:bir-rich-rationality}. In the case $\ind(X) = \ind(Y)=6$ both $X$ and $Y$ have two non-trivial atoms which are the same, and the third atom is $(\kk,0)$, so Proposition \ref{prop:rich-irr}(2) applies. 
    In the remaining cases we construct a Sarkisov link of type II from $Y$ to a surface which has the same atoms as $X$.
    
    Let us assume that $\ind(X) = \ind(Y) = 2$. Then the atoms of $X$ are $(\kk,0)$, $(L_2,0)$, $(L_3, \ba)$
    and atoms of $Y$ are
    $(\kk,0)$, $(L_2',0)$, $(L_3, \ba)$.
    By Example \ref{ex:dP6-splittings}, $L_2$ splits the atom $(L_3, \ba)$, hence by Corollary \ref{cor:splitting-atoms}, $Y_{L_2}$ is rational.
    In particular, there exist closed points on $Y$ with residue field isomorphic to $L_2$. 
    We claim that we can find such a point $y \in Y$ which is in general position so that the blow-up $\Bl_y(Y)$ is again a del Pezzo surface.
    We check the general position condition from Remark \ref{rem:general-pos}.
    Since 
    $\kk$ is an infinite field by Corollary \ref{cor:finite-fields}, we can find such a point $y \in Y$ away from the union of all the  $(-1)$-curves (in fact points of degree $2$ are  not contained in the union of $(-1)$-curves, see \cite[Proposition 5.13(1)]{KY-dP6}). Furthermore, $y$ can not be contained in a $\ol{\kk}$-conic (i.e. a $0$-curve) because such conic would have to be unique. Indeed different conics are either disjoint or intersect in one point, hence this conic would be defined over $\kk$, which contradicts to the assumption that $Y$ has Picard rank one.
    We blow up $y \in Y$ to get a del Pezzo surface $Z$ of degree $4$ and Picard rank two which gives a $(6, 4, 6)$ Sarkisov link of type II     from $Y$ to another del Pezzo surface $X'$ of degree $6$ (see Remark \ref{rem:dPZ-link} and Proposition~\ref{prop:SarkisovLinks}). 
    Let us check that $X'$ and $X$ have the same atoms. Indeed, the blow-up adds an atom $(L_2,0)$ and the blow-down removes the atom $(L_2',0)$. By Proposition \ref{prop:rich-irr}(2), 
    $X$ and $X'$ are isomorphic.
    Therefore $X$ and $Y$ are birational.
    
    The case $\ind(X) = \ind(Y) = 3$ is similar. The atoms of $X$ are $(\kk,0)$, $(L_2,\ba)$, $(L_3, 0)$
    and atoms of $Y$ are
    $(\kk,0)$, $(L_2,\ba)$, $(L_3', 0)$.
    By Example \ref{ex:dP6-splittings}, $L_3$ splits the atom $(L_2, \ba)$, hence by Corollary \ref{cor:splitting-atoms}, $Y_{L_3}$ is rational.
    In particular, there exist closed points on $Y$ with residue field isomorphic to $L_3$. We can find such a closed point $y \in Y$ with residue field $L_3$ away from the union of all the  $(-1)$-curves. This point can not be contained in any conic or $1$-curve defined over $\ol{\kk}$, because this curve would be unique, hence defined over $\kk$ which contradicts to the assumption that $Y$ has Picard rank one. Let us assume that $L_3 \not\cong L_3'$ (otherwise $X$ and $Y$ are isomorphic by Proposition \ref{prop:rich-irr}(2)). We show that no two points of the Galois orbit $y_{\ol{\kk}}$ can be contained in a $\ol{\kk}$-conic which is the final generality  condition that we need to check in order to blow-up $y \in Y$ and construct  a Sarkisov link. If this condition is violated,  the three points of $y_{\ol{\kk}}$ are three vertices of a triangle formed by a Galois orbit of $\ol{\kk}$-conics. 
    Hence the Galois orbit of points is isomorphic to the Galois orbit of conics, and this means that $L_3 \cong L_3'$, because $L_3$ corresponds to the Galois set $y_{\ol{\kk}}$ and $L_3'$ corresponds to the Galois set of the three classes of conics $h_1$, $h_2$, $h_3$ on $Y_{\ol{\kk}}$ by construction of atoms for $Y$ (see Definition \ref{def_standard}). As we assume $L_3 \not\cong L_3'$, the point $y$ is in general position, and the corresponding $(6,3,6)$ Sarkisov link of type II transforms $Y$ to a surface isomorphic to~$X$.
\end{proof}

Without any assumptions on $X$ and $Y$ the  result of Theorem \ref{thm:almost-same-atoms-bir-rich} cannot hold because even in the case $\kk = \bar\kk$
there are non-birational
K-nef surfaces, for example K3 or abelian surfaces, which are derived equivalent. 

\begin{conjecture}
The statement of Theorem \ref{thm:almost-same-atoms-bir-rich} holds for all geometrically rational surfaces.
\end{conjecture}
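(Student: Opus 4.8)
The forward implication---birational surfaces have the same atoms up to removing trivial atoms---holds in complete generality by Lemma~\ref{lem:bir-non-closed-atoms}, so the content is the converse, and the plan is to extend the argument of Theorem~\ref{thm:almost-same-atoms-bir-rich}. Given geometrically rational surfaces $X$ and $Y$ with the same non-trivial atoms, I would first reduce to $G$-minimal models: blowing down a $G$-orbit only removes a trivial (non-twisted permutation type) atom by Lemma~\ref{lem:blowup-perm} and Proposition~\ref{prop_essential}, so passing to minimal models preserves the multiset of non-trivial atoms. Birational richness is detected by the atoms---$X$ is birationally rich if and only if all of its atoms are small (Proposition~\ref{prop:bir-rich-non-closed})---so the two minimal models are simultaneously birationally rich or not. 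In the birationally rich case we are already done by Theorem~\ref{thm:almost-same-atoms-bir-rich}, so the genuinely new case is when both minimal models fail to be birationally rich.

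For a non-birationally-rich geometrically rational minimal surface the only non-trivial atom is the single \emph{big} atom $\cO_X^\perp$ (for a minimal del Pezzo surface of degree $\le 4$) or $\ker\pi_*$ (for a conic bundle over $\P^1$), as one reads off Definition~\ref{def_standard}; this big atom is a birational invariant, and the crux is to show that it reconstructs the minimal surface up to the Sarkisov links that preserve atoms. By the numerical classification of links (Proposition~\ref{prop:SarkisovLinks}, \cite{Isk96, DolgachevIskovskikh}), among non-rich surfaces the only degree-changing link is the $(4,3)$ type I link, so two birational minimal del Pezzo surfaces of the same degree $\le 3$ are connected only by degree-preserving type II links. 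Thus for these degrees the conjecture would follow from a \emph{Torelli-type reconstruction}: a minimal del Pezzo surface of degree $\le 3$ is determined, up to $G$-isomorphism, by the $G$-equivalence class of its big atom. The prototype is degree $4$, where exactly such a reconstruction from $\cO_X^\perp$ is available \cite{AE_Torelli} and already underlies Theorems~\ref{thm:Deg4} and~\ref{thm:dP4-bir}; the plan is to establish the analogue in degrees $1,2,3$, recovering the marked $G$-lattice $\Pic(X_{\bar\kk})$ from the numerical and Serre-functor data of the atom, using in particular the intrinsic description of the Serre functor in terms of the Bertini/Geiser involution from Lemma~\ref{lemma_fCY} in degrees $1$ and $2$.

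Two features then require separate handling. First, degree-$4$ del Pezzo surfaces are linked to degree-$3$ conic bundles by the $(4,3)$ link; here I would match a degree-$4$ del Pezzo with its associated degree-$3$ conic bundle carrying the same big atom, exactly as the $(6,4,6)$ and $(6,3,6)$ links were used in the degree-$6$ case of Theorem~\ref{thm:almost-same-atoms-bir-rich}. Second, for a conic bundle $\pi\colon X\to\P^1$ the big atom $\ker\pi_*$ encodes the even Clifford algebra of the associated quadratic form (Kuznetsov's conic bundle description \cite{Kuznetsov-quadrics}), and the plan is to read the conic bundle birational invariants---the discriminant together with its Brauer and \'etale data---off this atom and to match them against Iskovskikh's birational classification of conic bundles \cite{Isk96}.

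The main obstacle is precisely this reconstruction input. The restriction to geometrically rational surfaces is essential, since it keeps us inside the MMP-controlled regime where the atoms are the ones computed in Definition~\ref{def_standard} (the K-nef examples in the remark following Theorem~\ref{thm:almost-same-atoms-bir-rich} show the statement fails without it). But while degree $4$ is settled by \cite{AE_Torelli}, a Torelli/reconstruction theorem recovering a cubic surface, a del Pezzo surface of degree $1$ or $2$, or a general conic bundle from its big atom over an arbitrary perfect field is not presently known; establishing such reconstructions is the essential difficulty, and it is the reason the statement remains conjectural.
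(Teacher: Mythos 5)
This statement is stated in the paper as a conjecture: the paper contains no proof of it, only the remark following it that the cases to be checked are $G$-minimal del Pezzo surfaces $\cD_d$ and $G$-Mori conic bundles $\cC_d$ with $d \le 4$, and that when both surfaces are minimal del Pezzo surfaces of degree $4$ the claim follows from Theorem \ref{thm:dP4-bir}. So there is no paper proof to compare against; what can be assessed is whether your reduction is sound and whether you have closed the gap that makes this a conjecture. Your reduction is correct and coincides with the paper's own: the forward direction is Lemma \ref{lem:bir-non-closed-atoms}; passing to minimal models preserves nontrivial atoms (Proposition \ref{prop_essential}); birational richness is read off from smallness of atoms (Proposition \ref{prop:bir-rich-non-closed}), so the rich case is exactly Theorem \ref{thm:almost-same-atoms-bir-rich}; and among non-rich surfaces the numerical classification (Proposition \ref{prop:SarkisovLinks}) shows the only degree-changing link is the $(4,3)$ type I link, leaving degree-preserving type II links plus that one interaction. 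The degree-$4$ del Pezzo case via \cite{AE_Torelli} and Theorem \ref{thm:dP4-bir} is likewise exactly what the paper records.

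However, you have not closed the gap, and you say so yourself: the essential inputs --- a categorical Torelli theorem recovering a minimal del Pezzo surface of degree $1$, $2$, or $3$ up to isomorphism from the $G$-equivalence class of its big atom, and a reconstruction of the birational class of a Mori conic bundle (including those of degree $\le 0$, with arbitrarily many singular fibres) from $\ker \pi_*$ --- are not established, and nothing in the sketch argues for them beyond plausibility (Lemma \ref{lemma_fCY} identifies the Serre functor, and \cite{Kuznetsov-quadrics} relates $\ker\pi_*$ to the even Clifford algebra, but extracting Torelli statements from this data is precisely the open problem). Two further points would need care even granting such reconstructions: the mixed case, where a minimal del Pezzo surface of degree $4$ shares its nontrivial atom with a minimal conic bundle of degree $3$, requires not only matching atoms but also a general-position argument producing the $(4,3)$ link, and the analogous analysis in the $\cD_6$ case of Theorem \ref{thm:almost-same-atoms-bir-rich} used geometry specific to degree $6$ (uniqueness of conics through points, the structure of $(-1)$-curves); and for conic bundles one must match against Iskovskikh's birational classification \cite{Isk96}, where the relevant invariants are not obviously functions of the atom alone. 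So your proposal is a correct diagnosis of the state of the problem, agreeing with the paper's own assessment, but it is a research plan rather than a proof --- which is exactly why the statement remains a conjecture.
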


The cases to be checked are $G$-minimal
del Pezzo surfaces $\cD_d$ and $G$-Mori conic bundles $\cC_d$,
with $d \le 4$.
When both $X$ and $Y$
are minimal del Pezzo surfaces of degree $4$
with the same non-trivial atoms, they are
isomorphic by Theorem \ref{thm:dP4-bir}.

\begin{conjecture}\label{conj:stab-rat}
Let $X$ be a smooth projective surface such that 
$X \times \P^1$ is rational. Then 
$X$ is rational.
\end{conjecture}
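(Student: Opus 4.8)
The plan is to combine the index criterion for rationality of birationally rich surfaces (Lemma~\ref{lem:bir-rich-rationality}) with a reduction to a minimal model, so that the only genuine difficulty becomes concentrated in the non-birationally-rich case. First I would record the consequences of the hypothesis that can be extracted painlessly. Base-changing to $\bar\kk$, the variety $X_{\bar\kk}\times\P^1$ is rational over $\bar\kk$, so $X_{\bar\kk}$ is stably rational and hence rational over the algebraically closed field $\bar\kk$; thus $X$ is geometrically rational and the atomic theory of Theorem~\ref{thm:atoms-surfaces2} applies. Next, since $X\times\P^1$ is a smooth projective $\kk$-rational variety it carries a $\kk$-point, so $\ind(X\times\P^1)=1$. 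Since the projection $X\times\P^1\to X$ has a section over every closed point of $X$ one has $\ind(X\times\P^1)\mid\ind(X)$, and since the image of a closed point of $X\times\P^1$ is a closed point of $X$ of dividing degree one has $\ind(X)\mid\ind(X\times\P^1)$; hence $\ind(X)=1$. Finally, because stable rationality, rationality, and the atom-set up to trivial atoms (Lemma~\ref{lem:bir-non-closed-atoms}) are all birational, I may replace $X$ by a minimal model.

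If $X$ is birationally rich, the proof then concludes at once: by Lemma~\ref{lem:bir-rich-rationality} a birationally rich surface with $\ind(X)=1$ is rational. Equivalently, by the index formula of Proposition~\ref{prop:DAm-index-formula} the condition $\ind(X)=1$ forces every atom to have index one, i.e.\ to be trivial, whence $X$ is rational by Theorem~\ref{thm:rational-atoms}. So everything comes down to the single implication: a geometrically rational surface $X$ with $X\times\P^1$ rational must be \emph{birationally rich}.

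The hard part will be exactly this last implication. By Proposition~\ref{prop:bir-rich}, a surface fails to be birationally rich precisely when it has a \emph{big} atom, which by Proposition~\ref{prop_list} means it is birational to a minimal del Pezzo surface $\cD_d$ with $d\le 4$ or to a minimal conic bundle $\cC_d$ with $d\le 4$; by the reductions above I may assume $X$ is minimal of one of these types, and the task is to show such an $X$ is not stably rational. For those models whose classical stable birational obstruction is non-trivial --- a nonzero $\rH^1(\Gal(\bar\kk/\kk),\Pic(X_{\bar\kk}))$ or a nonzero unramified Brauer class --- this is immediate, since these groups are stable birational invariants and survive in $X\times\P^1$, using $\Pic((X\times\P^1)_{\bar\kk})\cong\Pic(X_{\bar\kk})\oplus\Z$ with trivial action on the second factor together with $\Br(X\times\P^1)\cong\Br(X)$.

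The genuine obstacle is the residual case in which all such classical invariants vanish and yet $X$ is non-rational: here one expects the big atom itself to be a non-trivial stable-birational (indeed stable-derived) invariant. Making this precise would require extending the atomic formalism to the threefold $X\times\P^1$ and proving, by a K\"unneth-type argument, that a big atom of $X$ persists as an obstruction after multiplication by $\P^1$ --- precisely the three-dimensional extension of the theory flagged as out of reach in the introduction. A more hands-on alternative is to dispatch the finitely many numerical types $\cD_d$ and $\cC_d$ with $d\le 4$ one by one, establishing failure of stable rationality in each (for $d=4$ one could try to leverage the Torelli-type rigidity underlying Theorem~\ref{thm:dP4-bir}). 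I expect this last case analysis, rather than any of the preliminary reductions, to be where the real difficulty lies.
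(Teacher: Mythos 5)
You should first be clear about the status of the statement: it is Conjecture~\ref{conj:stab-rat}, and the paper does \emph{not} prove it. The paper only offers motivation, conditional on the existence of an atomic theory in dimension three extending the two-dimensional one. Your preliminary reductions are correct and essentially reproduce that motivating discussion: geometric rationality of $X$ follows from the classification of surfaces over $\bar\kk$; $\ind(X)=1$ follows from a $\kk$-point on the rational variety $X\times\P^1$ (Lang--Nishimura) together with the projection; and the birationally rich case is settled by Lemma~\ref{lem:bir-rich-rationality} (equivalently, via Proposition~\ref{prop:DAm-index-formula} and Theorem~\ref{thm:rational-atoms}). Your route (a) for the remaining case --- extend the atomic formalism to the threefold $X\times\P^1$ and argue that a big atom of $X$ persists there but cannot arise from blow-ups of points and curves in a rational threefold --- is exactly the paper's conditional argument, and like the paper you correctly flag it as out of reach. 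So far the proposal matches the paper; neither constitutes a proof.

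The genuine error is in your fallback route (b): ``dispatch the finitely many numerical types $\cD_d$ and $\cC_d$ with $d\le 4$ one by one, establishing failure of stable rationality in each.'' This is impossible, and the paper's own closing example shows why. The generalized Ch\^atelet surface $X$ defined by $x^2-dy^2=P(t)$ is a minimal conic bundle of degree $K_X^2=4$ --- hence not birationally rich, carrying a big atom, and irrational --- yet it \emph{is} stably rational: $X\times\P^3$ and even $X\times\P^2$ are rational, while rationality of $X\times\P^1$ is precisely the open question the conjecture addresses. Consequently, in the residual case no stable birational invariant whatsoever can work: not $\rH^1(\Gal(\bar\kk/\kk),\Pic(X_{\bar\kk}))$, not unramified Brauer classes, and in particular not the big atom viewed as a ``stable-birational (indeed stable-derived) invariant,'' as you put it --- that expectation is directly contradicted by this example, since a stably rational surface retains its big atom. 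Any successful argument must distinguish multiplication by $\P^1$ from multiplication by $\P^2$; the conditional three-dimensional atomic argument does exactly this (blow-up centres in threefolds have dimension $\le 1$, so surface-type atoms cannot appear in a rational threefold, whereas in fourfolds one may blow up surfaces), and this asymmetry is the reason the statement is a conjecture rather than something reducible to a finite case check or to classical obstructions.
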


The motivation for this conjecture is as follows.
First of all, over an algebraically closed field the conjecture holds by classification of surfaces, hence we can assume that $X$ is geometrically rational.
By Lemma \ref{lem:bir-rich-rationality}, for birationally rich surfaces rationality is the same as stable rationality, so the conjecture holds.
Thus let us consider the case when $X$ is not birationally rich, in particular, not rational.
In this case, by Proposition \ref{prop:bir-rich-non-closed}, 
$X$ has a big atom $\bA$. It is easy to see from construction of the atomic theory that for non-birationally rich geometrically rational surfaces this atom is
not equivalent to an admissible subcategory in $\Db(C)$ for any curve $C$.
Now let us assume that there is an atomic theory in dimension three which extends our atomic theory in dimension two. 
Then existence of the trivial conic bundle $X \times \P^1 \to X$ 
implies that $X \times \P^1$ also has an atom equivalent to $\bA$. On the other hand, this atom cannot appear by smooth blow-ups relating $\P^3$ to $X$ because $\bA$ is not equivalent to an admissible subcategory in a variety of dimension $\le 1$.

\begin{example}
Let $\mathrm{char}(\kk) \ne 2$.
 Consider a smooth affine surface $U\subset \AA^3_{\kk}$ given by
 \[
 x^2 - dy^2 = P(t)
 \]
 where $d \in \kk^* \setminus (\kk^*)^2$ and $P \in \kk[t]$ is an irreducible separable cubic polynomial with Galois group $S_3$ and discriminant $d$.
 It is a particular type of generalized Ch\^atelet surfaces \cite[IV]{CSD-Chatelet}.
 The morphism  $U\to\AA^1=\Spec \kk[t]$ admits a relatively minimal smooth conic bundle compactification $X \to \P^1$ with four singular fibres \cite[Proof of Th\'eor\`eme 2]{BCSD}.
 Thus we have $K_X^2 = 4$, so
  $X$ is not birationally rich, in particular $X$ is irrational.
  
 However, $X$ is stably rational. 
 Namely by
 \cite[Th\'eor\`eme 1]{BCSD} $X \times \P^3$ is rational,
 and it was later shown that
 $X \times \P^2$ is also rational \cite[Corollary 12, Proposition 13]{SB-Chatelet}.
 It is currently unknown if $X \times \P^1$ is rational; 
 Conjecture \ref{conj:stab-rat} predicts that this is not the case.
 \end{example}


\appendix

\section{Proof of Proposition~\ref{prop_sarkisov}}
\label{ap_proof_prop}

In this section, we prove Proposition~\ref{prop_sarkisov}, which states:

\begin{proposition}
Let $\chi\colon X_1\dashrightarrow X_2$ be a $G$-Sarkisov link 
 \[\begin{tikzcd}        &Z\ar[dl,swap,"\sigma_1"]\ar[dr,"\sigma_2"]&\\        X_1\ar[d,swap,"\pi_1"]\ar[dashed,rr,"\chi"]&& X_2\ar[d,"\pi_2"]\\ 
        B_1\ar[dr,"\beta_1",swap]&& B_2\ar[dl,"\beta_2"]\\
        &B.&
    \end{tikzcd}\] 
    Then the following SOD-s of $\Db(Z)$ are mutation-equivalent:
    \begin{equation*}
        \langle \cK(\sigma_1), \sigma_1^*\cBstd(X_1/B_1)\rangle \cong
        \langle \cK(\sigma_2), \sigma_2^*\cBstd(X_2/B_2)\rangle.
    \end{equation*}
\end{proposition}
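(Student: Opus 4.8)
The plan is to prove Proposition~\ref{prop_sarkisov} by a systematic case analysis over the four types of Sarkisov links given in the numerical classification of Proposition~\ref{prop:SarkisovLinks}, reducing in each case to explicit mutations of exceptional collections on the roof $Z$. The key observation is that the two SOD-s being compared both live on $\Db(Z)$ and are obtained by pulling back $\cBstd(X_i/B_i)$ along $\sigma_i$ and augmenting with the kernel SOD $\cK(\sigma_i)$; since $\sigma_1,\sigma_2$ are (compositions of) blow-downs of $(-1)$-curves, I can describe both sides very concretely in terms of the divisor classes on $\Db(Z)$ using the divisor-class dictionary of Section~\ref{section_divisorsonsurfaces} and the standard mutation lemmas (Lemma~\ref{lemma_3mutations}, Lemma~\ref{lemma_OE}, Lemma~\ref{lemma_hmutation}).

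First I would dispose of the links of type IV, where both $\sigma_i$ are isomorphisms, so that $\cK(\sigma_i)$ is trivial and one is simply comparing two Mori conic bundle structures $\pi_1,\pi_2$ on the same surface $Z$; here the two fibre classes $h_1,h_2$ are the two $0$-classes, and the required mutation-equivalence is essentially a reshuffling of the four blocks handled by Lemma~\ref{lemma_hmutation} and Lemma~\ref{lem:3blockdecomp}. Next I would treat the type I/III links (which are inverse to each other, so it suffices to handle type~I), where $\beta_1,\sigma_2$ are isomorphisms: here $\sigma_1$ is a genuine blow-down, $X_2/\P^1$ is a conic bundle, and the compatibility is exactly the content of the $3$-block-versus-blow-up comparison already proved in Lemma~\ref{lemma_OE} and Lemma~\ref{lem:StandardDecompbirrich} for the degree pairs listed in Proposition~\ref{prop:SarkisovLinks}, namely $(9,8),(9,5),(8,6),(4,3)$. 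The bulk of the work is the type~II links, which I would subdivide according to whether $B$ is a curve (where $K_{X_1}^2=K_{X_2}^2$ and both $\sigma_i$ contract curves in fibres) or $B=\Spec\kk$ (the Bertini, Geiser, and the remaining symmetric and asymmetric del Pezzo cases).

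For the type~II del~Pezzo cases I would organize the argument by degree, exploiting that most links preserve the birationally-rich dichotomy of Proposition~\ref{prop_list}: when $K_{X_i}^2\le 4$ the standard SOD has the simple two- or three-piece shape of Definition~\ref{def_standard}, so the comparison reduces to the equivalence $\langle \cK(\sigma_1),\sigma_1^*\langle\cO^\perp,\langle\cO\rangle\rangle\rangle\cong\langle \cK(\sigma_2),\sigma_2^*\langle\cO^\perp,\langle\cO\rangle\rangle\rangle$, which follows from Lemma~\ref{lemma_thelastone} once I verify that the two $\langle\cO_Z\rangle$-orthogonals agree and the exceptional divisor classes match up under the symmetry of the link. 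The Bertini ($d,1,d$) and Geiser ($d,2,d$) involutions are the subtle subcases: here $\sigma_1=\sigma_2$ abstractly but the involution $\chi$ acts nontrivially, and I would invoke Lemma~\ref{lemma_fCY}, whose computation of powers of the Serre functor $S_{\bA}$ on $\bA=\cO^\perp$ in terms of $\sigma^*$ is precisely what certifies that $\sigma^*\cBstd$ is preserved up to the mutations given by \eqref{eq_SerreMutat1}.

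The main obstacle I anticipate is the asymmetric type~II del~Pezzo links $(9,7,8),(9,4,5),(8,5,6),(8,3,5)$ and the larger symmetric ones such as $(8,4,8),(6,4,6)$, where the roof $Z$ has intermediate degree, the two sides pull back $3$-block Karpov--Nogin decompositions of \emph{different} degrees, and neither side is simply a one- or two-step blow-up of the other. For these I would compute both collections fully in a fixed basis $H,E_1,\dots,E_n$ (or $h,s,E_1,\dots$) of $\Pic(Z)$, identify the $0$-classes and $1$-classes contracted by each $\pi_i$, and then exhibit an explicit braid of mutations connecting them, mirroring the multi-step mutation chains already carried out in the proof of Lemma~\ref{lem:StandardDecompbirrich}; the bookkeeping of which line bundles become which $r$-classes under the link, together with checking completeness and orthogonality at each intermediate step, is where the real length and the real risk of error lie. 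I expect that Lemma~\ref{lemma_classes}, Lemma~\ref{lemma_dualclasses}, and the explicit class lists \eqref{eq_0classes}--\eqref{eq_1classes} will suffice to control every intermediate collection, so that no new conceptual input beyond the mutation calculus of Section~\ref{sec_derivedcat_coh} is required.
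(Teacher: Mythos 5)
Your overall strategy---case analysis over the link types of Proposition~\ref{prop:SarkisovLinks}, with explicit mutation chains on the roof controlled by the divisor-class dictionary---is the same as the paper's, and you correctly identify Lemma~\ref{lemma_fCY} as the key to the Bertini/Geiser cases. However, there is a genuine gap where your plan would fail: links of \textbf{type II over a curve}. You list this case but give no argument for it, and your closing claim that ``no new conceptual input beyond the mutation calculus of Section~\ref{sec_derivedcat_coh} is required'' is false precisely here. For such a link the two kernels $\cK(\sigma_1)$, $\cK(\sigma_2)$ are generated by sheaves $\cO_{E_i}(-1)$, $\cO_{E_i'}(-1)$ supported on the two components of reducible fibres, and the basic move is the computation $L_{g^*\Db(C)}(\cO_{E_i})\cong \cO_{E_i'}(-1)[1]$, coming from the exact sequence $0\to\cO_{E_i'}(-1)\to\cO_{E_i\cup E_i'}\to\cO_{E_i}\to 0$ together with $\cO_{E_i\cup E_i'}\cong g^*\cO_{pt}$; this settles the non-birationally-rich conic bundles, but it is nowhere in your toolbox. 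Worse, when the two conic bundles have degree $5$ or $6$ (so that $\cBstd$ splits $\ker\pi_{i*}$ into two blocks), after this reduction one must still match the pair $\langle\cE^+\rangle,\ \langle\cO(-h_1-E'),\ldots\rangle$ against $\langle\cE'\rangle,\ \langle\cO(-h_1'),\ldots\rangle$, and the paper does this not by explicit mutations but by recognising both as strong exceptional collections whose endomorphism algebra is the path algebra of the affine quiver $\tilde D_4$ (resp.\ $\tilde A_3$), then invoking the Miyachi--Yekutieli description of autoequivalences of $\Db(\modd \kk \tilde D_4)$ to conclude that a power of the Serre functor, hence a braid of mutations via \eqref{eq_SerreMutat1}, carries one collection to the other. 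This representation-theoretic input is exactly the kind of extra idea your plan rules out, and there is no evidence the brute-force class computation you propose instead would close this case.

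A second, smaller gap: your treatment of \textbf{type IV} links only covers degree $8$. For $K_X^2\in\{1,2,4\}$ the surface is not birationally rich, there are no Karpov--Nogin blocks to reshuffle, and Lemma~\ref{lemma_hmutation} does not even apply (it requires degree $\ge 3$). In degree $4$ one needs $h_1+h_2=-K_X$ from Lemma~\ref{lemma_dualclasses} plus a Serre-functor mutation, and in degrees $1$ and $2$ the two fibre classes are interchanged by the Bertini/Geiser involution, so the fractional Calabi--Yau argument of Lemma~\ref{lemma_fCY} that you reserve for type II links is needed again here. Relatedly, your suggestion that the $K_{X_i}^2\le 4$ type II cases follow from Lemma~\ref{lemma_thelastone} ``once the two $\langle\cO_Z\rangle$-orthogonals agree'' cannot work as stated: those orthogonals do not agree, they differ by $\sigma^*$ for the Bertini/Geiser involution $\sigma$, and realising $\sigma^*$ by mutations is precisely the content of Lemma~\ref{lemma_fCY} combined with \eqref{eq_SerreMutat1}.
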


Recall that
  $\cK(\sigma_i) = \ker \sigma_{i,*}$ (as in Definition \ref{def_AgivenBandf}). See Definition \ref{def_standard} for $\cBstd(X_i/B_i)$.

\begin{proof}
We use notation from Section~\ref{section_divisorsonsurfaces}. 
In particular, we denote by $E$, $E_i$ exceptional divisors, by $h$, $h_i$ the $0$-classes and by $H$, $H_i$ the $1$-classes.
We denote the mutation of the first component to the end by $-K$ and the mutation of the last component to the beginning by $K$. We denote by \eqref{lemma_OE} the mutation described in Lemma~\ref{lemma_OE}.
We go through each case of Sarkisov link from Proposition~\ref{prop:SarkisovLinks}.

\textbf{Links of type I}:
$$\xymatrix{ X\ar[d] & Y\ar[l]\ar[d]  \\
\Spec \kk  & \P^1, \ar[l]}$$
where $X$ is a $G$-del Pezzo surface of rank one and $Y/\P^1$ is a $G$-Mori conic bundle.

$\mathbf{(9,8)}$. We have $X\cong \P^2, Y\cong \bF_1$. Recall that $H=E+h$ and $-K=2H+h$ in $\Pic Y$. Two SOD-s on~$Y$ coming from $X$ and from $\P^1$ are 
\begin{equation}
\label{eq_98a}
\langle \cO_E(-1),\cO(-2H),\cO(-H),\cO\rangle,
\end{equation}
\begin{equation}
\label{eq_98b}
\langle \cO(-h-E),\cO(-E),\cO(-h),\cO\rangle
\end{equation}
Applying mutations we get
\begin{multline*}
\text{\eqref{eq_98a}}\overset{\eqref{lemma_OE}}\leadsto
    \langle \langle\cO(-2H)\rangle,\langle\cO(-H)\rangle,\langle\cO(-E)\rangle,\langle\cO\rangle\rangle\overset{-K}\leadsto
    \langle\langle\cO(-H)\rangle,\langle\cO(-E)\rangle,\langle\cO\rangle,\langle\cO(-2H-K)\rangle\rangle=\\
    =
    \langle\langle\cO(-h-E)\rangle,\langle\cO(-E)\rangle,\langle\cO\rangle,\langle\cO(h)\rangle\rangle\overset{L_4}\leadsto
    \text{\eqref{eq_98b}}
\end{multline*}

$\mathbf{(9,5)}$. One has $X\cong \P^2, Y$ is the blow-up of $\P^2$ at an orbit of order $4$. There are $4$ contractions $X\to \P^1$ given by the divisors $h_i=H-E_i$ ($i=1,\ldots,4$) which are not $G$-invariant, and the $G$-invariant contraction $Y\to \P^1$ is  given by the divisor $h_5:=-K-H$ (cf. Lemma~\ref{lemma_dualclasses}).
Two SOD-s on $Y$ coming from $X$ and from $\P^1$ are 
\begin{equation*}
\left\langle 
\left\langle \begin{smallmatrix}
\cO_{E_1}(-1)\\ \ldots \\ \cO_{E_4}(-1)
\end{smallmatrix} \right\rangle,\langle\cO(-2H)\rangle,\langle\cO(-H)\rangle,\langle\cO\rangle\right\rangle,
\end{equation*}
\begin{equation*}
\left\langle
 \langle\cE\rangle,
\left\langle \begin{smallmatrix}
\cO(-h_1)\\ \ldots \\ \cO(-h_4)
\end{smallmatrix} \right\rangle,
\langle\cO(-h_5)\rangle,
\langle\cO\rangle
\right\rangle,
\end{equation*}
by Lemma~\ref{lem:3blockdecomp}, they are  mutation-equivalent. 

$\mathbf{(8,6)}$. One has  $X\cong \bF_0, Y$ is the blow-up of an orbit of order $2$ on $\bF_0$. Let $h_1,h_2$ be the fibres of two fibrations on $\bF_0$, then the $G$-invariant contraction $Y\to \P^1$ is  given by the divisor  $h_3:=-K-h_1-h_2$ (as $h_1+h_2$ is a $2$-class, cf. Lemma~\ref{lemma_dualclasses}).
Two SOD-s on $Y$ coming from $X$ and from $\P^1$ are 
\begin{equation*}
\left\langle 
\left\langle \begin{smallmatrix}
\cO_{E_1}(-1))\\ \cO_{E_2}(-1)
\end{smallmatrix} \right\rangle,
\langle \cO(-h_1-h_2)\rangle,
\left\langle \begin{smallmatrix}
\cO(-h_1)\\ \cO(-h_2)
\end{smallmatrix} \right\rangle,
\langle\cO\rangle\right\rangle,
\end{equation*}
\begin{equation*}
\left\langle 
\left\langle \begin{smallmatrix}
\cO(-H_1)\\ \cO(-H_2)
\end{smallmatrix} \right\rangle,
\left\langle \begin{smallmatrix}
\cO(-h_1)\\ \cO(-h_2)
\end{smallmatrix} \right\rangle,
\langle \cO(-h_3)\rangle,
\langle\cO\rangle
\right\rangle,
\end{equation*}
and they are mutation-equivalent by Lemma~\ref{lem:StandardDecompbirrich}.

$\mathbf{(4,3)}$. Here $Y\to X$ is the blow-up of a point on a del Pezzo surface of degree $4$ with the exceptional divisor $E$ and the fibration $f\colon Y\to \P^1$ is given by divisor $h$ such that $E+h= -K_Y$ (cf. Lemma~\ref{lemma_dualclasses}).
Two SOD-s on $Y$ coming from $X$ and from $\P^1$ are 
\begin{equation}
\label{eq_43_1}
\langle\langle\cO_E(-1)\rangle,\bA, \langle\cO\rangle\rangle 
\end{equation}
and
\begin{equation}
\label{eq_43_2}
\langle\ker f_*,\langle\cO(-h)\rangle, \langle\cO\rangle\rangle. 
\end{equation}
We apply mutations:
$$
\text{\eqref{eq_43_1}}\overset{\eqref{lemma_OE}}{\leadsto}
\langle\bA,\langle\cO(-E)\rangle, \langle\cO\rangle\rangle \overset{R_1}\leadsto
\langle\langle\cO(-E)\rangle, \bA',\langle\cO\rangle\rangle \overset{-K}\leadsto
\langle \bA',\langle\cO\rangle, \langle\cO(h)\rangle\rangle \overset{L_3}\leadsto
\text{\eqref{eq_43_2}}
$$

\textbf{Links of type II over a point:} 
$$\xymatrix{  & Z\ar[ld]_{p_1}\ar[rd]^{p_2} &  \\
X_1\ar[rd] &&  X_2 \ar[ld]\\
& \Spec \kk, &}$$
where $X_1,X_2$ are $G$-minimal del Pezzo surfaces and $Z$ is their blow-up at some $G$-orbits.

First we consider \textbf{asymmetric} links.

$\mathbf{(9,7,8)}$. Here $X_1=\P^2$, $X_2=\bF_0$ and $Z$ is the blow-up of $X_1$ at an orbit of order $2$ and of $X_2$ at a fixed point.
The SOD-s that we compare are 
\begin{equation}
\label{eq_978a}
\left\langle 
\left\langle \begin{smallmatrix}
\cO_{E_1}(-1)\\ \cO_{E_2}(-1)
\end{smallmatrix} \right\rangle,
\langle\cO(-2H)\rangle,
\langle\cO(-H)\rangle,
\langle\cO\rangle
\right\rangle,
\end{equation}
\begin{equation}
\label{eq_978b}
\left\langle 
\left\langle \cO_{E'}(-1)
\right\rangle,
\langle\cO(-h'_1-h'_2)\rangle,
\left\langle \begin{smallmatrix}
\cO(-h'_1)\\ \cO(-h'_2)
\end{smallmatrix} \right\rangle,
\langle\cO\rangle
\right\rangle.
\end{equation}
We apply mutations using Lemma \ref{lemma_3mutations}:
\begin{multline*}
\text{\eqref{eq_978a}}\overset{L_2}\leadsto
\left\langle 
\langle\cO(-2H+E_1+E_2)\rangle,
\left\langle \begin{smallmatrix}
\cO_{E_1}(-1)\\ \cO_{E_2}(-1)
\end{smallmatrix} \right\rangle,
\langle\cO(-H)\rangle,
\langle\cO\rangle
\right\rangle\overset{R_2}\leadsto\\
\leadsto
\left\langle 
\langle\cO(-2H+E_1+E_2)\rangle,
\langle\cO(-H)\rangle,
\left\langle \begin{smallmatrix}
\cO(-H+E_1)\\ \cO(-H+E_2)
\end{smallmatrix} \right\rangle,
\langle\cO\rangle
\right\rangle=\\
=
\left\langle 
\langle\cO(-h'_1-h'_2)\rangle,
\langle\cO(-H)\rangle,
\left\langle \begin{smallmatrix}
\cO(-h'_1)\\ \cO(-h'_2)
\end{smallmatrix} \right\rangle,
\langle\cO\rangle
\right\rangle\overset{L_2}\leadsto
\text{\eqref{eq_978b}}
\end{multline*}

$\mathbf{(9,4,5)}$. Here $X_1=\P^2$, $X_2$ is a del Pezzo surface of degree $5$  and $Z$ is the blow-up of $X_1$ at an orbit of order $5$ and of $X_2$ at a fixed point. Denote $E=\sum_{i=1}^5E_i$, then one has $E'=2H-E$ and $h'_i=2H-E+E_i$ for $i=1,\ldots,5$.
The SOD-s that we compare are 
\begin{equation}
\label{eq_945a}
\left\langle 
\left\langle \begin{smallmatrix}
\cO_{E_1}(-1)\\ \ldots \\ \cO_{E_5}(-1)
\end{smallmatrix} \right\rangle,
\langle\cO(-2H)\rangle,
\langle\cO(-H)\rangle,
\langle\cO\rangle
\right\rangle,
\end{equation}
\begin{equation}
\label{eq_945b}
\left\langle 
\left\langle \cO_{E'}(-1)
\right\rangle,
\langle\cE\rangle,
\left\langle \begin{smallmatrix}
\cO(-h'_1)\\ \ldots \\ \cO(-h'_5)
\end{smallmatrix} \right\rangle,
\langle\cO\rangle
\right\rangle
\end{equation}

We apply mutations:
\begin{multline*}
\text{\eqref{eq_945a}}\overset{L_2}\leadsto
\left\langle 
\langle\cO(-2H{+}E)\rangle,
\left\langle \begin{smallmatrix}
\cO_{E_1}(-1)\\ \ldots \\ \cO_{E_5}(-1)
\end{smallmatrix} \right\rangle,
\langle\cO(-H)\rangle,
\langle\cO\rangle
\right\rangle\overset{L_2}\leadsto
\left\langle 
\left\langle \begin{smallmatrix}
\cO(-2H+E-E_1)\\ \ldots \\ \cO(-2H+E-E_5)
\end{smallmatrix} \right\rangle,
\langle\cO(-2H{+}E)\rangle,
\langle\cO(-H)\rangle,
\langle\cO\rangle
\right\rangle=\\
=
\left\langle 
\left\langle \begin{smallmatrix}
\cO(-h'_1)\\ \ldots \\ \cO(-h'_5)
\end{smallmatrix} \right\rangle,
\langle\cO(-E')\rangle,
\langle\cO(-H)\rangle,
\langle\cO\rangle
\right\rangle\overset{L_2L_3}\leadsto
\left\langle 
\langle\cE'\rangle,
\left\langle \begin{smallmatrix}
\cO(-h'_1)\\ \ldots \\ \cO(-h'_5)
\end{smallmatrix} \right\rangle,
\langle\cO(-E')\rangle,
\langle\cO\rangle
\right\rangle\overset{\eqref{lemma_OE}}\leadsto
\text{\eqref{eq_945b}}
\end{multline*}

$\mathbf{(8,5,6)}$. Here $X_1=\bF_0$, $X_2$ is a del Pezzo surface of degree $6$  and $Z$ is the blow-up of $X_1$ at an orbit of order $3$ and of $X_2$ at a fixed point. Denote $E=\sum_{i=1}^5E_i$. We have $E'=-(h_1+h_2)-K$, $H'_i=-h_i-K$, and $h'_i=-(h_1+h_2-E_i)-K$ (cf. Lemma~\ref{lemma_dualclasses}).

The SOD-s that we compare are 
\begin{equation*}
\left\langle 
\left\langle \begin{smallmatrix}
\cO_{E_1}(-1)\\ \cO_{E_2}(-1)\\  \cO_{E_3}(-1)
\end{smallmatrix} \right\rangle,
\langle\cO(-h_1-h_2)\rangle,
\left\langle \begin{smallmatrix}
\cO(-h_1)\\ \cO(-h_2)
\end{smallmatrix} \right\rangle,
\langle\cO\rangle
\right\rangle,
\end{equation*}
\begin{equation*}
\left\langle 
\left\langle \cO_{E'}(-1)
\right\rangle,
\left\langle \begin{smallmatrix}
\cO(-H'_1)\\  \cO(-H'_2)
\end{smallmatrix} \right\rangle,
\left\langle \begin{smallmatrix}
\cO(-h'_1)\\ \cO(-h'_2) \\ \cO(-h'_3)
\end{smallmatrix} \right\rangle,
\langle\cO\rangle
\right\rangle,
\end{equation*}
which are exactly \eqref{eq:5-6--1} and~\eqref{eq:5-8--1}.
It is shown in the proof of Lemma~\ref{lem:StandardDecompbirrich} that they are mutation-equivalent.

$\mathbf{(8,3,5)}$. Here $X_1=\bF_0$, $X_2$ is a del Pezzo surface of degree $5$  and $Z$ is the blow-up of $X_1$ at an orbit of order $5$ and of $X_2$ at an orbit of order $2$. We have $E_i'=-h_i-K$ for $i=1,2$ and $h'_i=-E_i-K$ for $i=1,\ldots,5$ (cf. Lemma~\ref{lemma_dualclasses}).
The SOD-s that we compare are 
\begin{equation}
\label{eq_835a}
\left\langle 
\left\langle \begin{smallmatrix}
\cO_{E_1}(-1)\\ \ldots \\ \cO_{E_5}(-1)
\end{smallmatrix} \right\rangle,
\langle\cO(-h_1-h_2)\rangle,
\left\langle \begin{smallmatrix}
\cO(-h_1)\\ \cO(-h_2)
\end{smallmatrix} \right\rangle,
\langle\cO\rangle
\right\rangle,
\end{equation}
\begin{equation}
\label{eq_835b}
\left\langle 
\left\langle \begin{smallmatrix}
\cO_{E_1'}(-1)\\ \cO_{E_2'}(-1)
\end{smallmatrix}
\right\rangle,
\langle\cE\rangle,
\left\langle \begin{smallmatrix}
\cO(-h'_1)\\ \ldots \\ \cO(-h'_5)
\end{smallmatrix} \right\rangle,
\langle\cO\rangle
\right\rangle.
\end{equation}

We apply mutations:
\begin{multline*}
\text{\eqref{eq_835a}}\overset{\eqref{lemma_OE}}\leadsto
\left\langle\langle\cO(-h_1-h_2)\rangle,
\left\langle \begin{smallmatrix}
\cO(-h_1)\\ \cO(-h_2)
\end{smallmatrix} \right\rangle,
\left\langle \begin{smallmatrix}
\cO(-E_1)\\ \ldots \\ \cO(-E_5)
\end{smallmatrix} \right\rangle,
\langle\cO\rangle
\right\rangle\overset{L_4(-K)}\leadsto
\left\langle
\left\langle \begin{smallmatrix}
\cO(-h_1)\\ \cO(-h_2)
\end{smallmatrix} \right\rangle,
\left\langle \begin{smallmatrix}
\cO(-E_1)\\ \ldots \\ \cO(-E_5)
\end{smallmatrix} \right\rangle,
\langle\cL\rangle,
\langle\cO\rangle
\right\rangle=\\
=
\left\langle
\left\langle \begin{smallmatrix}
\cO(E_1'+K)\\ \cO(E_2'+K)
\end{smallmatrix} \right\rangle,
\left\langle \begin{smallmatrix}
\cO(h_1+K)\\ \ldots \\ \cO(h_5+K)
\end{smallmatrix} \right\rangle,
\langle\cL\rangle,
\langle\cO\rangle
\right\rangle
\overset{L_4(-K)}\leadsto
\left\langle
\left\langle \begin{smallmatrix}
\cO(h_1+K)\\ \ldots \\ \cO(h_5+K)
\end{smallmatrix} \right\rangle,
\langle\cL\rangle,
\left\langle \begin{smallmatrix}
\cO_{E'_1}(-1)\\ \cO_{E'_2}(-1)
\end{smallmatrix} \right\rangle,
\langle\cO\rangle
\right\rangle
\overset{L_4(-K)}\leadsto\\
\left\langle
\langle\cL\rangle,
\left\langle \begin{smallmatrix}
\cO_{E'_1}(-1)\\ \cO_{E'_2}(-1)
\end{smallmatrix} \right\rangle,
\left\langle \begin{smallmatrix}
\cO(-h'_1)\\ \ldots \\ \cO(-h'_5)
\end{smallmatrix} \right\rangle,
\langle\cO\rangle
\right\rangle\overset{R_1}\leadsto
\text{\eqref{eq_835b}}\end{multline*}

\medskip
Now we treat \textbf{symmetric} links. 

$\mathbf{(d,1,d)}$ and $\mathbf{(d,2,d)}$.
We consider all cases where $\deg Z=1$ or $2$ uniformly. In these cases, the blow-ups $p_1$ and $p_2$ differ by the Bertini, respectively Geiser involution of $Z$, which we denote by $\sigma$. In particular, $X_1\cong X_2=:X$ as $G$-surfaces. The two SOD-s that we need to compare are 
\begin{gather}
\label{eq_BG1}
\Db(Z)=\langle \ker p_{1*}, p_1^*\bB_1,\ldots,p_1^*\bB_{m}\rangle,\\
\label{eq_BG2}
\Db(Z)=\langle \ker p_{2*}, p_2^*\bB_1,\ldots,p_2^*\bB_{m}\rangle,
\end{gather}
where $\Db(X)=\langle\bB_1,\ldots,\bB_m\rangle$ is the standard SOD (and $m=3$ or $2$ depending on whether $X$ is birationally rich or not). Recall that $\bB_m=\langle\cO_X\rangle$, so that $p_1^*\bB_m=p_2^*\bB_m=\langle\cO_Z\rangle$. 
Denote $\bA:=\cO_Z^{\perp}$, then $\bA=\langle\bA_1,\ldots,\bA_m\rangle$, where $\bA_1=\ker p_{1*}$, and $\bA_i=p_1^*\bB_{i-1}$ for $i=2,\ldots,m$. Let $S_{\bA}$ be the Serre functor of $\bA$.
Recall that $L_i$ denotes the mutation of the $i$-th component of an SOD to the left.  
Iterating~\eqref{eq_SerreMutat1} one gets
\begin{align*}
    (L_2L_3\ldots L_{m})(\langle \bA_1,\ldots\bA_m, \langle\cO_Z\rangle\rangle)&=\langle S_{\bA}(\bA_{m}),\bA_1, \ldots,\bA_{m-1}, \langle\cO_Z\rangle\rangle,\\
    (L_2L_3\ldots L_{m})^2(\langle \bA_1,\ldots\bA_m, \langle\cO_Z\rangle\rangle)&=\langle S_{\bA}(\bA_{m-1}),S_{\bA}(\bA_{m}),\bA_1, \ldots,\bA_{m-2}, \langle\cO_Z\rangle\rangle,\\
    &\ldots
    \\
    (L_2L_3\ldots L_{m})^m(\langle \bA_1,\ldots\bA_m, \langle\cO_Z\rangle\rangle)&=\langle S_{\bA}(\bA_{1}),\ldots,S_{\bA}(\bA_{m}), \langle\cO_Z\rangle\rangle,\\
    (L_2L_3\ldots L_{m})^{km}(\langle \bA_1,\ldots\bA_m, \langle\cO_Z\rangle\rangle)&=\langle S_{\bA}^k(\bA_{1}),\ldots,S_{\bA}^k(\bA_{m}), \langle\cO_Z\rangle\rangle.
\end{align*}
We use Lemma~\ref{lemma_fCY}, saying that $S_{\bA}^3=\sigma^*[5]$ for $\deg Z=1$ and  
$S_{\bA}^2=\sigma^*[3]$ for $\deg Z=2$. 
Therefore, $(L_2L_3\ldots L_{m})^{3m}$ for $\deg Z=1$ and $(L_2L_3\ldots L_{m})^{2m}$ for $\deg Z=2$ takes~\eqref{eq_BG1} to 
$$\langle \sigma^*\ker p_{1*}, \sigma^*p_1^*\bB_1,\ldots,\sigma^*p_1^*\bB_{m-1}, \langle\cO_Z\rangle\rangle,$$
which is~\eqref{eq_BG2} since $p_2 = p_1 \sigma$.

$\mathbf{(9,6,9)}$. Here $Z$ is the blow-up of $X_1=\P^2$ and of $X_2=\P^2$ at orbits of order $3$. We note that by Lemma~\ref{lem:3blockdecomp} the SOD \eqref{eq_vectorbundleE}
\begin{equation}
\label{eq_969a}
\left\langle 
\left\langle \begin{smallmatrix}
\cO_{E_1}(-1)\\ \cO_{E_2}(-1) \\ \cO_{E_3}(-1)
\end{smallmatrix} \right\rangle,
\langle\cO(-2H)\rangle,\langle\cO(-H)\rangle,\langle\cO\rangle\right\rangle
\end{equation}
pulled back from $X_1$ is mutation-equivalent to
\begin{equation}
\label{eq_969b}
\left\langle 
\langle \cO(-H_1)\rangle, \langle\cO(-H_2)\rangle,
\left\langle \begin{smallmatrix}
\cO(-h_1)\\ \cO(-h_2) \\ \cO(-h_3)
\end{smallmatrix} \right\rangle,
\langle\cO\rangle
\right\rangle,
\end{equation}
which is the standard $3$-block SOD on a del Pezzo surface of degree~$6$ 
where the first block is split into two.
Then the result  follows from symmetry after swapping the orthogonal line bundles $\cO(-H_1)$ and $\cO(-H_2)$.

$\mathbf{(9,3,9)}$. Here $X_1,X_2=\P^2$ and $Z$ is the blow-up of $X_1,X_2$ at orbits of order $6$.
Let $H,H'\in\Pic Z$ be the pull-backs of lines under $p_1,p_2$.
As usual, we write $E_i, E'_i$ for the exceptional divisors. 
Denote $E=\sum_{i=1}^6E_i$. One has $H'=-2K-H$ (cf. Lemma~\ref{lemma_dualclasses}) and $E'_i=2H-E+E_i$. The SOD-s that we compare are 
\begin{gather}
\label{eq_939a}
\left\langle 
\left\langle \begin{smallmatrix}
\cO_{E_1}(-1)\\ \ldots \\ \cO_{E_6}(-1)
\end{smallmatrix} \right\rangle,
\langle\cO(-2H)\rangle,
\langle\cO(-H)\rangle,
\langle\cO\rangle
\right\rangle,\\
\label{eq_939b}
\left\langle 
\left\langle \begin{smallmatrix}
\cO_{E'_1}(-1)\\ \ldots \\ \cO_{E'_6}(-1)
\end{smallmatrix} \right\rangle,\langle\cO(-2H')\rangle,\langle\cO(-H')\rangle,\langle\cO\rangle\right\rangle.
\end{gather}

Applying mutations we get
\begin{multline*}
\text{\eqref{eq_939a}}\overset{L_2}\leadsto
\left\langle 
\langle\cO(-2H+E)\rangle,
\left\langle \begin{smallmatrix}
\cO_{E_1}(-1)\\ \ldots  \\ \cO_{E_6}(-1)
\end{smallmatrix} \right\rangle,
\langle\cO(-H)\rangle, \langle\cO\rangle\right\rangle\overset{L_2}\leadsto
\left\langle 
\left\langle \begin{smallmatrix}
\cO(-2L+E-E_1)\\ \ldots \\ \cO(-2L+E-E_6)
\end{smallmatrix} \right\rangle,
\langle\cO(-2H+E)\rangle,
\langle\cO(-H)\rangle, 
\langle\cO\rangle
\right\rangle=\\
=
\left\langle 
\left\langle \begin{smallmatrix}
\cO(-E'_1)\\ \ldots \\ \cO(-E'_6)
\end{smallmatrix} \right\rangle,
\langle\cO(H+K)\rangle,
\langle\cO(-H)\rangle,
\langle\cO\rangle
\right\rangle\overset{L_2L_3}\leadsto
\left\langle 
\langle\cE\rangle,
\left\langle \begin{smallmatrix}
\cO(-E'_1)\\ \ldots \\ \cO(-E'_6)
\end{smallmatrix} \right\rangle,
\langle\cO(H+K)\rangle, 
\langle\cO\rangle
\right\rangle\overset{R_3}=\\
=
\left\langle 
\langle\cE\rangle,
\left\langle \begin{smallmatrix}
\cO(-E'_1)\\ \ldots \\ \cO(-E'_6)
\end{smallmatrix} \right\rangle,
\langle\cO\rangle,
\langle\cO(H+K)\rangle
\right\rangle\overset{K}\leadsto
\left\langle 
\langle\cO(H+2K)\rangle,
\langle\cE\rangle,
\left\langle \begin{smallmatrix}
\cO(-E'_1)\\ \ldots \\ \cO(-E'_6)
\end{smallmatrix} \right\rangle,
\langle\cO\rangle
\right\rangle=\\
=
\left\langle 
\langle\cO(-H')\rangle,
\langle\cE\rangle,
\left\langle \begin{smallmatrix}
\cO(-E'_1)\\ \ldots \\ \cO(-E'_6)
\end{smallmatrix} \right\rangle,
\langle\cO\rangle
\right\rangle\overset{\eqref{lemma_OE}}\leadsto
\left\langle 
\left\langle \begin{smallmatrix}
\cO_{E'_1}(-1)\\ \ldots \\ \cO{E'_6}(-1)
\end{smallmatrix} \right\rangle,\langle\cO(-H')\rangle,
\langle\cE\rangle,
\langle\cO\rangle
\right\rangle\overset{L_3}\leadsto
\text{\eqref{eq_939b}}.
\end{multline*}
Here  we used that $-(H+K)$ is a $(-2)$-class by Lemma~\ref{lemma_dualclasses} and hence the
sheaf $\cO(H+K)$ is orthogonal to $\cO$ by Lemma~\ref{lemma_classes}(1).

$\mathbf{(8,4,8)}$. Here $X_1,X_2=\bF_0$ and $Z$ is the blow-up of $X_1,X_2$ at orbits of order $4$.
Let $h_1,h_2\in\Pic Z$ (resp. $h'_1,h'_2\in\Pic Z$) be the pull-backs of fibres under $p_1$ (resp. $p_2$).
Denote $E=\sum_{i=1}^4E_i$. One has $h'_i=-K-h_i$ (cf. Lemma~\ref{lemma_dualclasses}) and $E'_i=h_1+h_2-E+E_i$. The SOD-s that we compare are 
\begin{gather}
\label{eq_848a}
\left\langle 
\left\langle \begin{smallmatrix}
\cO_{E_1}(-1)\\ \ldots \\ \cO_{E_4}(-1)
\end{smallmatrix} \right\rangle,
\langle\cO(-h_1-h_2)\rangle,
\left\langle \begin{smallmatrix}
\cO(-h_1)\\ \cO(-h_2)
\end{smallmatrix} \right\rangle,
\langle\cO\rangle
\right\rangle,\\
\label{eq_848b}
\left\langle 
\left\langle \begin{smallmatrix}
\cO_{E'_1}(-1)\\ \ldots \\ \cO_{E'_4}(-1)
\end{smallmatrix} \right\rangle,
\langle\cO(-h'_1-h'_2)\rangle,
\left\langle \begin{smallmatrix}
\cO(-h'_1)\\ \cO(-h'_2)
\end{smallmatrix} \right\rangle,
\langle\cO\rangle
\right\rangle.
\end{gather}

Applying mutations analogous to the case $(9,3,9)$ we get
\begin{multline*}
\text{\eqref{eq_848a}}\overset{L_2}\leadsto
\left\langle 
\langle\cO(-h_1-h_2+E)\rangle,
\left\langle \begin{smallmatrix}
\cO_{E_1}(-1)\\ \ldots  \\ \cO_{E_4}(-1)
\end{smallmatrix} \right\rangle,
\left\langle \begin{smallmatrix}
\cO(-h_1)\\ \cO(-h_2)
\end{smallmatrix} \right\rangle,
\langle\cO\rangle
\right\rangle\overset{L_2}\leadsto\\
\leadsto
\left\langle 
\left\langle \begin{smallmatrix}
\cO(-h_1-h_2+E-E_1)\\ \ldots  \\ \cO(-h_1-h_2+E-E_4)
\end{smallmatrix} \right\rangle,
\langle\cO(-h_1-h_2+E)\rangle,
\left\langle \begin{smallmatrix}
\cO(-h_1)\\ \cO(-h_2)
\end{smallmatrix} \right\rangle,
\langle\cO\rangle
\right\rangle=\\
=
\left\langle 
\left\langle \begin{smallmatrix}
\cO(-E'_1)\\ \ldots  \\ \cO(-E'_4)
\end{smallmatrix} \right\rangle,
\langle\cO(h_1+h_2+K)\rangle,
\left\langle \begin{smallmatrix}
\cO(-h_1)\\ \cO(-h_2)
\end{smallmatrix} \right\rangle,
\langle\cO\rangle
\right\rangle\overset{K R_3}\leadsto\\
\leadsto
\left\langle 
\left\langle \begin{smallmatrix}
\cO(h_1+K)\\ \cO(h_2+K)
\end{smallmatrix} \right\rangle,
\left\langle \begin{smallmatrix}
\cO(-E'_1)\\ \ldots  \\ \cO(-E'_4)
\end{smallmatrix} \right\rangle,
\langle\cO(h_1+h_2+K)\rangle,
\langle\cO\rangle
\right\rangle\overset{R_3}\leadsto\\
\leadsto
\left\langle 
\left\langle \begin{smallmatrix}
\cO(-h'_1)\\ \cO(-h'_2)
\end{smallmatrix} \right\rangle,
\left\langle \begin{smallmatrix}
\cO(-E'_1)\\ \ldots  \\ \cO(-E'_4)
\end{smallmatrix} \right\rangle,
\langle\cO\rangle,
\langle\cO(h_1+h_2+K)\rangle
\right\rangle\overset{K}\leadsto\\
\leadsto
\left\langle 
\langle\cO(h_1+h_2+2K)\rangle,
\left\langle \begin{smallmatrix}
\cO(-h'_1)\\ \cO(-h'_2)
\end{smallmatrix} \right\rangle,
\left\langle \begin{smallmatrix}
\cO(-E'_1)\\ \ldots  \\ \cO(-E'_4)
\end{smallmatrix} \right\rangle,
\langle\cO\rangle
\right\rangle
\overset{\eqref{lemma_OE}}\leadsto\\
\leadsto
\left\langle 
\left\langle \begin{smallmatrix}
\cO_{E'_1}(-1)\\ \ldots  \\ \cO{E'_4}(-1)
\end{smallmatrix} \right\rangle,
\langle\cO(-h'_1-h'_2)\rangle,
\left\langle \begin{smallmatrix}
\cO(-h'_1)\\ \cO(-h'_2)
\end{smallmatrix} \right\rangle,
\langle\cO\rangle
\right\rangle
=
\text{\eqref{eq_848b}}.
\end{multline*}
Here we used that $-(h_1+h_2+K)$ is a $(-2)$-class and the sheaf $\cO(h_1+h_2+K)$ is orthogonal to $\cO$ by Lemmas~\ref{lemma_dualclasses} and \ref{lemma_classes}(1).

$\mathbf{(6,4,6)}$. Here $X_1,X_2$ are del Pezzo surfaces of degree $6$ and $Z$ is the blow-up of $X_1,X_2$ at orbits of order $2$.
One has $h'_i=-K-h_i, H'_i=-K-E_i, E'_i=-K-H_i$ in $\Pic Z$ (cf. Lemma~\ref{lemma_dualclasses}). The SOD-s that we compare are 
\begin{gather}
\label{eq_646a}
\left\langle 
\left\langle \begin{smallmatrix}
\cO_{E_1}(-1)\\  \cO_{E_2}(-1)
\end{smallmatrix} \right\rangle,
\left\langle \begin{smallmatrix}
\cO(-H_1)\\  \cO(-H_2)
\end{smallmatrix} \right\rangle,
\left\langle \begin{smallmatrix}
\cO(-h_1)\\ \cO(-h_2) \\ \cO(-h_3)
\end{smallmatrix} \right\rangle,
\langle\cO\rangle
\right\rangle,\\
\label{eq_646b}
\left\langle 
\left\langle \begin{smallmatrix}
\cO_{E'_1}(-1)\\  \cO_{E'_2}(-1)
\end{smallmatrix} \right\rangle,
\left\langle \begin{smallmatrix}
\cO(-H'_1)\\  \cO(-H'_2)
\end{smallmatrix} \right\rangle,
\left\langle \begin{smallmatrix}
\cO(-h'_1)\\ \cO(-h'_2) \\ \cO(-h'_3)
\end{smallmatrix} \right\rangle,
\langle\cO\rangle
\right\rangle.
\end{gather}

Applying mutations we get
\begin{multline*}
\text{\eqref{eq_646a}}\overset{L_4(-K)}\leadsto
\left\langle 
\left\langle \begin{smallmatrix}
\cO(-H_1)\\  \cO(-H_2)
\end{smallmatrix} \right\rangle
\left\langle \begin{smallmatrix}
\cO(-h_1)\\ \cO(-h_2) \\ \cO(-h_3)
\end{smallmatrix} \right\rangle,
\left\langle \begin{smallmatrix}
\cO(-E_1)\\  \cO(-E_2)
\end{smallmatrix} \right\rangle,
\langle\cO\rangle
\right\rangle=
\left\langle 
\left\langle \begin{smallmatrix}
\cO(E'_1+K)\\  \cO(E'_2+K)
\end{smallmatrix} \right\rangle
\left\langle \begin{smallmatrix}
\cO(-h_1)\\ \cO(-h_2) \\ \cO(-h_3)
\end{smallmatrix} \right\rangle,
\left\langle \begin{smallmatrix}
\cO(-E_1)\\  \cO(-E_2)
\end{smallmatrix} \right\rangle,
\langle\cO\rangle
\right\rangle\overset{L_4(-K)}\leadsto\\
\leadsto
\left\langle 
\left\langle \begin{smallmatrix}
\cO(-h_1)\\ \cO(-h_2) \\ \cO(-h_3)
\end{smallmatrix} \right\rangle,
\left\langle \begin{smallmatrix}
\cO(-E_1)\\  \cO(-E_2)
\end{smallmatrix} \right\rangle,
\left\langle \begin{smallmatrix}
\cO_{E'_1}(-1)\\  \cO_{E'_2}(-1)
\end{smallmatrix} \right\rangle,
\langle\cO\rangle
\right\rangle=
\left\langle 
\left\langle \begin{smallmatrix}
\cO(h'_1+K)\\ \cO(h'_2+K) \\ \cO(h'_3+K)
\end{smallmatrix} \right\rangle,
\left\langle \begin{smallmatrix}
\cO(-E_1)\\  \cO(-E_2)
\end{smallmatrix} \right\rangle,
\left\langle \begin{smallmatrix}
\cO_{E'_1}(-1)\\  \cO_{E'_2}(-1)
\end{smallmatrix} \right\rangle,
\langle\cO\rangle
\right\rangle\overset{L_4(-K)}\leadsto\\
\leadsto
\left\langle 
\left\langle \begin{smallmatrix}
\cO(-E_1)\\  \cO(-E_2)
\end{smallmatrix} \right\rangle,
\left\langle \begin{smallmatrix}
\cO_{E'_1}(-1)\\  \cO_{E'_2}(-1)
\end{smallmatrix} \right\rangle,
\left\langle \begin{smallmatrix}
\cO(-h'_1)\\ \cO(-h'_2) \\ \cO(-h'_3)
\end{smallmatrix} \right\rangle,
\langle\cO\rangle
\right\rangle\overset{R_1}\leadsto
\text{\eqref{eq_646b}} 
\end{multline*}

$\mathbf{(6,3,6)}$. Here $X_1,X_2$ are del Pezzo surfaces of degree $6$ and $Z$ is the blow-up of $X_1,X_2$ at orbits of order $3$.
One has $h'_i=-K-E_i, E'_i=-K-h_i, H'_i=-2K-H_i$ in $\Pic Z$ by Lemma~\ref{lemma_dualclasses}. The SOD-s that we compare are 
\begin{gather}
\label{eq_636a}
\left\langle 
\left\langle \begin{smallmatrix}
\cO_{E_1}(-1)\\  \cO_{E_2}(-1) \\  \cO_{E_3}(-1)
\end{smallmatrix} \right\rangle,
\left\langle \begin{smallmatrix}
\cO(-H_1)\\  \cO(-H_2)
\end{smallmatrix} \right\rangle,
\left\langle \begin{smallmatrix}
\cO(-h_1)\\ \cO(-h_2) \\ \cO(-h_3)
\end{smallmatrix} \right\rangle,
\langle\cO\rangle
\right\rangle,\\
\label{eq_636b}
\left\langle 
\left\langle \begin{smallmatrix}
\cO_{E'_1}(-1)\\  \cO_{E'_2}(-1) \\  \cO_{E'_3}(-1)
\end{smallmatrix} \right\rangle,
\left\langle \begin{smallmatrix}
\cO(-H'_1)\\  \cO(-H'_2)
\end{smallmatrix} \right\rangle,
\left\langle \begin{smallmatrix}
\cO(-h'_1)\\ \cO(-h'_2) \\ \cO(-h'_3)
\end{smallmatrix} \right\rangle,
\langle\cO\rangle
\right\rangle.
\end{gather}

Applying mutations we get
\begin{multline*}
\text{\eqref{eq_636a}}\overset{L_4(-K)}\leadsto
\left\langle 
\left\langle \begin{smallmatrix}
\cO(-H_1)\\  \cO(-H_2)
\end{smallmatrix} \right\rangle
\left\langle \begin{smallmatrix}
\cO(-h_1)\\ \cO(-h_2) \\ \cO(-h_3)
\end{smallmatrix} \right\rangle,
\left\langle \begin{smallmatrix}
\cO(-E_1)\\  \cO(-E_2) \\ \cO(-E_3)
\end{smallmatrix} \right\rangle,
\langle\cO\rangle
\right\rangle\overset{(-K)}\leadsto\\
\leadsto
\left\langle 
\left\langle \begin{smallmatrix}
\cO(-h_1)\\ \cO(-h_2) \\ \cO(-h_3)
\end{smallmatrix} \right\rangle,
\left\langle \begin{smallmatrix}
\cO(-E_1)\\  \cO(-E_2) \\  \cO(-E_3)
\end{smallmatrix} \right\rangle,
\langle\cO\rangle,
\left\langle \begin{smallmatrix}
\cO(-H_1-K)\\  \cO(-H_2-K)
\end{smallmatrix} \right\rangle
\right\rangle\overset{L_4}\leadsto
\left\langle 
\left\langle \begin{smallmatrix}
\cO(-h_1)\\ \cO(-h_2) \\ \cO(-h_3)
\end{smallmatrix} \right\rangle,
\left\langle \begin{smallmatrix}
\cO(-E_1)\\  \cO(-E_2) \\  \cO(-E_3)
\end{smallmatrix} \right\rangle,
\left\langle \begin{smallmatrix}
\cO(-H_1-K)\\  \cO(-H_2-K)
\end{smallmatrix} \right\rangle,
\langle\cO\rangle
\right\rangle=\\
=\left\langle 
\left\langle \begin{smallmatrix}
\cO(E'_1+K)\\ \cO(E'_2+K) \\ \cO(E'_3+K)
\end{smallmatrix} \right\rangle,
\left\langle \begin{smallmatrix}
\cO(h'_1+K)\\  \cO(h'_2+K) \\  \cO(h'_3+K)
\end{smallmatrix} \right\rangle,
\left\langle \begin{smallmatrix}
\cO(-H_1-K)\\  \cO(-H_2-K)
\end{smallmatrix} \right\rangle,
\langle\cO\rangle
\right\rangle\overset{L_4(-K)}\leadsto\\
\leadsto
\left\langle 
\left\langle \begin{smallmatrix}
\cO(h'_1+K)\\  \cO(h'_2+K) \\  \cO(h'_3+K)
\end{smallmatrix} \right\rangle,
\left\langle \begin{smallmatrix}
\cO(-H_1-K)\\  \cO(-H_2-K)
\end{smallmatrix} \right\rangle,
\left\langle \begin{smallmatrix}
\cO_{E'_1}(-1)\\ \cO_{E'_2}(-1) \\ \cO_{E'_3}(-1)
\end{smallmatrix} \right\rangle,
\langle\cO\rangle
\right\rangle\overset{L_4(-K)}\leadsto\\
\leadsto\left\langle 
\left\langle \begin{smallmatrix}
\cO(-H_1-K)\\  \cO(-H_2-K)
\end{smallmatrix} \right\rangle,
\left\langle \begin{smallmatrix}
\cO_{E'_1}(-1)\\ \cO_{E'_2}(-1) \\ \cO_{E'_3}(-1)
\end{smallmatrix} \right\rangle,
\left\langle \begin{smallmatrix}
\cO(-h'_1)\\  \cO(-h'_2) \\  \cO(-h'_3)
\end{smallmatrix} \right\rangle,
\langle\cO\rangle
\right\rangle\overset{R_1}\leadsto
\text{\eqref{eq_636b}}.
\end{multline*}
As before, we use that $-(H_i+K)$ are $(-2)$-classes and the line bundles $\cO,\cO(-H_1-K), \cO(-H_2-K)$ are pairwise orthogonal by Lemmas~\ref{lemma_dualclasses} and \ref{lemma_classes}(1).

\textbf{Links of type II over a curve: }
$$
\xymatrix{  & Z\ar[ld]_{p_1}\ar[rd]^{p_2}\ar[dd]^g &  \\
X_1\ar[rd]_{f_1} &&  X_2 \ar[ld]^{f_2}\\
& C, &}
$$
where $X_1,X_2$ are $G$-Mori conic bundles over $C$ and $Z$ is their blow-up at $G$-orbits.

The fibration $X_1\to C$  
may have singular fibres, and $p_1\colon Z\to X_1$ is the blow-up of some orbit $\{x_1,\ldots,x_n\}$ consisting of points in distinct non-singular fibres. Let $E_1,\ldots,E_n\subset Z$ be the exceptional divisors of $p_1$, and let $E'_1,\ldots,E'_n$ be the strict transforms of the corresponding fibres of $f_1$.  Then $p_2\colon Z\to X_2$ is the blow-down of $E'_1,\ldots,E'_n$.

We first show that the two SOD-s

\begin{gather}
\label{eq_EECC1}
\Db(Z)=\left\langle \left\langle\begin{smallmatrix}\cO_{E_1}(-1)\\ \ldots \\\cO_{E_n}(-1)\end{smallmatrix}\right\rangle, p_1^*(\ker f_{1*}),p_1^*f_1^*\Db(C)\right\rangle,\\
\label{eq_EECC2}
\Db(Z)=\left\langle \left\langle\begin{smallmatrix}\cO_{E'_1}(-1)\\ \ldots \\\cO_{E'_n}(-1)\end{smallmatrix}\right\rangle, p_2^*(\ker f_{2*}),p_2^*f_2^*\Db(C)\right\rangle
\end{gather}
are mutation-equivalent.
For this, note first that $p_1^*f_1^*\Db(C)=p_2^*f_2^*\Db(C)=g^*\Db(C)$ since $g=f_1p_1=f_2p_2$.
We will use that the left mutation satisfies
$$L_{g^*\Db(C)}(\cO_{E_i})\cong \cO_{E'_i}(-1)[1],$$
which is true since there is an exact sequence 
$$0\to \cO_{E'_i}(-1)\to \cO_{E_i\cup E'_i}\to \cO_{E_i}\to 0$$
and $\cO_{E_i\cup E'_i}\cong g^*(\cO_{pt})\in g^*\Db(C)$ holds (recall that $g^*$ denotes the derived pull-back). 
Now, the following mutations yield the claim: 
\begin{equation}
\label{eq_wisemove}
\text{\eqref{eq_EECC1}}\overset{-K}\leadsto 
\left\langle p_1^*(\ker f_{1*}),g^*\Db(C), \left\langle\begin{smallmatrix}\cO_{E_1}\\ \ldots \\\cO_{E_n}\end{smallmatrix}\right\rangle\right\rangle \overset{L_3}\leadsto 
\left\langle p_1^*(\ker f_{1*}), \left\langle\begin{smallmatrix}\cO_{E'_1}(-1)\\ \ldots \\\cO_{E'_n}(-1)\end{smallmatrix}\right\rangle, g^*\Db(C)\right\rangle \overset{R_1}\leadsto \text{\eqref{eq_EECC2}}
\end{equation}
Therefore, in the case when $X$ is \textbf{not $G$-birationally rich}, the statement follows directly  because the two SOD-s \eqref{eq_EECC1} and \eqref{eq_EECC2} are exactly the atomic SOD-s 
$\langle \cK(p_i), p_i^*\cBstd(X_i/C)\rangle$ for $i=1,2$.

Assume now that $X$ is \textbf{$G$-birationally rich}. In this case the components $p_i^*(\ker f_{i*})$ of $\Db(Z)$ ($i=1,2$) split into two atoms (see Definition~\ref{def_standard}) and we should take extra care of them. There are three cases to consider: degree $5,6,8$.

\textbf{Degree of $X_1$ and $X_2$ is $5$.}
The pull-back of the standard atomic SOD-s for $X_1/\P^1$, $X_2/\P^1$ to $Z$ are 
\begin{gather}
\label{eq_EHHHH1}
\Db(Z) = \left\langle 
\left\langle \begin{smallmatrix}
\cO_{E_1}(-1)\\ \ldots \\ \cO_{E_n}(-1)
\end{smallmatrix} \right\rangle,
\langle \cE \rangle,
\left\langle \begin{smallmatrix}
\cO(-h_1)\\ \cO(-h_2)\\ \cO(-h_3)\\ \cO(-h_4)
\end{smallmatrix} \right\rangle,
\langle \cO(-h_5) \rangle,
\langle\cO\rangle
\right\rangle,\\
\label{eq_EHHHH2}
\Db(Z) = \left\langle 
\left\langle \begin{smallmatrix}
\cO_{E'_1}(-1)\\ \ldots \\ \cO_{E'_n}(-1)
\end{smallmatrix} \right\rangle,
\left\langle \cE' \right\rangle,
\left\langle \begin{smallmatrix}
\cO(-h'_1)\\ \cO(-h'_2)\\ \cO(-h'_3)\\ \cO(-h'_4)
\end{smallmatrix} \right\rangle,
\langle \cO(-h'_5) \rangle,
\langle\cO\rangle
\right\rangle,
\end{gather}
where fibration $X_1\to \P^1$, $X_2\to \P^1$ is given by the divisors $h_5$, $h'_5$, which coincide when we pull them back to $Z$, and the two last components generate $f^*\Db(\P^1)$. The first two mutations from~\eqref{eq_wisemove}
transform~\eqref{eq_EHHHH1} into 
\begin{equation}
\label{eq_EHHHH3}
\left\langle 
\langle \cE \rangle,
\left\langle \begin{smallmatrix}
\cO(-h_1)\\ \cO(-h_2)\\ \cO(-h_3)\\ \cO(-h_4)
\end{smallmatrix} \right\rangle,
\left\langle \begin{smallmatrix}
\cO_{E'_1}(-1)\\ \ldots \\ \cO_{E'_n}(-1)
\end{smallmatrix} \right\rangle,
\langle \cO(-h_5) \rangle,
\langle\cO\rangle
\right\rangle.
\end{equation}
Denote $E':=\sum_{j=1}^nE'_j$, then the right mutation $R_1R_2$ takes~\eqref{eq_EHHHH3} to 
\begin{equation}
\label{eq_EHHHH4}
\left\langle 
\left\langle \begin{smallmatrix}
\cO_{E'_1}(-1)\\ \ldots \\ \cO_{E'_n}(-1)
\end{smallmatrix} \right\rangle,
\langle \cE^+ \rangle,
\left\langle \begin{smallmatrix}
\cO(-h_1-E')\\ \cO(-h_2-E')\\ \cO(-h_3-E')\\ \cO(-h_4-E')
\end{smallmatrix} \right\rangle,
\langle \cO(-h_5) \rangle,
\langle\cO\rangle
\right\rangle,
\end{equation}
where $\cE^+$ is a vector bundle of rank two. Now we are left to check that some mutations of the pair
\begin{equation}
\label{eq_EHHHH5}
\langle \cE^+ \rangle, \left\langle \begin{smallmatrix}
\cO(-h_1-E')\\ \cO(-h_2-E')\\ \cO(-h_3-E')\\ \cO(-h_4-E')
\end{smallmatrix} \right\rangle
\end{equation}
is
\begin{equation}
\label{eq_EHHHH6}    
\langle \cE' \rangle,
\left\langle \begin{smallmatrix}
\cO(-h'_1)\\ \cO(-h'_2)\\ \cO(-h'_3)\\ \cO(-h'_4)
\end{smallmatrix} \right\rangle.
\end{equation}
It is easy to compute that~\eqref{eq_EHHHH5} and~\eqref{eq_EHHHH6} are strong exceptional collections, that is, the Ext-groups between objects in the collections are concentrated in degree zero.
Furthermore their endomorphism algebras are isomorphic to the path algebra of the  affine quiver $\tD_4$  of type $D_4$ (see Figure~\ref{fig:quivers}). It follows using \cite[\S5, \S6]{Bondal-algebras} that the  category $\cT$ generated by~\eqref{eq_EHHHH5} (or~\eqref{eq_EHHHH6})  is equivalent to $\Db(\modd \kk \tD_4 )$. Let $\phi$ (resp. $\phi'$) be the equivalence 
$\Db(\modd \kk \tD_4 )\to \cT$ sending indecomposable projective modules to objects of~\eqref{eq_EHHHH5} (resp. of~\eqref{eq_EHHHH6}). Then $(\phi')^{-1}\phi$ is an autoequivalence of $\Db(\modd \kk\tD_4)$. They are described in~\cite{MiyachiYekutieli}, in particular, up to some iteration of the Serre functor and a shift,    $(\phi')^{-1}\phi$ sends indecomposable projective modules to indecomposable projective modules. It follows that some iteration $S^N_{\cT}$, $N\in\Z$, of the Serre functor on $\cT$ sends~\eqref{eq_EHHHH5} to~\eqref{eq_EHHHH6}: 
$$S_{\cT}^N\left(\left\langle \begin{smallmatrix}
\cO(-h_1-E')\\ \cO(-h_2-E')\\ \cO(-h_3-E')\\ \cO(-h_4-E')
\end{smallmatrix} \right\rangle\right)= \left\langle \begin{smallmatrix}
\cO(-h'_1)\\ \cO(-h'_2)\\ \cO(-h'_3)\\ \cO(-h'_4)
\end{smallmatrix} \right\rangle.$$ 
It follows that~\eqref{eq_EHHHH5} mutates to~\eqref{eq_EHHHH6} as needed by 
$L_2^{2N}$, see~\eqref{eq_SerreMutat1}.

\textbf{Degree of $X_1$ and $X_2$ is $6$.}
The pull-back of the standard atomic SOD-s for $X_1/\P^1$, $X_2/\P^1$ to $Z$ are 
\begin{gather}
\label{eq_hhHHH1}
\Db(Z) = \left\langle 
\left\langle \begin{smallmatrix}
\cO_{E_1}(-1)\\ \ldots \\ \cO_{E_n}(-1)
\end{smallmatrix} \right\rangle,
\left\langle \begin{smallmatrix}
\cO(-H_1)\\ \cO(-H_2)
\end{smallmatrix} \right\rangle,
\left\langle \begin{smallmatrix}
\cO(-h_1)\\ \cO(-h_2)
\end{smallmatrix} \right\rangle,
\langle \cO(-h_3) \rangle,
\langle\cO\rangle
\right\rangle,\\
\label{eq_hhHHH2}
\Db(Z) = \left\langle 
\left\langle \begin{smallmatrix}
\cO_{E'_1}(-1)\\ \ldots \\ \cO_{E'_n}(-1)
\end{smallmatrix} \right\rangle,
\left\langle \begin{smallmatrix}
\cO(-H'_1)\\ \cO(-H'_2)
\end{smallmatrix} \right\rangle,
\left\langle \begin{smallmatrix}
\cO(-h'_1)\\ \cO(-h'_2)
\end{smallmatrix} \right\rangle,
\langle \cO(-h'_3) \rangle,
\langle\cO\rangle
\right\rangle,
\end{gather}
where fibrations $X_1\to \P^1$, $X_2\to \P^1$ are given by the divisors $h_3$, $h'_3$, which coincide when we lift them  to $Z$, and two last components generate $f^*\Db(\P^1)$. First two mutations from~\eqref{eq_wisemove}
transform~\eqref{eq_hhHHH1} into 
\begin{equation}
\label{eq_hhHHH3}
\left\langle 
\left\langle \begin{smallmatrix}
\cO(-H_1)\\ \cO(-H_2)
\end{smallmatrix} \right\rangle,
\left\langle \begin{smallmatrix}
\cO(-h_1)\\ \cO(-h_2)
\end{smallmatrix} \right\rangle,
\left\langle \begin{smallmatrix}
\cO_{E'_1}(-1)\\ \ldots \\ \cO_{E'_n}(-1)
\end{smallmatrix} \right\rangle,
\langle \cO(-h_3) \rangle,
\langle\cO\rangle
\right\rangle.
\end{equation}
Denote $E':=\sum_{j=1}^nE'_j$, then the right mutation $R_1R_2$ takes~\eqref{eq_hhHHH3}  to 
\begin{equation*}
\left\langle 
\left\langle \begin{smallmatrix}
\cO_{E'_1}(-1)\\ \ldots \\ \cO_{E'_n}(-1)
\end{smallmatrix} \right\rangle,
\left\langle \begin{smallmatrix}
\cO(-H_1-E')\\ \cO(-H_2-E')
\end{smallmatrix} \right\rangle,
\left\langle \begin{smallmatrix}
\cO(-h_1-E')\\ \cO(-h_2-E')
\end{smallmatrix} \right\rangle,
\langle \cO(-h_3) \rangle,
\langle\cO\rangle
\right\rangle.
\end{equation*}
We claim that the SOD 
\begin{equation*}
\left\langle\left\langle \begin{smallmatrix}
\cO(-H_1-E')\\ \cO(-H_2-E')
\end{smallmatrix} \right\rangle,
\left\langle \begin{smallmatrix}
\cO(-h_1-E')\\ \cO(-h_2-E')
\end{smallmatrix} \right\rangle
\right\rangle
\end{equation*}
is mutation-equivalent to the SOD
\begin{equation*}
\left\langle
\left\langle \begin{smallmatrix}
\cO(-H'_1)\\ \cO(-H'_2)
\end{smallmatrix} \right\rangle,
\left\langle \begin{smallmatrix}
\cO(-h'_1)\\ \cO(-h'_2)
\end{smallmatrix} \right\rangle
\right\rangle
\end{equation*}
which is pulled back from $X_2$ and conclude the proof.
This can be shown as in the degree $5$ case replacing the quiver $\tD_4$ by the quiver $\tA_3$ with alternating orientation (see Figure~\ref{fig:quivers}).

\tikzcdset{scale cd/.style={every label/.append style={scale=#1},
    cells={nodes={scale=#1}}},
    }
\begin{figure}
    \centering
    \begin{minipage}{0.3\textwidth}
         \begin{tikzcd}[scale cd= 0.7, sep=small]
        & \cO(-h_1') & \\
        \cO(-h_2')& \cE'\ar[u]\ar[d]\ar[r]\ar[l] & \cO(-h_3')\\
        &\cO(-h_4')&
    \end{tikzcd}    
    \end{minipage}
    \begin{minipage}{0.3\textwidth}
        \begin{tikzcd}[scale cd= 0.7, sep=small]
    \cO(-H_1')\ar[d]\ar[r]&\cO(-h_1')\\
    \cO(-h_2')&\cO(-H_2')\ar[l]\ar[u]\\
    \end{tikzcd} 
    \end{minipage}  
    \caption{Affine quivers $\tilde D_4$ and $\tilde A_3$}
    \label{fig:quivers}
\end{figure}

\textbf{Degree of $X_1$ and $X_2$ is $8$.}
The pull-backs of the standard atomic SOD-s from $X_1,X_2$ to $Z$ are (where $X_1,X_2$ are Hirzebruch surfaces)
\begin{gather}
\label{eq_EECC3}
\Db(Z)=\left\langle \left\langle \begin{smallmatrix}
\cO_{E_1}(-1)\\ \cO_{E_n}(-1)
\end{smallmatrix} \right\rangle, \langle\cO(-s_1-h)\rangle,\langle\cO(-s_1)\rangle,\langle\cO(-h)\rangle,\langle\cO\rangle\right\rangle,\\
\label{eq_EECC4}
\Db(Z)=\left\langle \left\langle \begin{smallmatrix}
\cO_{E'_1}(-1)\\ \cO_{E'_n}(-1)
\end{smallmatrix} \right\rangle, \langle\cO(-s_2-h)\rangle,\langle\cO(-s_2)\rangle,\langle\cO(-h)\rangle,\langle\cO\rangle\right\rangle,
\end{gather}
where $s_1,s_2$ are pull-backs of sections of $\P^1$-bundles $X_1,X_2\to \P^1$ with $s_i^2\le 0$.
Mutations~\eqref{eq_wisemove} transform~\eqref{eq_EECC3} into 
\begin{equation}
\label{eq_EECC5} 
\left\langle \left\langle \begin{smallmatrix}
\cO_{E'_1}(-1)\\ \cO_{E'_n}(-1)
\end{smallmatrix} \right\rangle, \langle\cL_1\rangle,\langle\cL_2\rangle,\langle\cO(-h)\rangle,\langle\cO\rangle\right\rangle,
\end{equation}
where $\cL_1,\cL_2$ are some line bundles. Note that the category 
$$\langle\langle\cL_1\rangle,\langle\cL_2\rangle\rangle=\langle\langle\cO(-s_2-h)\rangle,\langle\cO(-s_2)\rangle\rangle$$
is equivalent to $\Db(\P^1)$, so any two SOD-s of this category are mutations of each other, and hence~\eqref{eq_EECC4} is a mutation of~\eqref{eq_EECC5}.

\textbf{Links of type III.} These  are inverse to links of type I, nothing to check anymore.

\textbf{Links of type IV.} We need to check that if $X$ has two Mori fibre space  structures over curves $C_1,C_2$ then the two standard SOD-s  $\cBstd(X/C_1)$ and $\cBstd(X/C_2)$ are mutation-equivalent. 
By Proposition~\ref{prop:SarkisovLinks}, we have $\deg X\in\{1,2,4,8\}$ and consider these cases.
    
    \textbf{Degree of $X$ is $8$}. Here $ X\cong \P^1\times \P^1$, with the two fibrations denoted by $h_1,h_2$. Then by the definition, the standard SOD-s are  
    $$\langle \langle\cO(-h_1-h_2)\rangle,\langle\cO(-h_2)\rangle,\langle\cO(-h_1)\rangle,\langle\cO\rangle\rangle$$
    and
    $$\langle \langle\cO(-h_1-h_2)\rangle,\langle\cO(-h_1)\rangle,\langle\cO(-h_2)\rangle,\langle\cO\rangle\rangle,$$
    which differ by the orthogonal mutation of the two middle terms.

    \textbf{Degree of $X$ is $4$}. The two $G$-equivariant fibrations $ X\to \P^1$ are given by divisor classes $h_1,h_2$ such that $h_1+h_2=-K_{X}$ (cf. Lemma~\ref{lemma_dualclasses}), and the two standard SOD-s for $\Db(X)$ are
    $$\langle \bA_1, \cO(-h_1),\cO\rangle\quad\text{and}\quad \langle \bA_2, \cO(-h_2),\cO\rangle.$$
    Applying mutations to the first one we get the second one:
    \begin{multline*}
    \langle \bA_1, \cO(-h_1),\cO\rangle  \overset{R_1}\leadsto\langle \cO(-h_1),\bA'_1, \cO\rangle  \overset{K}\leadsto \langle \bA'_1, \cO,\cO(-h_1-K)\rangle=
    \langle \bA'_1, \cO,\cO(h_2)\rangle\overset{L_3}\leadsto\\
    \leadsto\langle \bA'_1, \cO(-h_2),\cO\rangle=\langle \bA_2, \cO(-h_2),\cO\rangle.
    \end{multline*}

    \textbf{Degree of $X$ is $1$ or $2$}. We treat the cases when $\deg X=1,2$ uniformly and similarly to the cases of links of type II where the roof is a del Pezzo surface of degrees $1,2$. Let $\sigma$ be the Bertini, respectively Geiser involution on $X$. Then 
    (see \cite[Th. 2.6]{Isk96}) 
    the two $G$-equivariant fibrations $X\to \P^1$ are given by divisor classes $h_1,h_2$ such that $\sigma(h_1)=h_2$, and the two standard SOD-s for $\Db(X)$ are
    $$\langle \bA_1, \cO(-h_1),\cO\rangle\quad\text{and}\quad \langle \bA_2, \cO(-h_2),\cO\rangle.$$
    We use Lemma~\ref{lemma_fCY} as before. Assume $\deg X=2$, then the lemma says that $S_{\cO^{\perp}}^2\cong \sigma^*[3]$, where $S_{\cO^\perp}$ is the Serre functor on $\cO^\perp\subset \Db(X)$.
    Mutations $(L_2)^{4}$ take $\langle \bA_1, \cO(-h_1),\cO\rangle$
    to 
    $$\langle S_{\cO^\perp}^2\bA_1, S_{\cO^\perp}^2\cO(-h_1), \langle\cO\rangle\rangle=
    \langle \sigma^*\bA_1, \sigma^*\cO(-h_1), \langle\cO\rangle\rangle=
    \langle \bA_2, \cO(-h_2),\cO\rangle,$$
    see~\eqref{eq_SerreMutat1}.
    For $\deg X=1$ the argument is similar, it suffices to apply $(L_2)^6$.
    
\medskip
Proposition~\ref{prop_sarkisov} is proven.
\end{proof}

\bibliography{atoms}
\bibliographystyle{alpha}

\end{document}